\documentclass{amsart}
\usepackage[utf8]{inputenc}
\usepackage{enumerate}
\usepackage{xcolor}
\usepackage{imakeidx}
\usepackage{amsfonts}
\usepackage{amssymb}
\usepackage{amsmath}
\usepackage{amsthm}
\usepackage{tikz-cd}
\usepackage{float}
\usepackage{graphicx}
\usepackage{tikz}
\usepackage{thmtools}
\usepackage{thm-restate}
\usepackage{mathrsfs}
\usepackage{enumitem}
\usepackage{mathtools}
\usepackage{fancyhdr,textcase}
\usepackage{isomath}
\usepackage{MnSymbol}
\usepackage{yfonts}
\usepackage{wrapfig}
\usepackage{verbatim}
\usepackage[all]{xy}
  \SelectTips{cm}{10}     
  \everyxy={<2.5em,0em>:} 
\newcommand{\cok}{\mathrm{cok}\,}
\usepackage{hyperref}
\usepackage[backend=biber, style=alphabetic, sorting=nyt]{biblatex}
\addbibresource{bibliography.bib}

\theoremstyle{definition}
\newtheorem{theorem}{Theorem}[subsection]
\newtheorem{definition}[theorem]{Definition}
\newtheorem{notation}[theorem]{Notation}
\newtheorem*{notation*}{Notation}

\newtheorem{example}[theorem]{Example}
\newtheorem{construction}[theorem]{Construction}
\newtheorem{remark}[theorem]{Remark}
\newtheorem{corollary}[theorem]{Corollary}
\newtheorem{proposition}[theorem]{Proposition}

\newtheorem{lemma}[theorem]{Lemma}
\newtheorem{question}[theorem]{Question}
\newtheorem{claim}[theorem]{Claim}
\newtheorem{warning}[theorem]{Warning}

\newtheorem*{question*}{Question I}
\newtheorem*{question'}{Question II}
\newtheorem*{question''}{Question III}

\newtheorem*{theorem*}{Theorem}
\newtheorem*{theorem_A}{Theorem A}
\newtheorem*{theorem_B}{Theorem B}
\newtheorem*{theorem_C}{Theorem C}
\newtheorem*{theorem_D}{Theorem D}
\newtheorem*{theorem_E}{Theorem E}

\newcommand{\ZZ}{{\mathbb Z}}
\newcommand{\NN}{{\mathbb N}}
\newcommand{\RR}{{\mathbb R}}
\newcommand{\QQ}{{\mathbb Q}}
\newcommand{\KK}{{\mathbb K}}
\newcommand{\CC}{{\mathbb C}}

\newcommand{\PP}{{\mathbb P}}
\newcommand{\Affine}{{\mathbb A}}

\newcommand{\cZ}{{\mathcal Z}}
\newcommand{\cY}{{\mathcal Y}}
\newcommand{\cX}{{\mathcal X}}
\newcommand{\cW}{{\mathcal W}}
\newcommand{\cV}{{\mathcal V}}
\newcommand{\cU}{{\mathcal U}}

\newcommand{\cO}{{\mathcal O}}

\newcommand{\cM}{{\mathcal M}}
\newcommand{\cL}{{\mathcal L}}
\newcommand{\cK}{{\mathcal K}}

\newcommand{\cH}{{\mathcal H}}

\newcommand{\cF}{{\mathcal F}}
\newcommand{\cE}{{\mathcal E}}
\newcommand{\cD}{{\mathcal D}}
\newcommand{\cC}{{\mathcal C}}
\newcommand{\cB}{{\mathcal B}}
\newcommand{\cA}{{\mathcal A}}
\newcommand{\cJ}{{\mathcal J}}

\DeclareMathOperator{\Spec}{Spec}

\DeclareMathOperator{\supp}{supp}
\DeclareMathOperator{\vol}{vol}

\DeclareMathOperator{\GVF}{GVF}

\DeclareMathOperator{\Val}{Val}

\DeclareMathOperator{\amp}{amp}
\DeclareMathOperator{\sdiv}{div} 
\DeclareMathOperator{\cdiv}{div} 
\DeclareMathOperator{\Idiv}{IDiv}

\DeclareMathOperator{\Adiv}{ADiv}

\DeclareMathOperator{\Apic}{APic}
\DeclareMathOperator{\Rdiv}{RDiv}
\DeclareMathOperator{\Ldiv}{LDiv}

\DeclareMathOperator{\Div}{Div} 
\DeclareMathOperator{\ord}{ord} 
\DeclareMathOperator{\height}{ht}
\DeclareMathOperator{\interior}{o}
\DeclareMathOperator{\Eff}{Eff}

\DeclareMathOperator{\Bigcone}{Big}

\DeclareMathOperator{\PSH}{PSH}
\DeclareMathOperator{\Z}{Z}
\DeclareMathOperator{\VF}{VF}
\DeclareMathOperator{\Ima}{Im}

\DeclareSymbolFont{yhlargesymbols}{OMX}{yhex}{m}{n} \DeclareMathAccent{\yhwidehat}{\mathord}{yhlargesymbols}{"62}



\newcommand{\smallk}{k}

\newcommand{\Field}{F}

\newcommand{\adiv}{\widehat{\cdiv}}

\newcommand{\avol}{\widehat{\vol}}
\newcommand{\adeg}{\widehat{\deg}}

\newcommand{\aH}{\widehat{H}}

\newcommand{\aN}{\widehat{N}}



\newcommand{\GVFpairing}{\beta}
\newcommand{\GVFk}{\ov{\smallk(t)}}
\newcommand{\GVFQ}{\ov{\QQ}}


\newcommand{\ov}{\overline}

\title[Existential closedness of $\GVFQ$ as a GVF via Arakelov geometry]{Existential closedness of $\GVFQ$ as a globally valued field via Arakelov geometry}
\author{Michał Szachniewicz}
\date{}
\address{Mathematical Institute, University of Oxford, Oxford, OX2 6GG, UK}
\email{michal.szachniewicz@maths.ox.ac.uk}

\begin{document}

\maketitle

\begin{abstract}
    We use the differentiability of the arithmetic volume function and an arithmetic Bertini type theorem to classify when one can find a closed point on the generic fiber of an arithmetic variety, whose heights with respect to some finite tuple of arithmetic $\RR$-divisors approximate a given tuple of real numbers. 
    
    We use this result to prove existential closedness of $\GVFQ$ as a globally valued field (abbreviated GVF). We introduce GVF functionals on the space of arithmetic $\RR$-divisors and interpret the essential infimum function as the infimum of values of normalised GVF functionals, at least when the generic part of the arithmetic $\RR$-divisor is big. We also give a new criterion on equality in one of the Zhang's inequalities.
\end{abstract}

\tableofcontents

\section{Introduction}

\subsection{Main results}

In this paper we study polynomial equations with height conditions. Recall that for a number $q \in \QQ^{\times}$ its height is $\height(q):=\log(\max(|a|, |b|))$, if $q=a/b$ for some coprime integers $a, b$. It measures the complexity of a number. This height extends to the absolute logarithmic Weil height on $\GVFQ$, and on tuples $\ov{a} \in \GVFQ^m$ we define it to be the maximum of heights of coordinates. Note that it is not invariant under (even linear) changes of coordinates. In general, if $X$ is a variety over $\GVFQ$ and $f, g:X \to \Affine_{\GVFQ}^m$ are morphisms, we get an induced height for $x \in X(\GVFQ)$ given by $\height(f(x)) - \height(g(x))$. If $h_1, \dots, h_n$ are functions on $X(\GVFQ)$ obtained in this way, one could ask the following question.
\begin{question*}~\label{question}
    Given real numbers $r_1, \dots, r_n$, does there exist $x \in X(\GVFQ)$ with $h_1(x)=r_1, \dots, h_n(x)=r_n$?
\end{question*}
This question is important when one wants to compare various heights, for example when on an abelian variety one looks at the naive heights and the N\'eron–Tate height, see for example inequalities in \cite[Part B]{Silverman_Hindry_diophantine}.

Without conditions on heights, the (weak) Nullstellensatz provides the answer. More precisely, $X$ has a $\GVFQ$-point if and only if $X$ has an $\Field$-point, for some field extension $\GVFQ \subset \Field$, which is equivalent to non-emptiness of $X$.

Of course, even for a non-empty $X$, the answer cannot be positive for any tuple of real numbers, essentially for two reasons. First, the height function is not onto $\RR$. Second, there may be some relations between heights $h_1, \dots, h_n$, for example $h_1+h_2=h_3$ or $h_1 \geq 0$. To fix the first problem we ask instead whether there is a point approximating values $r_1, \dots, r_n$ up to $\varepsilon$, for some $\varepsilon > 0$. To resolve the other problem, we need to make the definition of height robust enough, so that the necessary conditions on tuples $r_1, \dots, r_n$ are simple to state.

The Weil height machine (see \cite[Part B]{Silverman_Hindry_diophantine}) is an example of a robust formalism for heights on varieties over $\GVFQ$. For a projective variety $X$ over $\GVFQ$ with a line bundle $L$, the Weil height machine associates a function $h_L:X(\GVFQ) \to \RR$ which is well defined only up to a bounded term. If $L$ is very ample, then $h_L$ has a representative of the form described above, on the complement of the hypersurface given by the zeroes of a section of $L$. However, to make sense of our motivating question we need to have heights defined not up to bounded terms. Fortunately, one can equip $L$ with an extra datum which will give a representative of $h_L$. For simplicity assume that $X$ is over $\QQ$ and choose a projective integral scheme $\cX$ over $\ZZ$ with a line bundle $\cL$, such that $\cX_{\QQ} = X, \cL_{\QQ} = L$. Such a scheme is called an \textit{arithmetic variety}. Assume additionally that $X$ is smooth over $\QQ$ and that $h$ is a conjugation invariant hermitian metric on the complex analytification of $L$. A pair $\ov{\cL} = (\cL, h)$ is called a \textit{hermitian line bundle} on $\cX$ and it defines a height function $h_{\ov{\cL}}:X(\GVFQ) \to \RR$. If $x$ is a closed point in $X$, then $h_{\ov{\cL}}(x)$ is the normalised arithmetic degree of the line bundle $\ov{\cL}$ restricted to the curve defined by taking the closure of $\{x\}$ in $\cX$. For the details of this approach, see \cite{Chambert_Loir_survey}. We choose to work with slightly more general objects parametrising heights, namely \textit{arithmetic $\RR$-divisors of $C^0$-type} introduced by Moriwaki in \cite{Moriwaki_Zariski_arithmetic_decompositions}. These are pairs $\ov{\cD} = (\cD, g)$, where $\cD$ is an $\RR$-Cartier divisor on $\cX$ and $g$ is a $\cD$-Green function of $C^0$-type. A trivial example of such object is a \textit{principal} $\RR$-divisor of $C^0$-type, i.e., a real combination of $\adiv(f) = (\cdiv(f), -\log|f|^2)$ for $f$'s in the function field of $X$. For a closed point $x \in X$, one defines $h_{\ov{\cD}}(x)$ as the normalised (arithmetic) intersection of $\ov{\cD}$ and the closure of $\{x\}$ in $\cX$, see Subsection~\ref{subsection_heights_w_r_t_arithmetic_R_divisors}. Now we can ask a more refined version of the question above.
\begin{question'}
    Given real numbers $r_1, \dots, r_n$, $\varepsilon>0$, and arithmetic $\RR$-divisors of $C^0$-type $\ov{\cD}_1, \dots, \ov{\cD}_n$ on $\cX$, does there exist $x \in X(\GVFQ)$ with 
    \[ |h_{\ov{\cD}_i}(x)-r_i|<\varepsilon \textnormal{ for all } i=1, \dots, n ?\]
\end{question'}
The arithmetic $\RR$-divisors of $C^0$-type on $\cX$ form a vector space over $\RR$ denoted $\Adiv_{\RR}(\cX)$. The main theorem of this paper gives the following answer.
\begin{theorem_A}[Theorem~\ref{theorem_main_arithmetic_approximation_theorem}]~\label{theorem_A}
    Let $\ov{\cD}_0, \dots, \ov{\cD}_n$ be arithmetic $\RR$-divisors of $C^0$-type on $\cX$ with $\ov{\cD}_0 = (\cdiv(2),0)$. Fix $\varepsilon > 0$, a closed proper subscheme $Z \subset X=\cX \otimes \QQ$, and a linear functional
    \[ l:\Adiv_{\RR}(\cX) \to \RR, \]
    which is non-negative on effective arithmetic $\RR$-divisors of $C^0$-type, zero on principal arithmetic $\RR$-divisors of $C^0$-type, and sends $\ov{\cD}_0$ to $\log(2)$. Then there exists a closed point $x \in X \setminus Z$ such that for all $i=0, \dots, n$
    \[ |h_{\ov{\cD}_i}(x) - l(\ov{\cD}_i)| < \varepsilon. \]
\end{theorem_A}
The theorem still holds if $l$ is defined only on the real vector subspace of $\Adiv_{\RR}(\cX)$ generated by $\ov{\cD}_0, \dots, \ov{\cD}_n$, provided that one of $\ov{\cD}_1, \dots, \ov{\cD}_n$ is big in the sense of Definition~\ref{definition_properties_of_arithmetic_R_divisors_big_etc}, see Corollary~\ref{corollary_main_arithmetic_approximation_corollary}. Note that the conditions on values of heights $h_{\ov{\cD}_0}(x), \dots, h_{\ov{\cD}_n}(x)$ in Theorem~\ref{theorem_A}A are optimal, if we expect to find a solution $x$ general enough. Indeed, always $h_{\ov{\cD}_0}(x) = \log(2)$; if $x \not\in \supp(\cD_i)$ and $\ov{\cD}_i$ is effective, then $h_{\ov{\cD}_i}(x) \geq 0$ for all $i=1, \dots, n$; and for any principal $\ov{\cE}$ we have $h_{\ov{\cE}}(x) = 0$, by the product formula.

One can consider another way of making sense out of Question~\ref{question}I. Note that for a number $q \in \QQ^{\times}$, the height $\height(q)$ can be expressed in a way that does not involve a presentation as a quotient of two integers:
\[ \height(q) = - \min(-\log|q|, 0) + \sum_{p \textnormal{ prime}} -\min(v_p(q), 0) \log p, \]
where $v_p$ is the $p$-adic valuation on $\QQ$ with $v_p(p)=1$. More generally, let us consider the measure $\mu$ on the space of (non-Archimedean and Archimedean) valuations $\Val_K$ for a number field $K$, defined by the following formula
\[ \mu := \frac{1}{[K : \QQ]} \biggl( \sum_{p \in \Spec(\cO_K)} \delta_{\ord_p} \cdot \log \# \kappa(p) + \sum_{\sigma : K \to \CC} \delta_{-\log |\sigma(-)|} \biggr). \]
The first sum is taken only over closed points, and the second sum is taken over all embeddings of fields. By $\kappa(p)$ we mean the residue field of the point $p$ and $\delta$ denotes the Dirac delta. Then for $q \in K^{\times}$ we have
\[ \height(q) = \int_{\Val_K} -\min(v(q),0) d\mu(v) \]
and
\[ \int_{\Val_K} v(q) d\mu(v) = 0. \]
If $t(x_1, \dots, x_n)$ is an expression composed out of $n$ free variables, addition, $\QQ$-scalar multiplication and taking minima (i.e., it is a $\QQ$-tropical polynomial, see Definition~\ref{definition_GVF_Globally_valued_fields}), for a tuple $a_1, \dots, a_n \in (K^{\times})^n$ we can define the ``$t$-height" of this tuple by
\[ R_t(a_1, \dots, a_n) := \int_{\Val_K} t(v(a_1), \dots, v(a_n)) d\mu(v). \]
It does not depend on the number field $K$, so we get a function $R_t: (\GVFQ^{\times})^n \to \RR$. Note that (representatives of) Weil heights are included among these functions.

In general, a field $\Field$ equipped with a collection of functions $R_t:(\Field^{\times})^n \to \RR$ for each $\QQ$-tropical polynomial $t$ is called a \textit{globally valued field} (abbreviated GVF) if it satisfies the axioms from Definition~\ref{definition_GVF_Globally_valued_fields}. One denotes
\[ R_t(\ov{a}) =: \int t(v(\ov{a})) dv \]
and the product formula axiom asserts that
\[ \int v(a) dv = 0 \textnormal{ for all } a \in \Field^{\times}. \]
These unbounded continuous logic structures were introduced by Ben Yaacov and Hrushovski in \cite{GVF2}. The notation with integrals is natural in the sense that every GVF structure on $\Field$ comes from a unique (up to a renormalisation) measure $\mu$ on the space $\Omega_{\Field}$ of pre-valuations on $\Field$, such that for all $\QQ$-tropical polynomials $t$ and all tuples $\ov{a}$ in $\Field^{\times}$
\[ \int t(v(\ov{a})) dv = \int_{\Omega_{\Field}} t(v(\ov{a})) d\mu(v). \]
For the details on this construction see \cite[Section 2]{GVF3}.

One can reformulate Question~\ref{question}I in the language of globally valued fields in the following way.
\begin{question''}
    Given real numbers $r_1, \dots, r_n$, $\varepsilon>0$, polynomials $f_1, \dots, f_s, g \in \GVFQ[x_1, \dots, x_m]$ and $\QQ$-tropical polynomials (in $m$ variables) $t_1, \dots, t_n$, does there exist $\ov{a} = (a_1, \dots, a_m) \in (\GVFQ^{\times})^m$ satisfying $f_1(\ov{a})=\dots=f_s(\ov{a})=0, g(\ov{a}) \neq 0$ with 
    \[ |R_{t_i}(\ov{a}) - r_i|< \varepsilon \textnormal{ for all } i=1, \dots, n ?\]
\end{question''}
The answer to this question cannot be positive for all tuples $r_1, \dots, r_n$ as for example it may happen that $R_{t_1+t_2}=R_{t_1} + R_{t_2}$, or $R_{t_3} \geq 0$. Moreover, equations $f_1(\ov{a})=0, \dots, f_s(\ov{a})=0$ can add some constrains on the heights. To see for which tuples $r_1, \dots, r_n$ existence of solutions is possible/expected one can use a definition common in model theory. Namely, call equations
\[ f_1(\ov{a})=\dots=f_s(\ov{a})=0, g(\ov{a}) \neq 0, R_{t_1}(\ov{a}) = r_1, \dots, R_{t_n}(\ov{a}) = r_n \]
\textit{consistent}, if there is a GVF extension $\GVFQ \subset \Field$ and a tuple $\ov{a}$ in $\Field$ satisfying them. We prove that consistent equations can be approximately solved in $\GVFQ$, which in the language of model theory means (essentially by definition) that $\GVFQ$ is \textit{existentially closed} as a GVF, see Definition~\ref{definition_existentially_closed_GVFs}. Note that this can bee seen as a (weak) Nullstellensatz with height conditions. 
\begin{theorem_B}[Theorem~\ref{theorem_QQ_bar_is_existentially_closed}]~\label{theorem_B}
    $\GVFQ$ is an existentially closed GVF.
\end{theorem_B}
This theorem is an arithmetic analogue of \cite[Theorem 2.1]{GVF2} which states that the field $\ov{k(t)}$ equipped with a natural GVF structure coming from the product formula on curves over the base field $k$ (which is an arbitrary field) is existentially closed. It is worth to mention that as an immediate corollary from Theorem~\ref{theorem_B}B, one gets a global Fekete-Szeg\H{o} type result, i.e., \cite[Theorem 3.11]{GVF2} for number fields.

Theorem~\ref{theorem_B}B served as a motivation for this project and it is a consequence of Theorem~\ref{theorem_A}A because of a translation theorem between the languages of globally valued fields and of functionals on arithmetic $\RR$-divisors. To express the translation, fix a finitely generated extension $\QQ \subset \Field$ and as in Definition~\ref{definition_F_models_and_global_divisors_on_F}, let $\Adiv_{\RR}(\Field)$ be the injective limit of $\Adiv_{\RR}(\cX)$ for all (normal, generically smooth) arithmetic varieties $\cX$ with function field isomorphic to $\Field$. We call $l:\Adiv_{\RR}(\Field) \to \RR$ a \textit{GVF functional}, if it satisfies the following conditions, already appearing in Theorem~\ref{theorem_A}A:
\begin{itemize}
    \item it sends principal arithmetic $\RR$-divisors of $C^0$-type to zero,
    \item it is non-negative on effective arithmetic $\RR$-divisors of $C^0$-type.
\end{itemize}
Additionally, we call such $l$ \textit{normalised}, if $l((\cdiv(2),0))= \log 2$. For the details see Definition~\ref{definition_GVF_functional}. In Definition~\ref{definition_lattice_structure_and_divisors_on_F_and_on_F_models} we introduce a lattice structure on the space of arithmetic $\QQ$-divisors of $C^0$-type denoted $\Adiv_{\QQ}(\Field) \subset \Adiv_{\RR}(\Field)$. In particular for a $\QQ$-tropical polynomial $t$ (of arity $m$) the expression $t(\ov{\cD}_1, \dots, \ov{\cD}_m)$ makes sense for $\ov{\cD}_1, \dots, \ov{\cD}_m \in \Adiv_{\QQ}(\Field)$. We prove the following.
\begin{theorem_C}[Theorem~\ref{theorem_GVF_functional_are_GVF_structures}]~\label{theorem_C}
    Let $\Field$ be a finitely generated extension of $\QQ$. There is a natural bijection 
    \[ \bigl\{ \textnormal{GVF functionals $\Adiv_{\RR}(\Field) \to \RR$}  \bigr\} \longleftrightarrow \bigl\{ \textnormal{GVF structures on $F$}  \bigr\} \]
    \[ l \longmapsto \bigl( R_t^l(\ov{a}):=l(\ov{\cD}_t(\ov{a})) \bigr)_{\textnormal{$\QQ$-tropical polynomials } t} \]
    where $\ov{\cD}_t(\ov{a})=t(\adiv(a_1), \dots, \adiv(a_n))$ if $\ov{a} = (a_1, \dots, a_n)$.
\end{theorem_C}
This theorem formally extends to arbitrary extensions of $\QQ$, see Remark~\ref{remark_GVF_functional_are_GVF_structures}. Over finite prime fields an analogous geometric description of globally valued fields follows from \cite[Section 11]{GVF2}.

At the end of the paper we present a new interpretation of the essential infimum function $\zeta$. If $\ov{\cD}$ is an arithmetic $\RR$-divisor of $C^0$-type on a normal, generically smooth arithmetic variety $\cX$, then the essential infimum of $\ov{\cD}$ is defined to be
\[ \zeta(\ov{\cD}) := \sup_{\cY \subset \cX} \inf_{x \in \cX(\ov{\QQ}) \setminus \cY} h_{\ov{\cD}}(x), \]
where the supremum is taken over all Zariski-closed proper subschemes $\cY \subset \cX$. However, since points approximate arbitrary GVF functionals by Theorem~\ref{theorem_A}A, the following holds.
\begin{theorem_D}~\cite[Theorem 4.6]{Arithmetic_Demailly_Qu_Yin}~\label{theorem_D}
    Let $\ov{\cD}$ be an arithmetic $\RR$-divisor of $C^0$-type on $\cX$. Then
    \[ \zeta(\ov{\cD}) \leq \inf \{ l(\ov{\cD}) : \textnormal{$l$ is a normalised GVF functional on } \Adiv_{\RR}(\Field) \}. \]
    Moreover, if $\cD_\QQ$ is big, this is an equality.
\end{theorem_D}
This result was proved by Qu and Yin in a different language, and it generalises \cite[Corollary 4.2]{F_Ballay_Succesive_minima}. The author would like to thank François Balla\"y for providing a reference. We decided to include the above theorem in this introduction to underline the new interpretation of the essential infimum function $\zeta$. To see our presentation of the proof of Theorem~\ref{theorem_D}D using Theorem~\ref{theorem_A}A, see Theorem~\ref{theorem_essential_infimum_as_infimum_of_GVF_functionals}. Note that Theorem~\ref{theorem_A}A is stronger, in the sense that it allows to consider several different arithmetic $\RR$-divisor of $C^0$-type at once.

Assume that $\cX$ is a normal, geometrically smooth arithmetic variety of dimension $d+1$ and let $\ov{\cD}$ be a pseudo-effective arithmetic $\RR$-divisor of $C^0$-type on $\cX$, with $\cD_{\QQ}$ big. The essential infimum function is related to other arithmetic invariants via Zhang's inequality which states (in the form from \cite[Theorem 7.2.(ii)]{F_Ballay_Succesive_minima}) that under the above assumptions
\[ \zeta(\ov{\cD}) \geq \frac{\avol(\ov{\cD})}{(d+1) \vol(\cD_{\QQ})} =: \varphi(\ov{\cD}). \]
We prove the following theorem which gives a characterisation of when the equality occurs in Zhang's inequality.
\begin{theorem_E}[Theorem~\ref{theorem_characterisation_of_Zhang_equality_divisors}]~\label{theorem_E}
    Let $\ov{\cD}$ be a big arithmetic $\RR$-divisor of $C^0$-type on $\cX$. Consider the statements:
    \begin{enumerate}[label=(\arabic*)]
        \item $\varphi(\ov{\cD}) = \zeta(\ov{\cD})$;
        \item $D_{\ov{\cD}} \varphi$ is a GVF functional;
        \item the infimum
        \[ \zeta(\ov{\cD}) = \inf \{ l(\ov{\cD}) : \textnormal{$l$ is a normalised GVF functional on } \Adiv_{\RR}(\Field) \} \]
        is achieved at a unique normalised GVF functional;
        \item $\zeta$ is Gateaux differentiable at $\ov{\cD}$ in every direction.
    \end{enumerate}
    Then $(1) \iff (2) \implies (3) \iff (4)$.
\end{theorem_E}
Standard methods show that Gateaux differentiability of $\zeta$ at $\ov{\cD}$ gives equidistribution, see Lemma~\ref{lemma_equdistributuin_with_zeta_differentiable} and \cite[Section 8]{Chambert_Loir_survey}. From the logic viewpoint, equidistribution corresponds to uniqueness of a GVF functional (at least its non-generic part), satisfying some equations, in the case of Theorem~\ref{theorem_E}E of the form $l(\ov{\cD}) = \zeta(\ov{\cD})$. It is an interesting question why the classical equdistribution theorems are formulated with respect to a single divisor only.

We choose to work with arithmetic $\RR$-divisors of $C^0$-type in this paper, however, one could work in a more general context of adelic $\RR$-divisors on varieties over number fields (see \cite{Moriwaki_Adelic_divisors_on_Arithmetic_Varieties}, \cite{F_Ballay_Succesive_minima}). We only work over $\QQ$ as this is enough for existential closedness of $\GVFQ$, essentially by the claim in the proof of Theorem~\ref{theorem_QQ_bar_is_existentially_closed}. Also, from the GVF point of view passing from arithmetic to adelic divisors does not make much difference, since GVF functionals on arithmetic $\RR$-divisors of $C^0$-type extend uniquely to GVF functionals on adelic $\RR$-divisors on the generic fiber of the corresponding arithmetic variety. This follows from \cite[Theorem 4.1.3]{Moriwaki_Adelic_divisors_on_Arithmetic_Varieties}.

\subsection{The technique used to prove Theorem~\ref{theorem_A}A}\label{subsection_the_technique_Theorem_A}

In this subsection we describe the proof technique used to prove Theorem~\ref{theorem_A}A. It is an arithmetic generalisation of the argument for existential closedness of $\GVFk$ from \cite{GVF2} for a fixed base field $\smallk$. Assume that $\ov{\smallk} = \smallk$. For a projective variety $X$ over $\smallk$, let $N^1(X)$ be the N\'eron–Severi group of $X$ tensored with $\RR$. The crucial geometric ingredient in \cite{GVF2} is the following theorem.
\begin{theorem*}~\cite[Theorem 10.9, Corollary 10.11]{GVF2}
    Let $X$ be a projective smooth variety with a dominant morphism to the projective line $\pi:X \to \PP^1$ over $\smallk$. Assume that $l:N^1(X) \to \RR$ is a linear functional having the following properties:
    \begin{itemize}
        \item $l$ is non-negative on classes of effective divisors;
        \item for $D = \pi^* p$, where $p \in \PP^1$ is closed, we have $l(D) = \deg_{\PP^1}(p) = 1$ (by algebraic closedness).
    \end{itemize}
    Fix $\varepsilon > 0$ and let $D_1, \dots, D_n$ be a basis of $N^1(X)$. Then there exists an irreducible curve $C \subset X$ such that for all $i=1, \dots, n$ we have
    \[ \Big| \frac{\deg_{C}(D_i)}{\deg(C/\PP^1)} - l(D_i) \Big| < \varepsilon. \]
\end{theorem*}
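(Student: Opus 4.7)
The plan is to dualize the hypothesis on divisors via the Boucksom--Demailly--Paun--Peternell (BDPP) theorem, obtaining a class in the closed movable cone of curves, and then realize that class by an honest irreducible curve through a Bertini argument. Since $l$ is linear on $\numericaldivisors(X)$ and non-negative on $\Eff^1(X)$, by continuity it is non-negative on the pseudo-effective cone $\effdivcone(X)$. BDPP asserts that, under the intersection pairing $\numericaldivisors(X) \times \numericalcurves(X) \to \RR$, the dual of $\effdivcone(X)$ is the closed movable cone of curves $\ov{\movablecone}(X)$. Hence there is a class $\alpha \in \ov{\movablecone}(X)$ with $l(D) = D \cdot \alpha$ for all $D \in \numericaldivisors(X)$, and the normalization $l(\pi^*p) = 1$ becomes $\alpha \cdot \pi^*p = 1$.

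Next, I would approximate $\alpha$ by a rational strongly movable class. Recall that $\ov{\movablecone}(X)$ equals the closure of the cone spanned by classes of the form $\mu_*(A_1 \cdots A_{d-1})$, where $d = \dim X$, $\mu: X' \to X$ is a birational morphism from a smooth projective variety, and $A_1, \dots, A_{d-1}$ are very ample divisors on $X'$. For any $\delta > 0$ one can find such $\mu$, such $A_i$, and a positive integer $N$ so that $\alpha$ and $\tfrac{1}{N}\mu_*(A_1 \cdots A_{d-1})$ agree against each $D_i$ to within $\delta$; in particular, $\tfrac{1}{N}\mu_*(A_1 \cdots A_{d-1}) \cdot \pi^*p$ is close to $1$.

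By Bertini, applied on $X'$ over the algebraically closed field $\smallk$, a generic intersection $C' := H_1 \cap \cdots \cap H_{d-1}$ with $H_i \in |A_i|$ is a smooth irreducible curve on $X'$ of class $A_1 \cdots A_{d-1}$. Since $\alpha \cdot \pi^*p = 1 > 0$, the approximating class $\tfrac{1}{N}\mu_*[C']$ also pairs positively with $\pi^*p$, so $\mu$ does not contract $C'$; then $C := \mu(C') \subset X$ is an irreducible curve with $\mu_*[C'] = \deg(\mu|_{C'}) \cdot [C]$. By the projection formula, the ratios
\[ \frac{[C] \cdot D_i}{[C] \cdot \pi^*p} = \frac{\mu_*[C'] \cdot D_i}{\mu_*[C'] \cdot \pi^*p} \]
approximate $\alpha \cdot D_i / (\alpha \cdot \pi^*p) = l(D_i)$ to within $\varepsilon$, provided $\delta$ is chosen small enough in terms of $\varepsilon$ and the basis $D_1, \dots, D_n$.

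The principal obstacle is the first step, namely invoking BDPP together with the density of strongly movable classes (complete intersections of very ample divisors, possibly after a birational modification) in the closed movable cone. The remaining ingredients---generic smoothness and irreducibility of complete intersections, and the projection formula---are essentially standard once the dual class $\alpha$ has been identified. In the arithmetic analogue pursued in the rest of the paper, the analogous duality will have to be replaced by differentiability of the arithmetic volume function, and Bertini by an arithmetic Bertini type theorem.
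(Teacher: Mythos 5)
Your proof is correct in outline but takes a genuinely different route from the paper's sketch of the \cite{GVF2} argument in Subsection~\ref{subsection_the_technique_Theorem_A}. You invoke BDPP duality (the dual of $\ov{\Eff}^1(X)$ under the intersection pairing is the closed cone of movable curves) to represent $l$ by a class $\alpha \in N_1^+(X)$, then approximate $\alpha$ by a single class $\tfrac{1}{N}\mu_*(A_1\cdots A_{d-1})$ and realize it by Bertini. The paper does not invoke BDPP as a black box: it first perturbs $l$ to be strictly positive on $\Eff^1(X)\setminus\{0\}$, maximizes $\vol^{1/(d+1)}/l$ over the unit sphere of $N^1(X)$, and computes the derivative at the maximizer $B$ (using differentiability of $\vol$ on the big cone) to obtain $\langle B^d\rangle/\vol(B) = l/l(B)$; Fujita approximation and Bertini then produce the curve. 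The two routes are closely related --- the optimization argument essentially reproves the part of BDPP that you cite --- but the paper's version is self-contained modulo $\vol$-differentiability and Fujita approximation, and, as you correctly anticipate, is the one that arithmetizes: there is no arithmetic BDPP duality available, but $\avol$ is differentiable and has a Fujita approximation. Two points to tighten in your writeup: (i) cone duality only places $\alpha$ in the \emph{closure of the cone generated by} strongly movable classes, whereas you approximate it by a \emph{single} such class; this needs the sharper BDPP statement that interior points of $N_1^+(X)$ are positive intersection products $\langle L^{d-1}\rangle$ of big $L$, each of which Fujita-approximates by a single $\mu_*(A^{d-1})$ (the paper's maximizer $B$ is precisely such an $L$, which is how its argument sidesteps this); and (ii) the statement is over an arbitrary algebraically closed field $\smallk$, so you must cite a version of BDPP valid there and not only over $\CC$ --- on this point the $\vol$-differentiability route, available in all characteristics via Okounkov bodies, is the safer one.
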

Note that in this case, if $\eta$ is the generic point of $\PP^1$, then $C$ is determined by its generic point $\eta' \in X_{\eta}$ and $\frac{\deg_{C}(D_i)}{\deg(C/\PP^1)}$ can be thought of as $h_{D_i}(\eta')$.

To outline the proof of this theorem from \cite{GVF2}, we recall the terminology concerning divisors and curves on a projective smooth variety $X$ over $\smallk$. For details, see \cite{Lazarsfeld_Positivity}. Assume that $X$ is of dimension $d+1$.
\begin{itemize}
    \item By $\Z_1(X), \Z_{d}(X) = \Z^1(X)$ we denote the cycles of dimension $1$ and $d$ respectively, in $X$. This means that these are free abelian groups generated by integral subvarieties of the corresponding dimension.
    \item There is a bilinear intersection pairing $\cdot:\Z_1(X) \times \Z^1(X) \to \ZZ$. It induces the numerical equivalences $\equiv$ on $\Z_1(X), \Z^1(X)$ where
    \[ D \equiv D' \iff (\textnormal{for all }C \in \Z_1(X))(C \cdot D = C \cdot D'), \]
    \[ C \equiv C' \iff (\textnormal{for all }D \in \Z^1(X))(C \cdot D = C' \cdot D). \]
    We use the following notation for the quotients tensored with reals: $N^1(X) := (\Z^1(X)/\equiv) \otimes \RR, N_1(X) := (\Z_1(X)/\equiv) \otimes \RR$. Note that $N_1(X), N^1(X)$ are finite dimensional, and the induced pairing $\cdot:N_1(X) \times N^1(X) \to \RR$ is perfect of signature $(1, -1, -1, \dots, -1)$, by the Hodge index theorem.
    \item Let $\Eff_1(X) \subset N_1(X), \Eff^1(X) \subset N^1(X)$ be the closed convex cones generated by effective curves and effective divisors in $N_1(X)$ and $N^1(X)$ respectively. Denote by $N_1^+(X), N_+^1(X)$ the dual cones. More precisely
    \[ N_1^+(X) = \{ C : (\textnormal{for all }D \in \Eff^1(X))(C \cdot D \geq 0) \}, \]
    \[ N_+^1(X) = \{ D : (\textnormal{for all }C \in \Eff_1(X))(C \cdot D \geq 0) \}. \]
    We call elements of $N_1^+(X)$ movable curves classes, and elements of $N_+^1(X)$ numerically effective (nef) divisors.
    \item Any $D \in \Z^1(X)$ is a Cartier divisor, so it induces a line bundle $\cO(D)$ on $X$. Its volume is the quantity
    \[ \vol(D) := \limsup_{m \to \infty} \frac{\dim_k H^0(X, \cO(mD))}{m^{d+1}/(d+1)!}. \]
    It extends to a continuous function $\vol:N^1(X) \to \RR$. The big cone is the set
     \[ \Bigcone(X) := \{ D : \vol(D) > 0 \}. \]
    It can be equivalently described as the (euclidean) interior of $\Eff^1(X)$. On the other hand, the interior of $N_+^1(X)$ is the convex cone generated by classes of ample divisors $D$, i.e., such that the linear system $|mD|$ induces an embedding into a projective space, for $m$ big enough.
    \item The volume function $\vol:N^1(X) \to \RR$ is differentiable on the big cone and the derivative at $D \in \Bigcone(X)$ is given by $d+1$ times the positive intersection product $\langle D^{d} \rangle:N^1(X) \to \RR$. The positive intersection product can be seen as an element $\langle D^{d} \rangle \in N_1^+(X)$. Moreover, $\langle D^{d} \rangle$ can be approximated by pushforwards of $d$'th self intersections of ample $D'$ on blowups $f:X' \to X$ with $D' \leq f^*D$. If $D$ itself is an ample Cartier divisor, then $\langle D^{d} \rangle$ can be given by the $d$'th self intersection of $D$. In this case one can use the Theorem of Bertini to get an irreducible curve. For the details of this point see \cite{Boucksom_DIFFERENTIABILITY_OF_VOLUMES}, \cite[Section 10]{GVF2}.
\end{itemize}

\begin{proof}[Sketch of the proof of the Theorem]
    We divide the argument from \cite{GVF2} into the following steps.
    \begin{enumerate}
        \item Replace $l$ by an approximation that is strictly positive on $\Eff^1(X) \setminus \{0\}$.
        \item Consider the function
        \[ \gamma := \frac{\vol^{1/(d+1)}}{l} : N^1(X) \setminus \{0\} \to \RR. \]
        By the first point it is continuous, and for $t>0$ we have $\gamma(D) = \gamma(tD)$. The restriction of $\gamma$ to the unit sphere of $N^1(X)$ determines the whole function, and by compactness of the sphere it achieves maximum. If at $B$ the maximum is achieved, then by calculating the derivative of (the logarithm of) $\gamma$ at $B$ we get
        \[ \frac{\langle B^{d} \rangle}{\vol(B)} = \frac{l}{l(B)}. \]
        \item Pick a blowup $f:X' \to X$ and an ample $B'$ on $X'$ with $B' \leq f^*B$ such that the pushforward of the $d$'th self intersection of $B'$ approximates $\langle B^{d} \rangle$ and $\vol(B')$ approximates $\vol(B)$. For simplicity assume that $X'=X, B'=B$. Let $C$ be a $d$'th self intersection of $B$, which is irreducible by the Bertini theorem.
        \item We get the equation
        \[ \frac{C}{\vol(B)} = \frac{l}{l(B)}. \]
        By applying this equation on $\pi^*p$, where $p$ is some closed point of $\PP^1$, we get
        \[ \frac{\deg(C/\PP^1)}{\vol(B)} = \frac{\deg_C(\pi^*p)}{\vol(B)} = \frac{l(\pi^*p)}{l(B)} = \frac{1}{l(B)}, \]
        where the first equality follows from the projection formula for $\pi$. Thus
        \[ \deg(C/\PP^1) = \frac{\vol(B)}{l(B)}. \]
        By putting everything together we get
        \[ \frac{C}{\deg(C/\PP^1)}=l, \]
        Note that in the first step we replaced $l$ by an approximation, so at the end we get the conclusion if the approximation was good enough.
    \end{enumerate}
\end{proof}

The proof of Theorem~\ref{theorem_A}A uses the same steps, but we need to work with arithmetic $\RR$-divisors of $C^0$-type, instead of Cartier divisors. Fortunately the necessary arithmetic variants of the geometric theorems used in the above sketch are available. However, in the arithmetic context a few additional steps are necessary. More precisely, in the arithmetic analogue of step (3) from the above sketch, a lower bound on the (geometric) volume of the generic part of the divisor is needed. This bound is obtained by using arithmetic Fujita approximation \cite{Chen2010_Fujita_approximation}, \cite{yuan_2009_Fujita_approximation} together with Zhang's inequality \cite{Zhang_thesis_inequality}, see Lemma~\ref{lemma_calculating_derivative_of_volume_uniformely_for_a_few_divisors} and Proposition~\ref{proposition_formula_for_derivative_of_avol_ample_in_blowup}.  The other crucial ingredients are differentiability of the arithmetic volume \cite{Chen_differentiability_of_arithmetic_volume} (see also \cite{Ikoma_Concavity_of_arithmetic_volume}) and arithmetic Bertini theorems \cite{Charles2021ArithmeticAA} and \cite{arithmetic_Bertini_Robert_Wilms}. Note that \cite{Charles2021ArithmeticAA} was motivated by a question of Hrushovski at the SAGA in Orsay, regarding arithmetic type Bertini theorems.

\subsection{Connections with other results}

Existential closedness of $\GVFQ$ and $\GVFk$ sets a goal of axiomatizing existencially closed models of the theory $\GVF$. Ben Yaacov has published research notes \cite{Ben_Yaacov_Vandermonde}, \cite{Ben_Yaacov_Estimates_on_volumes} related to this aim. The model companion is conjectured to exists, and it is possible that it is tame in some model theoretic sense. For example, by \cite[Theorem 14.2]{GVF3}, GVF structures are quantifier-free stable (i.e., quantifier-free formulas don't have the order property in the continuous logic sense).

On the other hand, a more geometrical approach to globally valued fields was developed by Chen and Moriwaki in the series of papers about \textit{adelic curves} \cite{Adelic_curves_1}, \cite{Adelic_curves_2}, \cite{Adelic_curves_3}, \cite{Adelic_curves_4}. Adelic curves are not exactly the same as globally valued fields, but one can translate from one world to another. The authors in \cite{Adelic_curves_2} introduced Arakelov intersection theory over an arbitrary countable adelic curve. Differentiability of the volume function in the context of adelic curves may lead to generalisations of the main result of this paper to a bigger class of GVF structures. The results in \cite{Wenbin_Luo_relative_Siu}, \cite{A_Sedillot_diff_of_relative_volume} are very promising in this direction.

\subsection{Acknowledges}

The author wishes to thank his DPhil supervisor Ehud Hrushovski for numerous discussions and many ideas. This project owes a huge intellectual debt to him. The author also wishes to thank Francesco Gallinaro and Robert Wilms for giving comments on an early draft of this paper, and to François Ballaÿ for providing a reference to \cite{Arithmetic_Demailly_Qu_Yin}. For helpful comments on the introduction, thanks go to Francesco Ballini, Andrés Ibáñez Núñez and Finn Wiersig.

\subsection{Outline}

The article is organized as follows. In Section~\ref{section_2} we introduce all the necessary background on arithmetic $\RR$-divisors of $C^0$-type. In Subsection~\ref{subsection_approx_global_functional} we give the proof of Theorem~\ref{theorem_A}A. Next in Section~\ref{section_3} we introduce lattice divisors and use them to prove Theorem~\ref{theorem_B}B using the translation from Theorem~\ref{theorem_C}C. In Subsection~\ref{subsection_ess_infimum_and_GVF_functionals} we prove Theorem~\ref{theorem_D}D and Theorem~\ref{theorem_E}E.

\section{Intersection theory of arithmetic divisors}\label{section_2}
\begin{notation*}
    We stick with the following conventions.
    \begin{enumerate}
        \item If $f$ is a function on a real vector space, then we write $D_x f(y)$ for the derivative of $f$ at $x$ in direction $y$, so that $D_x f$ is a linear map (if $f$ is Gateaux differentiable at $x$).
        \item If $A$ is an abelian group, and $\KK$ is one of $\ZZ, \QQ, \RR$ we write $A_{\KK}$ for the tensor product $A \otimes_{\ZZ} \KK$. If $A$ is written multiplicatively, then for $f_1, \dots, f_n \in A$ and $\alpha_1, \dots, \alpha_n \in \KK$ we write $f_1^{\alpha_1} \cdots f_n^{\alpha_n}$ for the product $f_1 \otimes \alpha_1 \cdot \ldots \cdot f_n \otimes \alpha_n$.
        \item For a field $\Field$ we call a function $v:\Field^{\times} \to \RR$ a \textit{non-Archimedean valuation} if it is a multiplicative homomorphism and satisfies the ultrametric inequality (with the convention $v(0) = \infty$).
        \item By an \textit{Archimedean valuation} on a field $\Field$ we mean a function $v:\Field^{\times} \to \RR$ of the form $v(-) = -\log|\sigma(-)|$, where $\sigma:\Field \to \CC$ is an embedding of fields. Note that conjugate embeddings give the same Archimedean valuation and every Archimedean valuation comes from a pair of conjugate embeddings (or from a single real embedding). 
        \item By a \textit{valuation} we mean either a non-Archimedean or an Archimedean valuation on a field and we denote by $\Val_{\Field}$ the space of valuations on a field $\Field$ (with the topology coming from the embedding into $\RR^{\Field \setminus \{0\}}$).
        \item If $\cX$ is a scheme over $\Spec(A)$ and we have a map $\Spec(B) \to \Spec(A)$ we write $\cX_B$ or $\cX \otimes B$ for the fiber product. By a proper subscheme of $\cX$ we mean a subscheme that is not equal to $\cX$.
        \item If $p \in \cX$ is a point in a scheme, we write $\kappa(p)$ for the residue field. The function field of $\cX$ is denoted by $\kappa(\cX)$. 
        \item For a scheme $\cX$ we denote by $\Z_d(\cX)$ the group of $d$-dimensional cycles on $\cX$, i.e., the free abelian group generated by dimension $d$ integral subschemes of $\cX$. Moreover, we denote by $\Div(\cX)$ the group of Cartier divisors on $\cX$ and by $\cO_{\cX}(\cD)$ the invertible subsheaf of the sheaf of rational functions on $\cX$ corresponding to $\cD$, as defined in \cite[Chapter II, Section 6]{Hartshorne_AG} or \cite[Definition 2.4.2]{Adelic_curves_1}. Note that then the rational function $1$ defines a canonical rational section of $\cO_{\cX}(\cD)$.
        \item If $\cL$ is a line bundle on a scheme $\cX$ and $s$ is a rational section of $\cL$, then as in \cite[Remark 2.4.4]{Adelic_curves_1}, by $\cdiv(s)$ we denote the Cartier divisor on $\cX$ associated to the invertible subsheaf of the sheaf $\cK$ of invertible rational functions on $\cX$, defined by the image of the embedding $\cL \to \cK$ sending $s$ to $1$. We use the same notation for the cycle associated with this Cartier divisor.
        \item An \textit{arithmetic variety} in this paper is an integral (irreducible and reduced) scheme, finite type and projective over $\Spec(\ZZ)$. It is generically smooth if the base change $\cX_\QQ$ is smooth over $\Spec(\QQ)$. We write $\deg(x) = [\kappa(x):\QQ]$ for a closed point of the generic fiber $x \in \cX_\QQ \subset \cX$.
        \item If $\cX$ is an arithmetic variety and $v$ is a non-Archimedean valuation on $\kappa(\cX)$, we write $\supp(v) \in \cX$ for the image of the maximal ideal of the value ring $\cO_v \subset \kappa(\cX)$ via the unique map $\phi$ making the following diagram commute:
        \[\begin{tikzcd}
        	{\Spec(\kappa(\cX))} & \cX \\
        	{\Spec(\cO_v)} & {\Spec(\ZZ)}
        	\arrow[from=1-2, to=2-2]
        	\arrow[from=1-1, to=2-1]
        	\arrow[from=1-1, to=1-2]
        	\arrow[from=2-1, to=2-2]
        	\arrow["\phi"{description}, dashed, from=2-1, to=1-2].
        \end{tikzcd}\]
        Such map always exists by the valuative criterion of properness.
    \end{enumerate}
\end{notation*}

\subsection{Hermitian line bundles on arithmetic varieties}\label{section_hermitian_line_bundles_on_arithmetic_varieties}
For this subsection let $\cX$ be a generically smooth arithmetic variety. Let $d+1$ be the dimension of $\cX$ and let $X = \cX_\QQ$ be the generic fiber.

\begin{warning}
    In this subsection $\cX$ is \textbf{not} necessarily \textbf{normal}.
\end{warning}

\begin{remark}~\label{definition_C_type_functions_and_various_Cartier_divisors}
       We use the following conventions regarding complex analytifications of arithmetic varieties.
       \begin{enumerate}
            \item By $\cX(\CC)$ we mean the complex analytification of the base change $\cX_{\CC}=\cX \otimes \CC$. It is in a natural bijection with the set of maps $\Spec(\CC) \to \cX$. The complex conjugation induces a map $\Spec(\CC) \to \Spec(\CC)$ and by precomposition it induces \textit{complex conjugation} $F_{\infty}:\cX(\CC) \to \cX(\CC)$.
            \item In the context of the first point, if $x \in X$ is a closed point and $\sigma:\kappa(x) \to \CC$ is an embedding of fields, we denote by $x^{\sigma}$ the point of $\cX(\CC)$ corresponding to the composition
            \[ \Spec(\CC) \to \Spec(\kappa(x)) \to X \to \cX. \]
       \end{enumerate}
\end{remark}
\begin{definition}~\label{definition_hermitian_line_bundle}
    Let $M$ be a complex manifold with a (complex) line bundle $L$. A \textit{hermitian metric} $h$ on $L$ is a collection of hermitian metrics $h_x$ on fibers $L_x$ of $L$ for all $x \in M$. It is \textit{smooth} if for all open $U \subset M$ and all sections $s$ of $L$ over $U$, the function $|s|_h^2 (x) := h_x(s(x),s(x))$ is a smooth function $U \to \RR$. Since the data of a smooth hermitian metric is determined by functions $|\cdot|_h$ (and vice versa), we use the name ``smooth hermitian metric'' also for collections of norms of sections coming from some $h$ as described above. The \textit{Chern form} of $\ov{L} := (L,h)$ is a differential $(1,1)$-form $c_1(\ov{L})$ on $M$ such that if $s$ is an invertible (holomorphic) section of $L$ over $U \subset M$, then $c_1(\ov{L})|_U = \frac{i}{2 \pi} \partial \ov{\partial} \log |s|_h^2$.
\end{definition}
\begin{definition}
    Let $M = \cX(\CC)$ and let $\ov{L} = (L,h)$ be a line bundle on $M$ equipped with a hermitian metric $h$. We say that $h$ is of \textit{real type}, if for all $x \in M$ with $\ov{x} := F_{\infty}(x)$ and for all sections $s, s' \in L_{x}$ we have
    \[ h_x(s, s') = \ov{h_{\ov{x}} (F_{\infty}(s), F_{\infty}(s'))}, \]
    where the action on germs of sections is defined by precomposition with $F_{\infty}$.
\end{definition}
\begin{definition}
    A \textit{hermitian line bundle} $\ov{\cL}=(\cL, |\cdot|_{\ov{\cL}})$ on $\cX$ is a pair consisting of a line bundle $\cL$ on $\cX$ and a smooth hermitian metric of real type $|\cdot|_{\ov{\cL}}$ on the complex analytification of the line bundle $\cL_{\CC}$ on $\cX_\CC$. A section $s \in H^0(\cX, \cL)$ is \textit{(strictly) small}, if $|s|_{\ov{\cL}} \leq 1$ (resp. $|s|_{\ov{\cL}} < 1$) on $\cX(\CC)$. The (finite) set of small sections is denoted by $\aH^0(\cX, \ov{\cL})$. The Chern form of the analytification of $\cL_\CC$ on $\cX_\CC$ is denoted by $c_1(\ov{\cL})$. 
\end{definition}

\begin{theorem}~\cite[Theorem (2.5)]{Chambert_Loir_survey}~\label{theorem_intersection_product_of_hermitian_line_bundles}
    Let $\ov{\cL}_0, \dots, \ov{\cL}_d$ be hermitian line bundles on $\cX$. There exists a unique family of linear maps
    \[ \adeg(\ov{\cL}_0 \cdots \ov{\cL}_{n-1} | -):\Z_n(\cX) \to \RR, \]
    for $n=0, \dots, d+1$ (for $n=0$ we mean a map $\adeg(-):\Z_0(\cX) \to \RR$) satisfying the following properties:
    \begin{enumerate}
        \item  For every integer $n \in \{ 0, \dots, d\}$, every integral closed subscheme $\cZ$ of $\cX$ such that $\dim(\cZ) = n+1$, every integer $m \neq 0$ and every regular meromorphic (i.e., defined on a dense open subset of $\cZ$) section $s$ of $\cL_n^{\otimes m} |_{\cZ}$, one has
        \[ m \ \adeg(\ov{\cL}_0 \cdots \ov{\cL}_{n}|\cZ) \]
        \[ = \adeg(\ov{\cL}_0 \cdots \ov{\cL}_{n-1}|\cdiv(s))  + \int_{\cZ(\CC)} - \log \|s\| \wedge c_1(\ov{\cL}_0) \wedge \ldots \wedge c_1(\ov{\cL}_{n-1}). \]
        \item For a closed point $z$ of $\cX$ viewed as a $0$-dimensional cycle, one has
        \[ \adeg(z) = \log \#\kappa(z). \]
    \end{enumerate}
    Moreover, these maps are multilinear and symmetric in the hermitian line bundles $\ov{\cL}_0, \dots, \ov{\cL}_d$ and only depend on their isomorphism classes as hermitian line bundles.
\end{theorem}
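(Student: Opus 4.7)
The plan is to proceed by induction on the dimension $n$ of cycles, starting from $n=0$ where condition (2) together with $\ZZ$-linear extension determines $\adeg:\Z_0(\cX) \to \RR$ uniquely. For the inductive step, assume the maps $\adeg(\ov{\cM}_0 \cdots \ov{\cM}_{k-1} | -)$ have been constructed for all $k \leq n$ and all tuples of hermitian line bundles. For an integral closed subscheme $\cZ \subset \cX$ with $\dim \cZ = n+1$, the restriction $\cL_n|_{\cZ}$ is a line bundle on an integral scheme, so there is a non-zero regular meromorphic section $s$ of $\cL_n^{\otimes m}|_{\cZ}$ for some $m \neq 0$. One then \emph{defines}
\[
\adeg(\ov{\cL}_0 \cdots \ov{\cL}_n | \cZ) := \tfrac{1}{m} \Bigl( \adeg(\ov{\cL}_0 \cdots \ov{\cL}_{n-1} | \cdiv(s)) + \int_{\cZ(\CC)} -\log \|s\| \cdot c_1(\ov{\cL}_0) \wedge \cdots \wedge c_1(\ov{\cL}_{n-1}) \Bigr),
\]
using the inductive maps on the $n$-dimensional cycle $\cdiv(s)$, and extends $\ZZ$-linearly to $\Z_{n+1}(\cX)$. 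This assignment is forced by condition (1), which yields uniqueness.

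The first nontrivial point is to show that the definition does not depend on the choice of $s$ and $m$. Given another such section $s'$ of $\cL_n^{\otimes m'}|_{\cZ}$, the quotient $f := (s')^{m}/s^{m'}$ is a non-zero rational function on $\cZ$, and by bilinearity the independence reduces to the ``arithmetic Poincar\'e--Lelong'' identity
\[
\adeg(\ov{\cL}_0 \cdots \ov{\cL}_{n-1} | \cdiv(f)) + \int_{\cZ(\CC)} -\log |f| \cdot c_1(\ov{\cL}_0) \wedge \cdots \wedge c_1(\ov{\cL}_{n-1}) = 0.
\]
I would establish this by a secondary induction: picking rational sections $s_i$ of suitable powers of $\cL_i|_{\cZ}$ and peeling them off one at a time using the defining formula, the identity reduces to the case where $\cZ$ is a one-dimensional arithmetic subvariety. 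There it becomes the classical product formula for $f$ on the number field $\kappa(\cZ_{\QQ})$, combined with the current equation $\frac{i}{\pi}\partial \ov{\partial} \log|f| = \delta_{\cdiv(f)_\CC}$ integrated via Stokes on the compact Riemann surface $\cZ(\CC)$ (which is compact thanks to the projectivity of $\cX$).

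Symmetry in the entries $\ov{\cL}_0, \dots, \ov{\cL}_n$ reduces to exchanging two adjacent bundles, say $\ov{\cL}_{n-1}$ and $\ov{\cL}_n$. Choosing sections $s_{n-1}, s_n$ of suitable tensor powers whose divisors meet properly on $\cZ$, one applies the defining formula twice in the two possible orderings and compares: the finite contributions agree by the classical commutativity of intersection multiplicities of Cartier divisors on a Noetherian integral scheme, while the archimedean contributions are swapped by a Bott--Chern/Stokes argument on $\cZ(\CC)$ applied to the smooth form $-\log\|s_{n-1}\| \cdot c_1(\ov{\cL}_n) + \log\|s_n\| \cdot c_1(\ov{\cL}_{n-1})$.

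\textbf{Main obstacle.} The hardest step is the interleaving of well-definedness and symmetry: both compare a priori different expressions and reduce to the Poincar\'e--Lelong/Stokes identity together with a careful treatment of improper intersections on the finite side. The latter is addressed either by a Chow-type moving lemma or by invoking Serre's Tor-formula; in both cases compactness of $\cZ(\CC)$, coming from the projectivity of $\cX$ over $\Spec(\ZZ)$, is what guarantees that no boundary contributions appear when applying Stokes. The combinatorial bookkeeping of signs and tensor powers $m$ throughout the two nested inductions is routine but tedious, and multilinearity follows immediately from the formula once uniqueness is in hand.
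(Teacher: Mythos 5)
The paper cites this result (Chambert-Loir's survey, Theorem 2.5) without proof, so there is no internal argument to compare against; the right benchmark is the literature (Bost--Gillet--Soulé, Zhang's thesis, and Chambert-Loir's exposition). Measured against that, your outline is the standard inductive construction and is correct in its main structure: define recursively via condition (1), check independence of the choice of section via the arithmetic Poincaré--Lelong identity, and establish symmetry via a Stokes/Bott--Chern computation after choosing sections in general position. The interleaving of well-definedness and symmetry and the eventual reduction to the product formula on one-dimensional subschemes are exactly the right key points.

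Two things deserve more care than you give them. First, the form $-\log\|s_{n-1}\|\,c_1(\ov{\cL}_n) + \log\|s_n\|\,c_1(\ov{\cL}_{n-1})$ is \emph{not} smooth --- $\log\|s_i\|$ has logarithmic singularities along $\cdiv(s_i)(\CC)$ --- so the Stokes argument is not a literal application but a computation with currents (or with smooth forms on the complement of the singular locus, checking that no boundary terms survive the limiting process); this is precisely where the Poincaré--Lelong formula $\tfrac{i}{2\pi}\partial\ov\partial\log\|s\|^2 = c_1(\ov{\cL}) - \delta_{\cdiv(s)}$ enters, and glossing over the regularisation is the main technical gap in your sketch. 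Second, the integral $\int_{\cZ(\CC)}\dots$ is over a possibly singular analytic space, since an integral closed subscheme $\cZ$ of $\cX$ need not be generically smooth; one has to interpret this either via the integration current of $\cZ(\CC)$ or by pulling back to a resolution of singularities, and you should flag this. Neither issue invalidates the approach --- both are standard and handled in the references --- but a complete write-up would have to address them rather than deferring to ``Chow moving lemma or Tor-formula,'' which concerns only the non-archimedean side of the improper-intersection problem.
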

\begin{notation}~\label{notation_arithmetic_degree_w_r_t_hermitian_line_bundle}
    If $n=d+1$ in the context of the above theorem, we skip $\cX$ in the notation and write
    \[ \adeg(\ov{\cL}_0 \cdots \ov{\cL}_{d}) \]
    or just $\ov{\cL}_0 \cdot \ldots \cdot \ov{\cL}_{d}$.
\end{notation}
\begin{remark}
    For a morphism of generically smooth arithmetic varieties $f:\cY \to \cX$, if $\ov{\cL} = (\cL, h)$ is a hermitian line bundle on $\cX$, then one can pullback it to $\cY$ to get a hermitian line bundle $f^* \ov{\cL} := (f^* \cL, f^* h)$ on $\cY$. 
\end{remark}
\begin{remark}
    Let $\ov{\cL}_0, \dots, \ov{\cL}_{l-1}$ be hermitian line bundles on $\cX$. If $i:\cZ \hookrightarrow \cX$ is an $l$-dimensional irreducible reduced subscheme which is generically smooth, then the following intersection numbers are the same
    \[ \adeg(\ov{\cL}_0 \cdots \ov{\cL}_{l-1}|\cZ) = \adeg(i^*\ov{\cL}_0 \cdots i^*\ov{\cL}_{l-1}). \]
    Here on the right hand side we treat $\cZ$ as a generically smooth arithmetic variety, and $i^*\ov{\cL}_0, \dots, i^*\ov{\cL}_{l-1}$ are hermitian line bundles on $\cZ$. In this case we also write $\ov{\cL}_0|_{\cZ} \cdot \ldots \cdot \ov{\cL}_{l-1}|_{\cZ}$ for this intersection number.
\end{remark}
\begin{theorem}~\cite[Proposition (2.9)]{Chambert_Loir_survey}~\label{theorem_arithmetic_projection_formula}
    Let $f: \cX' \to \cX$ be a generically finite morphism of arithmetic varieties, let $\cZ$ be an integral closed subscheme of $\cX'$ and let $l = \dim(\cZ)$. Assume that $\ov{\cL}_0, \dots, \ov{\cL}_{l-1}$ are hermitian line bundles on $\cX$.
    \begin{enumerate}
        \item If $\dim(f(\cZ)) < l$, then $\adeg(f^* \ov{\cL}_0 \cdots f^* \ov{\cL}_{l-1}|\cZ) = 0$.
        \item Otherwise, $\dim(f(\cZ)) = l$ and
        \[ \adeg(f^* \ov{\cL}_0 \cdots f^* \ov{\cL}_{l-1}|\cZ) = \adeg(\ov{\cL}_0 \cdots \ov{\cL}_{l-1}|f_*(\cZ)). \]
    \end{enumerate}
    where $f_*(\cZ) = [\kappa(\cZ) : \kappa(f(\cZ))]f(\cZ)$ is an $l$-cycle on $\cX$.
\end{theorem}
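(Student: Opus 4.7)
The plan is to proceed by induction on $l = \dim \cZ$, combining the recursive characterization of arithmetic intersection from Theorem~\ref{theorem_intersection_product_of_hermitian_line_bundles}(1) with the classical algebraic projection formula and change of variables for integration of forms under generically finite morphisms. The base case $l = 0$ is immediate: case (1) is vacuous, while case (2) reduces to the tower law $\log \#\kappa(z) = [\kappa(z):\kappa(f(z))] \cdot \log \#\kappa(f(z))$ at a closed point $z$.

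For the inductive step, pick a nonzero rational section $s'$ of $\cL_{l-1}^{\otimes m}|_{f(\cZ)}$ for some $m \geq 1$ (which exists since $f(\cZ)$ is integral) and let $s = f^*s'$ be the pulled-back rational section of $f^*\cL_{l-1}^{\otimes m}|_{\cZ}$. Apply Theorem~\ref{theorem_intersection_product_of_hermitian_line_bundles}(1) to the LHS to decompose it into a divisorial term $\adeg(f^*\ov{\cL}_0 \cdots f^*\ov{\cL}_{l-2}|\cdiv(s))$ and an analytic integral over $\cZ(\CC)$. For the divisorial term, $\cdiv(s) = f^*\cdiv(s')$ is a cycle of dimension $l-1$ on $\cZ$, and the inductive hypothesis applied componentwise rewrites its arithmetic intersection as $\adeg(\ov{\cL}_0 \cdots \ov{\cL}_{l-2}|g_*\cdiv(s))$, where $g := f|_\cZ : \cZ \to f(\cZ)$. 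In case (2), the classical algebraic projection formula yields $g_* f^* \cdiv(s') = [\kappa(\cZ):\kappa(f(\cZ))]\,\cdiv(s')$, matching the right-hand side after applying the same recursion on $f(\cZ)$. In case (1), $g$ has positive-dimensional generic fibers, so $g_* f^* \cdiv(s') = 0$ and the divisorial contribution vanishes.

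For the analytic piece, rewrite the integrand using $f^* c_1(\ov{\cL}_i) = c_1(f^*\ov{\cL}_i)$, so that the integral equals $\int_{\cZ(\CC)} f_\CC^* \bigl( -\log\|s'\| \wedge c_1(\ov{\cL}_0) \wedge \dots \wedge c_1(\ov{\cL}_{l-2}) \bigr)$. In case (2), the holomorphic map $f_\CC : \cZ(\CC) \to f(\cZ)(\CC)$ is proper and generically a covering of degree $[\kappa(\cZ):\kappa(f(\cZ))]$ (separability is automatic in characteristic zero), so integration along fibers supplies precisely the factor $[\kappa(\cZ):\kappa(f(\cZ))]$ needed to match the right-hand side. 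In case (1), the form $c_1(\ov{\cL}_0) \wedge \dots \wedge c_1(\ov{\cL}_{l-2})$ has real degree $2l-2$, whereas $f(\cZ)(\CC)$ has real dimension at most $2l-4$, so its restriction to $f(\cZ)(\CC)$ vanishes for pure dimension reasons, and the pullback to $\cZ(\CC)$ is identically zero, killing the analytic integral.

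The main obstacle is justifying the integration-by-fibers step at the singular locus of $g$ and at the support of $\cdiv(s')$: the integrand is only $L^1_{\mathrm{loc}}$ with logarithmic singularities, and one must verify that the change of variables formula survives by passing to a Zariski-dense open locus where everything is smooth, controlling the logarithmic contributions via Lelong-type estimates, and invoking Sard's theorem to discard the exceptional locus of measure zero. A secondary technical point is that one must check the rational section $s'$ can be chosen so that $\cdiv(s')|_{f(\cZ)}$ is a well-defined divisor (i.e., $s'$ does not vanish identically on $f(\cZ)$), which is arranged by replacing $\ov{\cL}_{l-1}$ with a difference of hermitian line bundles admitting enough global sections and using multilinearity of $\adeg$ in its arguments.
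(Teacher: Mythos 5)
The paper states this as Proposition~(2.9) of the Chambert-Loir survey and gives no proof, so there is nothing in the paper itself to compare against; your inductive reconstruction matches the standard argument in the cited source (recursion of Theorem~\ref{theorem_intersection_product_of_hermitian_line_bundles}, classical projection formula for cycles under proper generically finite morphisms, change of variables for the Archimedean integral), and its structure is correct.

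A few points to tighten. Your ``secondary technical point'' is unnecessary: $f(\cZ)$ is integral, so every nonzero rational section of $\cL_{l-1}^{\otimes m}|_{f(\cZ)}$ has a well-defined divisor, and such a section always exists, so no multilinearity trick is needed. In case~(1), the cleanest route for the divisorial term is to observe that every component $V$ of $\cdiv(f^*s') = g^*\cdiv(s')$ maps into $\supp\cdiv(s')$, which has dimension $\leq \dim f(\cZ) - 1 \leq l-2$, so $\dim f(V) < l-1$ and case~(1) of the inductive hypothesis kills each $\adeg(f^*\ov{\cL}_0 \cdots f^*\ov{\cL}_{l-2}|V)$ directly; it is convenient to state the inductive hypothesis uniformly as $\adeg(f^*\cdots|\cW) = \adeg(\cdots|f_*\cW)$ with $f_*\cW := 0$ when the dimension drops, which turns your $g_*=f_*=0$ remark into a corollary rather than a separate appeal. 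Your dimension count for the vanishing of the Archimedean integral in case~(1) is correct---an $(l-1,l-1)$-form pulls back to zero along a map whose image has complex dimension $\leq l-2$---but one should note that this uses the horizontal case; in the vertical case $\cZ(\CC) = \emptyset$ and the integral is vacuously zero. Finally, you correctly identify the genuine technical burden: the fiber-integration step must be justified against the $L^1_{\mathrm{loc}}$ integrand with logarithmic singularities and singular fibers, and that (plus the cycle-level, not merely cycle-class-level, form of the projection formula $g_*g^*\cdiv(s') = \deg(g)\cdiv(s')$) is where the substance of the argument in Chambert-Loir's survey lives; flagging it rather than glossing over it is the right call.
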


\begin{definition}
    A hermitian line bundle $\ov{\cL}$ on $\cX$ is $\textit{ample}$ if the following three conditions are satisfied:
    \begin{itemize}
        \item $\cL$ is relatively ample with respect to $\cX \to \Spec(\ZZ)$,
        \item $c_1(\ov{\cL})$ is positive,
        \item $H^0(\cX, n \cL)$ is generated (as a $\ZZ$-module) by strictly small sections of $n\ov{\cL}$, for $n$ big enough.
    \end{itemize}
\end{definition}

\begin{definition}~\label{definition_irreducibility_defined_by_Wilms}~\cite[Definition 1.1]{arithmetic_Bertini_Robert_Wilms}
    Let $\ov{\cM}, \ov{\cL}$ be ample hermitian line bundles on $\cX$. Denote by $\|\cdot\|$ the hermitian norm of $\ov{\cL}$. A section $s \in \aH^0(\cX, \ov{\cL})$ is \textit{$(\varepsilon, \ov{\cM})$-irreducible} if the following conditions are satisfied.
    \begin{enumerate}
        \item The $d$-cycle $\cZ := \cdiv(s)$ is irreducible (hence an arithmetic variety).
        \item The following inequality holds
        \[ |\adeg(\ov{\cM}^d \cdot \ov{\cL}) - \adeg(\ov{\cM}^d|\cZ)| < \varepsilon. \]
    \end{enumerate}
    By the inductive definition of the arithmetic intersection number (see Theorem~\ref{theorem_intersection_product_of_hermitian_line_bundles}), we can present the second condition in the following equivalent forms:
    \[ \int_{\cX(\CC)} -\log\|s\|^2 \wedge c_1(\ov{\cM})^d < \varepsilon \cdot \adeg(\ov{\cM}^d|\cdiv(s)) \]
    or
    \[ \int_{\cX(\CC)} -\log\|s\|^2 \wedge c_1(\ov{\cM})^d < \frac{\varepsilon}{1+\varepsilon} \cdot \adeg(\ov{\cM}^d \cdot \ov{\cL}). \]
\end{definition}

\begin{remark}
    The definition from \cite[Definition 1.1]{arithmetic_Bertini_Robert_Wilms} is more general and has an additional assumption that the current $\frac{i}{2 \pi} \partial \ov{\partial} (-\log \|s\|^2) + \delta_{\cZ(\CC)}$ is represented by a semi-positive form. However, in our case, by the Poincar\'e–Lelong formula we get
    \[ \frac{i}{2 \pi} \partial \ov{\partial} (-\log \|s\|^2) + \delta_{\cZ(\CC)} = [c_1(\ov{\cL})] \]
    as currents. Since $\ov{\cL}$ is an ample hermitian line bundle, the form $c_1(\ov{\cL})$ is positive, so the additional assumption is automatically satisfied in this case.
\end{remark}

\begin{theorem}~\label{theorem_arithmetic_Bertini_by_Wilms}~\cite[Remark 6.7.(ii)]{arithmetic_Bertini_Robert_Wilms}
    Assume that $\dim(\cX) \geq 2$. Let $\ov{\cL}, \ov{\cM}$ be ample hermitian line bundles on $\cX$. Fix $\varepsilon > 0$. Then
    \[ \lim_{n \to \infty} \frac{ \# \{ s \in \aH^0(\cX, \ov{\cL}^{\otimes n}) : s \textnormal{ is } (\varepsilon, \ov{\cM}) \textnormal{-irreducible}, \cdiv(s)_{\QQ} \textnormal{ is smooth} \} }{ \# \aH^0(\cX, \ov{\cL}^{\otimes n}) } = 1. \]
\end{theorem}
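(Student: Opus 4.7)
The plan is to follow the structure that Charles and then Wilms developed: for the total number of small sections, which by arithmetic Hilbert--Samuel grows as $\exp\bigl(\avol(\ov{\cL})\, n^{d+1}/(d+1)! + o(n^{d+1})\bigr)$, show that each of the three failure events occurs in a fraction whose growth rate is strictly slower (ideally $\exp(o(n^{d+1}))$ relative to the main term), and conclude by a union bound. The three events are: (i) $\cdiv(s)_{\QQ}$ is not smooth; (ii) $\cdiv(s)$ is reducible as a cycle on $\cX$; (iii) the archimedean inequality
\[ \int_{\cX(\CC)} -\log\|s\|^2 \wedge c_1(\ov{\cM})^d < \varepsilon \cdot \adeg(\ov{\cM}^d|\cdiv(s)) \]
fails.

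First I would dispose of (i) using classical Bertini on $X = \cX_{\QQ}$: the non-smooth locus in the linear system is a proper Zariski-closed subset, so the corresponding small sections lie in a sublattice of strictly positive codimension in $H^0(\cX,\cL^{\otimes n})$. Counting lattice points in the supremum-norm ball of such a sublattice via geometry of numbers yields a fraction that decays at least like $\exp(-c\, n^{d})$, which is negligible against the leading exponential. For (iii), I would use the averaging identity
\[ \frac{1}{\#\aH^0(\cX,\ov{\cL}^{\otimes n})}\sum_{s}\int_{\cX(\CC)} -\log\|s\|^2 \wedge c_1(\ov{\cM})^d = O(n), \]
obtained by an $L^2$/Gromov type estimate combined with Minkowski-style lattice-point counts, while the right-hand side $\varepsilon \cdot \adeg(\ov{\cM}^d|\cdiv(s))$ is essentially $\varepsilon n \cdot \adeg(\ov{\cM}^d \cdot \ov{\cL}) + o(n)$, growing linearly in $n$ with a strictly larger coefficient once $\varepsilon$ is bounded away from $0$. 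A Markov-type concentration bound then shows that only an exponentially small fraction of sections violates the inequality.

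For (ii), the irreducibility in the arithmetic sense, the idea is to separate two kinds of reducibility. If $\cdiv(s)$ is reducible with both components horizontal over $\Spec(\ZZ)$, then $s$ factors as $s_1 s_2$ with $s_i \in H^0(\cX, \cL^{\otimes n_i})$, $n_1 + n_2 = n$, $n_i \geq 1$; the count of such products is bounded by $\sum_{n_1+n_2=n}\#\aH^0(\cX,\ov{\cL}^{\otimes n_1}) \cdot \#\aH^0(\cX,\ov{\cL}^{\otimes n_2})$, and strict concavity of $t \mapsto t^{d+1}$ gives an exponential loss of $\exp(-c\, n^{d+1})$. If $\cdiv(s)$ contains a vertical component (i.e.\ a fiber $\cX_p$ for some prime $p$), then $s$ is divisible by $p$ in $H^0(\cX, \cL^{\otimes n})$; the sublattice $p \cdot H^0(\cX,\cL^{\otimes n})$ has index $p^{r_n}$ with $r_n \sim \vol(\cL_{\QQ})\, n^{d}/d!$, which again yields an exponentially small fraction of small sections (after balancing with the $L^\infty$-norm constraints in a careful way).

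The main obstacle, as always in arithmetic Bertini, is step (ii): ensuring that the exponentially small bounds for each \emph{shape} of reducibility can be summed without losing the exponential gain, and in particular handling sections with mixed vertical/horizontal factorizations. This is where one really uses $\dim(\cX) \geq 2$ and Wilms's precise asymptotics for the distribution of zeros of random small sections. Once those estimates are in place, combining (i), (ii), (iii) by a union bound and dividing by $\#\aH^0(\cX, \ov{\cL}^{\otimes n})$ gives the stated limit.
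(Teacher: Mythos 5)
The paper does not prove this theorem: it is imported as a black box from Wilms, \cite[Remark 6.7.(ii)]{arithmetic_Bertini_Robert_Wilms} (building on Charles, \cite{Charles2021ArithmeticAA}), so there is no in-paper argument to compare against.

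Assessed on its own terms, your decomposition into three failure events and a union bound has the right shape, and (i) and (iii) are plausible in outline (though in (iii) you need the average of $\int -\log\|s\|^2 \wedge c_1(\ov{\cM})^d$ over small sections to be $o(n)$, not merely $O(n)$, since the threshold $\tfrac{\varepsilon}{1+\varepsilon}\, n\, \adeg(\ov{\cM}^d \cdot \ov{\cL})$ also grows linearly in $n$ and the Markov bound only closes if the average sits strictly below it). However, step (ii) contains a genuine gap. You claim that if $\cdiv(s)$ has two horizontal components then $s$ factors as $s_1 s_2$ with $s_i \in H^0(\cX, \cL^{\otimes n_i})$, $n_1 + n_2 = n$. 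This is false in general: an irreducible component $Z$ of $\cdiv(s)$ is only a prime Weil divisor — it need not be Cartier, and even when it is, $\cO_\cX(Z)$ need not lie in the subgroup of $\Pic(\cX)$ generated by $\cL$. Already for $\cX$ an arithmetic surface whose generic fiber has Picard rank greater than one (for example a projective model of an elliptic curve with $\cL_\QQ$ a multiple of the origin), the reducible members of $|\cL_\QQ^{\otimes n}|$ are generically \emph{not} products of sections of lower powers of $\cL$, so the convolution estimate $\sum_{n_1+n_2=n}\#\aH^0(\ov{\cL}^{\otimes n_1})\cdot\#\aH^0(\ov{\cL}^{\otimes n_2})$ misses almost all of them. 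The Charles--Wilms argument is instead a sieve in the spirit of Poonen's Bertini over finite fields: one fixes a potential component $Z$ (or a prime $p$ for the vertical case), bounds the density of small sections vanishing on $Z$ by an effective arithmetic Hilbert--Samuel estimate for the sublattice $\aH^0(\cX, \ov{\cL}^{\otimes n}\otimes\cI_Z)$, and then sums over all possible $Z$; no factorization of $s$ itself is used or available.
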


\begin{theorem}~\label{theorem_Charles_Bertini_to_omit_subvariety}~\cite[Theorem 2.21]{Charles2021ArithmeticAA}.
    Let $\ov{\cL}$ be an ample hermitian line bundle on $\cX$ and $\cY \subset \cX$ be a closed proper subscheme. Then
    \[ \lim_{n \to \infty} \frac{ \# \{ s \in \aH^0(\cX, \ov{\cL}^{\otimes n}) : \cdiv(s) \textnormal{ is not contained in } \cY \} }{ \# \aH^0(\cX, \ov{\cL}^{\otimes n}) } = 1. \]
\end{theorem}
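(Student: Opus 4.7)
The plan is to compare two counts. On one hand, arithmetic Hilbert–Samuel for the ample hermitian line bundle $\ov{\cL}$ gives
\[ \log \# \aH^0(\cX, \ov{\cL}^{\otimes n}) = \frac{n^{d+1}}{(d+1)!} \avol(\ov{\cL}) + o(n^{d+1}), \]
and $\avol(\ov{\cL}) > 0$ by arithmetic ampleness, so the denominator grows at least like $\exp(c \, n^{d+1})$ for some $c > 0$. On the other hand, I will show that the number of $s \in \aH^0(\cX, \ov{\cL}^{\otimes n})$ with $\cdiv(s) \subset \cY$ grows only polynomially in $n$, so that the ratio of bad sections to total sections tends to zero.

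If no irreducible component of $\cY$ has codimension one in $\cX$, then for any nonzero $s$, the divisor $\cdiv(s)$ is a nonzero effective Weil divisor of pure codimension one and necessarily has a component outside $\cY$, making the statement trivial. Otherwise, let $\cY_1, \dots, \cY_r$ be the codimension-one prime components of $\cY_{\mathrm{red}}$. Any nonzero section $s$ with $|\cdiv(s)| \subset \cY$ has Weil cycle $\sum_i a_i \cY_i$ for some $a_i \in \ZZ_{\geq 0}$, and the class relation $\sum_i a_i [\cY_i] = n[\cL]$ in $\Pic(\cX)$ can be intersected with $\cL^d$, yielding $n \cdot \cL^{d+1} = \sum_i a_i (\cL^d \cdot \cY_i)$ with each $\cL^d \cdot \cY_i > 0$ by ampleness. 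Hence every $a_i$ is bounded by a constant times $n$, and the number of admissible tuples $(a_i) \in \ZZ_{\geq 0}^r$ is at most $O(n^r)$.

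For each admissible tuple, the integral sections with the prescribed divisor are either empty or a torsor under $H^0(\cX, \cO_\cX^\times)$, which is a finitely generated abelian group of rank $r_1 + r_2 - 1$ by Dirichlet's unit theorem (it is the unit group of an order in the number field of constants of $\cX$). Writing $s = u \cdot s_0$ for a fixed reference section $s_0$ with the prescribed divisor, smallness of $s$ translates, via the logarithmic embedding of units into $\RR^{r_1+r_2}$, into membership in a bounded box of diameter $O(\log \|s_0\|_\infty)$. A standard distortion estimate on the compact complex manifold $\cX(\CC)$ yields $\log \|s_0\|_\infty = O(n)$, since the divisor of $s_0$ has total coefficient $\sum_i a_i = O(n)$. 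The resulting box contains at most $O(n^{r_1+r_2-1})$ unit-lattice points, so summing over admissible tuples gives a polynomial-in-$n$ bound on the number of small $s$ with $\cdiv(s) \subset \cY$.

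Dividing this polynomial numerator by the denominator $\exp(c \, n^{d+1})$ produces a ratio tending to zero, proving the theorem. The main obstacle is the polynomial $L^\infty$-bound on a reference section $s_0$ whose divisor has total multiplicity $O(n)$: once this Green-function estimate is in hand (via compactness of $\cX(\CC)$ together with a Gromov-type distortion inequality), the remainder of the argument is lattice-point counting plus arithmetic Hilbert–Samuel.
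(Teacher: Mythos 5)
Your overall strategy (show the number of small sections with divisor contained in $\cY$ grows only polynomially in $n$, while $\#\aH^0(\cX,\ov{\cL}^{\otimes n})$ grows like $\exp(cn^{d+1})$ by arithmetic Hilbert--Samuel) is plausible and is in the spirit of counting arguments one would expect here. However, the two quantitative steps that make the ``bad'' count polynomial are both stated incorrectly, and the first one is a genuine gap, not just imprecision.

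\textbf{The coefficient bound $a_i = O(n)$ does not follow from the $\Pic$-class relation.} You intersect $\sum_i a_i[\cY_i] = n[\cL]$ in $\Pic(\cX)$ with $\cL^d$ and write $n\cdot\cL^{d+1} = \sum_i a_i\,(\cL^d\cdot\cY_i)$ as an equality of numbers, each $\cL^d\cdot\cY_i>0$. But there is no degree map on $CH_0(\cX)$ for $\cX$ projective over $\Spec\ZZ$: the cycle $\cdiv(p)$ on $\Spec\ZZ$ is rationally trivial but has $\adeg = \log p\neq 0$, so the numerical ``degree'' is not a function of the rational equivalence class and the displayed numerical equality simply is not a consequence of the class relation. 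Worse, the conclusion is actually false without the smallness hypothesis: if $\cY$ contains an entire fiber $\cX_p$, then multiplying any section by $p^k$ preserves the $\Pic$-class and the condition $\cdiv(s)\subset\cY$, so the set of admissible tuples $(a_i)$ is infinite, not $O(n^r)$. What is true, and what you need, is the inequality
\[
\sum_i a_i\,\adeg(\ov{\cL}^d\mid\cY_i) \;=\; \adeg(\ov{\cL}^d\mid\cdiv(s)) \;\leq\; n\,\adeg(\ov{\cL}^{d+1}),
\]
which follows from the inductive definition of the arithmetic intersection product (Theorem~\ref{theorem_intersection_product_of_hermitian_line_bundles}) together with the positivity of $\int_{\cX(\CC)}-\log\|s\|\wedge c_1(\ov{\cL})^d$, and this positivity uses precisely that $s$ is small and $c_1(\ov{\cL})$ is positive. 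Combined with $\adeg(\ov{\cL}^d\mid\cY_i)>0$ by ampleness, this does give $a_i\leq Cn$. So the mechanism is arithmetic intersection theory plus smallness, not a $\Pic$-class computation.

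\textbf{The ``Gromov-type distortion'' is the wrong direction.} You want an upper bound on $-\log\|s_0\|_\infty$, i.e.\ a \emph{lower} bound on the sup-norm of your reference section $s_0$. Gromov/distortion inequalities bound $\|s_0\|_\infty$ from \emph{above} (by an $L^2$-norm), which is of no use here. The bound you actually need comes from the same arithmetic intersection identity applied to $s_0$: if you take $s_0$ small with divisor $\sum a_i\cY_i$, then $\int_{\cX(\CC)}-\log\|s_0\|\wedge c_1(\ov{\cL})^d = n\,\adeg(\ov{\cL}^{d+1}) - \sum_i a_i\,\adeg(\ov{\cL}^d\mid\cY_i) = O(n)$, and then $-\log\|s_0\|_{\sigma,\infty} = \inf_x(-\log\|s_0(x)\|)$ is bounded above by the average, hence by $O(n)$. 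Only then does the logarithmic embedding argument put the admissible units into a box of linear diameter, giving $O(n^{r_1+r_2-1})$ lattice points per stratum.

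In short, both of your polynomial bounds ultimately rest on one and the same computation --- the inductive arithmetic degree formula together with $\|s\|\leq 1$ and positivity of $c_1(\ov{\cL})$ --- and not on the $\Pic$-class relation or a distortion inequality. If you replace those two steps by the arithmetic-intersection argument, the counting strategy goes through. You should also be a little careful that the subsection in which this theorem is stated does \emph{not} assume $\cX$ normal, so ``sections with a prescribed Weil cycle form a unit-torsor'' needs to be replaced by the statement about a prescribed \emph{Cartier} divisor (which is the correct invariant of the small section anyway, since two sections with the same divisor of zeros as a subsheaf differ by a global unit regardless of normality).
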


\subsection{Arithmetic $\RR$-divisors}
\begin{warning}
    For the rest of this section (unless stated otherwise) we impose \textbf{stronger} assumptions on $\cX$. Namely here $\cX$ is a generically smooth,  \textbf{normal} arithmetic variety of dimension $d+1$.
\end{warning}
\begin{remark}~\label{remark_subsheaves_definition}
    We recall here definitions of objects related to $\cX$ that we need.
    \begin{enumerate}
        \item Let $\cJ$ be a subsheaf of the sheaf of continuous real valued functions on $\cX(\CC)$ (with respect to the euclidean topology). A \textit{$\cJ$- function} (or a $\cJ$-type function) on an euclidean open conjugation invariant subset $U \subset \cX(\CC)$ is a continuous function $U \to \RR$ lying in $\cJ(U)$ that is invariant under complex conjugation.
                \item We use three subsheafs $\cJ$ of the sheaf of continuous (real valued) functions on $\cX(\CC)$, in the above context:
                \begin{itemize}
                    \item $\cJ = C^0$ - the full sheaf of continuous functions,
                    \item $\cJ = C^{\infty}$ - the subsheaf of smooth functions,
                    \item $\cJ = C^0 \cap \PSH$ - the subsheaf of plurisubharmonic continuous functions.
                \end{itemize}
                For the definition of a plurisubharmonic function, see \cite[Section 1.4]{Moriwaki_Adelic_divisors_on_Arithmetic_Varieties}.
                \item Let $\KK$ be one of $\ZZ, \QQ, \RR$. A \textit{$\KK$-Cartier divisor} on $\cX$ is an element of the group $\Div(\cX)_{\KK}$. A \textit{$\KK$-rational function} is an element of $\kappa(\cX)_{\KK}^\times$. Let $\cD = \sum_i \alpha_i \cD_i \in \Div(\cX)_{\KK}$ for $\alpha_i \in \KK$ and Cartier divisors $\cD_i$. If $f_i$ is a local equation for $\cD_i$ for all $i$, then we call $\prod_i f_i^{\alpha_i} \in \kappa(\cX)_{\KK}^{\times}$ a local equation for $\cD$. On the other hand, if $f \in \kappa(\cX)_{\KK}^{\times}$, then we write $(f)$ or $\cdiv(f)$ for the corresponding $\KK$-Cartier divisor. For $\cD \in \Div(\cX)_{\KK}$ we write $\cD_{\QQ} \in \Div(\cX_{\QQ})_{\KK}$ for its restriction to the generic fiber. 
                \item For an $\RR$-Cartier divisor $\cD$ on $\cX$, its \textit{support} is the set
                \[ \supp(\cD) := \{ p \in \cX : f \not\in (\cO_{\cX,p}^{\times})_{\RR} \textnormal{ for a local equation $f$ of $\cD$ at $p$} \}. \]
                If $\cD = \sum_i \alpha_i \cD_i$ is a decomposition as in the previous point (for $\KK=\RR$) with $\alpha_i$'s linearly independent over $\QQ$, then $\supp(\cD) = \bigcup_i \supp(\cD_i)$ where the latter $\supp$ is the support of a Cartier divisor. For details see \cite[Section 1.2]{Moriwaki_Adelic_divisors_on_Arithmetic_Varieties}.
                \item A \textit{$\ZZ$-Zariski open} set in $\cX(\CC)$ is a set of the form $\cV(\CC) \subset \cX(\CC)$ for some Zariski open $\cV \subset \cX$ (i.e., $\cV$ open in the topology of the scheme $\cX$). If $p \in \cX(\CC)$, then a \textit{$\ZZ$-Zariski neighbourhood} is an open neighbourhood of $p$ being a $\ZZ$-Zariski open set.
    \end{enumerate}
\end{remark}

\begin{definition}~\label{definition_Green_function}
    Let $\cD$ be an $\RR$-Cartier divisor on $\cX$. A \textit{$\cD$-Green function on $\cX$ of $\cJ$-type} (if $\cJ$ is $C^0$, we skip it in the notation) is a function $g:\cU(\CC) \to \RR$ for some Zariski open $\cU \subset \cX$ such that if $\cD = \sum a_i \cD_i$ for some real $a_i$'s and Cartier divisors $\cD_i$'s, then for every $p \in \cX(\CC)$ there is a $\ZZ$-Zariski neighbourhood $p \in \cV(\CC) \subset \cX(\CC)$ with local equations $f_i$'s for $\cD_i$'s, such that the function
    \[ g(x) + \sum_i a_i \log |f_i(x)|^2 \]
     restricted to the intersection of $\cU(\CC)$ and $\cV(\CC)$, extends to a $\cJ$-function on $\cV(\CC)$. We identify arithmetic $\RR$-divisors of $C^0$-type $(\cD, g) = (\cD, g')$ if $g:\cU(\CC) \to \RR, g':\cU'(\CC) \to \RR$ coincide on the intersection $(\cU \cap \cU')(\CC)$.
\end{definition}

\begin{remark}~\label{remark_open_set_of_definition_of_g_contains_support_of_divisors}
    The identification in the above definition makes sense, because for a pair $(\cD, g)$ as above, an extension $g':\cU'(\CC) \to \RR$ for $\cU \subset \cU'$ is unique if it exists. This follows from the fact that complex points of a Zariski open subset of $\cX$ are dense in the euclidean topology on $\cX(\CC)$. Thus, for every arithmetic $\RR$-divisor of $C^0$-type, there exists the biggest Zariski open $\cU \subset \cX$ such that $g$ can be defined on $\cU(\CC)$ (just take the sum of all possible extensions and glue by uniqueness) and this open set is $\cX \setminus \supp(\cD)$.
\end{remark}

\begin{definition}~\label{definition_arithmetic_R_divisors}
    An \textit{arithmetic $\RR$-divisor of $C^0$-type [resp. $\cJ$-type] on $\cX$} is a pair $\ov{\cD} = (\cD,g)$, where $\cD$ is an $\RR$-Cartier divisor on $\cX$ and $g$ is a $\cD$-Green function on $\cX$ [of $\cJ$-type]. An arithmetic $\RR$-divisor of $C^{0}$-type is called \textit{principal}, if it can be written as an $\RR$-combination of arithmetic $\RR$-divisors (of $C^{\infty}$-type) $\adiv(f) := (\cdiv(f), - \log|f|^2)$ for some rational functions $f$ on $\cX$. Also, $(\cD,g)$ is an \textit{arithmetic $\ZZ$-divisor [resp. $\QQ$-divisor] of $C^0$-type (or $\cJ$-type)} if it is an arithmetic $\RR$-divisor of the corresponding type, with $\cD$ being a Cartier [resp. $\QQ$-Cartier] divisor on $\cX$.
\end{definition}

\begin{remark}~\label{remark_pullbacks_of_arithmetic_R_divisors_of_C_0_type}
    Let $\phi:\cY \to \cX$ be a map of generically smooth, normal arithmetic varieties and let $\ov{\cD}=(\cD, g)$ be an arithmetic $\RR$-divisor of $C^0$-type on $\cX$. If the pullback $\phi^* \cD$ exists and $g$ can be defined on an open subset $\cU$ of $\cX$ whose preimage is non-empty in $\cY$, one can equip $\phi^* \cD$ with a Green function being the pullback of $g$. In particular, if $\cY \to \cX$ is a birational map, one can pullback arithmetic $\RR$-divisors of $C^0$-type.
\end{remark}

\begin{notation}
    Arithmetic $\RR$-divisors of $C^0$-type form an abelian group (with component-wise addition) and this group is denoted by $\Adiv_{\RR}(\cX)$. The subgroup of principal arithmetic $\RR$-divisors of $C^0$-type is denoted by $\Rdiv_{\RR}(\cX)$. We use the symbol $\equiv$ to indicate equality up to principal arithmetic $\RR$-divisors of $C^0$-type. The quotient $\Adiv_{\RR}(\cX)/\Rdiv_{\RR}(\cX)$ is denoted by $\aN^1(\cX)$. Note that this notation is not standard, but we use it so that the proof of Theorem~\ref{theorem_main_arithmetic_approximation_theorem} looks similar to the geometric setting outlined in Subsection~\ref{subsection_the_technique_Theorem_A}.
\end{notation}

\subsection{Positivity of $\RR$-Cartier divisors}

\begin{definition}~\label{definition_GVF_pairing}
    Let $\cD$ be an $\RR$-Cartier divisor on $\cX$ and let $v$ be a non-Archimedean valuation on $\kappa(\cX)$. Let $p = \supp(v) \in \cX$ and let $f \in \kappa(\cX)^{\times}_{\RR}$ be a local equation for $\cD$ at $p$. We then write $\GVFpairing(v, \cD)$ for the value $v(f)$ which is the $\RR$-linear (multiplicative) extension of the valuation $v:\kappa(\cX)^{\times} \to \RR$. Moreover, if $\ov{\cD}$ is an $\RR$-divisor of $C^0$-type on $\cX$ we define $\GVFpairing(v, \ov{\cD}) := \GVFpairing(v, \cD)$. On the other hand, if $v$ is an Archimedean valuation on $\kappa(\cX)$ induced by $\sigma:\kappa(\cX) \to \CC$, then we define $\GVFpairing(v, \ov{\cD}) := \frac{1}{2} g(p)$, where $p$ is the point of $\cX(\CC)$ coming from the composition
    \[\begin{tikzcd}
    	{\Spec(\CC)} & {\Spec(\kappa(\cX))} & \cX
    	\arrow["{\Spec(\sigma)}", from=1-1, to=1-2]
    	\arrow[from=1-2, to=1-3].
    \end{tikzcd}\]
    Note $g$ is defined on $p$ since $\Spec(\kappa(\cX))$ is the generic point of $\cX$. Furthermore, since $g$ is conjugation-invariant, the value $g(p)$ does not depend on the choice of $\sigma$ inducing $v$.
\end{definition}

\begin{definition}~\label{definition_Weil_decomposition_of_Cartier_R_divisor}
    Let $\cD$ be an $\RR$-Cartier divisor on $\cX$. The \textit{Weil decomposition of $\cD$} is the codimension one cycle on $\cX$ of the form
    \[ \sum_{\Gamma} \ord_{\Gamma}(\cD) \Gamma, \]
    where the sum is taken over all codimension one integral subschemes of $\cX$ and $\ord_{\Gamma}(\cD) = \GVFpairing(\ord_\Gamma, \cD)$. This makes sense, as we assume that $\cX$ is normal, which implies that the order of vanishing at an integral subscheme $\Gamma$ of codimension one is a valuation on $\kappa(\cX)$.
\end{definition}

\begin{lemma}~\label{lemma_notions_of_effective_for_R_divisors}
    Let $\cD$ be an $\RR$-Cartier divisor on $\cX$. The following are equivalent.
    \begin{enumerate}[label=(\arabic*)]
        \item $\cD$ is a positive $\RR$-combination of effective Cartier divisors.
        \item For all $p \in \cX$ if $f \in \kappa(\cX)_\RR^{\times}$ is a local equation for $\cD$ at $p$, then $f \in (\cO_{\cX, p} \setminus \{0\})_{\RR}$.
        \item For all non-Archimedean valuations $v$ on $\kappa(\cX)$ we have $\GVFpairing(v, \cD) \geq 0$.
        \item If $\cD = \sum_{\Gamma} a_{\Gamma} \Gamma$ is the Weil decomposition of $\cD$, then all $a_{\Gamma}$ are non-negative.
    \end{enumerate}
    Moreover, the same holds (with $\QQ$-tensors instead of $\RR$-tensors) for a $\QQ$-Cartier divisor $\cD$.
\end{lemma}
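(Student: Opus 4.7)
The plan is to prove the cyclic chain $(1) \Rightarrow (2) \Rightarrow (3) \Rightarrow (4) \Rightarrow (1)$; the first three implications are essentially formal, and the whole substance lies in $(4) \Rightarrow (1)$.

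For $(1) \Rightarrow (2)$, I write $\cD = \sum_i \lambda_i \cE_i$ with $\lambda_i \geq 0$ and $\cE_i$ an effective Cartier divisor: each local equation $f_i$ of $\cE_i$ at $p$ lies in $\cO_{\cX,p} \setminus \{0\}$, so $\prod_i f_i^{\lambda_i}$ is a local equation for $\cD$ lying in $(\cO_{\cX,p} \setminus \{0\})_{\RR}$. For $(2) \Rightarrow (3)$, a non-Archimedean valuation $v$ with center $p = \supp(v)$ is non-negative on $\cO_{\cX,p} \subset \cO_v$, and this extends $\RR$-multiplicatively to non-negativity on $(\cO_{\cX,p} \setminus \{0\})_{\RR}$. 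For $(3) \Rightarrow (4)$, I apply $(3)$ to $v = \ord_\Gamma$ for each codimension-one $\Gamma$; normality of $\cX$ is precisely what makes $\ord_\Gamma$ a discrete non-Archimedean valuation on $\kappa(\cX)$.

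The heart of the proof is $(4) \Rightarrow (1)$, and I would first establish the $\QQ$-case. If $\cE \in \Div(\cX)_\QQ$ has non-negative Weil decomposition, pick $n \geq 1$ with $n\cE \in \Div(\cX)$. Then $n\cE$ is an integer Cartier divisor with non-negative Weil decomposition, and Krull's theorem applied to the normal local rings $\cO_{\cX,p}$ shows that the local equations of $n\cE$ are regular, so $n\cE$ is an effective Cartier divisor; hence $\cE = \tfrac{1}{n}(n\cE)$ is a non-negative $\QQ$-multiple of an effective Cartier divisor.

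To reduce the $\RR$-case to the $\QQ$-case, I write $\cD = \sum_i \alpha_i \cD_i$ with $\alpha_i \in \RR$ and $\cD_i \in \Div(\cX)$, and let $\Gamma_1, \dots, \Gamma_N$ be the codimension-one subvarieties appearing in any $\cD_i$. The Weil-decomposition map $w \colon \Div(\cX)_{\RR} \to \RR^N$, $\cD' \mapsto (\ord_{\Gamma_j}(\cD'))_j$, is injective on the $\RR$-span of the $\cD_i$ by normality; its image is a $\QQ$-defined linear subspace $V \subset \RR^N$. By (4), $w(\cD) \in V \cap \RR_{\geq 0}^N$, which is a rational polyhedral cone and therefore, by a standard fact from convex geometry, equals the $\RR_{\geq 0}$-cone generated by its rational points $V_\QQ \cap \QQ_{\geq 0}^N$. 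Writing $w(\cD) = \sum_l \mu_l b_l$ with $\mu_l \geq 0$ and $b_l \in V_\QQ \cap \QQ_{\geq 0}^N$, the preimages $\cE_l := w^{-1}(b_l) \in \Div(\cX)_\QQ$ satisfy (4), and by the $\QQ$-case each is a non-negative $\QQ$-combination of effective Cartier divisors; combining the two decompositions produces the required presentation of $\cD$. The main obstacle is precisely this last passage: a naive attempt to extract effective pieces directly from $\cD = \sum \alpha_i \cD_i$ fails because $\QQ$-linear independence of the $\alpha_i$ does not force any sign on the individual integers $\ord_\Gamma(\cD_i)$, so one must work intrinsically inside the polyhedral cone $V \cap \RR_{\geq 0}^N$ to extract the rational generators.
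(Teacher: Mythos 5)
Your proof is correct, and the forward implications $(1)\Rightarrow(2)\Rightarrow(3)\Rightarrow(4)$ match what the paper leaves as routine. The genuine difference is in $(4)\Rightarrow(1)$: the paper simply cites \cite[Proposition 2.4.16]{Adelic_curves_1} for this implication, whereas you give a self-contained proof. Your route — first settling the $\QQ$-case via Krull/Serre normality (an integral Cartier divisor with non-negative Weil decomposition on a normal scheme is effective), then reducing the $\RR$-case to it by observing that the Weil-decomposition image of the relevant finite-dimensional $\RR$-span is a $\QQ$-defined subspace $V\subset\RR^N$, so that $V\cap\RR_{\geq 0}^N$ is a rational polyhedral cone generated by its rational points — is exactly the kind of argument that lies behind the cited result, and you are right to flag that the naive decomposition $\cD=\sum\alpha_i\cD_i$ with $\QQ$-independent $\alpha_i$ does \emph{not} directly give effective pieces; the cone-theoretic reorganization is what makes it work. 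The small details you'd want to make explicit in a final writeup are the injectivity of $w$ on the ambient $\RR$-span (which follows from the injectivity of Cartier into Weil divisors on a normal scheme, preserved under $-\otimes_{\ZZ}\RR$) and the fact that the rational generators $b_l$ land in $V_{\QQ}\cap\QQ_{\geq 0}^N$ so that their unique preimages are $\QQ$-Cartier. Both are fine as stated. In short: same implications, but you supply the substance the paper outsources, and the polyhedral cone reduction is a clean, reusable way to pass from $\QQ$-statements to $\RR$-statements about positivity of divisors.
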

In the above by $(\cO_{\cX, p} \setminus \{0\})_{\RR}$ we mean the subsemigroup of $\kappa(\cX)_\RR^{\times}$ generated by elements of the form $g^r$ with $g \in \cO_{\cX, p} \setminus \{0\}$ and $r>0$.
\begin{proof}
    Implications $(1) \implies (2) \implies (3) \implies (4)$ are skipped. The implication $(4) \implies (1)$ is \cite[Proposition 2.4.16]{Adelic_curves_1}. Implications in the $\QQ$ case are skipped.
\end{proof}

\begin{definition}
    Let $\cD$ be an $\RR$-Cartier divisor on $\cX$. We say that $\cD$ is \textit{effective}, if it satisfies the equivalent conditions from Lemma~\ref{lemma_notions_of_effective_for_R_divisors}. In this case we write $\cD \geq 0$. Similarly, if $\cE$ is another $\RR$-Cartier divisor on $\cX$, we write $\cD \geq \cE$, if $\cD - \cE \geq 0$. We also define the space rational functions having poles at most given by $\cD$ as
    \[ H^0(\cX, \cD) := \{ f \in \kappa(\cX)^{\times} : \cD + \cdiv(f) \geq 0 \} \cup \{0\}. \]
    A similar definition applies to $\RR$-Cartier divisors on $X=\cX_\QQ$.
\end{definition}

\begin{lemma}~\label{lemma_every_R_divisor_comb_of_effective_and_a_principal}~\cite[Lemma 5.2.3]{Moriwaki_Zariski_arithmetic_decompositions}
    The following hold.
    \begin{enumerate}
        \item Let $\cZ$ be a Weil divisor on $\cX$. Then there is an effective Cartier divisor $\cA$ on $\cX$ such that $\cZ \leq \cA$.
        \item Let $\cD$ be a Cartier divisor on $\cX$. Then there are effective Cartier divisors $\cA$ and $\cB$ on $\cX$ such that $\cD = \cA - \cB$.
        \item Let $x_1, \dots, x_l$ be points of $\cX$ and let $\cD$ be a Cartier divisor on $\cX$. Then there are effective Cartier divisors $\cA$ and $\cB$, and a non-zero rational function $\phi$ on $\cX$ such that $\cD + (\phi) = \cA - \cB$ and $x_1, \dots , x_l \not\in \supp(\cA) \cup \supp(\cB)$.
    \end{enumerate}
\end{lemma}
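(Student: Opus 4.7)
The plan is to prove parts (1), (2), (3) in order, with (2) falling out of (1) and (3) requiring an additional avoidance argument on top of (1) and (2). Throughout I use that $\cX$ is projective over $\ZZ$, hence admits an ample Cartier divisor $\cH$, and that the effectivity criterion via non-negativity of Weil coefficients from Lemma~\ref{lemma_notions_of_effective_for_R_divisors} applies because $\cX$ is normal.

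For (1), I would start from the Weil decomposition $\cZ = \sum_{i} a_i \Gamma_i$ and, for each index $i$ with $a_i > 0$, build an effective Cartier divisor $\cA_i$ whose Weil coefficient along $\Gamma_i$ is at least one. The construction is to take a nonzero global section of $\cO_{\cX}(n\cH) \otimes \cI_{\Gamma_i}$ for $n \gg 0$; such a section exists because the short exact sequence $0 \to \cO_{\cX}(n\cH) \otimes \cI_{\Gamma_i} \to \cO_{\cX}(n\cH) \to \cO_{\Gamma_i}(n\cH) \to 0$ combined with the polynomial growth of $H^0$-ranks ($\sim n^{d+1}$ on $\cX$ against $\sim n^{d}$ on $\Gamma_i$ for large $n$, using ampleness of $\cH$) forces the kernel to have positive rank. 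Its zero divisor is an effective Cartier $\cA_i \geq \Gamma_i$, and $\cA := \sum_{a_i > 0} a_i \cA_i$ is an effective Cartier divisor with $\cA \geq \cZ$. Part (2) then follows immediately: apply (1) to $\cD$ viewed as a Weil divisor to get $\cA \geq \cD$ effective Cartier, and observe that $\cB := \cA - \cD$ is a Cartier divisor with non-negative Weil coefficients, hence effective by Lemma~\ref{lemma_notions_of_effective_for_R_divisors}.

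For (3), I would proceed in two stages. First I construct $\phi$ so that $\cD + (\phi)$ is trivial at each $x_j$: for $n \gg 0$, the sheaves $\cO_{\cX}(\cD + n\cH)$ and $\cO_{\cX}(n\cH)$ are globally generated, the evaluation maps $H^0(\cX, \cL) \to \bigoplus_j \cL \otimes \kappa(x_j)$ are surjective by Serre vanishing, and a standard avoidance argument for finitely many proper saturated submodules of a finitely generated $\ZZ$-module yields global sections $s \in H^0(\cX, \cO_{\cX}(\cD + n\cH))$ and $r \in H^0(\cX, \cO_{\cX}(n\cH))$ simultaneously nonzero at every $x_j$. The ratio $s/r$ is a rational section of $\cO_{\cX}(\cD)$ regular and nonvanishing at each $x_j$, and the corresponding rational function $\phi$ gives $\cD + (\phi) = \cdiv(s/r)$, which is trivial at each $x_j$. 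In particular, no prime divisor through any $x_j$ appears in the Weil decomposition of $\cD + (\phi)$. Second, I rerun (1) and (2) applied to $\cD + (\phi)$, but now additionally requiring that the section of $\cO_{\cX}(n\cH) \otimes \cI_{\Gamma_i}$ used to build $\cA_i$ be nonzero at every $x_j$. This is achievable for $n \gg 0$ by the same Serre vanishing and avoidance argument, using that $x_j \notin \Gamma_i$ for each relevant $i$. Then $\supp(\cA) \cap \{x_1, \dots, x_l\} = \emptyset$, and since $\cD + (\phi)$ is trivial at each $x_j$, the difference $\cB = \cA - (\cD + (\phi))$ is also trivial at each $x_j$, so $x_j \notin \supp(\cB)$.

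The main obstacle I anticipate is the simultaneous avoidance step in (3): producing sections that vanish along a prescribed subscheme yet are nonzero at every point of a prescribed finite set. This is a standard but fiddly combination of Serre vanishing for coherent sheaves on a projective Noetherian scheme with a prime-avoidance-style argument for finitely many saturated submodules; working over $\ZZ$ rather than a field, one has to verify that the relevant evaluation submodules remain saturated so that the argument reduces to the usual $\QQ$-vector space statement about finite unions of proper subspaces.
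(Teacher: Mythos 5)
The paper does not actually prove this lemma: it is stated with a bare citation to Moriwaki's \cite[Lemma 5.2.3]{Moriwaki_Zariski_arithmetic_decompositions}, and no argument is given. So there is no in-paper proof to compare against; what you have done is reconstruct the proof from scratch, and your reconstruction is in substance correct and follows the standard route (twist by powers of an ample $\cH$, use Serre's theorem to produce sections, use normality to pass between Cartier effectivity and non-negativity of Weil coefficients, and use the familiar ``moving lemma'' pattern for the avoidance clause).

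One inaccuracy worth flagging in part (1): your rank-comparison heuristic ($\sim n^{d+1}$ for $H^0$ on $\cX$ against $\sim n^d$ on $\Gamma_i$) uses the exponents appropriate for varieties over a field, but $\cX$ is projective over $\ZZ$ of absolute dimension $d+1$, so the $\ZZ$-rank of $H^0(\cX,\cO(n\cH))$ grows like $n^d$ (it is the dimension of $H^0$ on the $d$-dimensional generic fiber $X$), while for a horizontal prime divisor $\Gamma_i$ it is $\sim n^{d-1}$ and for a vertical $\Gamma_i$ the group $H^0(\Gamma_i,\cO(n\cH)|_{\Gamma_i})$ is torsion of rank zero. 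The inequality of ranks still forces a nonzero kernel in either case, so your conclusion survives, but the stated exponents are wrong. It would be both cleaner and more robust to skip the rank comparison entirely and simply invoke that $\cO(n\cH)\otimes\cI_{\Gamma_i}$ is globally generated for $n\gg 0$ by Serre's theorem on a projective Noetherian scheme, hence (being a nonzero sheaf) has a nonzero global section; the same Serre argument is exactly what you already rely on for the evaluation surjectivity in part (3), so this unifies the two steps. A second small point: when you invoke Lemma~\ref{lemma_notions_of_effective_for_R_divisors} to conclude $\cB=\cA-\cD$ is effective Cartier, note that that lemma is stated for $\RR$- and $\QQ$-Cartier divisors; the $\ZZ$-Cartier version you actually need is the classical normality fact (Serre's $R_1+S_2$ criterion) that on a normal Noetherian integral scheme a rational function lies in $\cO_{\cX,p}$ iff it has non-negative order along every codimension-one point through $p$. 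The content is identical, but the citation as written is slightly off-target. With these two adjustments the argument is complete.
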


\begin{lemma}~\label{lemma_divisors_here_are_the_same_is_in_Ikoma_up_to_a_principal}
    Let $\ov{\cD}=(\cD,g)$ be an arithmetic $\RR$-divisor of $C^0$-type on $\cX$. Fix a point $x \in \cX$. Then there exists an arithmetic $\RR$-divisor of $C^0$-type $\ov{\cD}'=(\cD',g')$ such that $\cD' = \sum_{i=1}^m \alpha_i' \cD'_i$ for some real $\alpha_i'$'s and effective Cartier $\cD'_i$'s, and such that $\ov{\cD} - \ov{\cD}'$ is principal. Moreover, $g'$ can be defined on $(\cX \setminus \bigcup_{i=1}^m \supp(\cD_i'))(\CC)$ and one can assume that $x \in \cX \setminus \bigcup_{i=1}^m \supp(\cD_i')$.
\end{lemma}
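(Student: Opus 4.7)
The idea is to reduce to the Cartier setting and apply Lemma~\ref{lemma_every_R_divisor_comb_of_effective_and_a_principal}(3) component-wise, then twist the Green function $g$ by the resulting principal divisor.

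First, since $\cD$ is an $\RR$-Cartier divisor, by definition I can write $\cD = \sum_{i=1}^n a_i \cD_i$ with $a_i \in \RR$ and $\cD_i$ Cartier divisors on $\cX$. Applying Lemma~\ref{lemma_every_R_divisor_comb_of_effective_and_a_principal}(3) separately to each $\cD_i$ and the single point $x$, I obtain, for every $i$, a rational function $\phi_i \in \kappa(\cX)^\times$ together with effective Cartier divisors $\cA_i, \cB_i$ satisfying
\[
\cD_i + \cdiv(\phi_i) = \cA_i - \cB_i \qquad \text{and} \qquad x \notin \supp(\cA_i) \cup \supp(\cB_i).
\]
I then define $\cD' := \cD + \cdiv(\phi)$ where $\phi := \prod_i \phi_i^{a_i} \in \kappa(\cX)^\times_{\RR}$, so that
\[
\cD' = \sum_{i=1}^n a_i \cA_i + \sum_{i=1}^n (-a_i) \cB_i,
\]
which is the desired presentation (with $m = 2n$, effective Cartier divisors $\cD'_1=\cA_1,\cD'_2=\cB_1,\dots$ and real coefficients $\alpha'_1=a_1,\alpha'_2=-a_1,\dots$), and whose supports avoid $x$.

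Next I define $g' := g - \sum_i a_i \log|\phi_i|^2$, so that $\ov{\cD}' := (\cD', g') = \ov{\cD} + \adiv(\phi)$ and hence $\ov{\cD}-\ov{\cD}' = -\adiv(\phi)$ is a principal arithmetic $\RR$-divisor of $C^0$-type. I must verify that $g'$ is a genuine $\cD'$-Green function of $C^0$-type. Locally around any $p \in \cX(\CC)$, if $f_i$ is a local equation for $\cD_i$, then $f_i \phi_i$ is a local equation for $\cA_i - \cB_i$, so $f_i \phi_i = u_i \cdot a_i/b_i$ with $u_i \in \cO^\times$ and $a_i, b_i$ local equations for $\cA_i, \cB_i$. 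Taking $\log|\cdot|^2$, one gets
\[
-\log|\phi_i|^2 + \log|a_i|^2 - \log|b_i|^2 = \log|f_i|^2 - \log|u_i|^2,
\]
hence
\[
g' + \sum_i a_i \log|a_i|^2 - \sum_i a_i \log|b_i|^2 = \Bigl(g + \sum_i a_i \log|f_i|^2\Bigr) - \sum_i a_i \log|u_i|^2,
\]
and both parenthesised expressions are continuous on a $\ZZ$-Zariski neighbourhood of $p$ (the first because $g$ was a $\cD$-Green function, the second because the $u_i$ are units).

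Finally, by Remark~\ref{remark_open_set_of_definition_of_g_contains_support_of_divisors}, $g'$ extends uniquely to the open set $\cX \setminus \supp(\cD')$, which contains the smaller open set $\cX \setminus \bigcup_i (\supp(\cA_i) \cup \supp(\cB_i))$; the latter contains $x$ by construction. I do not foresee any serious obstacle here: the whole argument is really just a bookkeeping exercise whose only non-trivial ingredient is Lemma~\ref{lemma_every_R_divisor_comb_of_effective_and_a_principal}(3), and the mildest subtlety is checking that twisting $g$ by $-\log|\phi|^2$ really produces a Green function for the new decomposition in the sense of Definition~\ref{definition_Green_function}, which I handled via the unit $u_i$ computation above.
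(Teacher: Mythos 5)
Your proof is correct and follows essentially the same route as the paper: decompose $\cD = \sum a_i \cD_i$ into Cartier pieces, apply Lemma~\ref{lemma_every_R_divisor_comb_of_effective_and_a_principal}(3) to each $\cD_i$ to produce effective $\cA_i, \cB_i$ avoiding $x$ together with rational functions, twist $g$ by the corresponding logarithm, and then verify the Green function condition by a local computation with unit factors before invoking Remark~\ref{remark_open_set_of_definition_of_g_contains_support_of_divisors} for the extension and the avoidance of $x$. The only quibble is notational: you use $a_i$ both for the real coefficients of $\cD$ and for the local equations of $\cA_i$, which clashes on the line $g' + \sum_i a_i \log|a_i|^2 - \sum_i a_i \log|b_i|^2$; rename one of them.
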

\begin{proof}
    Let $\cD = \sum_{i=1}^n \alpha_i \cD_i$ for some real $\alpha_i$'s and Cartier (but not necessarily effective) divisors $\cD_i$'s. By Lemma~\ref{lemma_every_R_divisor_comb_of_effective_and_a_principal} there are effective Cartier divisors $\cA_i, \cB_i$'s and rational functions $f_i$'s on $\cX$, such that
    \[ \cD_i = \cA_i - \cB_i + \cdiv(f_i) \textnormal{ for all } i=1, \dots, n \]
    and with $x \not\in \supp(\cA_i) \cup \supp(\cB_i)$. Let $\cD' = \sum_{i=1}^n \alpha_i (\cA_i - \cB_i)$. Let $\cU = \cX \setminus \supp(\cD)$ and let $\cV = \cX \setminus \bigcup_{i=1}^n \supp(\cdiv(f_i))$. Let $g' = g + \sum_{i=1}^n \alpha_i \log|f_i|^2$ on $(\cU \cap \cV)(\CC)$. Pick $P \in (\cU \cap \cV)(\CC)$ and let $\cW(\CC) \subset (\cU \cap \cV)(\CC)$ be a $\ZZ$-Zariski neighbourhood of $P$ with local equations $a_i, b_i$ for $\cA_i, \cB_i$ respectively (for all $i$) such that on $\cW(\CC)$ a continuous extension of
    \[ g(x) + \sum_{i=1}^n \alpha_i( \log|a_i|^2(x) - \log|b_i|^2(x) + \log|f_i|^2(x) ) \]
    can be defined. Then the function
    \[ g'(x) + \sum_{i=1}^n \alpha_i( \log|a_i|^2(x) - \log|b_i|^2(x)) \]
    can also be defined on $\cW(\CC)$, which proves that $(\cD',g')$ is an arithmetic $\RR$-divisor of $C^0$-type. By Remark~\ref{remark_open_set_of_definition_of_g_contains_support_of_divisors}  $g'$ can be defined on $\bigl(\cX \setminus \bigcup_{i=1}^n (\supp(\cA_i) \cup \supp(\cB_i))\bigr)(\CC)$. Moreover, by construction $x \not\in \bigcup_{i=1}^n (\supp(\cA_i) \cup \supp(\cB_i))$, which finishes the proof.
\end{proof}

\subsection{Heights with respect to arithmetic $\RR$-divisors}\label{subsection_heights_w_r_t_arithmetic_R_divisors}

\begin{definition}
    Let $x \in X = \cX_\QQ$ be a closed point and let $v$ be a valuation on $\kappa(x)$. Let $\ov{\cD} = (\cD, g)$ be an arithmetic $\RR$-divisor of $C^0$-type on $\cX$ with $\cD$ effective and $x \not\in \supp(\cD)$. We define the \textit{local degree} of $\ov{\cD}$ with respect to $v$ $\adeg_v(\ov{\cD}|\ov{\{x\}})$ in the following way:
    \begin{itemize}
        \item If $v$ is Archimedean we put $\adeg_v(\ov{\cD}|\ov{\{x\}}) := \frac{1}{2} g(x^{\sigma})$ where $\sigma:\kappa(x) \to \CC$ is any embedding which induces $v$ on $\kappa(x)$.
        \item If $v$ is non-Archimedean we put $\adeg_v(\ov{\cD}|\ov{\{x\}}) = v(d|_x)$ where $d$ is a local equation for $\cD$ at the point $q \in \cX$ defined as the image of the maximal ideal of the value ring $\cO_v \subset \kappa(x)$ via the map $\phi$ in the following diagram
        \[\begin{tikzcd}
        	{\Spec(\kappa(x))} & \cX \\
        	{\Spec(\cO_v)} & {\Spec(\ZZ)}
        	\arrow[from=1-2, to=2-2]
        	\arrow[from=1-1, to=2-1]
        	\arrow[from=1-1, to=1-2]
        	\arrow[from=2-1, to=2-2]
        	\arrow["\phi"{description}, dashed, from=2-1, to=1-2].
        \end{tikzcd}\]
        Existence and uniqueness of $\phi$ follows from the valuative criterion of properness. If $d$ is an $\RR$-rational function, we use the unique (multiplicative) $\RR$-linear extension of $v$ to calculate $v(d|_x)$. Note that $q$ generalises to $x$, so $d$ is a regular function on the neighbourhood of $x$ and $d|_x \in \kappa(x)^{\times}_{\RR}$.
    \end{itemize}
\end{definition}

\begin{definition}~\label{definition_height_of_a_point_w_r_t_arithmetic_R_divisor}
    Let $\ov{\cD} = (\cD, g)$ be an arithmetic $\ZZ$-divisor of $C^0$-type on $\cX$ with $\cD$ effective and let $x \in X = \cX_\QQ$ be a closed point of the generic fiber of $\cX$. If $x \not\in \supp(\cD)$ and $\cD$ is a Cartier divisor on $\cX$ the \textit{height} of $x$ with respect to $\ov{\cD}$ is defined to be
    \[ h_{\ov{\cD}}(x) := \frac{1}{[\kappa(x):\QQ]} \Bigl(\log \# \bigl(\cO_{\cC}(\cD)/\cO_{\cC}\bigl) + \frac{1}{2} \sum_{\sigma:\kappa(x) \to \CC} g(x^{\sigma})\Bigl), \]
    where $\cC = \ov{ \{x\} } \subset \cX$ is the $1$-cycle corresponding to $x$ and we treat $\cO_{\cC}(\cD)/\cO_{\cC}$ as a module over the ring of functions of $\cC \cap \cD$. For a general $\ov{\cD}$, by Lemma~\ref{lemma_divisors_here_are_the_same_is_in_Ikoma_up_to_a_principal} there is a decomposition $\ov{\cD} = \sum_{i=1}^m \alpha_i \ov{\cD}_i + \adiv(f)$ with $\alpha_i$ real, $\cD_i$ effective Cartier, such that $x \not\in \supp(\cD_i)$ for all $i=1, \dots, m$ and where $f$ is an $\RR$-rational function on $\cX$. Then we put
    \[ h_{\ov{\cD}}(x) := \sum_{i=1}^m \alpha_i h_{\ov{\cD}_i}(x). \]
    Note that this is well defined by the product formula on number fields. For details, see \cite[Section 4.2]{Moriwaki_Adelic_divisors_on_Arithmetic_Varieties}. Also, we define the \textit{arithmetic degree} of $\ov{\cD}$ with respect to $x$ as the quantity $\adeg(\ov{\cD}|\ov{\{x\}})$ satisfying
    \[ h_{\ov{\cD}}(x) = \frac{\adeg(\ov{\cD}|\ov{\{x\}})}{\deg(x)}. \]
\end{definition}

\begin{lemma}~\label{lemma_local_and_global_arithmetic_degree}
    Let $\ov{\cD}$ be an arithmetic $\RR$-divisor of $C^0$-type on $\cX$ and let $x \in X = \cX_\QQ$ be a closed point of the generic fiber of $\cX$ with $x \not\in \supp(\cD)$. Let $K = \kappa(x)$ and denote by $\cO_K$ its integer ring. Then
    \[ h_{\ov{\cD}}(x) = \int_{\Val_{K}} \adeg_v(\ov{\cD}|\ov{\{x\}}) d\mu(v), \]
    where $\mu$ is the measure
    \[ \mu = \frac{1}{[K:\QQ]} \biggl( \sum_{p \in \Spec(\cO_K)} \delta_{\ord_p} \cdot \log \# \kappa(p) + \sum_{\sigma : K \to \CC} \delta_{-\log |\sigma(-)|} \biggr) \]
    and $\Val_K$ is the space of valuations on $K$ (see Example~\ref{example_number_field_GVF}).
\end{lemma}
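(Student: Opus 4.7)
My plan is to reduce the identity to an effective Cartier $\ZZ$-divisor and then match Archimedean and non-Archimedean contributions separately. Both sides of the claimed equality extend $\RR$-linearly in $\ov{\cD}$ (the right side since $\adeg_v$ is naturally $\RR$-linear: multiplicatively for non-Archimedean $v$ and by the formula $\frac{1}{2}g(x^\sigma)$ for Archimedean $v$), so by Lemma~\ref{lemma_divisors_here_are_the_same_is_in_Ikoma_up_to_a_principal} applied at $x$ it suffices to treat two cases: $\ov{\cD}=\adiv(f)$ with $x\notin\supp(\cdiv(f))$, and $\ov{\cD}=(\cD,g)$ with $\cD$ an effective Cartier divisor and $x\notin\supp(\cD)$. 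In the principal case, $h_{\adiv(f)}(x)=0$ from Definition~\ref{definition_height_of_a_point_w_r_t_arithmetic_R_divisor} (trivial decomposition with $m=0$), and the integral evaluates to
\[ \frac{1}{[K:\QQ]}\Bigl(\sum_p\log\#\kappa(p)\,\ord_p(f|_x)-\sum_\sigma\log|\sigma(f|_x)|\Bigr), \]
which vanishes by the classical product formula on $K$ applied to $f|_x\in K^\times$.

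In the effective Cartier case the measure $\mu$ splits into its Archimedean and non-Archimedean parts. The Archimedean contribution equals $\frac{1}{[K:\QQ]}\sum_\sigma\frac{1}{2}g(x^\sigma)$, matching the second summand of Definition~\ref{definition_height_of_a_point_w_r_t_arithmetic_R_divisor} directly from the formula for $\adeg_{v_\sigma}(\ov{\cD}|\ov{\{x\}})$. The non-Archimedean part reduces to the identity
\[ \log\#\bigl(\cO_\cC(\cD)/\cO_\cC\bigr)\;=\;\sum_{p\in\Spec(\cO_K)}\log\#\kappa(p)\cdot\adeg_{\ord_p}(\ov{\cD}|\ov{\{x\}}), \]
with $\cC=\ov{\{x\}}\subset\cX$. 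Let $\nu:\tilde{\cC}:=\Spec(\cO_K)\to\cC$ denote the normalization morphism; it is finite and birational since $\cC$ is a $1$-dimensional projective integral scheme over $\Spec(\ZZ)$ with function field $K$. By the valuative criterion of properness the map $\phi$ defining $\adeg_{\ord_p}$ factors through $\nu$, so $\adeg_{\ord_p}(\ov{\cD}|\ov{\{x\}})=\ord_p(d_{\nu(p)}|_x)$, where $d_{\nu(p)}$ is a local equation of $\cD$ at $\nu(p)\in\cC$.

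The core computation is a local length-transfer at each closed $q\in\cC\cap\supp(\cD)$ with local equation $d_q\in\cO_{\cC,q}$:
\[ \log\#\bigl(\cO_{\cC,q}/(d_q)\bigr)\;=\;\log\#\bigl(A'_q/(d_qA'_q)\bigr)\;=\;\sum_{p\mapsto q}\log\#\kappa(p)\cdot\ord_p(d_q), \]
where $A'_q:=\bigoplus_{p\mapsto q}(\cO_K)_p$ is the semi-local normalization of $\cO_{\cC,q}$. The second equality is the standard decomposition of a finite $\cO_K$-module into $p$-primary components. For the first, I would apply the long exact Tor sequence associated to $0\to\cO_{\cC,q}\to A'_q\to Q\to 0$ (with $Q$ finite, being annihilated by the conductor of $\cO_{\cC,q}$ in $A'_q$, whose quotient is a finite ring) against $\cO_{\cC,q}/(d_q)$ to obtain
\[ 0\to Q[d_q]\to\cO_{\cC,q}/(d_q)\to A'_q/(d_qA'_q)\to Q/d_qQ\to 0, \]
and then use $|Q[d_q]|=|Q/d_qQ|$, which holds for any endomorphism of a finite abelian group. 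Summing over closed $q\in\cC$ and extending to all $p\in\Spec(\cO_K)$ (noting that $\ord_p(d_{\nu(p)})=0$ when $\nu(p)\notin\supp(\cD)$, since there $d_{\nu(p)}$ is a unit) completes the identity, and dividing by $[K:\QQ]$ matches the factor in $\mu$.

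The main technical obstacle is the local transfer identity above, which is where the potential non-normality of $\cC$ at $q$ intervenes: because $\cO_{\cC,q}$ and $A'_q$ can have distinct residue-field indices, one cannot simply compare lengths; the Tor/finiteness argument is needed to equate the raw cardinalities. Everything else in the proof is either bookkeeping of the splitting of $\mu$ or a direct unpacking of Definition~\ref{definition_height_of_a_point_w_r_t_arithmetic_R_divisor}.
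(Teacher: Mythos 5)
Your proof is correct and takes essentially the same approach as the paper: reduce to an effective Cartier $\ZZ$-divisor via Lemma~\ref{lemma_divisors_here_are_the_same_is_in_Ikoma_up_to_a_principal} and the product formula, split off the Archimedean part directly, and then match the non-Archimedean contribution by comparing lengths on $\cC=\ov{\{x\}}$ with lengths on its normalization $\Spec(\cO_K)$. The only cosmetic difference is that you run this comparison locally at each $q\in\cC$ via the Tor-sequence of $0\to\cO_{\cC,q}\to A'_q\to Q\to 0$, whereas the paper does it globally with a snake-lemma diagram for $A\hookrightarrow\cO_K$ and then localizes to identify $\cok f$ with $\cok g$ — but these are the same homological argument in two packagings.
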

\begin{proof}
    By Lemma~\ref{lemma_divisors_here_are_the_same_is_in_Ikoma_up_to_a_principal} we can write $\ov{\cD} = \sum_{i=1}^m \alpha_i \ov{\cD}_i + \adiv(f)$ with $\alpha_i$ real, $\cD_i$ effective Cartier, such that $x \not\in \supp(\cD_i)$ for all $i=1, \dots, m$ and where $f$ is an $\RR$-rational function on $\cX$. Note that
    \[ \supp(\cdiv(f)) \subset \supp(\cD) \cup \bigcup_{i=1}^m \supp(\cD_i) \]
    and $x$ is not in the right hand side, so $x \not\in \supp(\adiv(f))$. Since both the height and the integral are linear in $\ov{\cD}$ and give zero on $\adiv(f)$ (see the product formula in Example~\ref{example_number_field_GVF}), it is enough to check that the lemma holds for an arithmetic $\ZZ$-divisor of $C^0$-type $\ov{\cD}$ with $\cD$ effective. This is standard, but for convenience we provide a proof here.

    By unraveling the definitions, we need to prove that
    \[ \log \# \bigl(\cO_{\cC}(\cD)/\cO_{\cC}\bigl) = \sum_{p \in \Spec(\cO_K)} \adeg_{\ord_p}(\ov{\cD}|\ov{\{x\}}) \cdot \log \# \kappa(p). \]
    Let $\cC$ be the closure of $x$ in $\cX$ (with the reduced scheme structure). By valuative criterion of properness (not directly, one has to use it for primes of $\cO_K$), we get a diagonal map $\phi$ on the commutative diagram
    \[\begin{tikzcd}
        {\Spec(K)} & \cX \\
        {\Spec(\cO_K)} & {\Spec(\ZZ)}
        \arrow[from=1-2, to=2-2]
        \arrow[from=1-1, to=2-1]
        \arrow["x", from=1-1, to=1-2]
        \arrow[from=2-1, to=2-2]
        \arrow["\phi"{description}, dashed, from=2-1, to=1-2].
    \end{tikzcd}\]
    Since $x$ is the generic point of $\cC$, $\phi$ factors through $\cC$. As $\cC \to \Spec(\ZZ)$ is projective and has finite fibers (as the composition with $\Spec(\cO_K) \to \cC$ has finite fibers), by \cite[Theorem 19.1.7]{Ravi_Vakil_FOAG} we get that $\cC$ is affine (and finite) over $\Spec(\ZZ)$. Let $\cC = \Spec (A)$. Note that then $\ZZ \subset A \subset \cO_K \subset K$ and $\cO_K$ is the integral closure of $A$. Let $I, J$ be the ideals of $\cD$ in $A, \cO_K$ respectively (more precisely $J$ is the ideal of the pullback of $\cD$ to $\Spec(\cO_K)$). Consider the diagram coming from the snake lemma
    \[\xymatrix{
      & 0 \ar[r]\ar[d] & 0 \ar[r]\ar[d] & \ker h \ar[d]\\
      0 \ar[r] & I \ar[r]\ar[d]_(.6)f|(.3)\hole
          & A \ar[r]\ar[d]_(.6)g|(.3)\hole
          & A/I \ar[r]\ar[d]_(.6)h|(.3)\hole="hole" & 0 \\
      0\ar[r] & J \ar[r]\ar[d] & \cO_K \ar[r]\ar[d] & \cO_K/J \ar[d]\ar[r] & 0 \\
      & \cok f \ar[r]
          \ar@{<-} `l [lu]-(.5,0) `"hole" `[rrruu]+(.5,0) `[rruuu] [rruuu]
          & \cok g \ar[r] & \cok h \ar[r] & 0. \\}\]
    Note that $\log\# \bigl(\cO_{\cC}(\cD)/\cO_{\cC}\bigl) = \log\# (A/I)$. We now show that $\# (A/I) = \# (\cO_K/J)$. If $q \in \Spec(A)$, then we can localise everything at $q$ and as $\cD$ is an effective Cartier divisor, it means that in $A_q$ the ideal $I_q$ is generated by a single element $a \in A_q$. Also $J_q$ is generated by $a$ in $(\cO_K)_q$. Thus $(\cok f)_q \simeq (\cok g)_q$. We calculate
    \[ \log \#(\cok f) = \sum_{q \in \Spec(A)} \log \#(\cok f)_q = \sum_{q \in \Spec(A)} \log \#(\cok g)_q = \log \#(\cok g). \]
    Using two exact sequences
    \[\xymatrix{ 0 \ar[r] & \ker h \ar[r] & \cok f \ar[r] & \cok g \ar[r] & \cok h \ar[r] & 0 }\]
    \[\xymatrix{ 0 \ar[r] & \ker h \ar[r] & A/I \ar[r] & \cO_K/J \ar[r] & \cok h \ar[r] & 0 }\]
    we get that 
    \[ \log \#(\ker h) - \log \#(\cok f) + \log \#(\cok g) - \log \#(\cok h) = 0, \]
    \[ \log \#(\ker h) - \log \#(A/I) + \log \#(\cO_K/J) - \log \#(\cok h) = 0. \]
    Putting it all together we get that indeed $\log\# (A/I) = \log\# (\cO_K/J)$. 
    We calculate
    \[ \log\# (\cO_K/J) = \sum_{p \in \Spec(\cO_K)} \log\# (\cO_K/J)_p = \sum_{p \in \Spec(\cO_K)} \ord_p(J) \log\# \kappa(p), \]
    where by $\ord_p(J)$ we mean the value of the valuation $\ord_p$ at a local equation for $J$ at $p$. By the definition $\adeg_{\ord_p}(\ov{\cD}|\ov{\{x\}}) = \ord_p(J)$, which finishes the proof.
\end{proof}

\begin{lemma}~\label{lemma_height_with_respect_to_divisor_max_div_two_and_zero}
    Let $x \in X = \cX \otimes \QQ$ be a closed point and let $\ov{\cD}=(\cdiv(2),0)$. Then
    \[ h_{\ov{\cD}}(x) = \log(2). \]
\end{lemma}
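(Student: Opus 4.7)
The plan is to compute the height directly from Definition~\ref{definition_height_of_a_point_w_r_t_arithmetic_R_divisor}, since both $\cdiv(2)$ (regarded as a Cartier divisor pulled back from $\Spec(\ZZ)$) and the Green function are particularly simple. Observe first that the support of $\cdiv(2)$ is contained in the fiber of $\cX \to \Spec(\ZZ)$ over the prime $(2)$, while $x \in X = \cX_\QQ$ lies in the generic fiber, so automatically $x \notin \supp(\cdiv(2))$ and the definition applies. Since the Green function is identically zero, the Archimedean contribution vanishes, and we are reduced to showing
\[
\log \#\bigl(\cO_\cC(\cdiv(2))/\cO_\cC\bigr) = [\kappa(x):\QQ] \cdot \log 2,
\]
where $\cC = \ov{\{x\}} \subset \cX$.

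The main step is the length computation. Following the argument already used in Lemma~\ref{lemma_local_and_global_arithmetic_degree}, let $K = \kappa(x)$, let $\cC = \Spec(A)$ with $\ZZ \subset A \subset \cO_K$, and let $I \subset A$, $J \subset \cO_K$ be the ideals corresponding to $\cdiv(2)$ on $\cC$ and on $\Spec(\cO_K)$ respectively. The snake lemma comparison in that proof shows $\log \#(A/I) = \log \#(\cO_K/J)$, so one can replace $\cC$ by $\Spec(\cO_K)$. Since $\cdiv(2)$ pulls back to the divisor cut out by the constant function $2$, we have $J = 2\cO_K$, and $\cO_K/2\cO_K$ is a free $\FF_2$-module of rank $[K:\QQ]$ because $\cO_K$ is a free $\ZZ$-module of that rank. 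Therefore $\#(\cO_K/J) = 2^{[K:\QQ]}$, giving the claimed identity and $h_{\ov{\cD}}(x) = \log 2$.

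Alternatively, one can bypass the snake lemma by invoking Lemma~\ref{lemma_local_and_global_arithmetic_degree} directly and writing
\[
h_{\ov{\cD}}(x) = \int_{\Val_K} \adeg_v(\ov{\cD}|\ov{\{x\}})\, d\mu(v).
\]
All Archimedean contributions vanish because $g = 0$. For a non-Archimedean $v = \ord_\mathfrak{p}$ with $\mathfrak{p} \in \Spec(\cO_K)$ lying over a rational prime $p_0$, the valuation of the local equation $2$ is $e_\mathfrak{p}$ if $p_0 = 2$ and $0$ otherwise. Summing and using the standard identity $\sum_{\mathfrak{p}|2} e_\mathfrak{p} f_\mathfrak{p} = [K:\QQ]$ together with $\log\#\kappa(\mathfrak{p}) = f_\mathfrak{p} \log 2$ yields
\[
h_{\ov{\cD}}(x) = \frac{1}{[K:\QQ]} \sum_{\mathfrak{p} \mid 2} e_\mathfrak{p} f_\mathfrak{p} \log 2 = \log 2.
\]

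There is no real obstacle here; the content of the lemma is essentially that the arithmetic degree of the Cartier divisor $\cdiv(2)$ on a point recovers the usual formula $\log\#(\cO_K/2\cO_K)/[K:\QQ]$, so the only care needed is bookkeeping between the possibly non-normal closure $\cC$ and its normalisation $\Spec(\cO_K)$, which has already been handled in the preceding lemma.
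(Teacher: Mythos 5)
Your proof is correct, but it takes a genuinely different route from the paper's one‑line argument. The paper observes that $\ov{\cD} = (\cdiv(2), 0) = \adiv(2) + (0, 2\log 2)$, where $\adiv(2) = (\cdiv(2), -\log|2|^2)$ is a principal arithmetic divisor; since $h$ vanishes on principal arithmetic divisors (by the product formula, as recorded in Definition~\ref{definition_height_of_a_point_w_r_t_arithmetic_R_divisor}), the height reduces immediately to that of the constant Green function $2\log 2$, giving $\frac{1}{2[\kappa(x):\QQ]}\sum_{\sigma} 2\log 2 = \log 2$. You instead compute the height directly from the definition: the Archimedean term vanishes because $g = 0$, and the non‑Archimedean term is $\log\#(A/2A) = \log\#(\cO_K/2\cO_K) = [K:\QQ]\log 2$, using the snake‑lemma comparison (already set up in Lemma~\ref{lemma_local_and_global_arithmetic_degree}) to pass from $A = \cO(\cC)$ to the maximal order $\cO_K$, and the freeness of $\cO_K$ as a $\ZZ$‑module of rank $[K:\QQ]$. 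Both approaches are valid; the paper's exploits the invariance of $h$ under adding principal arithmetic divisors, so it dispenses with the non‑Archimedean computation entirely, whereas yours is more self‑contained and makes the length calculation explicit. Your second variant via Lemma~\ref{lemma_local_and_global_arithmetic_degree} and the identity $\sum_{\mathfrak{p}\mid 2} e_\mathfrak{p} f_\mathfrak{p} = [K:\QQ]$ is an equivalent packaging of the same count.
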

\begin{proof}
    Note that $\ov{\cD} = \adiv(2) + (0, 2 \log(2))$. Hence we get
    \[ h_{\ov{\cD}}(x) = h_{(0, 2 \log(2))}(x) = \frac{1}{2 [\kappa(x):\QQ]} \sum_{\sigma:\kappa(x) \to \CC} 2 \log(2) = \log(2). \]
\end{proof}

\begin{lemma}~\label{lemma_behaviour_of_height_for_birational_maps_and_pullbacks_proj_formula}
    Assume that $\cX'$ is a generically smooth, normal arithmetic variety. Let $f:\cX' \to \cX$ be a generically finite map and let $x' \in X' = \cX' \otimes \QQ$ be a closed point. Then for $x = f(x') \in X = \cX \otimes \QQ$ and for any arithmetic $\RR$-divisor of $C^0$-type $\ov{\cD}$ on $\cX$ we have
    \[ h_{f^* \ov{\cD}}(x') = h_{\ov{\cD}}(x). \]
\end{lemma}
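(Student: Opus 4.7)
The plan is to reduce to the case of an effective Cartier divisor and then compare the two heights via the integral formula of Lemma~\ref{lemma_local_and_global_arithmetic_degree}, using the classical fundamental identities for finite extensions of number fields.

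First I would perform the reduction. By Lemma~\ref{lemma_divisors_here_are_the_same_is_in_Ikoma_up_to_a_principal}, applied at the point $x \in \cX$, write $\ov{\cD} = \sum_{i=1}^m \alpha_i \ov{\cD}_i + \adiv(\phi)$ with $\alpha_i \in \RR$, the $\cD_i$'s effective Cartier divisors such that $x \notin \supp(\cD_i)$ for every $i$, and $\phi$ an $\RR$-rational function on $\cX$. Since $f$ is a morphism, $f^{-1}(\cX \setminus \supp(\cD_i)) \subset \cX' \setminus \supp(f^*\cD_i)$, so $x' \notin \supp(f^*\cD_i)$ for all $i$, and moreover $f^*\adiv(\phi) = \adiv(\phi \circ f)$. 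Both sides of the claimed equality are linear in $\ov{\cD}$, and both vanish on principal arithmetic $\RR$-divisors by the product formula on number fields (this is what makes Definition~\ref{definition_height_of_a_point_w_r_t_arithmetic_R_divisor} well-defined). Hence it suffices to treat the case where $\ov{\cD}$ is an arithmetic $\ZZ$-divisor of $C^0$-type with $\cD$ effective Cartier and $x \notin \supp(\cD)$ (so also $x' \notin \supp(f^*\cD)$).

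Next I would invoke Lemma~\ref{lemma_local_and_global_arithmetic_degree}, which expresses both heights as integrals over spaces of valuations. Let $K = \kappa(x)$ and $K' = \kappa(x')$; since $x, x'$ are closed points of the generic fibers and $f$ is generically finite, $f$ induces a finite extension of number fields $K \hookrightarrow K'$. For a prime $p$ of $\cO_K$ lying below a prime $p'$ of $\cO_{K'}$, let $e_{p'/p}$ and $f_{p'/p}$ denote the ramification index and residue degree. If $d$ is a local equation for $\cD$ at the image point of $\ord_p$ in $\cX$, then $f^*d$ is a local equation for $f^*\cD$ at the corresponding image of $\ord_{p'}$ in $\cX'$, and $\ord_{p'}(f^* d|_{x'}) = e_{p'/p} \cdot \ord_p(d|_x)$, so
\[
\adeg_{\ord_{p'}}(f^*\ov{\cD}\,|\,\ov{\{x'\}}) = e_{p'/p} \cdot \adeg_{\ord_p}(\ov{\cD}\,|\,\ov{\{x\}}).
\]
Analogously, for an embedding $\sigma': K' \hookrightarrow \CC$ with restriction $\sigma = \sigma'|_K$, the point $(x')^{\sigma'} \in \cX'(\CC)$ maps to $x^{\sigma} \in \cX(\CC)$ under $f$, so the pullback Green function gives
\[
\adeg_{-\log|\sigma'(\cdot)|}(f^*\ov{\cD}\,|\,\ov{\{x'\}}) = \adeg_{-\log|\sigma(\cdot)|}(\ov{\cD}\,|\,\ov{\{x\}}).
\]

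Finally I would sum everything up. Grouping the non-Archimedean part by primes $p$ of $\cO_K$ and the Archimedean part by embeddings $\sigma: K \hookrightarrow \CC$, and using
\[
\sum_{p' \mid p} e_{p'/p} f_{p'/p} = [K' : K]
\qquad \text{and} \qquad
\#\{\sigma': K' \hookrightarrow \CC \text{ restricting to } \sigma\} = [K' : K],
\]
each contribution acquires a factor $[K':K]$, which cancels with the ratio $[K':\QQ]/[K:\QQ] = [K':K]$ in the normalizations. The two integrals match, giving $h_{f^*\ov{\cD}}(x') = h_{\ov{\cD}}(x)$. The main bookkeeping obstacle is checking that pullback local equations pick up precisely the ramification factor and that the Archimedean and non-Archimedean normalizations combine with the $e f$-identity to yield exactly $[K':K]$; once this is verified the result follows immediately.
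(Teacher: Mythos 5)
Your proof is correct, but it takes a genuinely different route from the paper's. The paper reduces to the effective Cartier case as you do, but then restricts everything to the arithmetic curves $\cC = \ov{\{x\}}$ and $\cC' = \ov{\{x'\}}$, observes that $\cC' \to \cC$ is finite of degree $\deg(x')/\deg(x)$, and invokes the arithmetic projection formula (Theorem~\ref{theorem_arithmetic_projection_formula}) as a black box: $\adeg(f^*\ov{\cL}|\cC') = \adeg(\ov{\cL}|f_*\cC') = \frac{\deg(x')}{\deg(x)} \adeg(\ov{\cL}|\cC)$, and then divides by $\deg(x')$. You instead unwind the definitions through Lemma~\ref{lemma_local_and_global_arithmetic_degree} and establish the result place-by-place, using $\ord_{p'}|_K = e_{p'/p}\ord_p$, $\log\#\kappa(p') = f_{p'/p}\log\#\kappa(p)$, the embedding-counting identity, and the fundamental identity $\sum_{p'|p} e_{p'/p}f_{p'/p} = [K':K]$. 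Your approach is essentially a from-scratch rederivation of the one-dimensional projection formula in this situation; it is more elementary and more self-contained, whereas the paper's version is shorter and stays within the intersection-theoretic package of Section 2. One small remark: when you assert that pulling back the local equation $d$ at $q$ gives a local equation for $f^*\cD$ at the specialization $q'$ of $x'$ along $\ord_{p'}$, it is worth noting explicitly that $f(q') = q$, which follows from the uniqueness part of the valuative criterion of properness applied to $\cO_{\ord_{p'}} \cap K = \cO_{\ord_p}$; you use this implicitly but it is the one piece of bookkeeping that should be stated rather than left tacit.
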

\begin{proof}
    As in Definition~\ref{definition_height_of_a_point_w_r_t_arithmetic_R_divisor}, we reduce to the case that $\ov{\cD}$ is an arithmetic $\ZZ$-divisor of $C^0$-type with $\cD$ effective and $x \not\in \supp(\cD)$. Let $\cC = \ov{\{x\}}$ and $\cC' = \ov{\{x'\}}$. Then $\cC' \to \cC$ is a finite morphism of degree $\deg(x'/x) := \deg(x')/\deg(x)$. Both sides of the equality depend only on the restriction of $\ov{\cD}$ to $\cC$, so we can without loss of generality assume that $\cX = \cC, \cX' = \cC'$. In this case $\ov{\cD}$ is a $\ZZ$-divisor of $C^{\infty}$-type. Let $\ov{\cL} = \cO(\ov{\cD})$ be the corresponding metrized line bundle. By Theorem~\ref{theorem_arithmetic_projection_formula} we get
    \[ \adeg(f^*\ov{\cL}|\cC') = \adeg(\ov{\cL}|f_* \cC') = \frac{\deg(x')}{\deg(x)} \cdot \adeg(\ov{\cL}|\cC). \]
    Thus
    \[ h_{f^* \ov{\cD}}(x') = \frac{\adeg(f^*\ov{\cL}|\cC')}{\deg(x')} = \frac{\adeg(\ov{\cL}|\cC)}{\deg(x)} = h_{\ov{\cD}}(x). \]
\end{proof}

\subsection{Positivity of arithmetic $\RR$-divisors}

\begin{definition}~\label{definition_properties_of_arithmetic_R_divisors}
    We use the following notions regarding an arithmetic $\RR$-divisor of $C^0$-type $\ov{\cD}=(\cD,g)$ on $\cX$.
    \begin{enumerate}
        \item We write $\ov{\cD} \geq 0$ if $\cD \geq 0$ and $g \geq 0$. Note that this does not depend on the choice of $g$ since if $g'$ extends $g$ and $g'(x) < 0$ for some $x \in \cX(\CC)$, then the set $\{y \in \cX(\CC) : g'(y) <0\}$ is open, so it has to intersect the domain of $g$ which is a dense open subset of $\cX(\CC)$. We then call $\ov{\cD}$ \textit{effective}. If $\ov{\cE}$ is another arithmetic $\RR$-divisor of $C^0$-type on $\cX$, then $\ov{\cD} \geq \ov{\cE}$ means that $\ov{\cD} - \ov{\cE} \geq 0$.
        \item There is a natural norm $|\cdot|^{\ov{\cD}}$ associating to each rational function $f \in  H^0(\cX, \cD)$ a function
        \[ |f|^{\ov{\cD}} : \cX(\CC) \to \RR, \]
        which is the unique continuous extension of $|f| \cdot \exp(-g/2)$ on $\cX(\CC)$. For well definiteness, see \cite[Proposition 1.4.2]{Moriwaki_Adelic_divisors_on_Arithmetic_Varieties}. It induces a supremum norm
        \[ \|\cdot\|_{\sup}^{\ov{\cD}} : H^0(\cX, \cD) \to \RR. \]
        A rational function $f \in H^0(\cX, \cD)$ is called \textit{small}, if $\|f\|_{\sup}^{\ov{\cD}} \leq 1$. It is called \textit{strictly small} if $\|f\|_{\sup}^{\ov{\cD}}<1$. The set of small sections is denoted by $\aH^0(\cX, \ov{\cD})$.
        \item Assume that $\ov{\cD}$ is an arithmetic $\ZZ$-divisor of $C^{\infty}$-type. Let $L$ be the (complex analytic) line bundle over $\cX(\CC)$ induced by the base change of $\cO_{X}(\cD_{\QQ})$ to $\CC$ (recall that $X = \cX_\QQ$). Then for a Zariski open $\cU \subset \cX$ where $d \in \kappa(\cX)^{\times}$ is a local equation for $\cD$ and a section $s$ of $L$ over $\cU(\CC)$ one can define a norm of $s$ by presenting $s = f \cdot d^{-1}$ for some holomorphic function $f:\cU(\CC) \to \CC$ and by putting
        \[ |s|_{\ov{L}} := |f| \cdot \exp(-(g+\log|d|^2)/2), \]
        where by $g+\log|d|^2$ we mean the unique continuous (even smooth) extension of this function to $\cU(\CC)$. In fact, this metric comes from a unique hermitian product on $L$ and the Chern form of the pair $(L, |\cdot|_{\ov{L}})$ is denoted by $c_1(\ov{\cD})$ (see Definition~\ref{definition_hermitian_line_bundle}). Note that $c_1(\ov{\cD})|_{\cU(\CC)} = \frac{i}{2\pi}\partial\ov{\partial} (g+\log|d|^2)$.
        \item The \textit{arithmetic volume} of $\ov{\cD}$ is defined to be the following number
        \[ \avol(\ov{\cD}) := \limsup_{m \to \infty} \frac{\log \# \aH^0(\cX, m \ov{\cD})}{m^{d+1}/(d+1)!}. \]
        The arithmetic volume only depends on the class of $\ov{\cD}$ in $\aN^1(\cX)$. Also, recall that \textit{volume} of an $\RR$-divisor $\cD_{\QQ}$ on the generic fiber $X = \cX_\QQ$ is
        \[ \vol(\cD_\QQ) := \limsup_{m \to \infty} \frac{\dim_{\QQ} H^0(X, m \cD_\QQ)}{m^d/d!}. \]
    \end{enumerate}
\end{definition}

\begin{remark}~\label{remark_alternative_definition_of_effectivity_of_arithmetic_R_divisors}
    Note that for an arithmetic $\RR$-divisor of $C^0$-type $\ov{\cD}$ on $\cX$, it is effective if and only if for every valuation $v$ on $\kappa(\cX)$ (non-Archimedean or Archimedean) we have $\GVFpairing(v, \ov{\cD}) \geq 0$.
\end{remark}

\begin{notation}
    If $\ov{\cD}$ is an arithmetic $\ZZ$-divisor of $C^{\infty}$-type on $\cX$, we denote by $\cO(\ov{\cD})$ the pair $(\cO_\cX(\cD), |\cdot|)$, where $|\cdot|$ is the smooth hermitian metric of real type described in Definition~\ref{definition_properties_of_arithmetic_R_divisors}. Note that in this case $\aH^0(\cX, \ov{\cD}) = \aH^0(\cX, \cO(\ov{\cD}))$ (both definitions of small sections agree).
\end{notation}

\begin{remark}~\label{remark_passing_from_hermitian_line_bundle_to_an_arithmeic_divisor}
    If $\ov{\cL}$ is a hermitian line bundle on $\cX$, then a rational section $s$ of $\cL$ defines an arithmetic $\ZZ$-divisor of $C^{\infty}$-type on $\cX$. Namely, take the pair $\adiv(s) = (\cdiv(s), -\log|s|_{\ov{\cL}}^2)$. Then $\cO(\adiv(s)) \simeq \ov{\cL}$ as hermitian line bundles. Moreover, if $\ov{\cL} = \cO(\ov{\cD})$ and $s$ is the rational section corresponding to the rational function $1$, then $\adiv(s) = \ov{\cD} = (\cD, g)$. To see that $-\log|s|_{\ov{\cL}}^2 = g$, take an open subset $\cU$ where $d$ is a local equation for $\cD$ and where $d \in \cO_\cX(\cU)^{\times}$. Then $s = d \cdot d^{-1}$, so by the definition we have
    \[ |s|_{\ov{\cL}} = |d| \cdot \exp(-(g+\log|d|^2)/2) = \exp(-g/2). \]
\end{remark}

\begin{remark}
    Let $\ov{\cL} = \cO(\ov{\cD})$ for an arithmetic $\ZZ$-divisor of $C^{\infty}$-type $\ov{\cD}$ on $\cX$. Then for a closed point in the generic fiber $x \in \cX_{\QQ} \subset \cX$, Definition~\ref{definition_height_of_a_point_w_r_t_arithmetic_R_divisor} and the arithmetic degree map from Theorem~\ref{theorem_intersection_product_of_hermitian_line_bundles} agree, i.e.,
    \[ \adeg(\ov{\cD}|\ov{\{x\}}) = \adeg(\ov{\cL}|\ov{\{x\}}). \]
\end{remark}

\begin{definition}~\label{definition_properties_of_arithmetic_R_divisors_big_etc}
    We gather here properties of arithmetic $\RR$-divisors of $C^0$-type on $\cX$ crucial for the proof of Theorem~\ref{theorem_main_arithmetic_approximation_theorem}.
    \begin{enumerate}
        \item An arithmetic $\RR$-divisors of $C^0$-type $\ov{\cD}$ on $\cX$ is \textit{big}, if $\avol(\ov{\cD}) > 0$.
        \item An arithmetic $\RR$-divisors of $C^0$-type $\ov{\cE}$ on $\cX$ is \textit{pseudo-effective}, if for all big arithmetic $\RR$-divisors of $C^0$-type $\ov{\cB}$ on $\cX$, the sum $\ov{\cB} + \ov{\cE}$ is big.
        \item We say that $\ov{\cD}=(\cD,g) \in \Adiv_{\RR}(\cX)$ is \textit{nef}, if the following conditions are satisfied:
        \begin{itemize}
            \item $\cD$ is relatively nef,
            \item $\ov{\cD}$ is an arithmetic $\RR$-divisor of $(C^0 \cap \PSH)$-type,
            \item $h_{\ov{\cD}}(x) \geq 0$ for all closed points $x \in X = \cX_\QQ$.
        \end{itemize}
        \item $\ov{\cD} \in \Adiv_{\RR}(\cX)$ is called \textit{integrable}, if it is in a real vector space generated by nef arithmetic $\RR$-divisors of $C^0$-type. The space of integrable $\RR$-divisors of $C^0$-type on $\cX$ is denoted by $\Idiv_{\RR}(\cX)$.
        \item $\ov{\cD} \in \Adiv_{\RR}(\cX)$ is called \textit{ample}, if there exists a decomposition $\ov{\cD} = \sum_{i=1}^m \alpha_i \ov{\cA}_i$ with $\alpha_i > 0$ for $i=1, \dots, m$ and where $\ov{\cA}_i$ are arithmetic $\ZZ$-divisors of $C^{\infty}$-type such that each $\cO(\ov{\cA}_i)$ is an ample hermitian line bundle.
    \end{enumerate}
\end{definition}

\begin{lemma}~\label{lemma_effective_are_pseudo_effective}
    Let $\ov{\cD}$ be an arithmetic $\RR$-divisor of $C^0$-type on $\cX$. If $\ov{\cD}$ is big, then there is a natural number $n \neq 0$ and  a rational function $f \in \kappa(\cX)^{\times}$ such that $\adiv(f) + n \ov{\cD}$ is effective on $\cX$. Moreover, $\cD_{\QQ}$ is big. Also, if $\ov{\cD}$ is effective on $\cX$, then it is pseudo-effective on $\cX$.
\end{lemma}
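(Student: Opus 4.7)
The plan is to prove the three assertions separately; the first and third are essentially formal from the definitions, whereas the second requires an arithmetic lattice-point estimate.

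For the first assertion, bigness gives $\avol(\ov{\cD}) > 0$, so by the $\limsup$ definition there exists a natural number $n \geq 1$ with $\aH^0(\cX, n\ov{\cD})$ nontrivial. I pick any nonzero $f$ in this set. By definition of a small section, $f \in H^0(\cX, n\cD)$, equivalently $n\cD + \cdiv(f) \geq 0$, and $\|f\|_{\sup}^{n\ov{\cD}} \leq 1$, the latter being the pointwise inequality $|f|\exp(-ng/2) \leq 1$, i.e.\ $ng - \log|f|^2 \geq 0$ on a dense open subset of $\cX(\CC)$. Hence both components of
\[ n\ov{\cD} + \adiv(f) = \bigl( n\cD + \cdiv(f),\ ng - \log|f|^2 \bigr) \]
are nonnegative, witnessing effectivity.

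For the second assertion, I intend to use the standard comparison between arithmetic and geometric volumes via a lattice-point count. For each $m$ the abelian group $H^0(\cX, m\cD)$ is a torsion-free $\ZZ$-module of some rank $r_m$, because it embeds into $\kappa(\cX)$. Equipping this lattice with the sup-norm attached to $m\ov{\cD}$, a standard Minkowski-type bound (as in \cite{yuan_2009_Fujita_approximation} or \cite{Moriwaki_Zariski_arithmetic_decompositions}) yields
\[ \log \#\aH^0(\cX, m\ov{\cD}) \leq r_m \bigl(A \log m + B\bigr), \]
with constants $A, B$ independent of $m$. If $\avol(\ov{\cD}) > 0$, the left-hand side is of order $m^{d+1}$ along a subsequence, forcing $r_m$ to be at least of order $m^{d+1}/\log m$ on that subsequence. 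Combined with the inequality $r_m \leq \dim_\QQ H^0(X, m\cD_\QQ)$ obtained by extending scalars from $\ZZ$ to $\QQ$, this shows that $\dim_\QQ H^0(X, m\cD_\QQ)$ grows at least as fast as a constant times $m^d$, hence $\vol(\cD_\QQ) > 0$.

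For the third assertion, let $\ov{\cD}$ be effective and $\ov{\cB}$ big. I claim $\aH^0(\cX, m\ov{\cB}) \subseteq \aH^0(\cX, m(\ov{\cB} + \ov{\cD}))$ for every $m$. Indeed, if $s$ lies in the left-hand side then $m\cB + \cdiv(s) \geq 0$ and $|s|\exp(-m g_\cB / 2) \leq 1$; adding the effective divisor $m\cD$ preserves the divisorial inequality, while multiplying by $\exp(-m g_\cD / 2) \leq 1$ (using $g_\cD \geq 0$) preserves the norm bound. Therefore $\avol(\ov{\cB}+\ov{\cD}) \geq \avol(\ov{\cB}) > 0$, so $\ov{\cB}+\ov{\cD}$ is big. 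The main obstacle is the second assertion: the other two are straightforward consequences of the definitions of effectivity and of small sections, but this one requires invoking a nontrivial arithmetic lattice-counting estimate relating $\#\aH^0$ to the rank of the underlying $\ZZ$-module.
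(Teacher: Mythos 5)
Your arguments for the first and third assertions are correct and essentially match the paper's (the paper explicitly skips the first, and your third is identical in spirit: effectivity of $\ov{\cD}$ gives $\aH^0(\cX, m\ov{\cB}) \subseteq \aH^0(\cX, m(\ov{\cB}+\ov{\cD}))$, hence $\avol(\ov{\cB}+\ov{\cD}) \geq \avol(\ov{\cB})$).

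The second assertion, however, has a genuine gap: the claimed Minkowski-type bound
\[ \log\#\aH^0(\cX, m\ov{\cD}) \leq r_m\bigl(A\log m + B\bigr) \]
with constants $A,B$ independent of $m$ is false. The count of small sections depends on the metric, not just the rank, and the Green-function contribution to the sup-norm scales \emph{linearly} in $m$. Concretely, take $\cX = \PP^1_\ZZ$ and $\ov{\cD} = (0,c)$ for a constant $c>0$: then $H^0(\cX, m\cdot 0) = \ZZ$, so $r_m = 1$ for all $m$, while $\aH^0(\cX, m\ov{\cD}) = \{ n \in \ZZ : |n| \leq e^{mc/2} \}$ has about $2e^{mc/2}$ elements, giving $\log\#\aH^0 \sim mc/2$, which eventually exceeds $A\log m + B$. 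More damagingly, if your bound held in general, it would rule out bigness of every arithmetic divisor: from $\dim_\QQ H^0(X, m\cD_\QQ) = O(m^d)$ you would get $r_m = O(m^d)$, hence $\log\#\aH^0 = O(m^d\log m) = o(m^{d+1})$, forcing $\avol(\ov{\cD}) = 0$ always. A correct lattice-point estimate (of Gillet--Soul\'e or Bost type) necessarily involves the covolume or successive minima of the normed lattice $(H^0(\cX, m\cD), \|\cdot\|^{m\ov{\cD}}_{\sup})$, not just its rank, and turning such an estimate into the desired conclusion requires controlling those extra arithmetic quantities.

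The paper's own proof of the second assertion sidesteps lattice counting entirely: fix an ample arithmetic $\RR$-divisor $\ov{\cA}$ of $C^\infty$-type; by continuity of $\avol$, the divisor $\ov{\cD} - \frac{1}{n}\ov{\cA}$ remains big for $n$ large; by the first assertion there is a $\QQ$-rational function $f$ with $\adiv(f) + \ov{\cD} - \frac{1}{n}\ov{\cA}$ effective; restricting to the generic fiber gives $\cdiv(f) + \cD_\QQ - \frac{1}{n}\cA_\QQ \geq 0$, whence $\vol(\cD_\QQ) \geq \vol(\frac{1}{n}\cA_\QQ) > 0$. This route only uses the first assertion, continuity of $\avol$, and monotonicity of $\vol$ on the generic fiber, with no need for Minkowski-type estimates.
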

\begin{proof}
    We skip the proof of the first part. For the second part let $\ov{\cA}$ be an ample $\RR$-divisor of $C^{\infty}$-type on $\cX$ and using the continuity of arithmetic volume (see Theorem~\ref{theorem_differentiability_of_arithmetic_volume}) pick a natural number $n$ with $\avol(\ov{\cD} - \frac{1}{n} \ov{\cA}) > 0$. By the first part of the lemma, there is a $\QQ$-rational function $f$ on $\cX$ with $\adiv(f) + \ov{\cD} - \frac{1}{n} \ov{\cA} \geq 0$. Thus $\cdiv(f) + \cD_{\QQ} - \frac{1}{n} \cA_{\QQ} \geq 0$, so $\vol(\cD_{\QQ}) \geq \vol(\frac{1}{n} \cA_{\QQ}) > 0$. For the last part, let $\ov{\cB}$ be a big arithmetic $\RR$-divisor of $C^0$-type on $\cX$. Since $\ov{\cD}$ is effective, the rational function $1$ is small as an element of  $H^0(\cX, \cD)$. Thus any small $s \in \aH^0(\cX, m\ov{\cB})$ is also a small element of $\aH^0(\cX, m(\ov{\cB}+\ov{\cD}))$. Hence $\avol(\ov{\cB}+\ov{\cD}) \geq \avol(\ov{\cB}) > 0$.
\end{proof}

\begin{proposition}~\label{proposition_Moriwaki_Proposition_2_4_2}~\cite[Proposition 2.4.2]{Moriwaki_Zariski_arithmetic_decompositions}
    Let $g$ be a $\cD$-Green function of $\cJ$-type (for $\cJ$ equal $C^0, \PSH \cap C^0$ or $C^{\infty}$) on $\cX$ and let
    \[ \cD = b_1 \cE_1 + \dots + b_r \cE_r \]
    be a decomposition such that $\cE_1, \dots, \cE_r \in \Div(\cX)$ and $b_1, \dots , b_r \in \RR$. Note that
    $\cE_i$ is not necessarily an irreducible Weil divisor. Then we have the following:
    \begin{enumerate}
        \item There are $g_1, \dots , g_r$ such that $g_i$ is an $\cE_i$-Green function of $\cJ$-type for each $i$ and $g = b_1 g_1 + \dots + b_r g_r$.
        \item If $\cE_1, \dots , \cE_r$ are effective, $b_1, \dots , b_r \geq 0, g \geq 0$, then there are $g_1, \dots , g_r$ such that $g_i$ is a non-negative $\cE_i$-Green function of $\cJ$-type for each $i$ and $g = b_1 g_1 + \dots + b_r g_r$.
    \end{enumerate}
\end{proposition}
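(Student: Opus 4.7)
The plan is to produce reference $\cE_i$-Green functions $g_i^0$ of $\cJ$-type for each $i$ by a partition-of-unity construction, and then to absorb the residual $h := g - \sum_i b_i g_i^0$ into a single summand by replacing $g_{j_0}^0$ with $g_{j_0}^0 + h/b_{j_0}$ for some $j_0$ with $b_{j_0} \neq 0$.

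For the construction of the $g_i^0$, first cover $\cX(\CC)$ by finitely many conjugation-invariant $\ZZ$-Zariski open sets $\cV_\alpha(\CC)$ on which each $\cE_i$ admits a local equation $f_{i,\alpha}$, and choose a conjugation-invariant $C^\infty$ partition of unity $\{\rho_\alpha\}$ subordinate to this cover. For $\cJ = C^\infty$ (and hence $\cJ = C^0$), setting $g_i^0 := -\sum_\alpha \rho_\alpha \log|f_{i,\alpha}|^2$ works: near a point $p \in \cV_\gamma(\CC)$ one computes
\[
g_i^0 + \log|f_{i,\gamma}|^2 = \sum_\alpha \rho_\alpha \log|f_{i,\gamma}/f_{i,\alpha}|^2,
\]
which extends smoothly because each ratio $f_{i,\gamma}/f_{i,\alpha}$ is a unit wherever both sides are defined and $\rho_\alpha \neq 0$. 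Granted such $g_i^0$, the residual $h = g - \sum b_i g_i^0$ becomes a $0$-Green function of $\cJ$-type, i.e., a genuine $\cJ$-type function on all of $\cX(\CC)$ with no logarithmic singularities, because the singular parts of $g$ and of $\sum b_i g_i^0$ along $\cD$ cancel. If every $b_i = 0$ then $\cD = 0$ and there is nothing to do beyond choosing the $g_i^0$; otherwise, setting $g_{j_0} := g_{j_0}^0 + h/b_{j_0}$ and $g_i := g_i^0$ for $i \neq j_0$ gives $\sum b_i g_i = g$ with each $g_i$ still an $\cE_i$-Green function of $\cJ$-type.

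For part (2), under the additional hypotheses of effectivity of the $\cE_i$, non-negativity of the $b_i$ and $g \geq 0$, I would first choose the reference functions $g_i^0$ to be non-negative: effectivity of $\cE_i$ forces $-\log|f_{i,\alpha}|^2 \to +\infty$ along $\supp(\cE_i)$, so after adding a sufficiently large constant the partition-of-unity output becomes $g_i^0 \geq 0$ on $\cX(\CC)$. Using compactness of $\cX(\CC)$ and scaling the constants appropriately one can arrange $\sum b_i g_i^0 \leq g$, so that $h \geq 0$. Distributing $h$ to a single index $j_0$ with $b_{j_0} > 0$ then yields $g_{j_0} = g_{j_0}^0 + h/b_{j_0} \geq 0$ while leaving the other $g_i = g_i^0 \geq 0$.

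The main obstacle is the $\cJ = C^0 \cap \PSH$ case in part (1), since this sheaf is not closed under subtraction or negative scalar multiples, and the smooth partition-of-unity construction does not preserve plurisubharmonicity. To handle it I would extract the PSH reference Green functions $g_i^0$ directly from the local data furnished by the hypothesis that $g$ is a PSH-type $\cD$-Green function, and carefully choose the absorption index $j_0$ so that its sign $b_{j_0}$ matches the PSH condition required of $h/b_{j_0}$; the smoothness of $h$ away from the supports of the $\cE_i$ (and the freedom to add smooth PSH corrections) is what makes this distribution step compatible with the PSH requirement.
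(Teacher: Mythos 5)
The paper itself gives no proof and cites Moriwaki directly, so I am evaluating your argument on its own terms. Your partition-of-unity construction of the reference $g_i^0$, and the absorption of the residual $h := g - \sum_i b_i g_i^0$ into a single $g_{j_0}$, correctly handles $\cJ = C^0$ and $\cJ = C^\infty$ in part (1): these sheaves are closed under subtraction and real scalar multiplication, so $h$ and $h/b_{j_0}$ stay of $\cJ$-type, and $g_{j_0}^0 + h/b_{j_0}$ is still an $\cE_{j_0}$-Green function of $\cJ$-type. However, there are two genuine gaps.

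The $\cJ = C^0 \cap \PSH$ case is not closed. You identify it as the obstruction, but ``extract the PSH reference Green functions directly from the local data'' and ``carefully choose the absorption index'' are not constructions. The structural problem is that once the $g_i^0$ are produced (by whatever means), the residual $h = g - \sum b_i g_i^0$ is at best $C^0$, since a continuous PSH function minus a $C^\infty$ function need not be PSH; consequently $g_{j_0}^0 + h/b_{j_0}$ fails to be of PSH-type regardless of the sign of $b_{j_0}$, and ``adding smooth PSH corrections'' to force it would break the identity $g = \sum b_i g_i$. A PSH-preserving mechanism (e.g., a regularized-max gluing of local PSH potentials, or a reduction that never subtracts a non-PSH correction) has to replace the linear absorption step, and none is supplied.

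Part (2) also has an unjustified step: ``scaling the constants appropriately one can arrange $\sum b_i g_i^0 \leq g$.'' Having added constants to make each $g_i^0 \geq 0$, the sum $\sum b_i g_i^0$ may well exceed $g$ (which can vanish at points away from $\supp(\cD)$), and subtracting constants back to restore $\sum b_i g_i^0 \leq g$ spoils $g_i^0 \geq 0$. Multiplicative rescaling is not available either, since the coefficient of $-\log|f_i|^2$ in $g_i^0$ is pinned by the requirement that $g_i^0$ be an $\cE_i$-Green function. A different construction is needed here — for instance, distributing $g$ itself among the indices via a partition of unity concentrated near each $\supp(\cE_i)$, with care taken when the supports overlap. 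As a minor remark, the case in which every $b_i=0$ forces $g=0$ and is not ``nothing to do''; presumably the statement implicitly assumes some $b_i\neq 0$.
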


\begin{lemma}~\label{lemma_every_Cartier_divisor_can_be_equipped_with_a_Green_function}
    Let $\cD$ be an $\RR$-Cartier divisor on $\cX$. Then there exists a $\cD$-Green function $g$ on $\cX$. 
\end{lemma}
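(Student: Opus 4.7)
The plan is to reduce to the case of a single Cartier divisor by linearity, and then to construct a Green function for a Cartier divisor by gluing local models via a smooth partition of unity.

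First, I would write $\cD = b_1 \cE_1 + \dots + b_r \cE_r$ with $\cE_i \in \Div(\cX)$ and $b_i \in \RR$. The operation of taking Green functions is additive in the obvious way: if $g_i$ is an $\cE_i$-Green function of $C^\infty$-type for each $i$, then $g := b_1 g_1 + \dots + b_r g_r$ is a $\cD$-Green function of $C^\infty$-type (in particular of $C^0$-type). So it suffices to construct a Green function for a single Cartier divisor $\cE$.

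Next, I would cover $\cX$ by finitely many affine Zariski open subsets $\cV_1, \dots, \cV_n$ on each of which $\cE$ has a local equation $f_\alpha \in \kappa(\cX)^\times$; this is possible because $\cE$ is Cartier and $\cX$ is quasi-compact. The analytifications $\cV_\alpha(\CC) \subset \cX(\CC)$ form a finite open cover of the compact complex space $\cX(\CC)$, and each $\cV_\alpha(\CC)$ is invariant under the complex conjugation $F_\infty$ since $\cV_\alpha$ is defined over $\ZZ$. Choose a smooth partition of unity $\{\rho_\alpha\}_{\alpha=1}^n$ on $\cX(\CC)$ subordinate to this cover, with each $\supp(\rho_\alpha)$ a compact subset of $\cV_\alpha(\CC)$. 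Replacing $\rho_\alpha$ by $\tfrac{1}{2}(\rho_\alpha + \rho_\alpha \circ F_\infty)$ (which preserves the partition-of-unity property and the support condition), I may assume each $\rho_\alpha$ is conjugation-invariant. Then I would define
\[
 g := \sum_{\alpha=1}^{n} \rho_\alpha \cdot \bigl(-\log|f_\alpha|^2\bigr)
\]
on $(\cX \setminus \supp(\cE))(\CC)$, where each summand $\rho_\alpha(-\log|f_\alpha|^2)$ is smooth on $\cV_\alpha(\CC) \setminus V(f_\alpha)$ and extends smoothly by zero outside the compact set $\supp(\rho_\alpha) \subset \cV_\alpha(\CC)$.

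The verification that $g$ is an $\cE$-Green function then reduces to a local check: near any point $p \in \cX(\CC)$, pick some $\beta$ with $p \in \cV_\beta(\CC)$, so that $f_\beta$ is a local equation for $\cE$ near $p$. Using $\sum_\alpha \rho_\alpha \equiv 1$, one computes on $(\cV_\beta \setminus \supp(\cE))(\CC)$
\[
 g + \log|f_\beta|^2 = \sum_\alpha \rho_\alpha \log\bigl|f_\beta/f_\alpha\bigr|^2.
\]
On the overlap $\cV_\alpha(\CC) \cap \cV_\beta(\CC)$, the quotient $f_\beta/f_\alpha$ is a unit in the local rings, so $\log|f_\beta/f_\alpha|^2$ is smooth; multiplying by $\rho_\alpha$, which has compact support inside $\cV_\alpha(\CC)$, makes the product extend smoothly by zero to all of $\cV_\beta(\CC)$. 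Hence $g + \log|f_\beta|^2$ extends to a $C^\infty$-function on $\cV_\beta(\CC)$, as required by Definition~\ref{definition_Green_function}. Conjugation invariance of $g$ is immediate from conjugation invariance of the $\rho_\alpha$ and of $|f_\alpha|$.

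The only subtle point in the argument is keeping the bookkeeping of supports straight so that each term of the sum really extends continuously across $V(f_\alpha)$; this is handled by the compactness of $\supp(\rho_\alpha)$ inside $\cV_\alpha(\CC)$, which guarantees that on $\cV_\beta(\CC) \setminus \cV_\alpha(\CC)$ the factor $\rho_\alpha$ vanishes in a whole neighborhood and hence kills any would-be singularity of $\log|f_\beta/f_\alpha|^2$ there. Everything else is formal, and the construction actually produces a $\cD$-Green function of $C^\infty$-type, which is stronger than what the lemma requires.
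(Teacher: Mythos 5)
Your proof is correct, and it takes a genuinely different route from the paper. Both begin with the same reduction to a single Cartier divisor. The paper then writes $\cD = (l\cH + \cD) - l\cH$ for an ample $\cH$ and $l$ large enough that $l\cH + \cD$ is ample, reducing further to the ample case, where it invokes \cite[Corollary 2.11]{Charles2021ArithmeticAA} to produce a hermitian metric on $\cO(\cD)$ and then takes $\adiv$ of the canonical rational section (as in Remark~\ref{remark_passing_from_hermitian_line_bundle_to_an_arithmeic_divisor}). You avoid both the ample reduction and the appeal to metrized line bundle machinery: you cover $\cX$ by affine opens carrying local equations, symmetrize a partition of unity over $F_\infty$, and set $g = \sum_\alpha \rho_\alpha(-\log|f_\alpha|^2)$. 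The verification via $g + \log|f_\beta|^2 = \sum_\alpha \rho_\alpha \log|f_\beta/f_\alpha|^2$ is exactly right, and the one delicate point you flag --- that compactness of $\supp(\rho_\alpha)$ inside $\cV_\alpha(\CC)$ forces each term to extend smoothly across all of $\cV_\beta(\CC)$ --- is indeed the crux and is handled correctly. The paper's proof is shorter because it outsources the construction to an existing theorem; yours is self-contained, elementary, and produces an explicit $C^\infty$-type Green function without any appeal to ampleness.
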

\begin{proof}
    It is enough to prove the lemma assuming that $\cD$ is a Cartier divisor. In this case we can use an ample Cartier divisor $\cH$ on $\cX$ and take a natural number $l$ such that $l \cH + \cD$ is ample. Since $\cD = (l \cH + \cD) - l \cH$, we can without loss of generality assume that $\cD$ is ample. Then the corresponding line bundle $\cO(\cD)$ has a hermitian metric, for example by \cite[Corollary 2.11]{Charles2021ArithmeticAA}. By taking the rational section of this bundle corresponding to the rational function $1$, we are done (see Remark~\ref{remark_passing_from_hermitian_line_bundle_to_an_arithmeic_divisor}).
\end{proof}

\begin{lemma}~\label{lemma_decomposition_of_an_arithmetic_divisor_into_a_difference_of_amples_plus_a_function}
    Let $\ov{\cD}$ be an arithmetic $\RR$-divisor of $C^0$-type on $\cX$. Fix $\varepsilon > 0$. Then there exist:
    \begin{itemize}
        \item ample arithmetic $\RR$-divisors of $C^{\infty}$-type $\ov{\cA}, \ov{\cA}'$;
        \item a $C^0$-function $\phi$ on $\cX(\CC)$ with $\| \phi \|_{\sup} < \varepsilon$;
    \end{itemize}
    such that $\ov{\cD} \equiv \ov{\cA} - \ov{\cA}' - (0, \phi)$ (equality modulo $\RR$-rational equivalence).
\end{lemma}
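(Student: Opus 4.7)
The plan is to construct the decomposition in three passes: first handle the $\RR$-Cartier part of $\cD$ as a difference of effective divisors, then smooth the residual Green function up to a $\sup$-small error, then boost by a large ample to restore arithmetic ampleness.

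For the Cartier part, I would fix once and for all an ample arithmetic $\ZZ$-divisor $\ov{\cH}_0$ of $C^\infty$-type, obtained by equipping some ample Cartier divisor $\cH$ with a smooth real-type hermitian metric of strictly positive curvature (e.g.\ via \cite[Corollary 2.11]{Charles2021ArithmeticAA}). Expanding $\cD = \sum_i \alpha_i \cD_i$ into Cartier divisors and applying Lemma~\ref{lemma_every_R_divisor_comb_of_effective_and_a_principal}(2) to each $\cD_i$, then regrouping positive and negative $\alpha_i$'s, would give $\cD = \cE_1 - \cE_2$ with $\cE_1, \cE_2$ positive $\RR$-combinations of effective Cartier divisors. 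Equipping each Cartier summand with a smooth real-type hermitian metric (averaging over $F_\infty$ if needed) would produce arithmetic $\RR$-divisors $\ov{\cE}_1, \ov{\cE}_2$ of $C^\infty$-type, and since the $\RR$-Cartier parts now agree, one obtains $\ov{\cD} \equiv \ov{\cE}_1 - \ov{\cE}_2 + (0, h)$ for some conjugation-invariant continuous function $h$ on $\cX(\CC)$.

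For the Green function, I would approximate $h$ uniformly by a smooth conjugation-invariant function $\tilde h$ with $\|h - \tilde h\|_{\sup} < \varepsilon$ (standard mollification on a finite atlas glued by a partition of unity, followed by averaging over $F_\infty$) and set $\phi := \tilde h - h$. The candidate decomposition is then $\ov{\cA} := \ov{\cE}_1 + (0, \tilde h) + N \ov{\cH}_0$ and $\ov{\cA}' := \ov{\cE}_2 + N \ov{\cH}_0$ for a large integer $N$, with $\ov{\cA} - \ov{\cA}' - (0, \phi) \equiv \ov{\cD}$ by construction.

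The main obstacle will be verifying that $\ov{\cA}$ and $\ov{\cA}'$ are ample in the sense of Definition~\ref{definition_properties_of_arithmetic_R_divisors_big_etc}(5), i.e.\ writing each as a positive $\RR$-combination of ample hermitian line bundles with smooth metrics. Writing $\ov{\cE}_1 = \sum_i \beta_i \ov{\cE}_{1,i}$ with $\beta_i > 0$ and $\cE_{1,i}$ effective Cartier, I would regroup
\[ \ov{\cA} = \sum_i \beta_i\bigl(\ov{\cE}_{1,i} + M\ov{\cH}_0\bigr) + \Bigl((N - M\textstyle\sum_i \beta_i)\ov{\cH}_0 + (0, \tilde h)\Bigr), \]
so that by an arithmetic Nakai--Moishezon / openness-of-the-ample-cone argument each $\ov{\cE}_{1,i} + M\ov{\cH}_0$ is an ample hermitian line bundle for $M$ large, and the residual term is ample once $N - M\sum_i \beta_i$ dominates the smooth perturbation $\tilde h$: its Chern form $(N - M\sum_i \beta_i)c_1(\ov{\cH}_0) + \frac{i}{2\pi}\partial\ov\partial \tilde h$ is then strictly positive and the strictly-small-section condition is inherited from a sufficiently large multiple of $\ov{\cH}_0$. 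The same argument (with $\tilde h = 0$) would handle $\ov{\cA}'$. The Cartier decomposition and smoothing are routine; the substantive content lies in this final arithmetic ampleness check.
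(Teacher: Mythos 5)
Your proposal is correct, but it takes a genuinely different (slightly more laborious) route than the paper. The paper invokes Proposition~\ref{proposition_Moriwaki_Proposition_2_4_2} to decompose the whole arithmetic object at once as $\ov{\cD}=\sum_i a_i \ov{\cD}_i$ with $\ov{\cD}_i$ arithmetic $\ZZ$-divisors of $C^0$-type and $a_i\in\RR$, then perturbs each $\ov{\cD}_i$ by a sup-small $\phi_i$ to make it $C^\infty$-type, and applies \cite[Corollary 2.6]{Charles2021ArithmeticAA} summand-by-summand to write $\ov{\cD}_i+(0,\phi_i)\equiv\ov{\cA}_i-\ov{\cA}_i'$ as a difference of arithmetically ample hermitian line bundles; regrouping and the fact that a positive combination of amples is ample finish the job immediately. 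You instead split the $\RR$-Cartier part via Lemma~\ref{lemma_every_R_divisor_comb_of_effective_and_a_principal}(2), attach arbitrary smooth metrics, smooth the residual continuous Green function in one go, and only then inject a large multiple of a fixed $\ov{\cH}_0$; in doing so you are in effect re-proving the part of Proposition~\ref{proposition_Moriwaki_Proposition_2_4_2} that separates the Cartier and Green-function data, and you defer the arithmetic-ampleness verification to the end. Your final decomposition of $\ov{\cA}$ is fine, but the phrase ``the strictly-small-section condition is inherited from a sufficiently large multiple of $\ov{\cH}_0$'' is not a proof: positivity of the Chern form plus relative ampleness of the underlying bundle does not by itself give generation by strictly small sections. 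The precise statement you want — that $\ov{\cL}\otimes\ov{\cH}_0^{\otimes K}$ is arithmetically ample for $K\gg 0$, applied with $\ov{\cL}=\cO(\ov{\cE}_{1,i})$ and with $\ov{\cL}=\cO((0,\tilde h))$ — is exactly \cite[Corollary 2.6]{Charles2021ArithmeticAA}, and citing it closes the gap cleanly; there is no need to appeal to an arithmetic Nakai--Moishezon criterion. With that citation in place, both your route and the paper's give the same conclusion, and the paper's is simply shorter because it reaches for \cite[Corollary 2.6]{Charles2021ArithmeticAA} directly on each $\ZZ$-summand rather than reassembling the ample cone by hand at the end.
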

\begin{proof}
    By Proposition~\ref{proposition_Moriwaki_Proposition_2_4_2} we can write $\ov{\cD} = \sum_i a_i \ov{\cD}_i$ for some arithmetic $\ZZ$-divisors of $C^0$-type $\ov{\cD}_i$ and $a_i \in \RR$. Let $\phi_i$ be a $C^0$-function on $\cX(\CC)$ such that $\ov{\cD}_i + (0, \phi_i)$ are of $C^{\infty}$-type and with $\| \phi_i \|_{\sup} < \delta$. By \cite[Corollary 2.6]{Charles2021ArithmeticAA} we can present $\cO(\ov{\cD}_i + (0, \phi_i))$ as a difference of two ample hermitian line bundles. By taking rational sections of those hermitian line bundles, we can write $\ov{\cD}_i + (0, \phi_i)$ as a difference of ample arithmetic $\ZZ$-divisors of $C^{\infty}$-type, at least up to a principal divisor. Let
    \[ \ov{\cD}_i + (0, \phi_i) \equiv \ov{\cA}_i - \ov{\cA}'_i \]
    be such a decomposition. Then we get
    \[ \ov{\cD} \equiv \sum_i a_i(\ov{\cA}_i - \ov{\cA}'_i) - (0, \sum_i a_i \phi_i). \]
    Note that $\| \sum_i a_i \phi_i \|_{\sup} \leq \sum_i |a_i| \| \phi_i \|_{\sup} < (\sum_i |a_i|) \delta$. By taking $\delta$ small enough and rearranging the terms we get the result (note that a sum of ample is ample).
\end{proof}

\subsection{Intersection product and arithmetic volume}

We use the intersection product of arithmetic divisors defined in \cite{Moriwaki_Zariski_arithmetic_decompositions} and extended by \cite{Ikoma_Concavity_of_arithmetic_volume}. In our notation an arbitrary arithmetic $\RR$-divisor of $C^0$-type appears on the right in contrary to \cite{Ikoma_Concavity_of_arithmetic_volume}. More precisely, we have the following.
\begin{theorem}~\label{theorem_Ikoma_extension_intersection_product_arithmetic_R_divisors}
    There exists a symmetric multi-linear map
    \[ \adeg : \Idiv_{\RR}(\cX)^{\times (d+1)} \to \RR \]
    extending the intersection product of arithmetic $\ZZ$-divisors of $C^{\infty}$-type from Theorem~\ref{theorem_intersection_product_of_hermitian_line_bundles}. Moreover, one can extend it even further, to a multi-linear function
    \[ \adeg : \Idiv_{\RR}(\cX)^{\times d} \times \Adiv_{\RR}(\cX) \to \RR. \]
    This extension is the unique one having the following property: If $\ov{\cD}_0, \dots, \ov{\cD}_{d}$ are nef and $\ov{\cD}_{d+1}$ is pseudo-effective, then $\adeg(\ov{\cD}_0 \cdots \ov{\cD}_{d} \cdot \ov{\cD}_{d+1}) \geq 0$. Also, it only depends on the classes of the divisors in $\aN^1(\cX)$.
\end{theorem}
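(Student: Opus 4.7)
The plan is to construct the pairing in three layers of increasing generality and then verify uniqueness via the positivity property. First, on arithmetic $\ZZ$-divisors of $C^{\infty}$-type the pairing is already available: assign to $\ov{\cD}_0, \dots, \ov{\cD}_d$ the quantity $\adeg(\cO(\ov{\cD}_0) \cdots \cO(\ov{\cD}_d))$ from Theorem~\ref{theorem_intersection_product_of_hermitian_line_bundles}, and extend $\QQ$-multilinearly to $C^{\infty}$-type $\QQ$-divisors. Next, extend to nef arithmetic $\RR$-divisors of $C^0$-type by approximation: any nef $\ov{\cD}$ can be written as a limit of nef arithmetic $\QQ$-divisors of $C^{\infty}$-type, obtained by perturbing the underlying $\RR$-Cartier divisor along $\QQ$-Cartier classes and by uniform smoothing of the Green function (using Proposition~\ref{proposition_Moriwaki_Proposition_2_4_2} together with regularisation of plurisubharmonic functions). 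Because the smooth intersection number depends continuously on the data---as is visible inductively from the integral formula in Theorem~\ref{theorem_intersection_product_of_hermitian_line_bundles}---the limit exists and is independent of the chosen approximation. Multilinearly extending to $\Idiv_{\RR}(\cX)^{\times(d+1)}$ gives the desired symmetric pairing; well-definedness of this extension (independence of the chosen decomposition into nef pieces) reduces to the same continuity.

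For the extension to the last slot $\Adiv_{\RR}(\cX)$, I would use Lemma~\ref{lemma_decomposition_of_an_arithmetic_divisor_into_a_difference_of_amples_plus_a_function} to write an arbitrary $\ov{\cE} \in \Adiv_{\RR}(\cX)$ as $\ov{\cE} \equiv \ov{\cA} - \ov{\cA}' - (0,\phi)$ with $\ov{\cA}, \ov{\cA}'$ ample of $C^{\infty}$-type (hence integrable) and $\phi$ a continuous function on $\cX(\CC)$. Given integrable $\ov{\cD}_0, \dots, \ov{\cD}_{d-1}$, set
\[
\adeg(\ov{\cD}_0 \cdots \ov{\cD}_{d-1} \cdot \ov{\cE}) := \adeg(\ov{\cD}_0 \cdots \ov{\cD}_{d-1} \cdot \ov{\cA}) - \adeg(\ov{\cD}_0 \cdots \ov{\cD}_{d-1} \cdot \ov{\cA}') - \frac{1}{2}\int_{\cX(\CC)} \phi \cdot c_1(\ov{\cD}_0) \wedge \dots \wedge c_1(\ov{\cD}_{d-1}),
\]
where the wedge product of first Chern currents is made sense of via the integrable decomposition of each $\ov{\cD}_i$ into differences of nef pieces whose Chern forms are limits of smooth positive $(1,1)$-forms. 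The fact that this value is independent of the chosen decomposition of $\ov{\cE}$ reduces to the two identities $\adeg(\ov{\cD}_0 \cdots \ov{\cD}_{d-1} \cdot \adiv(f)) = 0$ for $f \in \kappa(\cX)^{\times}_{\RR}$ and $\adeg(\ov{\cD}_0 \cdots \ov{\cD}_{d-1} \cdot (0,\psi)) = \tfrac{1}{2} \int \psi \, c_1(\ov{\cD}_0) \wedge \dots \wedge c_1(\ov{\cD}_{d-1})$, both of which follow inductively by Poincaré–Lelong from the defining formula of Theorem~\ref{theorem_intersection_product_of_hermitian_line_bundles}. Multilinearity in the last slot is then clear.

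For uniqueness, suppose $\Psi$ is another multilinear extension of the integrable pairing satisfying the nonnegativity on nef$^{d+1}$ $\times$ pseudo-effective. Fix nef $\ov{\cD}_0, \dots, \ov{\cD}_{d}$ and an arbitrary $\ov{\cE} \in \Adiv_{\RR}(\cX)$; I want to show $\Psi = \adeg$ at this tuple. Choose an ample $\ov{\cA}$ and, for every $\varepsilon > 0$, note that both $\ov{\cE} + \varepsilon \ov{\cA}$ and $-\ov{\cE} + N \varepsilon \ov{\cA}$ (for $N$ large enough, using Lemma~\ref{lemma_effective_are_pseudo_effective} to ensure the latter is pseudo-effective after an effective representative is subtracted) have well-behaved positivity. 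Applying the nonnegativity property of $\Psi - \adeg$ in both directions and letting $\varepsilon \to 0$ squeezes $\Psi(\ov{\cD}_0 \cdots \ov{\cD}_{d} \cdot \ov{\cE}) = \adeg(\ov{\cD}_0 \cdots \ov{\cD}_{d} \cdot \ov{\cE})$. Since integrable divisors span $\Idiv_{\RR}(\cX)$ by $\RR$-linear combinations of nef, multilinearity propagates the equality to all tuples. Finally, dependence only on the classes in $\aN^1(\cX)$ follows from the vanishing on principal divisors established above.

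The main obstacle is the well-definedness of the extension in the last slot: verifying that the integral $\int \phi \cdot c_1(\ov{\cD}_0) \wedge \cdots \wedge c_1(\ov{\cD}_{d-1})$ is well-defined (as a signed measure pairing) when the $\ov{\cD}_i$ are only integrable---and not nef---and that the combination above does not depend on the decomposition $\ov{\cE} \equiv \ov{\cA} - \ov{\cA}' - (0,\phi)$. This is essentially the content of Moriwaki's construction in \cite{Moriwaki_Zariski_arithmetic_decompositions} extended by Ikoma in \cite{Ikoma_Concavity_of_arithmetic_volume}, whose technical core is that intersection of integrable classes with continuous Green functions defines a genuine signed measure via truncation and uniform approximation, and that Poincaré–Lelong produces cancellation on the level of these measures.
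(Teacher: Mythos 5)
Your plan reconstructs what the paper simply cites---\cite[Lemma 2.5]{Ikoma_Concavity_of_arithmetic_volume} together with \cite[Theorem 5.2.2 and Claim 6.4.2.2.(c)]{Moriwaki_Zariski_arithmetic_decompositions}---and its three-layer architecture (arithmetic $\ZZ$-divisors of $C^\infty$-type, then nef $\RR$-divisors, then $\Idiv_{\RR}$, then $C^0$ Green functions in the last slot) does match the shape of Moriwaki's and Ikoma's construction, so the outline is reasonable. However, the nef approximation step is wrong as stated: nefness is a closed condition, and there is no reason that a nef arithmetic $\RR$-divisor of $C^0$-type can be approximated by nef $\QQ$-divisors of $C^\infty$-type by ``perturbing along $\QQ$-Cartier classes'' and ``uniform smoothing'' of the Green function; neither operation preserves nefness. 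The references instead add a small multiple $\varepsilon\ov{\cA}$ of a fixed ample divisor so as to land strictly inside the \emph{open} ample cone, where rational and smooth approximations do exist, and then let $\varepsilon\to 0$. Your sketch misses this perturbation into the interior.

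The uniqueness argument has a second gap. You invoke ``the nonnegativity property of $\Psi - \adeg$,'' but no such property is available: each of $\Psi$ and $\adeg$ is nonnegative on the positive cone, not their difference. Moreover the sandwich between $\ov{\cE}+\varepsilon\ov{\cA}$ and $-\ov{\cE}+N\varepsilon\ov{\cA}$ does not close: keeping $-\ov{\cE}+N\varepsilon\ov{\cA}$ pseudo-effective forces $N\varepsilon$ to remain bounded away from zero, so nothing shrinks as $\varepsilon\to 0$. The correct uniqueness argument applies Lemma~\ref{lemma_decomposition_of_an_arithmetic_divisor_into_a_difference_of_amples_plus_a_function}: write $\ov{\cE}\equiv\ov{\cA}-\ov{\cA}'-(0,\phi)$ with $\ov{\cA},\ov{\cA}'$ ample (hence integrable, where the two candidate extensions agree) and $\|\phi\|_{\sup}<\varepsilon$, then use positivity of each extension to bound its value on $(0,\phi)$ in absolute value by $\|\phi\|_{\sup}$ times its value on the fixed integrable divisor $(0,1)$. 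The difference of the two candidate extensions at $\ov{\cE}$ is then $O(\varepsilon)$ for arbitrary $\varepsilon>0$, hence zero. (Incidentally, the theorem statement's positivity condition lists $d+2$ divisors on a $(d+1)$-dimensional variety; your own extension formula, indexing the integrable factors $\ov{\cD}_0,\dots,\ov{\cD}_{d-1}$, has the count right.)
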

\begin{proof}
    This is \cite[Lemma 2.5]{Ikoma_Concavity_of_arithmetic_volume}. The fact that the product only depends on the classes of divisors in $\aN^1(\cX)$ follows for example from \cite[Theorem 5.2.2 and Claim 6.4.2.2.(c)]{Moriwaki_Zariski_arithmetic_decompositions}.
\end{proof}

\begin{notation}
    We usually omit $\adeg$ in the notation, and just write $\ov{\cD}_0 \cdot \ldots \cdot \ov{\cD}_{d+1}$ for $\adeg(\ov{\cD}_0 \cdots \ov{\cD}_{d} \cdot \ov{\cD}_{d+1})$ where $\ov{\cD}_0, \dots, \ov{\cD}_{d}$ are integrable arithmetic $\RR$-divisors of $C^0$-type and $\ov{\cD}_{d+1}$ is an arithmetic $\RR$-divisor of $C^0$-type.
\end{notation}

\begin{lemma}~\label{lemma_intersection_doesnt_depend_on_the_model}~\cite[Lemma 2.3 and Lemma 2.5]{Ikoma_Concavity_of_arithmetic_volume}
    Let $\pi : \cX' \to \cX$ be a birational morphism from a generically smooth
    normal arithmetic variety. Then
    \[ \ov{\cD}_0 \cdot \ldots \cdot \ov{\cD}_{d+1} = \pi^* \ov{\cD}_0 \cdot \ldots \cdot \pi^* \ov{\cD}_{d+1}, \]
    for all arithmetic $\RR$-divisors of $C^0$-type $\ov{\cD}_0, \dots, \ov{\cD}_{d+1}$ on $\cX$ with $\ov{\cD}_0, \dots, \ov{\cD}_{d}$ integrable.
\end{lemma}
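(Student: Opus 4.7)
The plan is to reduce the identity to the already-known projection formula for hermitian line bundles (Theorem~\ref{theorem_arithmetic_projection_formula}) and then bootstrap to the full $\RR$-linear $C^{0}$-type setting by multilinearity, approximation, and a separate handling of the $(0,\phi)$-part. Throughout I will use that $\pi$ birational implies $\pi_{*}[\cX']=[\cX]$ as top-dimensional cycles.

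First I would treat the base case in which $\ov{\cD}_{0},\ldots,\ov{\cD}_{d+1}$ are all arithmetic $\ZZ$-divisors of $C^{\infty}$-type, so each one comes from a hermitian line bundle $\ov{\cL}_{i}=\cO(\ov{\cD}_{i})$. Then $\pi^{*}\ov{\cD}_{i}$ corresponds to $\pi^{*}\ov{\cL}_{i}$, and the definition of the arithmetic intersection number from Theorem~\ref{theorem_intersection_product_of_hermitian_line_bundles} combined with Theorem~\ref{theorem_arithmetic_projection_formula} applied to the cycle $\cZ=\cX'$ (of the same dimension as $\cX$, with $\pi_{*}[\cX']=[\cX]$) yields
\[
\adeg(\pi^{*}\ov{\cL}_{0}\cdots\pi^{*}\ov{\cL}_{d+1})\;=\;\adeg(\ov{\cL}_{0}\cdots\ov{\cL}_{d+1}),
\]
which is exactly the desired identity. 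By $\QQ$-multilinearity of both sides in each slot, this extends verbatim to arithmetic $\QQ$-divisors of $C^{\infty}$-type.

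Next I would handle the integrable entries $\ov{\cD}_{0},\ldots,\ov{\cD}_{d}$. By Theorem~\ref{theorem_Ikoma_extension_intersection_product_arithmetic_R_divisors} the extended intersection product is multilinear on $\Idiv_{\RR}(\cX)$, so it suffices to verify the identity when each $\ov{\cD}_{i}$ is nef. Using Lemma~\ref{lemma_decomposition_of_an_arithmetic_divisor_into_a_difference_of_amples_plus_a_function} together with an ample perturbation, I would approximate each nef $\ov{\cD}_{i}$ (in the class in $\aN^{1}$) by ample arithmetic $\QQ$-divisors of $C^{\infty}$-type, controlling both the divisorial part and the Green function in $C^{0}$-norm. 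Since pullback along $\pi$ preserves ampleness up to nef (the pullbacks are at least nef, which is what the characterization of the extended product in Theorem~\ref{theorem_Ikoma_extension_intersection_product_arithmetic_R_divisors} requires), and since the extension of the product is characterized by its nonnegativity on nef tuples against pseudo-effective divisors (hence is continuous in the appropriate sense along such approximations), passing to the limit reduces this case to the previous step.

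Finally, for the arbitrary last slot $\ov{\cD}_{d+1}\in\Adiv_{\RR}(\cX)$, I would write $\ov{\cD}_{d+1}\equiv\ov{\cA}-\ov{\cA}'-(0,\phi)$ using Lemma~\ref{lemma_decomposition_of_an_arithmetic_divisor_into_a_difference_of_amples_plus_a_function}, where $\ov{\cA},\ov{\cA}'$ are ample arithmetic $\RR$-divisors of $C^{\infty}$-type and $\phi$ is continuous. Rational equivalence does not affect the intersection number (since intersections only depend on classes in $\aN^{1}$), and the $\ov{\cA},\ov{\cA}'$ terms are handled by the previous steps. It remains to check the identity for the contribution of $(0,\phi)$, which by the inductive construction of the intersection number is the integral
\[
\bigl(0,\phi\bigr)\cdot\ov{\cD}_{0}\cdots\ov{\cD}_{d}\;=\;\tfrac{1}{2}\int_{\cX(\CC)}\phi\;c_{1}(\ov{\cD}_{0})\wedge\cdots\wedge c_{1}(\ov{\cD}_{d}),
\]
and since $\pi\colon\cX'(\CC)\to\cX(\CC)$ is a proper bimeromorphic map of Kähler manifolds, the standard change-of-variables $\int_{\cX'(\CC)}\pi^{*}\omega=\int_{\cX(\CC)}\omega$ for top forms together with $c_{1}(\pi^{*}\ov{\cD}_{i})=\pi^{*}c_{1}(\ov{\cD}_{i})$ give the equality. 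Combining the three steps yields the lemma.

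The step I expect to be the main technical obstacle is the continuity/approximation argument in the integrable-case step: one must be careful that the approximations of nef divisors by ample $C^{\infty}$-type divisors converge in a mode strong enough that the extended intersection product is continuous, and that the continuity persists when the last slot is only $C^{0}$-type. In the write-up, invoking the framework of \cite{Ikoma_Concavity_of_arithmetic_volume} (on which the lemma is based) at this juncture keeps the argument concise.
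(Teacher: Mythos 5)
The paper does not supply its own proof of this lemma: it is stated with a direct citation to Ikoma's Lemma~2.3 and Lemma~2.5, and the surrounding text treats it as imported. So there is no ``paper's proof'' to compare against; what I can assess is whether your reconstruction is a viable route to the result.

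Your outline is essentially the right one and matches the structure of Ikoma/Moriwaki's construction: (i) the smooth $\ZZ$-divisor base case reduces to the projection formula applied to $\cZ=\cX'$ with $\pi_*[\cX']=[\cX]$, which is correct; (ii) extend to integrable slots by multilinearity plus approximation of nef by smooth positive data; (iii) handle the $C^0$-slot via rational equivalence and the $(0,\phi)$-contribution, for which the change-of-variables under a proper bimeromorphic map of the analytifications gives the equality. Two points deserve a caveat. First, your appeal to Lemma~\ref{lemma_decomposition_of_an_arithmetic_divisor_into_a_difference_of_amples_plus_a_function} in the integrable step is a slight mismatch: that lemma provides a decomposition modulo principal divisors, not a $C^0$-norm approximation of a nef $(C^0\cap\PSH)$-type Green function by smooth ample data; the actual approximation in Ikoma's construction proceeds by regularizing the PSH Green function (decreasing smooth approximations after adding a small multiple of a strictly positive form) and defining the extended product as the limit, so continuity along these approximations is built into the definition rather than something to be deduced from the uniqueness characterization in Theorem~\ref{theorem_Ikoma_extension_intersection_product_arithmetic_R_divisors}. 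Second, in step (iii) the Chern ``forms'' $c_1(\ov{\cD}_i)$ for integrable but only $C^0$-type metrics are currents/Monge--Ampère measures, and the identity $\int_{\cX'(\CC)}\pi^*\phi\cdot\pi^*c_1(\ov{\cD}_0)\wedge\cdots = \int_{\cX(\CC)}\phi\cdot c_1(\ov{\cD}_0)\wedge\cdots$ uses that these measures do not charge the pluripolar exceptional locus; this is true but worth flagging rather than invoking as ``standard change of variables for top forms.'' With those two caveats made explicit, your sketch is a faithful reconstruction of the argument the cited reference carries out. (Independently, note that the paper's indexing $\ov{\cD}_0,\ldots,\ov{\cD}_{d+1}$ in this lemma and in the Notation box is one too many relative to $\dim\cX=d+1$ and to Theorem~\ref{theorem_intersection_product_of_hermitian_line_bundles}; you inherited that off-by-one, which is the paper's typo and not a flaw in your reasoning.)
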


\begin{theorem}~\label{theorem_Ikoma_characterisation_of_psef_arithmetic_R_divisors}~\cite[Theorem 6.4]{Ikoma_Concavity_of_arithmetic_volume}
    Let $\ov{\cD}$ be an arithmetic $\RR$-divisor of $C^0$-type on $\cX$.
    \begin{enumerate}
        \item The following are equivalent.
        \begin{enumerate}
            \item $\ov{\cD}$ is pseudo-effective.
            \item For any normalized blowup $\phi:\cX' \to \cX$ and for any nef arithmetic $\RR$-divisor of $C^0$-type $\ov{\cH}$ on $\cX'$, we have
            \[ \adeg(\ov{\cH}^d \cdot \phi^* \ov{\cD}) \geq 0. \]
            \item For any blowup $\phi:\cX' \to \cX$ such that $\cX'$ is normal, generically smooth, and for any ample arithmetic $\QQ$-divisor of $C^{\infty}$-type $\ov{\cH}$ on $\cX'$, we have
            \[ \adeg(\ov{\cH}^d \cdot \phi^* \ov{\cD}) \geq 0. \]
        \end{enumerate}
        \item Suppose that $\ov{\cD}$ is pseudo-effective. The following are equivalent.
        \begin{enumerate}
            \item $\ov{\cD}$ is principal.
            \item There exist a blowup $\phi:\cX' \to \cX$ with normal, generically smooth $\cX'$, and an ample arithmetic $\RR$-divisor of $C^{\infty}$-type $\ov{\cH}$ on $\cX'$, such that
            \[ \adeg(\ov{\cH}^d \cdot \phi^* \ov{\cD}) = 0. \]
        \end{enumerate}
    \end{enumerate}
\end{theorem}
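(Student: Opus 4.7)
The plan is to reduce both parts to the analytic properties of the arithmetic volume function—continuity, concavity of $\avol^{1/(d+1)}$ (arithmetic Brunn--Minkowski), differentiability on the big cone, and arithmetic Fujita approximation—all established in the work of Yuan, Chen and Ikoma referenced in Subsection~\ref{subsection_the_technique_Theorem_A}. Since the statement is quoted from \cite[Theorem 6.4]{Ikoma_Concavity_of_arithmetic_volume}, I sketch only the main ideas.

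For part~(1), the implications (a)$\Rightarrow$(b)$\Rightarrow$(c) are essentially formal. If $\ov{\cD}$ is pseudo-effective on $\cX$, then so is its pullback $\phi^*\ov{\cD}$ on any birational model $\cX'$, since the arithmetic volume is birationally invariant and thus bigness is preserved by pullback (Lemma~\ref{lemma_intersection_doesnt_depend_on_the_model}). The non-negativity asserted in (b) is then precisely the defining positivity property of the extended intersection product in Theorem~\ref{theorem_Ikoma_extension_intersection_product_arithmetic_R_divisors}. The step (b)$\Rightarrow$(c) is immediate since ample arithmetic $\QQ$-divisors of $C^\infty$-type are in particular nef.

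The main direction (c)$\Rightarrow$(a) proceeds as follows. Assume (c) and fix any big $\ov{\cB}$; we want to show $\ov{\cB}+\ov{\cD}$ is big. Consider
\[ f(t) := \avol(\ov{\cB} + t\ov{\cD})^{1/(d+1)}, \]
which is concave by arithmetic Brunn--Minkowski. Let $I=\{t\geq 0:\ov{\cB}+t\ov{\cD}\text{ is big}\}$; this is an open subinterval of $[0,\infty)$ containing $0$, on which $f$ is positive and differentiable. For $t\in I$, arithmetic Fujita approximation presents the arithmetic positive intersection $\langle(\ov{\cB}+t\ov{\cD})^{d}\rangle$ as a limit of pushforwards $\phi_*(\ov{\cH}^{d})$ for ample arithmetic $\QQ$-divisors of $C^{\infty}$-type $\ov{\cH}$ on blowups $\phi:\cX'\to\cX$ with $\ov{\cH}\leq\phi^*(\ov{\cB}+t\ov{\cD})$. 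By the projection formula and hypothesis (c), each $\phi_*(\ov{\cH}^{d})\cdot\ov{\cD}=\ov{\cH}^{d}\cdot\phi^*\ov{\cD}\geq 0$, hence $\langle(\ov{\cB}+t\ov{\cD})^{d}\rangle\cdot\ov{\cD}\geq 0$. Differentiability of $\avol$ at big divisors then yields $f'(t)\geq 0$ on $I$, so $f$ is non-decreasing on $I$ with $f(0)>0$. By continuity this forces $I=[0,\infty)$; in particular, $\ov{\cB}+\ov{\cD}$ is big.

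For part~(2), the direction (a)$\Rightarrow$(b) is immediate: the intersection product depends only on classes in $\aN^1(\cX')$, on which principal divisors vanish. The converse (b)$\Rightarrow$(a) is where I expect the main obstacle. Applying the analogous analysis to $g(t)=\avol(\ov{\cH}+t\phi^*\ov{\cD})^{1/(d+1)}$, hypothesis (b) combined with differentiability gives $g'(0)=0$, while part~(1)(a)$\Rightarrow$(c) applied on further blowups (using that $\phi^*\ov{\cD}$ is pseudo-effective) together with Fujita approximation gives $g'(t)\geq 0$ for all $t\geq 0$. Since $g'$ is non-increasing by concavity, $g'\equiv 0$ on $[0,\infty)$, so $\avol(\ov{\cH}+t\phi^*\ov{\cD})$ is constant in $t$. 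Upgrading this constancy to principality is the core technical difficulty: it amounts to an arithmetic Hodge-index-type statement saying that, for ample $\ov{\cH}$, the pairing $\ov{\cH}^{d}\cdot(-)$ has no pseudo-effective null directions other than the principal class. Ikoma establishes this by inducting on dimension, using Khovanskii--Teissier type inequalities to reduce to the case of arithmetic surfaces, where Faltings--Hriljac's arithmetic Hodge index theorem supplies the required signature of the intersection pairing.
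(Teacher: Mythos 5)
The paper does not prove this statement: it is quoted verbatim from Ikoma with a citation, so there is no in-paper proof to compare your sketch against. Assessing your sketch on its own merits:

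Your outline of part~(1) is sound. The implications $(a)\Rightarrow(b)\Rightarrow(c)$ are indeed formal, using that pseudo-effectivity is preserved under blowup pullback (Lemma~\ref{lemma_pseudo_effective_cone_is_invariant_under_birational_pullbacks}) and the positivity clause in Theorem~\ref{theorem_Ikoma_extension_intersection_product_arithmetic_R_divisors}. For $(c)\Rightarrow(a)$, the overall strategy (concavity of $\avol^{1/(d+1)}$, non-negativity of $\langle(\ov{\cB}+t\ov{\cD})^d\rangle\cdot\ov{\cD}$, non-decreasing along the ray, openness forces $I=[0,\infty)$) is correct. Two minor imprecisions: first, the positive intersection $\langle\cdot\rangle$ is defined as a supremum over $\hat\Theta_{\amp}$ only against nef-and-big arguments and then extended by linearity and continuity (Definition~\ref{definition_positive_intersection_power}), so for a general $\ov{\cD}$ you need a uniform approximation such as Lemma~\ref{lemma_calculating_derivative_of_volume_uniformely_for_a_few_divisors} applied to the pieces of a decomposition $\ov{\cD}\equiv\ov{\cA}_1-\ov{\cA}_2-(0,\phi)$ from Lemma~\ref{lemma_decomposition_of_an_arithmetic_divisor_into_a_difference_of_amples_plus_a_function}, rather than a direct Fujita-approximation step; second, phrasing this via pushforwards $\phi_*(\ov{\cH}^d)$ is slightly off --- one evaluates $\ov{\cH}^d\cdot\phi^*\ov{\cD}$ on the blowup via Lemma~\ref{lemma_intersection_doesnt_depend_on_the_model}. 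Neither affects the argument.

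For part~(2), the reduction to ``\,$\avol(\ov{\cH}+t\phi^*\ov{\cD})$ is constant in $t\geq 0$\,'' is correct and a nice reformulation: $g'(0)=0$ from (b) and differentiability, $g'(t)\geq 0$ from pseudo-effectivity of $\phi^*\ov{\cD}$ and part~(1), and concavity forces $g'\equiv 0$. You are right that the passage from this constancy (equivalently, from one vanishing $\ov{\cH}^d\cdot\phi^*\ov{\cD}=0$) to principality is the real content, and this step is left as a black box in your sketch. That is an honest gap, but I would be cautious about your description of how Ikoma closes it --- the claim that he inducts on dimension via Khovanskii--Teissier inequalities down to Faltings--Hriljac on arithmetic surfaces reads as a guess rather than a report. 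Since the paper itself treats the theorem as a citation and never relies on the internals of its proof, this imprecision has no downstream consequences; it only matters if you intend your sketch to be self-contained.
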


\begin{lemma}~\label{lemma_pseudo_effective_cone_is_invariant_under_birational_pullbacks}
    Let $f:\cX' \to \cX$ be a blowup and $\ov{\cD}$ be an arithmetic $\RR$-divisor of $C^0$-type on $\cX$. Then $\avol(\ov{\cD}) = \avol(f^* \ov{\cD})$.
    Moreover, $\ov{\cD}$ is pseudo-effective [effective] on $\cX$ if and only if $f^* \ov{\cD}$ is pseudo-effective [effective] on $\cX'$. 
\end{lemma}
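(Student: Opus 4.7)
The plan is to handle the three assertions in order of increasing subtlety, using throughout that $f$ is birational (so $\kappa(\cX') = \kappa(\cX)$) and that $f$ is surjective on complex points.

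For the volume equality, I would start from the observation that a proper birational morphism of normal schemes satisfies $f_* \cO_{\cX'} = \cO_{\cX}$ by Zariski's main theorem, whence the projection formula yields a canonical isomorphism $H^0(\cX', \cO(m f^*\cD)) \cong H^0(\cX, \cO(m\cD))$ for every $m$ (reducing to the $\ZZ$-divisor case, and in the $\RR$-case identifying small rational functions via $\kappa(\cX')=\kappa(\cX)$). Since the metric defining $f^*\ov{\cD}$ is by construction the pullback of the metric defining $\ov{\cD}$, and $f:\cX'(\CC)\to\cX(\CC)$ is surjective, the sup norms are preserved under this identification. Consequently $\aH^0(\cX, m\ov{\cD})$ is in bijection with $\aH^0(\cX', m f^*\ov{\cD})$, and taking $\limsup$ gives $\avol(\ov{\cD}) = \avol(f^*\ov{\cD})$.

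For effectivity, recall that $\ov{\cD} = (\cD, g)$ is effective iff $\cD \geq 0$ and $g \geq 0$. By Lemma~\ref{lemma_notions_of_effective_for_R_divisors}(3), $\cD \geq 0$ is equivalent to $\GVFpairing(v, \cD) \geq 0$ for every non-Archimedean valuation $v$ on $\kappa(\cX) = \kappa(\cX')$. Since a local equation for $\cD$ at $\supp(v)\in\cX$ pulls back to a local equation for $f^*\cD$ at $\supp(v)\in\cX'$ (an element of the same field), the pairings match: $\GVFpairing(v, \cD) = \GVFpairing(v, f^*\cD)$. This yields $\cD \geq 0 \iff f^*\cD \geq 0$. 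For the Green function part, surjectivity of $f$ on complex points immediately gives $g \geq 0 \iff f^*g \geq 0$.

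For pseudo-effectivity, I would apply Theorem~\ref{theorem_Ikoma_characterisation_of_psef_arithmetic_R_divisors}(1). The direct implication is straightforward: any blowup $\psi:\cX''\to\cX'$ with $\cX''$ normal and generically smooth, composed with $f$, is still a blowup of $\cX$, so characterization (c) applied to $\ov{\cD}$ transfers verbatim to $f^*\ov{\cD}$. For the converse, given a blowup $\phi:\cX''\to\cX$ and an ample $\ov{\cH}$ on $\cX''$, I would construct a common normal, generically smooth dominating model $\cY$ with maps $\rho:\cY\to\cX''$ and $\sigma:\cY\to\cX'$ satisfying $\phi\rho = f\sigma$ (take a resolution of the closure of the graph of $\cX''\dashrightarrow\cX'$, viewed as a rational map between birational models of $\cX$). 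Then Lemma~\ref{lemma_intersection_doesnt_depend_on_the_model} gives
\[ \adeg(\ov{\cH}^d \cdot \phi^*\ov{\cD}) = \adeg((\rho^*\ov{\cH})^d \cdot \sigma^* f^*\ov{\cD}), \]
and the right-hand side is non-negative because $\rho^*\ov{\cH}$ is nef (as the pullback of an ample class) and $f^*\ov{\cD}$ is pseudo-effective, invoking now the \emph{nef} characterization (b) of Theorem~\ref{theorem_Ikoma_characterisation_of_psef_arithmetic_R_divisors}(1). The main obstacle is this converse step: one must move between the nef and ample formulations via an auxiliary dominating model, rather than being able to argue directly on $\cX''$.
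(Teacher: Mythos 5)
Your proposal is correct, and it agrees with the paper's treatment of both the volume equality (the paper simply cites it as standard, whereas you sketch the identification $\aH^0(\cX, m\ov{\cD}) \cong \aH^0(\cX', m f^*\ov{\cD})$ explicitly — a fine expansion) and of effectivity (both approaches go via the valuation-based characterisation, i.e.\ Remark~\ref{remark_alternative_definition_of_effectivity_of_arithmetic_R_divisors} and the fact that $\kappa(\cX')=\kappa(\cX)$). Where you diverge from the paper is in the ``harder'' half of the pseudo-effectivity equivalence. To show that pseudo-effectivity of $f^*\ov{\cD}$ implies that of $\ov{\cD}$, the paper's argument is a one-liner that goes directly from the definition: given any big $\ov{\cB}$ on $\cX$, the already-established volume invariance gives $\avol(\ov{\cB}+\ov{\cD}) = \avol(f^*\ov{\cB}+f^*\ov{\cD}) > 0$, since $f^*\ov{\cB}$ is big and $f^*\ov{\cD}$ is pseudo-effective. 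You instead invoke Theorem~\ref{theorem_Ikoma_characterisation_of_psef_arithmetic_R_divisors}(1) again and, given a test blowup $\phi:\cX''\to\cX$, construct a model $\cY$ dominating both $\cX''$ and $\cX'$ so as to move the intersection number down to $\cX'$ and apply the nef characterisation (b) to $f^*\ov{\cD}$. This is valid, but requires extra bookkeeping (existence and normality of the common dominating model, nefness of $\rho^*\ov{\cH}$, and checking that $\sigma$ qualifies as a normalised blowup in the sense of the cited theorem), none of which you need for the paper's softer volume-based argument. In short: the direction you call ``direct'' matches the paper exactly; the direction you call ``the converse'' is where you take a longer route than necessary, though it does work. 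You might want to notice that once the volume equality is in hand, that direction falls out of the definitions with no appeal to the Ikoma characterisation at all.
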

\begin{proof}
    The first part is standard, see for example \cite[Section 2]{Ikoma_Concavity_of_arithmetic_volume} or \cite[Definition 4.3.2]{Moriwaki_Adelic_divisors_on_Arithmetic_Varieties}.
    Now we prove the second part. Assume that $f^* \ov{\cD}$ is pseudo-effective on $\cX'$. Take a big arithmetic $\RR$-divisor of $C^0$-type $\ov{\cB}$ on $\cX$. Then, by the invariance of the arithmetic volume under birational maps, $f^* \ov{\cB}$ is big, so we get 
    \[ \avol(\ov{\cB} + \ov{\cD}) = \avol(f^* \ov{\cB} + f^* \ov{\cD}) > 0. \] Thus $\ov{\cD}$ is pseudo-effective on $\cX$.
    The other implication follows from Theorem~\ref{theorem_Ikoma_characterisation_of_psef_arithmetic_R_divisors}. To see that effectivity of $\ov{\cD}, f^* \ov{\cD}$ is equivalent, use Remark~\ref{remark_alternative_definition_of_effectivity_of_arithmetic_R_divisors} and the definition of the pullback of an arithmetic $\RR$-divisor of $C^0$-type.
\end{proof}

\begin{lemma}~\label{lemma_nef_cone_is_invariant_under_birational_pullbacks}   
    Let $f:\cX' \to \cX$ be a blowup and $\ov{\cD}$ be an arithmetic $\RR$-divisor of $C^0$-type on $\cX$. Then $\ov{\cD}$ is nef on $\cX$ if and only if $f^* \ov{\cD}$ is nef on $\cX'$.
\end{lemma}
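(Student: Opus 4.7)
The plan is to verify the three defining properties of nefness from Definition~\ref{definition_properties_of_arithmetic_R_divisors_big_etc} are each invariant under pullback by the birational morphism $f$. Write $\ov{\cD}=(\cD,g)$ and $f^*\ov{\cD}=(f^*\cD, f^*g)$.

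For the relative nefness of $\cD$ versus $f^*\cD$, I would argue via the projection formula on curves contracted by the structure morphism to $\Spec(\ZZ)$. If $\cD$ is relatively nef and $C'\subset\cX'$ is such a contracted curve, then $f_*C'$ is either zero or a contracted effective $1$-cycle on $\cX$, so $f^*\cD\cdot C'=\cD\cdot f_*C'\geq 0$. Conversely, any contracted curve $C\subset\cX$ admits a strict transform $C'\subset\cX'$ with $f_*C'=C$, and then $\cD\cdot C=f^*\cD\cdot C'\geq 0$. Hence $\cD$ is relatively nef if and only if $f^*\cD$ is.

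For the $(C^0\cap\PSH)$-type condition, one direction is routine: pulling back a continuous plurisubharmonic local model via the holomorphic map $f_\CC:\cX'(\CC)\to\cX(\CC)$ yields a continuous plurisubharmonic local model on $\cX'(\CC)$, so if $\ov{\cD}$ is of $(C^0\cap\PSH)$-type, so is $f^*\ov{\cD}$. The converse is the step I expect to be the main technical obstacle. The idea is that $f_\CC$ is a proper holomorphic bimeromorphism and hence restricts to a biholomorphism over a dense Zariski open $\cU\subset\cX$ whose complement has codimension at least one. On $\cU(\CC)\setminus\supp(\cD)$ the function $g$ coincides with $(f^*g)\circ f_\CC^{-1}$ and is therefore plurisubharmonic. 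The given continuity of $g$ on $\cX(\CC)\setminus\supp(\cD)$ combined with plurisubharmonicity on a dense open subset then allows a removable-singularities argument (continuous extensions of locally bounded plurisubharmonic functions across analytic subsets of codimension $\geq 1$ are again plurisubharmonic) to upgrade this to plurisubharmonicity of the canonical local models of $g$ across the exceptional locus. This pointwise plurisubharmonicity check has to be performed around each $p\in\cX(\CC)$ after adding the local contribution $\sum a_i\log|f_i|^2$, which is where care is needed.

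For the height condition, I would use Lemma~\ref{lemma_behaviour_of_height_for_birational_maps_and_pullbacks_proj_formula}: for every closed point $x'\in\cX'_\QQ$ one has $h_{f^*\ov{\cD}}(x')=h_{\ov{\cD}}(f(x'))$, and because $f$ is projective and birational, closed points in $\cX_\QQ$ surject onto by closed points in $\cX'_\QQ$. Thus non-negativity of heights on $X$ is equivalent to non-negativity of heights on $X'$. Combining the three equivalences yields that $\ov{\cD}$ is nef on $\cX$ if and only if $f^*\ov{\cD}$ is nef on $\cX'$.
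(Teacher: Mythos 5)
Your proof takes essentially the same approach as the paper: it checks that each of the three defining conditions of nefness is preserved and reflected under pullback by $f$. The paper handles the relative-nefness and height conditions just as you do (projection formula; Lemma~\ref{lemma_behaviour_of_height_for_birational_maps_and_pullbacks_proj_formula} plus surjectivity of $f$ on closed points), and for the $(C^0 \cap \PSH)$-condition it simply cites \cite[Proposition 1.4.1]{Moriwaki_Adelic_divisors_on_Arithmetic_Varieties} rather than arguing directly. Your removable-singularities sketch for the psh converse is a reasonable outline of what such a proposition should encode (and since $\cX$ is normal and $f$ is proper birational, the non-isomorphism locus downstairs has codimension $\geq 2$, so the extension step is even cleaner than the codimension-$1$ bounded case you invoke), but as written it remains a sketch where the paper relies on a precise reference. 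One small imprecision: the strict transform $\cC'$ of a contracted curve $\cC$ satisfies $f_*\cC' = m\cC$ for some positive integer $m$ (not necessarily $m=1$); the paper's phrasing ``a positive multiple of $\cC$'' is the safe statement, though this does not affect your sign conclusion.
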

\begin{proof}
    We check that the three conditions defining nefness are equivalent for $\ov{\cD}$ and $f^* \ov{\cD}$.
    \begin{itemize}
        \item Relative nefness: here equivalence follows from the projection formula and the fact that if $\cC$ is a curve in $\cX$ mapped to a closed point in $\Spec(\ZZ)$, then there is a curve $\cC'$ in $\cX'$ whose push-forward gives a positive multiple of $\cC$.
        \item $(C^0 \cap \PSH)$-type: here equivalence follows from \cite[Proposition 1.4.1]{Moriwaki_Adelic_divisors_on_Arithmetic_Varieties}.
        \item Non-negative height on closed points: here equivalence follows from Lemma~\ref{lemma_behaviour_of_height_for_birational_maps_and_pullbacks_proj_formula} and from the fact that $f$ is surjective on closed points.
    \end{itemize}
\end{proof}

\begin{remark}
    Because of Lemma~\ref{lemma_pseudo_effective_cone_is_invariant_under_birational_pullbacks} and Lemma~\ref{lemma_nef_cone_is_invariant_under_birational_pullbacks} we use the notions of bigness, pseudo-effectivity, effectivity and nefness without referring to an arithmetic variety where the divisor is defined.
\end{remark}

\subsection{Differentiability of the arithmetic volume}

Results in this section follow easily from \cite{Ikoma_Concavity_of_arithmetic_volume}. We provide some proofs for the reader's convenience.

\begin{definition}\label{definition_continuity_of_functions_from_adelic_divisors_to_R}
    We call a function $\alpha: \Adiv_{\RR}(\cX) \to \RR$ \textit{continuous}, if
    \[ \alpha(\ov{\cD}) = \lim_{\varepsilon_1, \dots, \varepsilon_r, \| f \|_{\sup} \to 0} \alpha(\ov{\cD} + \sum_{i=1}^r \varepsilon_i \ov{\cD}_i + (0, f)), \]
    for any arithmetic $\RR$-divisors of $C^0$-type $\ov{\cD}, \ov{\cD}_1, \dots, \ov{\cD}_r$ on $\cX$ and any $f$ of $C^0$-type on $\cX$.
\end{definition}

\begin{definition}~\label{definition_positive_intersection_power}
    Let $\ov{\cD}$ be a big arithmetic $\RR$-divisor of $C^0$-type on $\cX$. 
    Let $\hat{\Theta}(\ov{\cD})$ be the set of pairs $(f:\cX' \to \cX, \ov{\cA})$ consisting of a blowup $f:\cX' \to \cX$ together with a nef arithmetic $\RR$-divisor of $C^0$-type on $\cX'$ such that $f^*\ov{\cD} - \ov{\cA}$ is pseudo-effective. Let $\hat{\Theta}_{\amp}(\ov{\cD})$ be the set of pairs $(f:\cX' \to \cX, \ov{\cA})$ in $\hat{\Theta}(\ov{\cD})$ where $\ov{\cA}$ is an ample arithmetic $\RR$-divisor of $C^{\infty}$-type on $\cX'$ and $f^*\ov{\cD} - \ov{\cA}$ is an effective arithmetic $\QQ$-divisor of $C^0$-type on $\cX'$. The \textit{positive intersection $d$'th power of $\ov{\cD}$} is the function
    \[ \langle \ov{\cD}^d \rangle :\Adiv_{\RR}(\cX) \to \RR \]
    defined in the following way:
    \[ \langle \ov{\cD}^d \rangle \ov{\cE} = \sup_{(f, \ov{\cA}) \in \hat{\Theta}_{\amp}(\ov{\cD})} \ov{\cA}^d \cdot f^* \ov{\cE}, \textnormal{ for nef and big } \ov{\cE} \in \Adiv_{\RR}(\cX); \]
    and for other $\ov{\cE} \in \Adiv_{\RR}(\cX)$ we extend by linearity and continuity in the sense of Definition~\ref{definition_continuity_of_functions_from_adelic_divisors_to_R}.
\end{definition}

\begin{remark}~\label{remark_equiv_of_def_of_pos_intersection_in_Ikoma_and_pos_int_for_nef}
    The Definition~\ref{definition_positive_intersection_power} is equivalent to the one in \cite{Ikoma_Concavity_of_arithmetic_volume}. This equivalence follows from \cite[Proposition 3.9, Proposition 3.10.(3), proof of Theorem 5.3]{Ikoma_Concavity_of_arithmetic_volume}. Moreover, if $\ov{\cD}$ is nef in the above definition, then $\langle \ov{\cD}^d \rangle \ov{\cE} = \ov{\cD}^d \cdot \ov{\cE}$ by \cite[Remark 3.8.(3)]{Ikoma_Concavity_of_arithmetic_volume}.
\end{remark}

\begin{theorem}~\label{theorem_differentiability_of_arithmetic_volume}~\cite[Theorem 5.3]{Ikoma_Concavity_of_arithmetic_volume}
    The function $\avol:\Adiv_{\RR}(\cX) \to \RR$ has the following properties:
    \begin{enumerate}
        \item It is continuous in the sense of Definition~\ref{definition_continuity_of_functions_from_adelic_divisors_to_R}.
        \item It is positively $(d+1)$-homogeneous, i.e.,
        \[ \avol(t \ov{\cD}) = t^{d+1} \avol(\ov{\cD}), \]
        for any $t>0$ and arithmetic $\RR$-divisor of $C^0$-type $\ov{\cD}$ on $\cX$.
        \item It is differentiable on the big cone and at a big $\ov{\cD} \in \Adiv_{\RR}(\cX)$ the derivative is given by
        \[ D_{\ov{\cD}} \avol(\ov{\cE}) = (d+1) \langle \ov{\cD}^d \rangle \ov{\cE}. \]
    \end{enumerate}
\end{theorem}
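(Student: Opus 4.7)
The plan is to treat the three assertions in order of increasing difficulty, with the bulk of the work on (3). Throughout I appeal to \cite{Chen2010_Fujita_approximation}, \cite{yuan_2009_Fujita_approximation} for arithmetic Fujita approximation and to \cite{Ikoma_Concavity_of_arithmetic_volume} for the technical machinery.

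Positive $(d+1)$-homogeneity (2) is essentially immediate from the definition $\avol(\ov{\cD}) = \limsup_{m\to\infty} \log\#\aH^0(\cX, m\ov{\cD})/(m^{d+1}/(d+1)!)$: for rational $t=p/q>0$ this amounts to a reindexing $m \mapsto qm$, and the extension to real $t>0$ follows once continuity is known.

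For continuity (1) in the sense of Definition~\ref{definition_continuity_of_functions_from_adelic_divisors_to_R}, the direction along $(0,f)$ is easy: multiplying small sections by $e^{f/2}$ gives a bijection $\aH^0(\cX, \ov{\cD}) \leftrightarrow \aH^0(\cX, \ov{\cD}+(0,f))$ up to a controlled correction, so $\|f\|_{\sup}$ dominates the change in $\avol$. For a direction along an arithmetic $\RR$-divisor, Lemma~\ref{lemma_decomposition_of_an_arithmetic_divisor_into_a_difference_of_amples_plus_a_function} writes any $\ov{\cE}$, up to principal and $C^0$-pieces, as a difference of ample arithmetic $\RR$-divisors of $C^\infty$-type; an arithmetic Siu-type inequality then controls $\avol(\ov{\cD}+\sum_i \varepsilon_i \ov{\cD}_i)$ by intersection numbers that depend continuously on the $\varepsilon_i$.

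The heart of the theorem is differentiability (3). The plan is to pass from the big case to a limit of ample cases via arithmetic Fujita approximation: for any big $\ov{\cD}$, there exist pairs $(f:\cX'\to\cX, \ov{\cA}) \in \hat{\Theta}_{\amp}(\ov{\cD})$ with $\adeg(\ov{\cA}^{d+1})$ arbitrarily close to $\avol(\ov{\cD})$. For ample $\ov{\cA}$, an arithmetic Hilbert--Samuel computation (Theorem~\ref{theorem_intersection_product_of_hermitian_line_bundles} together with standard lattice-point counting) yields $\avol(\ov{\cA}) = \adeg(\ov{\cA}^{d+1})$ and the directional derivative $D_{\ov{\cA}}\avol(\ov{\cE}) = (d+1)\adeg(\ov{\cA}^d \cdot \ov{\cE})$. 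Taking the supremum over $\hat{\Theta}_{\amp}(\ov{\cD})$ as in Definition~\ref{definition_positive_intersection_power} packages these ample directional derivatives into $\langle \ov{\cD}^d \rangle$.

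The main obstacle is transferring differentiability from the approximating ample pieces back to $\ov{\cD}$ itself, and checking that the identification of the derivative is correct simultaneously in all directions $\ov{\cE}$. The key tool is log-concavity of $\avol$ on the big cone (an arithmetic Brunn--Minkowski statement, due to Yuan), which together with positive homogeneity forces $\avol^{1/(d+1)}$ to be concave and hence Gateaux-differentiable on the interior of its domain. One then shows that $\langle \ov{\cD}^d \rangle \ov{\cE}$ both lower-bounds $D_{\ov{\cD}}\avol(\ov{\cE})/(d+1)$, via Fujita approximation applied to $\ov{\cD}+\varepsilon\ov{\cE}$ for $\ov{\cE}$ nef and big, and upper-bounds it, via the supremum definition and superadditivity. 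Extending from nef-and-big directions to arbitrary $\ov{\cE} \in \Adiv_{\RR}(\cX)$ is done by linearity and the continuity from (1), using Lemma~\ref{lemma_decomposition_of_an_arithmetic_divisor_into_a_difference_of_amples_plus_a_function} once more.
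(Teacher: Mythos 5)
The paper does not prove this theorem; it quotes it directly from \cite[Theorem 5.3]{Ikoma_Concavity_of_arithmetic_volume}, so there is no in-text proof to compare against. Your outline does follow the general strategy of Ikoma's paper (which in turn builds on Chen's differentiability theorem and Yuan's arithmetic Brunn--Minkowski), so the references and overall plan are appropriate. Parts (1) and (2) of your sketch are essentially correct, modulo the standard care required in the $(0,f)$ direction (one compares with $\ov{\cD} \pm (0,\|f\|_{\sup})$ rather than twisting sections by $e^{f/2}$, which changes holomorphicity).

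However, there is a genuine gap in your treatment of (3). You claim that log-concavity of $\avol$ ``together with positive homogeneity forces $\avol^{1/(d+1)}$ to be concave and hence Gateaux-differentiable on the interior of its domain.'' This inference is false: concavity implies that one-sided directional derivatives $D^{\pm}_{\ov{\cD}}\avol(\ov{\cE})$ exist on the interior, and that $D^{+}_{\ov{\cD}}\avol(\ov{\cE}) \leq D^{-}_{\ov{\cD}}\avol(\ov{\cE})$, but it does \emph{not} force them to agree (a concave function on an open interval can have kinks; Rademacher only gives differentiability almost everywhere). Differentiability on the whole big cone is precisely the nontrivial content of Ikoma's and Chen's theorems and cannot be deduced from concavity alone. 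The pinching argument you sketch afterwards --- showing that $(d+1)\langle\ov{\cD}^d\rangle\ov{\cE}$ squeezes between $D^+$ and $D^-$ --- is in fact \emph{how} one establishes differentiability, not merely how one identifies an already-existing derivative. Your write-up presents the pinching as secondary, but it carries the entire weight of (3). You should restructure the argument so that concavity is invoked only to guarantee existence of one-sided derivatives, and then the Fujita-approximation lower bound and positive-intersection-product/superadditivity upper bound are presented as the two halves of the squeeze that together give both existence and identification of $D_{\ov{\cD}}\avol$.

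A secondary point: the claim that for ample $\ov{\cA}$ one has $D_{\ov{\cA}}\avol(\ov{\cE}) = (d+1)\adeg(\ov{\cA}^d\cdot\ov{\cE})$ via Hilbert--Samuel needs to account for the fact that $\ov{\cA}+t\ov{\cE}$ is generally only of $C^0$-type (not ample of $C^\infty$-type), so the arithmetic Hilbert--Samuel formula does not apply directly to the perturbed divisor; one needs a continuity or regularization step here as well.
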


\begin{lemma}~\label{lemma_psef_cone_increases_arithmetic_volume}
    Let $\ov{\cA}, \ov{\cB}$ be arithmetic $\RR$-divisors of $C^0$-type on $\cX$ with $\ov{\cB} - \ov{\cA}$ pseudo-effective. Then $\avol(\ov{\cA}) \leq \avol(\ov{\cB})$. 
\end{lemma}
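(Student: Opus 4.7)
The plan is to reduce the inequality to the case where the difference $\ov{\cB} - \ov{\cA}$ is effective, which is handled by a direct inclusion of small-section spaces, and then to use Lemma~\ref{lemma_effective_are_pseudo_effective} together with a small ample perturbation to pass from the pseudo-effective difference to an effective one modulo principal divisors.

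First I would prove the following sublemma: if $\ov{\cF} = (\cF, g_\cF)$ is an effective arithmetic $\RR$-divisor of $C^0$-type on $\cX$, then $\avol(\ov{\cA}) \leq \avol(\ov{\cA} + \ov{\cF})$ for every arithmetic $\RR$-divisor of $C^0$-type $\ov{\cA}$. Indeed, given $f \in \aH^0(\cX, m\ov{\cA})$ we have $m\cA + \cdiv(f) \geq 0$, so adding $m\cF \geq 0$ gives $f \in H^0(\cX, m(\cA + \cF))$. Moreover, since $g_\cF \geq 0$, one has $|f|^{m(\ov{\cA} + \ov{\cF})} = |f|^{m\ov{\cA}} \cdot e^{-m g_\cF/2} \leq |f|^{m\ov{\cA}}$ on the domain of definition, and hence $\|f\|_{\sup}^{m(\ov{\cA}+\ov{\cF})} \leq \|f\|_{\sup}^{m\ov{\cA}} \leq 1$. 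This yields an inclusion $\aH^0(\cX, m\ov{\cA}) \subseteq \aH^0(\cX, m(\ov{\cA}+\ov{\cF}))$, and taking $\limsup_m$ gives the sublemma.

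Next, choose any ample arithmetic $\RR$-divisor of $C^{\infty}$-type $\ov{\cH}$ on $\cX$; such an object exists since $\cX$ is projective. For every $\varepsilon > 0$ the divisor $\varepsilon \ov{\cH}$ is big by positive homogeneity of $\avol$ (Theorem~\ref{theorem_differentiability_of_arithmetic_volume}), so $\ov{\cE} + \varepsilon \ov{\cH}$ with $\ov{\cE} := \ov{\cB} - \ov{\cA}$ is big as a sum of a pseudo-effective and a big divisor. Applying Lemma~\ref{lemma_effective_are_pseudo_effective} to $\ov{\cE} + \varepsilon \ov{\cH}$ yields $n \in \NN_{>0}$ and $f \in \kappa(\cX)^{\times}$ such that $\ov{\cG} := \adiv(f) + n(\ov{\cE} + \varepsilon \ov{\cH})$ is effective, so
\[
\ov{\cB} + \varepsilon \ov{\cH} = \ov{\cA} + \ov{\cE} + \varepsilon \ov{\cH} \equiv \ov{\cA} + \tfrac{1}{n}\ov{\cG} \pmod{\Rdiv_{\RR}(\cX)}.
\]
By the invariance of $\avol$ under principal equivalence and by the sublemma,
\[
\avol(\ov{\cB} + \varepsilon \ov{\cH}) = \avol(\ov{\cA} + \tfrac{1}{n}\ov{\cG}) \geq \avol(\ov{\cA}).
\]
Letting $\varepsilon \to 0$ and invoking continuity of $\avol$ (Theorem~\ref{theorem_differentiability_of_arithmetic_volume}) finishes the proof. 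The main obstacle lies in the sublemma: one must verify that the pointwise estimate on $|f|^{m(\ov{\cA}+\ov{\cF})}$ propagates to the supremum norm globally on $\cX(\CC)$ (including at boundary points where the Green functions are only defined by continuous extension), which follows from the conventions in Definition~\ref{definition_properties_of_arithmetic_R_divisors} and the density of the domain of $g_\cF$ in $\cX(\CC)$.
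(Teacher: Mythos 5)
Your proof is correct, and it takes a genuinely different route from the paper's, though both rest on the same two inputs: the fact from Lemma~\ref{lemma_effective_are_pseudo_effective} that a big divisor is, up to a positive rational multiple and a principal term, effective; and the monotonicity of $\avol$ under adding an effective divisor (your sublemma, which is exactly the argument in the last part of the proof of Lemma~\ref{lemma_effective_are_pseudo_effective}). The difference is in the perturbation. The paper first reduces (WLOG) to the case $\avol(\ov{\cA})>0$, then writes the convex decomposition $\ov{\cB} = (1-\varepsilon)\ov{\cA} + (\varepsilon\ov{\cA} + (\ov{\cB} - \ov{\cA}))$, noting that the second summand is big because $\varepsilon\ov{\cA}$ is; it then uses $(d{+}1)$-homogeneity of $\avol$ and sends $\varepsilon\to 0$. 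You instead keep $\ov{\cA}$ arbitrary and perturb $\ov{\cB}$ to $\ov{\cB} + \varepsilon\ov{\cH}$ for an auxiliary ample $\ov{\cH}$, so that $\ov{\cE}+\varepsilon\ov{\cH}$ is big, and then send $\varepsilon\to 0$ by continuity of $\avol$ (Definition~\ref{definition_continuity_of_functions_from_adelic_divisors_to_R}). Your version dispenses with the WLOG step and the use of homogeneity, at the minor cost of introducing an auxiliary ample divisor; the paper's version is self-contained (it only touches the two given divisors). Both verifications of the effective-monotonicity sublemma proceed identically via the inclusion of small-section spaces, so there is no real gap in either argument.
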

\begin{proof}
    Without loss of generality $\avol(\ov{\cA}) > 0$. Fix $\varepsilon > 0$. We have
    \[ \avol(\ov{\cB}) = \avol((1-\varepsilon) \ov{\cA} + (\varepsilon \ov{\cA} + \ov{\cB} - \ov{\cA})) \geq \avol((1-\varepsilon) \ov{\cA}) = (1-\varepsilon)^{d+1} \avol(\ov{\cA}), \]
    where the inequality follows as in Lemma~\ref{lemma_effective_are_pseudo_effective}, as $\varepsilon \ov{\cA} + \ov{\cB} - \ov{\cA}$ is big. By taking $\varepsilon \to 0$, we get the result.
\end{proof}

\begin{definition}~\label{definition_partial_order_on_positive_approximations_of_a_divisor}
    Let $\ov{\cD}$ be a big arithmetic $\RR$-divisor of $C^0$-type on $\cX$. Following \cite{Ikoma_Concavity_of_arithmetic_volume}, we equip the set $\hat{\Theta}(\ov{\cD})$ with a partial order defined in the following way:
    \[ (f_1:\cX_1 \to \cX, \ov{\cA}_1) \leq (f_2:\cX_2 \to \cX, \ov{\cA}_2) \]
    if and only if there exists a blowup $f:\cX' \to \cX$ which factors as
    \[\begin{tikzcd}
    	{\cX'} & {\cX_i} & \cX
    	\arrow["{g_i}", from=1-1, to=1-2]
    	\arrow["{f_i}", from=1-2, to=1-3]
    \end{tikzcd}\]
    so that on $\cX'$ the arithmetic $\RR$-divisor of $C^0$-type $g_2^* \ov{\cA}_2 - g_1^* \ov{\cA}_1$ is pseudo-effective.
\end{definition}

\begin{lemma}~\label{lemma_filtering_result_from_Ikoma}~\cite[Proposition 3.2]{Ikoma_Concavity_of_arithmetic_volume}
    Let $\ov{\cD}$ be a big arithmetic $\RR$-divisor of $C^0$-type on $\cX$. Then $\hat{\Theta}(\ov{\cD})$ is filtered with respect to the order defined in Definition~\ref{definition_partial_order_on_positive_approximations_of_a_divisor}. Moreover, for any $(f_1:\cX_1 \to \cX, \ov{\cA}_1), (f_2:\cX_2 \to \cX, \ov{\cA}_2) \in  \hat{\Theta}(\ov{\cD})$ there is a blowup $f:\cX' \to \cX$ together with a nef arithmetic $\RR$-divisor of $C^0$-type $\ov{\cA}$ such that $f$ factors as in Definition~\ref{definition_partial_order_on_positive_approximations_of_a_divisor} and
    \[ g_1^* \ov{\cA}_2, g_2^* \ov{\cA}_2 \leq \ov{\cA} \leq f^* \ov{\cD}, \]
    where the inequalities are written with respect to the pseudo-effective cone.
\end{lemma}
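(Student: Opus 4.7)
The plan is to first reduce both pairs to a common birational cover of $\cX$, and then produce the upper bound $\ov{\cA}$ via an arithmetic positive-part construction for $f^*\ov{\cD}$.

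To build the common cover, I would take $\cX'$ to be the normalization of the dominant irreducible component of the fiber product $\cX_1 \times_\cX \cX_2$; this is a normal generically smooth arithmetic variety equipped with birational morphisms $g_i : \cX' \to \cX_i$ such that $f := f_1 \circ g_1 = f_2 \circ g_2 : \cX' \to \cX$ is a blowup. Setting $\ov{\cB}_i := g_i^* \ov{\cA}_i$, Lemma~\ref{lemma_nef_cone_is_invariant_under_birational_pullbacks} shows each $\ov{\cB}_i$ remains nef, and Lemma~\ref{lemma_pseudo_effective_cone_is_invariant_under_birational_pullbacks} shows $f^*\ov{\cD} - \ov{\cB}_i = g_i^*(f_i^* \ov{\cD} - \ov{\cA}_i)$ is pseudo-effective on $\cX'$. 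The problem thus reduces to producing, on $\cX'$ or a further blowup, a nef arithmetic $\RR$-divisor $\ov{\cA}$ of $C^0$-type with $\ov{\cB}_i \leq \ov{\cA} \leq f^*\ov{\cD}$ in the pseudo-effective order for $i = 1, 2$; the filteredness of $\hat{\Theta}(\ov{\cD})$ then follows at once from the existence of such a common upper bound.

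For the core construction of $\ov{\cA}$, the idea is to write $\ov{\cA} = f^*\ov{\cD} - \ov{N}$ with $\ov{N}$ a pseudo-effective arithmetic $\RR$-divisor of $C^0$-type chosen minimally (in the psef order) subject to $f^*\ov{\cD} - \ov{N}$ being nef. Since $f^*\ov{\cD} - \ov{\cB}_i$ is itself pseudo-effective and satisfies $f^*\ov{\cD} - (f^*\ov{\cD} - \ov{\cB}_i) = \ov{\cB}_i$ nef, such a minimal $\ov{N}$ must satisfy $\ov{N} \leq f^*\ov{\cD} - \ov{\cB}_i$ for each $i$, equivalently $\ov{\cA} \geq \ov{\cB}_i$; the upper bound $\ov{\cA} \leq f^*\ov{\cD}$ just records $\ov{N} \geq 0$ in psef. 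Concretely, I would realize $\ov{\cA}$ as a nef limit of ample arithmetic $\RR$-divisors of $C^\infty$-type produced by arithmetic Fujita approximation \cite{Chen2010_Fujita_approximation, yuan_2009_Fujita_approximation} on successive blowups $h : \cX'' \to \cX'$, invoking cofinality of $\hat{\Theta}_{\amp}(\ov{\cD})$ in $\hat{\Theta}(\ov{\cD})$ (cf. Remark~\ref{remark_equiv_of_def_of_pos_intersection_in_Ikoma_and_pos_int_for_nef}), with the positive intersection product of Definition~\ref{definition_positive_intersection_power} controlling the limit.

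The main obstacle will be guaranteeing nefness of $\ov{\cA}$ simultaneously with both the lower and upper psef inequalities. In the complex-algebraic setting this is the Boucksom positive-envelope construction; in the arithmetic setting it requires the explicit positive-intersection/arithmetic Fujita-approximation machinery of \cite{Ikoma_Concavity_of_arithmetic_volume}. A direct route is to use Lemma~\ref{lemma_decomposition_of_an_arithmetic_divisor_into_a_difference_of_amples_plus_a_function} to decompose each $h^* f^* \ov{\cD} - h^* \ov{\cB}_i$ on a further blowup $h : \cX'' \to \cX'$ as a difference of ample divisors plus a small $C^0$-perturbation, and then to adjust the ample approximation $\ov{\cH}$ of $h^* f^* \ov{\cD}$ by a small ample correction so that $\ov{\cH}$ dominates both $h^* \ov{\cB}_i$ in psef while staying dominated by $h^* f^* \ov{\cD}$; passing to a nef limit along the resulting cofinal sequence in $\hat{\Theta}_{\amp}(\ov{\cD})$ yields the desired $\ov{\cA}$.
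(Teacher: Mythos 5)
The paper does not actually prove this statement; it is quoted verbatim from \cite[Proposition 3.2]{Ikoma_Concavity_of_arithmetic_volume}, so there is no internal argument to compare against. What you have written is an attempt to reconstruct Ikoma's proof, and while the opening reduction is sound, the central construction contains a genuine gap.

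Your reduction to a common birational model via the normalized dominant component of $\cX_1 \times_\cX \cX_2$, and the transport of nefness of $\ov{\cA}_i$ and pseudo-effectivity of $f^*\ov{\cD} - g_i^*\ov{\cA}_i$ via Lemmas~\ref{lemma_nef_cone_is_invariant_under_birational_pullbacks} and \ref{lemma_pseudo_effective_cone_is_invariant_under_birational_pullbacks}, is exactly what one should do and is correct. The problem is the production of $\ov{\cA}$. You write $\ov{\cA} = f^*\ov{\cD} - \ov{N}$ with ``$\ov{N}$ minimal in the psef order subject to $f^*\ov{\cD} - \ov{N}$ being nef,'' and then assert that minimality forces $\ov{N} \leq f^*\ov{\cD} - \ov{\cB}_i$. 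This inference does not follow: the pseudo-effective order is a genuine partial order, and a minimal element of $\{\ov{N} \text{ psef} : f^*\ov{\cD} - \ov{N} \text{ nef}\}$ need not be a lower bound for the set, nor need a least element exist at all. What you are implicitly invoking is the existence of an arithmetic Zariski decomposition of $f^*\ov{\cD}$ (a greatest nef class $\leq f^*\ov{\cD}$), which is exactly the thing that is known to fail in general; indeed Ikoma's paper is in large part about replacing the missing Zariski decomposition by the positive intersection product. Your concluding paragraph flags the obstacle, but the proposed repair --- take an ample $\ov{\cH}$ from arithmetic Fujita approximation and ``adjust it by a small ample correction'' so that it dominates both $h^*\ov{\cB}_i$ while staying below $h^*f^*\ov{\cD}$ --- is not a proof step: Fujita approximation controls arithmetic volume, not the pseudo-effective order, and there is no reason a volume-close ample approximant should dominate a prescribed pair of nef classes; forcing domination could require adding something that is not small. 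Note also that the ``cofinality of $\hat{\Theta}_{\amp}$ in $\hat{\Theta}$'' you invoke is essentially a reformulation of the filteredness one is trying to prove, combined with Lemma~\ref{lemma_every_nef_approximation_is_almost_dominated_by_an_ample_one} which only gives domination up to a factor $\gamma < 1$, so the appeal risks circularity. To actually close the argument one should consult Ikoma's construction directly rather than route through a putative Zariski positive part.
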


\begin{lemma}~\label{lemma_every_nef_approximation_is_almost_dominated_by_an_ample_one}~\cite[Proposition 3.1]{Ikoma_Concavity_of_arithmetic_volume}
    Let $\ov{\cD}$ be a big arithmetic $\RR$-divisor of $C^0$-type on $\cX$. Let $(f:\cX' \to \cX, \ov{\cA}) \in \hat{\Theta}(\ov{\cD})$ and fix a number $0 < \gamma < 1$. Then there is $\ov{\cB} \in \Adiv_{\RR}(\cX')$ with $(f, \ov{\cB}) \in \hat{\Theta}_{\amp}(\ov{\cD})$ such that $f^* \ov{\cD} - \ov{\cB}$ is an effective arithmetic $\QQ$-divisor of $C^0$-type and $\ov{\cB} - \gamma \ov{\cA}$ is pseudo-effective.
\end{lemma}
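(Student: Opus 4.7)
The plan is to upgrade $\gamma\ov{\cA}$ to an ample arithmetic $\QQ$-divisor of $C^{\infty}$-type $\ov{\cA}^{*}$ that still sits, modulo a principal divisor, below $f^{*}\ov{\cD}$, and then to correct by that principal divisor so that the remainder is a genuine effective $\QQ$-divisor of $C^{0}$-type.

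I would first verify that $f^{*}\ov{\cD} - \gamma\ov{\cA}$ is big. Decomposing
\[
f^{*}\ov{\cD} - \gamma\ov{\cA} \;=\; (1-\gamma)\, f^{*}\ov{\cD} \;+\; \gamma\,(f^{*}\ov{\cD} - \ov{\cA}),
\]
the first summand is big by invariance of bigness under blowup pullbacks (Lemma~\ref{lemma_pseudo_effective_cone_is_invariant_under_birational_pullbacks}) and the second is pseudo-effective by the hypothesis $(f,\ov{\cA}) \in \hat{\Theta}(\ov{\cD})$; Lemma~\ref{lemma_psef_cone_increases_arithmetic_volume} then yields bigness of the sum.

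Next, using Proposition~\ref{proposition_Moriwaki_Proposition_2_4_2} to decompose the Green function of $\gamma\ov{\cA}$ into contributions supported on the irreducible components of its divisorial part, and smoothing the $C^{0}\cap\PSH$ Green functions by a Demailly-style regularization (convolution with an approximate identity, averaged over complex conjugation to preserve the real-type condition), together with the addition of a small rational multiple of a fixed ample $\QQ$-divisor $\ov{\cH}$ of $C^{\infty}$-type on $\cX'$ to restore strict positivity of the Chern form, I obtain an ample arithmetic $\QQ$-divisor of $C^{\infty}$-type $\ov{\cA}^{*}$ with $\ov{\cA}^{*} - \gamma\ov{\cA}$ pseudo-effective and arbitrarily small in pseudo-effective order. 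Continuity of $\avol$ (Theorem~\ref{theorem_differentiability_of_arithmetic_volume}) then guarantees that $f^{*}\ov{\cD} - \ov{\cA}^{*}$ is still big.

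Finally, by Lemma~\ref{lemma_effective_are_pseudo_effective} applied to the big $\QQ$-divisor $f^{*}\ov{\cD} - \ov{\cA}^{*}$, there exist a positive integer $n$ and a rational function $\psi \in \kappa(\cX')^{\times}$ such that $n(f^{*}\ov{\cD} - \ov{\cA}^{*}) + \adiv(\psi)$ is effective; as $\ov{\cA}^{*}$ is $\QQ$-Cartier, this is in fact a $\QQ$-divisor of $C^{0}$-type. Setting
\[
\ov{\cB} \;:=\; \ov{\cA}^{*} \;-\; \tfrac{1}{n}\adiv(\psi),
\]
the difference $f^{*}\ov{\cD} - \ov{\cB}$ is the required effective $\QQ$-divisor of $C^{0}$-type. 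Ampleness of $\ov{\cB}$ as an $\RR$-divisor of $C^{\infty}$-type and pseudo-effectivity of $\ov{\cB} - \gamma\ov{\cA}$ follow from the corresponding properties of $\ov{\cA}^{*}$ and $\ov{\cA}^{*} - \gamma\ov{\cA}$, since both notions depend only on the class in $\aN^{1}(\cX')$ by Theorem~\ref{theorem_Ikoma_extension_intersection_product_arithmetic_R_divisors}. This gives $(f,\ov{\cB}) \in \hat{\Theta}_{\amp}(\ov{\cD})$.

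The main obstacle is the regularization step: producing an ample $\QQ$-divisor of $C^{\infty}$-type whose $\aN^{1}$-class dominates that of $\gamma\ov{\cA}$ by an arbitrarily small pseudo-effective increment. This is where smoothing of plurisubharmonic Green functions under control of the Chern current enters, and it is precisely the technicality worked out carefully in \cite{Ikoma_Concavity_of_arithmetic_volume}. A secondary point to be careful about is ensuring rationality of the divisor at every stage of the approximation so that $f^{*}\ov{\cD} - \ov{\cB}$ ends up as a $\QQ$-divisor rather than merely an $\RR$-divisor, which dictates choosing $\ov{\cH}$ and the perturbation scalars in $\QQ$ throughout Step~2.
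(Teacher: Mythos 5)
This lemma is quoted in the paper directly from \cite[Proposition 3.1]{Ikoma_Concavity_of_arithmetic_volume} without an internal proof, so there is no paper argument to compare against; what follows is a critique of your proposal on its own merits.

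There is a concrete error in how you handle the $\QQ$-rationality. The membership $(f,\ov{\cB})\in\hat\Theta_{\amp}(\ov{\cD})$ requires $f^*\ov{\cD}-\ov{\cB}$ to be an effective arithmetic $\QQ$-divisor of $C^0$-type, i.e.\ $f^*\cD-\cB$ must be $\QQ$-Cartier. Since $\cD$ (and hence $f^*\cD$) is a general $\RR$-Cartier divisor, the correct condition is that $\ov{\cB}$ lie in the affine subspace $f^*\ov{\cD}+\Adiv_{\QQ}(\cX')$; asking $\ov{\cA}^*$ (and hence $\ov{\cB}$) to be an ample arithmetic $\QQ$-divisor, as you do, is not the same thing and does not achieve it. In your Step 2, $\cA^*$ arises from $\gamma\cA$ (which is $\RR$-Cartier) by smoothing the Green function and adding a rational multiple of a $\QQ$-divisor $\cH$; that produces $\cA^*\in\gamma\cA+\Div(\cX')_{\QQ}$, which is neither $\QQ$-Cartier nor in $f^*\cD+\Div(\cX')_{\QQ}$ in general. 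Consequently your claim that $f^*\ov{\cD}-\ov{\cA}^*$ is a ``big $\QQ$-divisor'' is unjustified, Lemma~\ref{lemma_effective_are_pseudo_effective} applied to it only yields an effective $\RR$-divisor, and the final $\ov{\cB}=\ov{\cA}^*-\frac{1}{n}\adiv(\psi)$ fails the $\QQ$-Cartier requirement. The remark at the end that ``choosing $\ov{\cH}$ and the perturbation scalars in $\QQ$'' ensures rationality is aimed at the wrong target: you need rationality of the difference $f^*\cD-\cB$, not of $\cB$ itself.

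Beyond this, the crux of the lemma---passing from a nef $C^0$-type $\ov{\cA}$ to an ample $C^\infty$-type divisor still trapped between $\gamma\ov{\cA}$ and $f^*\ov{\cD}$---is exactly your Step 2, and you defer it entirely to Ikoma's regularization argument. That is acceptable as an acknowledgment of where the difficulty lies, but it means your proposal is essentially a reduction to the statement you were asked to prove, plus one faulty bookkeeping step on rationality. A correct version of Step 2 should construct $\ov{\cA}^*$ (or directly $\ov{\cB}$) with $f^*\ov{\cD}-\ov{\cA}^*$ landing in $\Adiv_{\QQ}(\cX')$ from the start, for instance by first perturbing $f^*\ov{\cD}-\gamma\ov{\cA}$ inside the big cone to a $\QQ$-divisor $\ov{\cE}$ and then taking $\ov{\cB}=f^*\ov{\cD}-\ov{\cE}$, verifying ampleness and the psef inequality $\ov{\cB}-\gamma\ov{\cA}\geq 0$ afterward.
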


\begin{lemma}~\label{lemma_calculating_derivative_of_volume_uniformely_for_a_few_divisors}
    Let $\ov{\cD}_1, \dots, \ov{\cD}_n$ be big and nef arithmetic $\RR$-divisors of $C^0$-type on $\cX$. Let $\ov{\cB}$ be a big arithmetic $\RR$-divisor of $C^0$-type on $\cX$. Fix $\varepsilon > 0$. Then there is a pair $(f, \ov{\cA}) \in \hat{\Theta}_{\amp}(\ov{\cB})$ such that for all $i=1, \dots, n$
    \[ |\langle \ov{\cB}^d \rangle \ov{\cD}_i - \ov{\cA}^d \cdot f^* \ov{\cD}_i| < \varepsilon \]
    and satisfying
    \[ \avol(\ov{\cB}) \geq \avol(\ov{\cA}) > \frac{1}{2} \avol(\ov{\cB}). \]
\end{lemma}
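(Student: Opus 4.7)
The plan is to combine several ample approximations into a single one by using the filtering property of $\hat{\Theta}(\ov{\cB})$ together with the ample upgrade of Lemma~\ref{lemma_every_nef_approximation_is_almost_dominated_by_an_ample_one}. First, fix $\eta > 0$ small (to be tuned later in terms of $\varepsilon$, the $\ov{\cD}_i$, and $\avol(\ov{\cB})$). Since each $\ov{\cD}_i$ is nef and big, Definition~\ref{definition_positive_intersection_power} presents $\langle \ov{\cB}^d \rangle \ov{\cD}_i$ as a supremum over $\hat{\Theta}_{\amp}(\ov{\cB})$, so for every $i = 1, \dots, n$ one may pick $(f_i, \ov{\cA}_i) \in \hat{\Theta}_{\amp}(\ov{\cB})$ with $\ov{\cA}_i^d \cdot f_i^* \ov{\cD}_i > \langle \ov{\cB}^d \rangle \ov{\cD}_i - \eta$. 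Separately, arithmetic Fujita approximation produces $(f_0, \ov{\cA}_0) \in \hat{\Theta}_{\amp}(\ov{\cB})$ with $\avol(\ov{\cA}_0) > (1 - \eta) \avol(\ov{\cB})$.

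The next step is to merge these $n+1$ approximations onto a common blowup. Iterating Lemma~\ref{lemma_filtering_result_from_Ikoma} yields a pair $(f : \cX' \to \cX, \ov{\cA}') \in \hat{\Theta}(\ov{\cB})$ with $\ov{\cA}'$ nef, $f^* \ov{\cB} - \ov{\cA}'$ pseudo-effective, and $\ov{\cA}' - g_i^* \ov{\cA}_i$ pseudo-effective for every $i = 0, \dots, n$, where $g_i : \cX' \to \cX_i$ is the factorization of $f$ through $f_i$. Then apply Lemma~\ref{lemma_every_nef_approximation_is_almost_dominated_by_an_ample_one} with $\gamma$ close to $1$ to produce an ample arithmetic $\RR$-divisor $\ov{\cA}$ on $\cX'$ with $(f, \ov{\cA}) \in \hat{\Theta}_{\amp}(\ov{\cB})$, $f^* \ov{\cB} - \ov{\cA}$ effective, and $\ov{\cA} - \gamma \ov{\cA}'$ pseudo-effective; chaining this with the previous step gives $\ov{\cA} - \gamma g_i^* \ov{\cA}_i$ pseudo-effective for every $i$.

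Finally, verify the two conclusions. The telescoping identity $\ov{\cA}^d - (\gamma g_i^* \ov{\cA}_i)^d = (\ov{\cA} - \gamma g_i^* \ov{\cA}_i) \cdot \sum_{k=0}^{d-1} \ov{\cA}^k (\gamma g_i^* \ov{\cA}_i)^{d-1-k}$ intersected with the nef class $f^* \ov{\cD}_i$ is a sum of intersections of $d$ nef classes against one pseudo-effective class, hence non-negative by Theorem~\ref{theorem_Ikoma_extension_intersection_product_arithmetic_R_divisors}; combining with Lemma~\ref{lemma_intersection_doesnt_depend_on_the_model} one obtains $\ov{\cA}^d \cdot f^* \ov{\cD}_i \geq \gamma^d \ov{\cA}_i^d \cdot f_i^* \ov{\cD}_i \geq \gamma^d(\langle \ov{\cB}^d \rangle \ov{\cD}_i - \eta)$, while the matching upper bound $\ov{\cA}^d \cdot f^* \ov{\cD}_i \leq \langle \ov{\cB}^d \rangle \ov{\cD}_i$ is just membership of $(f, \ov{\cA})$ in $\hat{\Theta}_{\amp}(\ov{\cB})$. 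The same pseudo-effectivity applied with $i = 0$, combined with Lemma~\ref{lemma_psef_cone_increases_arithmetic_volume} and the $(d+1)$-homogeneity of $\avol$ from Theorem~\ref{theorem_differentiability_of_arithmetic_volume}, gives $\avol(\ov{\cA}) \geq \gamma^{d+1}(1-\eta) \avol(\ov{\cB})$, while $\avol(\ov{\cA}) \leq \avol(\ov{\cB})$ follows again from Lemma~\ref{lemma_psef_cone_increases_arithmetic_volume} applied to $f^* \ov{\cB} - \ov{\cA}$. Choosing $\gamma$ sufficiently close to $1$ and $\eta$ sufficiently small yields both conclusions. The main obstacle is coordinating the $n+1$ simultaneous pseudo-effectivity constraints on a common blowup, which is exactly what Ikoma's filtering lemma is designed for; a secondary subtlety is the multiplicative loss $\gamma^d$ in the ample refinement, which must be absorbed into $\varepsilon$ uniformly in $i$.
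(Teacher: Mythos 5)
Your proposal is correct and takes essentially the same route as the paper: separately approximate each $\langle \ov{\cB}^d\rangle\ov{\cD}_i$ by an ample pair in $\hat{\Theta}_{\amp}(\ov{\cB})$ and the arithmetic volume by a Fujita pair, merge everything onto a common blowup via Ikoma's filtering lemma, upgrade the resulting nef divisor to an ample $\ov{\cA}$ with a $\gamma$-scaling loss, and then track the intersection-number and volume bounds using nef monotonicity and homogeneity. The only cosmetic difference is that you spell out the monotonicity step $\ov{\cA}^d\cdot f^*\ov{\cD}_i \geq (\gamma g_i^*\ov{\cA}_i)^d\cdot f^*\ov{\cD}_i$ via a telescoping identity, while the paper writes it as a chain of inequalities through $(\gamma\ov{\cA}')^d$, and you parameterize by a free $\eta$ rather than the paper's specific $3/4$ threshold; both lead to the same conclusion after tuning $\gamma$ and the initial approximation error.
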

\begin{proof}
    First of all, we take $(f_0, \ov{\cA}_0) \in \hat{\Theta}(\ov{\cB})$ satisfying
    \[ \avol(\ov{\cB}) \geq \avol(\ov{\cA}_0) > \frac{3}{4} \avol(\ov{\cB}). \]
    Such exists by the arithmetic Fujita approximation theorem \cite[Theorem 4.3]{Chen2010_Fujita_approximation}, \cite[Theorem C]{yuan_2009_Fujita_approximation}; see also \cite[Proposition 3.11.(1)]{Ikoma_Concavity_of_arithmetic_volume}. Next, take pairs $(f_i, \ov{\cA}_i) \in \hat{\Theta}_{\amp}(\ov{\cB})$ such that
    \[ \langle \ov{\cB}^d \rangle \ov{\cD}_i - \varepsilon \leq \ov{\cA}_i^d \cdot f_i^* \ov{\cD}_i \leq \langle \ov{\cB}^d \rangle \ov{\cD}_i \]
    for all $i=1, \dots, n$. By Lemma~\ref{lemma_filtering_result_from_Ikoma} and induction, there is a pair $(f:\cX' \to \cX, \ov{\cA}') \in \hat{\Theta}(\ov{\cD})$ such that for all $i=0, \dots, n$ there exists a factorisation
    \[\begin{tikzcd}
    	{\cX'} & {\cX_i} & \cX
    	\arrow["{g_i}", from=1-1, to=1-2]
    	\arrow["{f_i}", from=1-2, to=1-3]
    \end{tikzcd}\]
    and
    \[ g_i^* \ov{\cA}_i \leq \ov{\cA}' \leq f^* \ov{\cD} \]
    with respect to the pseudo-effective cone. We can proceed by induction because of Lemma~\ref{lemma_pseudo_effective_cone_is_invariant_under_birational_pullbacks}. Choose a real number $0 < \gamma < 1$ to be specified later. By Lemma~\ref{lemma_every_nef_approximation_is_almost_dominated_by_an_ample_one} there is an ample $\ov{\cA} \in \Adiv_{\RR}(\cX')$ such that $(f, \ov{\cA}) \in \hat{\Theta}_{\amp}(\ov{\cB})$ and $\ov{\cA} - \gamma \ov{\cA}'$ is pseudo-effective. Using Lemma~\ref{lemma_psef_cone_increases_arithmetic_volume} we get:
    \[ \avol(\ov{\cA}) \geq \avol(\gamma \ov{\cA}') = \gamma^d \avol(\ov{\cA}') \geq \gamma^d \avol(\ov{\cA}_0) > \gamma^d \cdot \frac{3}{4} \avol(\ov{\cB}) > \frac{1}{2} \avol(\ov{\cB}), \]
    for $\gamma$ such that $\gamma^d \cdot \frac{3}{4} > \frac{1}{2}$. The inequality $\avol(\ov{\cA}) \leq \avol(\ov{\cB})$ also follows from Lemma~\ref{lemma_psef_cone_increases_arithmetic_volume}. Moreover for $i=1, \dots, n$ we have
    \[ \langle \ov{\cB}^d \rangle \ov{\cD}_i \geq \ov{\cA}^d \cdot f^* \ov{\cD}_i \geq (\gamma \ov{\cA}')^d \cdot f^* \ov{\cD}_i \geq \gamma^{d} \cdot \ov{\cA}_i^d \cdot f_i^* \ov{\cD}_i \]
    \[ \geq \gamma^{d} (\langle \ov{\cB}^d \rangle \ov{\cD}_i - \varepsilon) = \gamma^{d} \langle \ov{\cB}^d \rangle \ov{\cD}_i - 
\gamma^{d} \varepsilon. \]
    Thus
    \[ |\langle \ov{\cB}^d \rangle \ov{\cD}_i - \ov{\cA}^d \cdot f^* \ov{\cD}_i| \leq (1-\gamma^{d}) \langle \ov{\cB}^d \rangle \ov{\cD}_i + \gamma^{d} \varepsilon. \]
    Let $M := \max(\langle \ov{\cB}^d \rangle \ov{\cD}_i :i=1, \dots, n)$. Then for $\gamma$ such that
    \[ (1-\gamma^{d})M + \gamma^{d} \varepsilon < 2 \varepsilon \]
    and using the fact that $\varepsilon$ was arbitrary, we are done.
\end{proof}

\begin{lemma}~\label{lemma_continuity_of_intersection_height_and_derivative_of_volume_in_the_euclidean_topology}
    The following functions are continuous in the sense of Definition~\ref{definition_continuity_of_functions_from_adelic_divisors_to_R}:
    \begin{itemize}
        \item $h_{(-)}(x)$ for a closed point $x \in \cX \otimes \QQ$.
        \item $\ov{\cD}^d \cdot (-)$ for $\ov{\cD}$ being an ample arithmetic $\RR$-divisor of $C^{\infty}$-type on $\cX$.
        \item $\langle \ov{\cB}^d \rangle (-)$ for $\ov{\cB}$ being a big arithmetic $\RR$-divisor of $C^0$-type on $\cX$.
    \end{itemize}
\end{lemma}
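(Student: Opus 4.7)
The plan is to exploit multilinearity to reduce continuity in each case to a single estimate. All three functionals are linear in their argument: for $h_{(-)}(x)$ this is built into Definition~\ref{definition_height_of_a_point_w_r_t_arithmetic_R_divisor}, for $\ov{\cD}^d \cdot (-)$ it is Theorem~\ref{theorem_Ikoma_extension_intersection_product_arithmetic_R_divisors}, and for $\langle \ov{\cB}^d \rangle(-)$ it is built into Definition~\ref{definition_positive_intersection_power}. Consequently, if $\alpha$ denotes any of the three functionals and we perturb as in Definition~\ref{definition_continuity_of_functions_from_adelic_divisors_to_R}, then
\[ \alpha\Bigl(\ov{\cD} + \sum_{i=1}^r \varepsilon_i \ov{\cD}_i + (0, f)\Bigr) - \alpha(\ov{\cD}) = \sum_{i=1}^r \varepsilon_i \alpha(\ov{\cD}_i) + \alpha((0, f)). \]
The first summand vanishes as $\varepsilon_i \to 0$, so the problem reduces to producing, for each $\alpha$, a constant $C_\alpha$ (independent of $f$) with $|\alpha((0, f))| \leq C_\alpha \|f\|_{\sup}$.

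For (1) the definition yields $h_{(0,f)}(x) = \tfrac{1}{2[\kappa(x):\QQ]}\sum_{\sigma:\kappa(x) \to \CC} f(x^\sigma)$, whence $|h_{(0,f)}(x)| \leq \tfrac{1}{2}\|f\|_{\sup}$. For (2) I would decompose $\ov{\cD} = \sum_i a_i \ov{\cA}_i$ with ample $\ZZ$-divisors of $C^{\infty}$-type $\ov{\cA}_i$ and apply Theorem~\ref{theorem_intersection_product_of_hermitian_line_bundles} to each summand with $\cZ = \cX$, using the constant rational section $1$ of $\cO_\cX$ to represent $(0, f)$; by Remark~\ref{remark_passing_from_hermitian_line_bundle_to_an_arithmeic_divisor}, $\|1\| = \exp(-f/2)$, which produces
\[ \ov{\cD}^d \cdot (0, f) = \tfrac{1}{2}\int_{\cX(\CC)} f \cdot c_1(\ov{\cD})^d, \]
and the triangle inequality gives $|\ov{\cD}^d \cdot (0, f)| \leq \tfrac{1}{2}\|f\|_{\sup} \cdot \int_{\cX(\CC)} c_1(\ov{\cD})^d$, a finite constant depending only on $\ov{\cD}$.

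For (3) the idea is to transfer the bound from (2) through the supremum in Definition~\ref{definition_positive_intersection_power}. For any pair $(g:\cX' \to \cX, \ov{\cA}) \in \hat{\Theta}_{\amp}(\ov{\cB})$ the requirement that $g^*\ov{\cB} - \ov{\cA}$ is effective forces $\cA_\QQ \leq g^*\cB_\QQ$ on the generic fibre, so $\int_{\cX'(\CC)} c_1(\ov{\cA})^d \leq \vol(\cB_\QQ)$ by birational invariance; applying (2) on $\cX'$ yields the uniform bound $|\ov{\cA}^d \cdot g^*(0, f)| \leq \tfrac{1}{2}\|f\|_{\sup} \cdot \vol(\cB_\QQ)$ across all elements of $\hat{\Theta}_{\amp}(\ov{\cB})$. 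Taking the supremum gives the desired estimate on nef big perturbation directions, and continuity on all of $\Adiv_\RR(\cX)$ then follows from the already-asserted linear-continuous extension. The main obstacle is precisely this last step: one must verify that this linear extension from the nef big cone is well defined and respects the bound globally, not only on directions where the sup-formula applies. I would handle this by using Lemma~\ref{lemma_decomposition_of_an_arithmetic_divisor_into_a_difference_of_amples_plus_a_function} to reduce any $\ov{\cE}$ to a combination of amples and a small $C^0$-function, applying Lemma~\ref{lemma_filtering_result_from_Ikoma} to pass to a common blowup, and invoking Lemma~\ref{lemma_calculating_derivative_of_volume_uniformely_for_a_few_divisors} to approximate $\langle \ov{\cB}^d \rangle$ uniformly on finitely many divisors simultaneously by ample intersections, for which continuity is already established.
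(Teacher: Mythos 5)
The paper's proof is terse: by linearity it reduces to showing $\alpha((0,f))\to 0$ as $\|f\|_{\sup}\to 0$, which it handles by the definition of height for (1), a citation to Ikoma for (2), and by Definition~\ref{definition_positive_intersection_power} for (3). Your reduction to bounding $\alpha((0,f))$ is exactly the paper's, and your explicit computations for (1) and (2) are correct and in fact unpack what the paper cites: $h_{(0,f)}(x)=\tfrac{1}{2[\kappa(x):\QQ]}\sum_\sigma f(x^\sigma)$ directly from Definition~\ref{definition_height_of_a_point_w_r_t_arithmetic_R_divisor}, and $\ov{\cD}^d\cdot(0,f)=\tfrac{1}{2}\int_{\cX(\CC)}f\,c_1(\ov{\cD})^d$ from Theorem~\ref{theorem_intersection_product_of_hermitian_line_bundles} applied to the constant section $1$ (after decomposing the $\RR$-divisor $\ov{\cD}$ into $\ZZ$-divisor pieces and using multilinearity). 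Both give Lipschitz bounds in $\|f\|_{\sup}$, which is stronger than the required limit.

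For (3) you are doing more work than is necessary, and the extra work as written has a gap. The paper's point is that the third bullet is true \emph{by construction}: Definition~\ref{definition_positive_intersection_power} defines $\langle\ov{\cB}^d\rangle$ on general $\ov{\cE}$ as the extension ``by linearity and continuity in the sense of Definition~\ref{definition_continuity_of_functions_from_adelic_divisors_to_R}'', with well-posedness of that extension delegated to Remark~\ref{remark_equiv_of_def_of_pos_intersection_in_Ikoma_and_pos_int_for_nef} and Ikoma's paper. Your attempt to re-derive the bound from the supremum formula runs into the problem that the supremum formula only computes $\langle\ov{\cB}^d\rangle\ov{\cE}$ for nef and big $\ov{\cE}$, and $(0,f)$ is not nef and big, so ``taking the supremum gives the desired estimate on nef big perturbation directions'' does not apply to the perturbation you actually need to control. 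Your fallback via Lemma~\ref{lemma_decomposition_of_an_arithmetic_divisor_into_a_difference_of_amples_plus_a_function} and Lemma~\ref{lemma_calculating_derivative_of_volume_uniformely_for_a_few_divisors} is not an argument so much as a list of tools, and it does not close the gap: the decomposition lemma replaces $(0,f)$ by a difference of amples \emph{plus another small function}, so it does not make the $C^0$-perturbation disappear. If you insist on avoiding the ``by definition'' shortcut, the clean way to use your uniform bound is to fix an ample arithmetic $\RR$-divisor $\ov{\cA}$, observe that for $\|f\|_{\sup}$ small both $\ov{\cA}$ and $\ov{\cA}+(0,f)$ are nef and big, apply the supremum formula to each, and subtract; your uniform estimate $|\ov{\cC}^d\cdot g^*(0,f)|\leq\tfrac{1}{2}\|f\|_{\sup}\vol(\cB_\QQ)$ then squeezes the two suprema to give $|\langle\ov{\cB}^d\rangle(0,f)|\leq\tfrac{1}{2}\|f\|_{\sup}\vol(\cB_\QQ)$. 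But strictly within the paper's framework this is redundant: once the extension is declared continuous, part (3) is immediate.
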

\begin{proof}
    All of the functions in question are linear, so it's enough to check that for $\alpha$ being one of those, we have
    \[ \lim_{\| f \|_{\sup} \to 0} \alpha((0,f)) = 0. \]
    For the height function it follows directly from the definition. For the second function this is \cite[Lemma 2.5]{Ikoma_Concavity_of_arithmetic_volume}. For the last function this is by Definition~\ref{definition_positive_intersection_power}.
\end{proof}

\subsection{Essential infimum and Zhang's inequality}

\begin{definition}~\label{definition_essential_infimum_of_an_arithmetic_divisor}
    Let $\ov{\cD}$ be an arithmetic $\RR$-divisor of $C^0$-type on $\cX$. Use the notation $X = \cX \otimes \QQ$. The \textit{essential infimum} of $\ov{\cD}$ is the following number
    \[ \zeta(\ov{\cD}) := \sup_{\cY \subset \cX} \inf_{x \in \cX \setminus \cY(\ov{\QQ})} h_{\ov{\cD}}(x), \]
    where the supremum is taken taken over all closed proper subschemes $\cY \subset \cX$. Note that $\zeta$ depends only on the rational class of an arithmetic $\RR$-divisor of $C^0$-type and a priori has values in $\RR \cup \{ \pm \infty \}$.
\end{definition}

\begin{lemma}~\label{lemma_properties_of_zeta_function}
    The essential infimum function $\zeta$ has the following properties:
    \begin{enumerate}
        \item If $\ov{\cD}$ is an arithmetic $\RR$-divisor of $C^0$-type on $\cX$ and $f:\cX' \to \cX$ is a blowup, then $\zeta(\ov{\cD}) = \zeta(f^* \ov{\cD})$.
        \item For any arithmetic $\RR$-divisors of $C^0$-type $\ov{\cA}, \ov{\cB}$ on $\cX$
        \[ \zeta(\ov{\cA} + \ov{\cB}) \geq \zeta(\ov{\cA}) + \zeta(\ov{\cB}). \]
        \item It is positively $1$-homogeneous.
        \item It is non-negative on effective arithmetic $\RR$-divisors of $C^0$-type.
        \item For any arithmetic $\RR$-divisors of $C^0$-type $\ov{\cA}, \ov{\cB}$ on $\cX$ such that $\ov{\cB}$ is big and $\ov{\cA} \leq \ov{\cB}$ with respect to the pseudo-effective cone, we have $\zeta(\ov{\cA}) \leq \zeta(\ov{\cB})$.
        \item It has values in $\RR \cup \{ - \infty \}$.
    \end{enumerate}
\end{lemma}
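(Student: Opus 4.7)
The plan is to establish each of the six properties largely independently, using linearity of $h_{\ov{\cD}}$ in $\ov{\cD}$, invariance of heights under principals and birational pullbacks (Lemma~\ref{lemma_behaviour_of_height_for_birational_maps_and_pullbacks_proj_formula}), and the integral representation from Lemma~\ref{lemma_local_and_global_arithmetic_degree}.

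For (1), the key inputs are that a blowup $f:\cX' \to \cX$ is birational and surjective on closed points of the generic fibers. I would prove $\zeta(\ov{\cD}) \leq \zeta(f^* \ov{\cD})$ by sending a proper closed $\cY \subset \cX$ to $\cY' = f^{-1}(\cY)$, which is proper closed in $\cX'$; by Lemma~\ref{lemma_behaviour_of_height_for_birational_maps_and_pullbacks_proj_formula} and surjectivity on closed points of generic fibers, the two infima coincide. For the reverse, given proper closed $\cY' \subset \cX'$, I take $\cY := f(\cY') \cup E$ where $E \subset \cX$ is the (proper closed) locus over which $f$ is not an isomorphism; $f(\cY')$ is proper closed in $\cX$ because $\cX'$ is integral so $\dim \cY' < \dim \cX'$. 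Every closed point $x \notin \cY$ then lifts uniquely to an $x' \notin \cY'$ with $h_{f^*\ov{\cD}}(x') = h_{\ov{\cD}}(x)$, concluding the argument.

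Parts (2)--(4) are then quick. For (2), write $h_{\ov{\cA}+\ov{\cB}}(x) = h_{\ov{\cA}}(x) + h_{\ov{\cB}}(x)$ and use that for proper closed $\cY_A, \cY_B$, the union $\cY_A \cup \cY_B$ is again proper closed and
\[ \inf_{x \notin \cY_A \cup \cY_B} \bigl(h_{\ov{\cA}}(x) + h_{\ov{\cB}}(x)\bigr) \geq \inf_{x \notin \cY_A} h_{\ov{\cA}}(x) + \inf_{x \notin \cY_B} h_{\ov{\cB}}(x); \]
take the supremum on both sides. Part (3) is immediate since $h_{t \ov{\cD}} = t \cdot h_{\ov{\cD}}$ for $t > 0$. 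For (4), if $\ov{\cD} = (\cD, g) \geq 0$, take $\cY = \supp(\cD)$: for closed $x \notin \cY$, Lemma~\ref{lemma_local_and_global_arithmetic_degree} expresses $h_{\ov{\cD}}(x)$ as an integral of local degrees, each of which is non-negative (non-Archimedean: a local equation is in $\cO_{\cX,\phi(\mathfrak{m}_v)}$; Archimedean: equal to $g(x^\sigma)/2 \geq 0$).

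For (5), I first observe that $\zeta$ is invariant under adding a principal divisor (as heights are, by the product formula), and that any big $\ov{\cE}$ satisfies $\zeta(\ov{\cE}) \geq 0$: by Lemma~\ref{lemma_effective_are_pseudo_effective} some $n\ov{\cE} + \adiv(f)$ is effective, and (4) combined with (3) and invariance under principals yields the bound. Now given $\ov{\cB}$ big with $\ov{\cB} - \ov{\cA}$ pseudo-effective, the divisor $(1+\varepsilon)\ov{\cB} - \ov{\cA} = \varepsilon \ov{\cB} + (\ov{\cB} - \ov{\cA})$ is big for every $\varepsilon > 0$, so $\zeta((1+\varepsilon)\ov{\cB} - \ov{\cA}) \geq 0$. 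Applying (2) and (3):
\[ (1+\varepsilon) \zeta(\ov{\cB}) \geq \zeta(\ov{\cA}) + \zeta\bigl((1+\varepsilon)\ov{\cB} - \ov{\cA}\bigr) \geq \zeta(\ov{\cA}). \]
Letting $\varepsilon \to 0$ and using (6) to ensure $\zeta(\ov{\cB}) \neq +\infty$ gives $\zeta(\ov{\cB}) \geq \zeta(\ov{\cA})$.

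Part (6) is the main obstacle. The plan is to reduce to the ample case via Lemma~\ref{lemma_decomposition_of_an_arithmetic_divisor_into_a_difference_of_amples_plus_a_function}: write $\ov{\cD} \equiv \ov{\cA} - \ov{\cA}' - (0, \phi)$ with $\ov{\cA}, \ov{\cA}'$ ample and $\phi$ a $C^0$ function. By (2) applied to $\ov{\cA} = \ov{\cD} + \ov{\cA}' + (0, \phi)$ (modulo principal), together with the trivial bounds $\zeta((0, \phi)) \geq -\|\phi\|_{\sup}/2$ and $\zeta(\ov{\cA}') \geq 0$ (the latter because some multiple of ample $\ov{\cA}'$ is effective modulo a principal, by the definition of ampleness and existence of strictly small sections), one gets
\[ \zeta(\ov{\cD}) \leq \zeta(\ov{\cA}) - \zeta(\ov{\cA}') + \|\phi\|_{\sup}/2, \]
reducing the problem to $\zeta(\ov{\cA}) < +\infty$ for ample $\ov{\cA}$. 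The crux is then to produce a Zariski-dense set of closed points of $X$ with uniformly bounded heights with respect to $\ov{\cA}$: I would do this by using an arithmetic Bertini argument (Theorem~\ref{theorem_Charles_Bertini_to_omit_subvariety}) to cut $\cX$ down by hyperplane sections of $n\ov{\cA}$ to an arithmetic curve, along which heights with respect to $\ov{\cA}$ are bounded on closed points by a standard computation (finitely many values from the intersection with each hyperplane divisor plus bounded Archimedean contribution). Inducting on $\dim \cX$, the union over all such iterated sections yields the required Zariski-dense set, giving $\zeta(\ov{\cA}) < +\infty$ and hence the claim.
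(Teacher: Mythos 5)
Your proofs of (1)–(5) are correct and follow essentially the same route as the paper, except that the paper cites \cite[Lemma 3.15]{F_Ballay_Succesive_minima} for (2)–(4) while you supply direct arguments from the definition (unioning the excluded subschemes $\cY_A \cup \cY_B$ for superadditivity, taking $\cY = \supp(\cD)$ and applying Lemma~\ref{lemma_local_and_global_arithmetic_degree} for non-negativity). Your proof of (5) is identical to the paper's. One small remark: you do not actually need (6) to pass to the limit in (5) — if $\zeta(\ov{\cB}) = +\infty$ the conclusion is trivial — but invoking it does no harm and creates no circularity, since your proof of (6) uses only (2)–(4).

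For (6) you diverge from the paper. The paper reduces via Lemma~\ref{lemma_decomposition_of_an_arithmetic_divisor_into_a_difference_of_amples_plus_a_function} to the ample $\ZZ$-divisor case and then cites Zhang's inequalities or the elementary argument in \cite[Proposition 2.6]{Boucksom_Chen_Okounkov_bodies_of_filtered_linear_series}. You propose the same reduction but then sketch an internal arithmetic Bertini argument, essentially re-deriving the special case of Proposition~\ref{proposition_arithmetic_Bertini_closed_point_with_good_heights} for a single ample divisor. This is a self-contained alternative, but it is only a sketch: you would still need to spell out that the iterated sections give a closed point $x \not\in \cY$ with $h_{\ov{\cA}}(x)$ bounded by a quantity depending only on $\ov{\cA}$ and $d$ (e.g., by $\ov{\cA}^{d+1}/\vol(\cA_\QQ)$ up to an error), which requires carrying the intersection-theoretic estimates through the induction as in that Proposition.

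There is also a small logical slip in your reduction step for (6). From superadditivity you correctly obtain
\[ \zeta(\ov{\cA}) \;\geq\; \zeta(\ov{\cD}) + \zeta(\ov{\cA}') + \zeta\bigl((0,\phi)\bigr), \]
but then you ``solve'' for $\zeta(\ov{\cD})$ to get $\zeta(\ov{\cD}) \leq \zeta(\ov{\cA}) - \zeta(\ov{\cA}') + \|\phi\|_{\sup}/2$. Subtracting $\zeta(\ov{\cA}')$ from both sides is only legitimate if you already know $\zeta(\ov{\cA}') < +\infty$, which is exactly what is being proved. The fix is straightforward: drop the nonnegative term $\zeta(\ov{\cA}')$ from the lower bound rather than moving it across, which yields the weaker but safe $\zeta(\ov{\cD}) \leq \zeta(\ov{\cA}) + \|\phi\|_{\sup}/2$ and still reduces the claim to the ample case. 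Alternatively, once the ample case and (5) are in hand, one can argue more cleanly that $\ov{\cA} - \ov{\cD} \equiv \ov{\cA}' + (0,\phi)$ is big for $\|\phi\|_{\sup}$ small (by continuity of $\avol$), hence pseudo-effective, and then apply (5) directly.
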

\begin{proof}
    The first property follows from the fact that blowups are isomorphisms on an open set and from Lemma~\ref{lemma_behaviour_of_height_for_birational_maps_and_pullbacks_proj_formula}. For the proof of properties two, three and four, see \cite[Lemma 3.15]{F_Ballay_Succesive_minima}. For the fifth one fix arithmetic $\RR$-divisors of $C^0$-type $\ov{\cA}, \ov{\cB}$ on $\cX$ such that $\ov{\cB}$ is big and $\ov{\cA} \leq \ov{\cB}$. Then we have
    \[ \zeta(\ov{\cB}) = \lim_{\varepsilon \to 0^+} \zeta((1+\varepsilon)\ov{\cB}) = \lim_{\varepsilon \to 0^+} \zeta( \varepsilon \ov{\cB} + (\ov{\cB} - \ov{\cA}) + \ov{\cA} ) \]
    \[ \geq \liminf_{\varepsilon \to 0^+} \zeta(\varepsilon \ov{\cB} + (\ov{\cB} - \ov{\cA})) + \zeta(\ov{\cA}). \]
    By pseudo-effectivity of $\ov{\cB} - \ov{\cA}$ and bigness of $\ov{\cB}$ the divisor $\varepsilon \ov{\cB} + (\ov{\cB} - \ov{\cA})$ is big for $\varepsilon>0$, hence the value of $\zeta$ on it is non-negative. Thus we get $\zeta(\ov{\cB}) \geq \zeta(\ov{\cA})$. 

    By Lemma~\ref{lemma_decomposition_of_an_arithmetic_divisor_into_a_difference_of_amples_plus_a_function} and the fact that values of $\zeta$ only depend on the rational equivalence classes of arithmetic $\RR$-divisors of $C^0$-type, to get the last property it is enough to show that for an ample $\ZZ$-divisor of $C^{\infty}$-type $\ov{\cA}$ one has $\zeta(\ov{\cA}) < \infty$. This follows from Zhang's inequalities, but for an elementary proof using Weil's height machine, see \cite[Proposition 2.6]{Boucksom_Chen_Okounkov_bodies_of_filtered_linear_series}.
\end{proof}

\begin{lemma}~\label{lemma_F_Ballay_lemma_3_16}~\cite[Lemma 3.16]{F_Ballay_Succesive_minima}
    Let $\ov{\cD}$ be an arithmetic $\RR$-divisor of $C^0$-type on $\cX$ and let $\ov{\cD}_0 = (\cdiv(2),0)$. Then 
    \begin{enumerate}
        \item $\zeta(\ov{\cD} - \frac{r}{\log 2} \cdot \ov{\cD}_0) = \zeta(\ov{\cD}) - r$ for all real $r$;
        \item if $\ov{\cD}$ is pseudo-effective and $\cD_{\QQ}$ is big, then $\zeta(\ov{\cD}) \geq 0$;
        \item if $\cD_{\QQ}$ is big, the following inequality holds
        \[ \zeta(\ov{\cD}) \geq \sup \{ r : \ov{\cD} - \frac{r}{\log 2} \cdot \ov{\cD}_0 \textnormal{ is pseudo-effective} \}. \]
    \end{enumerate}
\end{lemma}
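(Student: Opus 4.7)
Part~(1) follows immediately from Lemma~\ref{lemma_height_with_respect_to_divisor_max_div_two_and_zero}, which gives $h_{\ov{\cD}_0}(x) = \log 2$, together with linearity of heights in the arithmetic divisor: for every closed $x \in X = \cX_{\QQ}$ one has $h_{\ov{\cD} - (r/\log 2)\ov{\cD}_0}(x) = h_{\ov{\cD}}(x) - r$, and passing to the $\sup_{\cY}\inf_{x \notin \cY}$ defining $\zeta$ yields the stated equality. Part~(3) is then a formal consequence of (1) and (2): whenever $\ov{\cD} - (r/\log 2)\ov{\cD}_0$ is pseudo-effective, its generic fiber is still $\cD_{\QQ}$ (since $\ov{\cD}_0$ has trivial generic Cartier part), which is big by hypothesis, so (2) yields $\zeta(\ov{\cD} - (r/\log 2)\ov{\cD}_0) \geq 0$; (1) rearranges this to $\zeta(\ov{\cD}) \geq r$, and one takes the supremum over admissible $r$.

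The substance is part~(2), which I would prove in two steps. The first step is an auxiliary statement: every big $\ov{\cE} \in \Adiv_{\RR}(\cX)$ satisfies $\zeta(\ov{\cE}) \geq 0$. By bigness there is a non-zero small element $f \in \aH^0(\cX, m\ov{\cE})$ for some $m$, and the bound $\|f\|_{\sup}^{m\ov{\cE}} \leq 1$ translates to $\adiv(f) + m\ov{\cE}$ being effective (its Green function $m g_{\ov{\cE}} - 2\log|f|$ is non-negative); its Cartier support $\supp(\cdiv(f) + m\cE)$ is proper in $\cX$. For every closed $x$ outside this support, Lemma~\ref{lemma_local_and_global_arithmetic_degree} expresses $m h_{\ov{\cE}}(x) = h_{\adiv(f) + m\ov{\cE}}(x)$ as an integral over $\Val_{\kappa(x)}$ of local degrees that are each non-negative by effectivity, hence $h_{\ov{\cE}}(x) \geq 0$; taking $\cY := \supp(\cdiv(f) + m\cE)$ in the definition of $\zeta$ concludes the sublemma.

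For the second step, I would show that $\ov{\cD}_{\varepsilon} := \ov{\cD} + \varepsilon \ov{\cD}_0$ is big for every $\varepsilon > 0$. Given this, the sublemma gives $\zeta(\ov{\cD}_{\varepsilon}) \geq 0$; by (1) we obtain $\zeta(\ov{\cD}) \geq -\varepsilon \log 2$, and sending $\varepsilon \to 0^+$ concludes (2). To establish bigness of $\ov{\cD}_{\varepsilon}$, fix any ample $\ov{\cA}$ on $\cX$ and set $\ov{\cE}_t := \ov{\cD} + t\ov{\cA}$, which is big for $t > 0$ because $\ov{\cD}$ is pseudo-effective. By Theorem~\ref{theorem_differentiability_of_arithmetic_volume}, $\avol$ is differentiable on the big cone with derivative $(d+1)\langle \ov{\cF}^d\rangle$; and for $\ov{\cF}$ big with $\cF_{\QQ}$ big I would prove $\langle \ov{\cF}^d \rangle \ov{\cD}_0 \geq \log 2 \cdot \vol(\cF_{\QQ})$. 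In the ample case Theorem~\ref{theorem_intersection_product_of_hermitian_line_bundles} yields an exact equality, using the rational section $1$ of $\cO_{\cX}(\cdiv(2))$ (whose metric satisfies $|1| \equiv 1$, so the $-\log\|\cdot\|^2$ term vanishes identically) and the fact that the complex points of $\cdiv(2)$ are empty, leaving only the $\log 2$-weighted geometric self-intersection on the special fiber $\cX_{\FF_2}$, equal to $\cF_{\QQ}^d = \vol(\cF_{\QQ})$ by flatness; the general big case reduces to the ample case via arithmetic Fujita approximation (Lemma~\ref{lemma_calculating_derivative_of_volume_uniformely_for_a_few_divisors}) paired with geometric Fujita on the generic fiber. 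Integrating the derivative of $s \mapsto \avol(\ov{\cE}_t + s\ov{\cD}_0)$ on $[0, \varepsilon]$ and using $\avol(\ov{\cE}_t) \geq 0$ yields $\avol(\ov{\cE}_t + \varepsilon \ov{\cD}_0) \geq (d+1)\log 2 \cdot \vol(\cD_{\QQ} + t\cA_{\QQ}) \cdot \varepsilon$; letting $t \to 0^+$ and invoking continuity of $\avol$ together with continuity of $\vol$ on the big cone of $X$ gives $\avol(\ov{\cD}_{\varepsilon}) \geq (d+1)\log 2 \cdot \vol(\cD_{\QQ}) \cdot \varepsilon > 0$.

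The main obstacle is the positive-intersection estimate $\langle \ov{\cF}^d\rangle \ov{\cD}_0 \geq \log 2 \cdot \vol(\cF_{\QQ})$ for general big $\ov{\cF}$ with $\cF_{\QQ}$ big: the ample case is transparent, but the passage to general big $\ov{\cF}$ requires interleaving arithmetic Fujita approximation with a geometric Fujita argument on $X$ to approximate $\vol(\cF_{\QQ})$ by $\vol(\cB_{\QQ})$ for ample arithmetic approximations $(f, \ov{\cB}) \in \hat{\Theta}_{\amp}(\ov{\cF})$, handling the possible non-nefness of $\cF_{\QQ}$. All remaining steps are formal given the tools of the excerpt.
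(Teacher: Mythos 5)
Your treatment of parts (1) and (3) is correct and matches the paper's own (very brief) argument: (1) comes from Lemma~\ref{lemma_height_with_respect_to_divisor_max_div_two_and_zero} and linearity of $h_{(-)}(x)$, and (3) is a formal consequence of (1) and (2) since $\ov{\cD}_0$ has trivial generic part. Your sublemma that every \emph{big} $\ov{\cE}$ satisfies $\zeta(\ov{\cE}) \geq 0$ is also correct: a non-zero small section $f \in \aH^0(\cX, m\ov{\cE})$ makes $\adiv(f)+m\ov{\cE}$ effective, and Lemma~\ref{lemma_local_and_global_arithmetic_degree} then gives $h_{\ov{\cE}}(x)\geq 0$ for all closed $x$ off the support; this is exactly the mechanism behind Zhang-type lower bounds.

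The gap is in part (2), and it is where you said it would be. You reduce (2) to showing $\ov{\cD}_{\varepsilon} := \ov{\cD}+\varepsilon\ov{\cD}_0$ is big for all $\varepsilon > 0$, and this reduction is sound; but the bigness itself you derive from the unproven estimate $\langle\ov{\cF}^d\rangle\ov{\cD}_0 \geq \log 2 \cdot \vol(\cF_\QQ)$ for all big $\ov{\cF}$ with $\cF_\QQ$ big. Your computation gives equality when $\ov{\cF}$ is ample (or nef), but for general big $\ov{\cF}$ the Fujita argument as sketched actually produces the \emph{opposite} inequality for free: for $(f,\ov{\cA})\in\hat{\Theta}_{\amp}(\ov{\cF})$ one has $\ov{\cA}^d\cdot f^*\ov{\cD}_0 = \log 2 \cdot \vol(\cA_\QQ)$ and $\vol(\cA_\QQ) \leq \vol(\cF_\QQ)$ since $(f^*\cF-\cA)_\QQ$ is pseudo-effective, so the net limit is $\leq \log 2\cdot\vol(\cF_\QQ)$. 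To get $\geq$ (hence equality) you would need to show that arithmetic Fujita approximations of $\ov{\cF}$ realize the full geometric volume of $\cF_\QQ$ in the limit, i.e.\ that a geometric Fujita approximation $B\leq g^*\cF_\QQ$ can be promoted to an arithmetic one $\ov{\cB}\leq f^*\ov{\cF}$ with $\cB_\QQ$ close to $B$. That requires controlling Green functions simultaneously with the geometric decomposition and is genuinely nontrivial; nothing in the cited lemmas gives it directly, and your mention of ``interleaving'' Fujita arguments acknowledges but does not close this gap. Absent the estimate, the integration step only yields an upper bound for $\avol(\ov{\cD}_\varepsilon)$, which is useless for bigness.

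There is a much shorter route to (2) inside this paper's toolbox that you did not use: Theorem~\ref{theorem_Zhang_inequality} already states, for pseudo-effective $\ov{\cD}$ with $\cD_\QQ$ big, that $\zeta(\ov{\cD}) \geq \avol(\ov{\cD})/\bigl((d+1)\vol(\cD_\QQ)\bigr)$, whose right-hand side is automatically $\geq 0$. This gives (2) in one line. The paper itself cites (2) as \cite[Lemma 3.16]{F_Ballay_Succesive_minima} rather than proving it, so if you want a self-contained proof you should be aware that the natural endpoint of your ``big $\Rightarrow \zeta\geq 0$'' sublemma is precisely an ingredient in the proof of Zhang's inequality, and closing your bigness gap is essentially equivalent in difficulty to reproving that theorem; it is not a step one can wave through.
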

\begin{proof}
    The first point follows from additivity of height and Lemma~\ref{lemma_height_with_respect_to_divisor_max_div_two_and_zero}. The second point is \cite[Lemma 3.16]{F_Ballay_Succesive_minima}. The third point follows from the first two.
\end{proof}

\begin{theorem}~\label{theorem_Zhang_inequality}
    Let $\ov{\cD}$ be a pseudo-effective arithmetic $\RR$-divisor of $C^0$-type on $\cX$ with $\cD_{\QQ}$ big. Then the following inequality holds
    \[ \zeta(\ov{\cD}) \geq \frac{\avol(\ov{\cD})}{(d+1) \vol(\cD_{\QQ})}. \]
\end{theorem}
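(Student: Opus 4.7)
When $\avol(\ov{\cD}) = 0$ the right-hand side is non-positive, while $\zeta(\ov{\cD}) \geq 0$ by Lemma~\ref{lemma_F_Ballay_lemma_3_16}(2) (using that $\ov{\cD}$ is pseudo-effective with $\cD_\QQ$ big), so the inequality holds trivially. I therefore assume $\ov{\cD}$ is big and set $r^* := \frac{\avol(\ov{\cD})}{(d+1)\vol(\cD_\QQ)}$. By Lemma~\ref{lemma_F_Ballay_lemma_3_16}(3), it suffices to show that $\ov{\cD}(-r) := \ov{\cD} - \frac{r}{\log 2}\ov{\cD}_0$ is pseudo-effective for every rational $r < r^*$.

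My plan is to produce, for $m$ sufficiently large, a non-zero $s \in H^0(\cX, m\cD)$ with $\|s\|_{\sup}^{m\ov{\cD}} \leq e^{-mr}$. Since $\ov{\cD}_0 \equiv (0, 2\log 2)$ modulo principal divisors, such an $s$ is precisely a small section of $m\ov{\cD}(-r)$; the associated effective arithmetic divisor, scaled by $1/m$, represents $\ov{\cD}(-r)$ modulo principal divisors, and Lemma~\ref{lemma_effective_are_pseudo_effective} then delivers the pseudo-effectivity of $\ov{\cD}(-r)$.

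The existence of such an $s$ is the standard geometry-of-numbers argument underlying Zhang's theorem of successive minima. Setting $N_m := \dim_\QQ H^0(X, m\cD_\QQ) = \vol(\cD_\QQ) m^d/d! + o(m^d)$ by the Hilbert--Samuel formula on the generic fibre and $\log\#\aH^0(\cX, m\ov{\cD}) = \avol(\ov{\cD}) m^{d+1}/(d+1)! + o(m^{d+1})$ by the definition of arithmetic volume, the arithmetic Minkowski theorem (Gillet--Soul\'e; see also \cite[Theorem~7.2.(ii)]{F_Ballay_Succesive_minima}) applied to the lattice $H^0(\cX, m\cD)$ inside $H^0(X, m\cD_\QQ) \otimes_\QQ \RR$ equipped with the sup-norm $\|\cdot\|_{\sup}^{m\ov{\cD}}$ yields a non-zero $s_m$ whose first minimum satisfies
\[
-\log \|s_m\|_{\sup}^{m\ov{\cD}} \;\geq\; \frac{\log \# \aH^0(\cX, m\ov{\cD})}{N_m} - o(m) \;=\; m\, r^* + o(m),
\]
which exceeds $mr$ once $m$ is large enough. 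The main obstacle is exactly this arithmetic Minkowski estimate, where the ratio $\avol/\vol$ with the $(d+1)$-denominator emerges as the comparison between the $(d+1)$-dimensional arithmetic volume and the average over an $N_m$-dimensional generic linear system; this input can be imported directly as the special case of Balla\"y's theorem quoted above.
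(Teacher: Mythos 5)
The paper's own proof is a one-line citation to \cite[Theorem 7.2.(ii)]{F_Ballay_Succesive_minima}; your proof attempts to unpack that citation, but then reimports the same theorem at the critical step, which makes the argument circular as written.

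Your intermediate reasoning is sound and fits the paper's machinery correctly: the reduction via Lemma~\ref{lemma_F_Ballay_lemma_3_16} to showing that $\ov{\cD} - \frac{r}{\log 2}\ov{\cD}_0$ is pseudo-effective for $r < r^*$, the observation that $\ov{\cD}_0 \equiv (0, 2\log 2)$ so that small sections of $m\ov{\cD}$ of norm $\leq e^{-mr}$ are precisely small sections of $m(\ov{\cD} - \frac{r}{\log 2}\ov{\cD}_0)$, and the passage from a single small section to pseudo-effectivity via Lemma~\ref{lemma_effective_are_pseudo_effective} and birational invariance of $\aN^1$-classes --- all of this is correct, and is in fact the reverse direction of Corollary~\ref{corollary_Ballays_result_without_semi_pos_ass_zeta_formula}. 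The treatment of the $\avol(\ov{\cD}) = 0$ boundary case via Lemma~\ref{lemma_F_Ballay_lemma_3_16}(2) is also fine.

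The gap is the final step. You flag the ``arithmetic Minkowski estimate'' (that the first successive minimum of the lattice $H^0(\cX,m\cD)$ with the sup-norm of $m\ov{\cD}$ satisfies $-\log\lambda_1 \geq mr^* + o(m)$) as the main obstacle, and then say this ``can be imported directly as the special case of Balla\"y's theorem quoted above,'' i.e., \cite[Theorem 7.2.(ii)]{F_Ballay_Succesive_minima}. But that theorem \emph{is} Zhang's inequality in the form stated here --- it is exactly what the paper cites to dispatch the whole theorem --- not a lattice-minima estimate. So as written you have reduced Zhang's inequality to Zhang's inequality. To make the proof genuinely independent of the citation you would need to invoke the arithmetic Minkowski/Riemann--Roch apparatus directly: the Gillet--Soul\'e estimate relating $\log\#\aH^0$ to the arithmetic Euler characteristic $\chi$ up to $O(N\log N)$, together with the classical Minkowski first-minimum bound $-\log\lambda_1 \geq \chi/N - \log 2$ and the extension of these to arithmetic $\RR$-divisors of $C^0$-type --- the relevant references would be \cite{Zhang_thesis_inequality} and the appropriate sections of \cite{Moriwaki_Zariski_arithmetic_decompositions} or \cite{F_Ballay_Succesive_minima}, not the final theorem. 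Alternatively, one can simply cite Ballaÿ's Theorem 7.2.(ii) directly, as the paper does, in which case the detour through small sections buys nothing. Either way, the reduction you carry out is correct but the attribution of the decisive analytic input needs to be fixed.
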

\begin{proof}
    In this form the theorem follows from \cite[Theorem 7.2.(ii)]{F_Ballay_Succesive_minima}.
\end{proof}

\subsection{Convex geometry}

\begin{lemma}~\label{lemma_convex_geometry_interior_of_closure_is_the_starting_open_cone}
    An open convex set in $\RR^n$ is the interior of its closure.
\end{lemma}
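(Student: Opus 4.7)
The plan is to prove both inclusions $U \subset \textnormal{int}(\ov{U})$ and $\textnormal{int}(\ov{U}) \subset U$ for an open convex set $U \subset \RR^n$. The first inclusion is immediate: since $U$ itself is open and $U \subset \ov{U}$, every point of $U$ is interior to $\ov{U}$. The substance of the lemma is in the reverse inclusion, and the trivial case $U = \emptyset$ can be dispensed with separately (the closure is then empty too).

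For the reverse inclusion, assume $U$ is non-empty and fix $x \in \textnormal{int}(\ov{U})$ together with some $y \in U$. I plan to exhibit $x$ as a convex combination of $y$ with a point of $\ov{U}$, using the fact that there is room to move slightly past $x$ on the ray from $y$ through $x$. More precisely, for sufficiently small $t > 0$ the point $z := x + t(x-y)$ lies in a small ball around $x$ contained in $\ov{U}$, and one checks that $x = \mu z + (1-\mu) y$ with $\mu = \tfrac{1}{1+t} \in (0,1)$.

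The heart of the argument, and the step I expect to require a moment of care, will be the following sub-claim: if $V$ is convex, $B(y, \delta) \subset V$ for some $\delta > 0$, $z \in \ov{V}$ and $\mu \in [0,1)$, then $\mu z + (1-\mu) y \in V$. A naive approximation $z_n \to z$ with $z_n \in V$ only places the convex combination in $\ov{V}$, which is not enough; the trick is to combine the whole ball around $y$ with $z_n$ rather than a single point. Concretely, by convexity the sets $\mu z_n + (1-\mu) B(y, \delta) = B\bigl(\mu z_n + (1-\mu) y,\, (1-\mu)\delta\bigr)$ are contained in $V$, and for $n$ large enough $\mu z_n + (1-\mu) y$ is within distance less than $(1-\mu)\delta$ of $\mu z + (1-\mu) y$, placing the latter inside one of these balls. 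Applying this sub-claim to $V = U$, $\mu = \tfrac{1}{1+t}$ and the $z$ constructed in the previous paragraph yields $x \in U$, completing the proof.
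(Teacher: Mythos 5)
Your proof is correct. The paper itself gives no argument, only a citation (``This is classical, for a proof see for example [GVF2, Lemma~5.6]''), so your self-contained write-up is exactly what a reader would want to fill in. The approach you use is the standard one for this classical fact: the easy inclusion from openness, and for the hard inclusion, pushing past an interior-of-closure point $x$ along the ray from an interior point $y$ to land at some $z\in\ov{U}$, then recovering $x$ as a strict convex combination of $z$ with $y$. Your sub-claim (a convex set containing a ball $B(y,\delta)$ and a closure point $z$ contains the open segment toward $y$) is the right lemma to isolate, and the argument of convex-combining $z_n$ with the whole ball rather than with the single point $y$ is precisely the step that would otherwise only place the combination in $\ov{V}$; you handle it cleanly. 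One small remark: your sub-claim as stated already implies the result with no need for the preliminary step of choosing $z = x + t(x-y)$, since taking $z$ to range over a ball $B(x,\epsilon)\subset\ov{U}$ and $\mu$ close to $1$ gives a neighbourhood of $x$ in $U$; but the route you take is equally valid and arguably clearer.
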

\begin{proof}
    This is classical, for a proof see for example \cite[Lemma 5.6]{GVF2}.
\end{proof}

\begin{lemma}~\label{lemma_convex_geometry_separation_by_a_linear_functional}
    Let $V$ be a finite dimensional real vector space and let $V^+$ be a closed convex cone in $V$ not containing any line. Let $l:V \to \RR$ be a linear functional non-negative on $V^+$. Then for any vectors $v_0, \dots, v_n \in V$ and for any $\varepsilon>0$ there is a linear functional $l': V \to \RR$ strictly positive on $V^+ \setminus \{0\}$ such that for all $i=0, \dots, n$
    \[ |l(v_i) - l'(v_i)| \leq \varepsilon. \]
    Moreover, if $v_0 \in V^+$, then one can additionally assume that $l'(v_0) \geq l(v_0)$.
\end{lemma}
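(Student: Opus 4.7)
The plan is to reduce everything to finding an auxiliary linear functional $l_0$ which is strictly positive on $V^+ \setminus \{0\}$. Given such an $l_0$, the desired functional is simply $l' := l + \delta l_0$ for $\delta > 0$ small enough that $\delta |l_0(v_i)| \leq \varepsilon$ for all $i = 0, \dots, n$. Then $l'$ is strictly positive on $V^+ \setminus \{0\}$ (as a sum of a non-negative and a strictly positive functional on that set), and $|l(v_i) - l'(v_i)| = \delta |l_0(v_i)| \leq \varepsilon$. For the moreover part, if $v_0 \in V^+$ then $l_0(v_0) \geq 0$, so $l'(v_0) = l(v_0) + \delta l_0(v_0) \geq l(v_0)$.

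So the only real content is the existence of such $l_0$, i.e., showing that the interior of the dual cone $(V^+)^{\vee}$ is non-empty whenever $V^+$ is a closed convex pointed cone. I would prove this by a standard separation argument. Fix any norm on $V$ and let $S$ be the unit sphere. The set $K := V^+ \cap S$ is compact. I claim $0 \notin \mathrm{conv}(K)$: if $0 = \sum_{i=1}^m \lambda_i x_i$ with $x_i \in K$, $\lambda_i > 0$, $\sum \lambda_i = 1$, then
\[ \lambda_1 x_1 = -\sum_{i=2}^m \lambda_i x_i, \]
and the right-hand side lies in $-V^+$ while the left-hand side lies in $V^+$, forcing $\lambda_1 x_1 \in V^+ \cap (-V^+) = \{0\}$ (since $V^+$ contains no line), contradicting $x_1 \in S$.

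Since $\mathrm{conv}(K)$ is a compact convex set in the finite dimensional space $V$ not containing $0$, the Hahn--Banach separation theorem produces a linear functional $l_0 : V \to \RR$ with $l_0 > 0$ on $\mathrm{conv}(K)$, hence on $K$. By positive homogeneity, $l_0 > 0$ on $V^+ \setminus \{0\}$, completing the construction. There is no serious obstacle here; the only subtlety is ensuring that the pointedness hypothesis (no line in $V^+$) is used precisely at the step where we exclude $0$ from $\mathrm{conv}(K)$.
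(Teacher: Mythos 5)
Your proof is correct and takes essentially the same approach as the paper: both reduce to producing an auxiliary functional strictly positive on $V^+ \setminus \{0\}$ and then perturb $l$ by a small multiple of it. The only difference is that the paper constructs the auxiliary functional via a scalar product paired with a vector in the interior of the dual cone (deferring the details to a reference), whereas you spell out the standard separation argument ($0 \notin \mathrm{conv}(V^+ \cap S)$ plus Hahn--Banach) directly; this is a cosmetic rather than substantive difference.
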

\begin{proof}
    Fix $v_0, \dots, v_n \in V$ and $\varepsilon>0$. It is enough to find a linear functional $m:V \to \RR$ such that $V^+ \setminus \{0\} \subset \{v \in V: m(v)>0\}$. Indeed, in this case we can just put $l' := l+\delta m$ for small enough positive $\delta$. To define $m$ one can equip $V$ with a (non-degenerate, positively defined) scalar product and take $m$ to be the product with a vector from the interior of the dual cone of $V^+$. For details, see for example \cite[Lemma 5.7]{GVF2}.
\end{proof}

\begin{lemma}~\label{lemma_convex_geometry_volume_divided_by_linear}
    Let $V$ be a finite dimensional real vector space and let $V^+$ be a closed convex cone in $V$ generating the whole space. Assume that $l:V \to \RR$ is a linear functional strictly positive on $V^+ \setminus \{0\}$ and that $\alpha:V \to \RR$ is a function which is:
    \begin{enumerate}
        \item nonzero,
        \item continuous,
        \item positively $1$-homogeneous,
        \item differentiable on $(V^+)^{\interior}$,
        \item vanishing outside $(V^+)^{\interior}$.
    \end{enumerate}
    Then there is a unique continuous continuation of the function $\frac{\alpha}{l}$ to $V \setminus \{0\}$. Moreover, there exists a point $p \in (V^+)^{\interior}$ such that
    \[ \frac{D_p \alpha}{\alpha(p)} = \frac{l}{l(p)}. \]
\end{lemma}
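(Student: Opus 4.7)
My plan is to handle the two claims separately.

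For the continuous extension, I would define $\tilde{f}(v) := \alpha(v)/l(v)$ on $(V^+)^{\interior}$ (where $l > 0$ by hypothesis on $l$) and $\tilde{f}(v) := 0$ on $V \setminus (V^+)^{\interior}$, then check continuity point by point. On $(V^+)^{\interior}$ it is immediate. On the open set $V \setminus V^+$ the function is identically zero and by (5) the original quotient $\alpha/l$ equals $0$ wherever $l$ does not vanish, so the two definitions are compatible. At a point $p \in \partial V^+ \setminus \{0\}$ we have $l(p) > 0$ (by strict positivity on $V^+ \setminus \{0\}$) and $\alpha(p) = 0$ (by (2) together with (5)), so for any sequence $p_n \to p$ one gets $\tilde{f}(p_n) \to 0 = \tilde{f}(p)$ regardless of which region the $p_n$ come from. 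Uniqueness is automatic: the only set on which $\alpha/l$ is a priori undefined is $\{v \notin V^+ : l(v) = 0\} \setminus \{0\}$, and this set lies in the closure of the open set $V \setminus V^+$ where the extension must be $0$.

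For the optimization claim, by (3) and linearity of $l$ the function $\tilde{f}$ is positively $0$-homogeneous, so after fixing any norm on $V$ its values are determined by its restriction to the unit sphere $S$. Since $\alpha$ is nonzero and (5) holds, some $v_0 \in (V^+)^{\interior}$ satisfies $\alpha(v_0) \neq 0$, whence $\tilde{f}(v_0) \neq 0$. The restriction $\tilde{f}|_S$ is continuous on a compact set, so it attains its supremum and infimum, at least one of which is nonzero. I would pick $p \in S$ at which $|\tilde{f}|$ attains its maximum on $S$ with $\tilde{f}(p) \neq 0$; since $\alpha(p) \neq 0$, condition (5) forces $p \in (V^+)^{\interior}$.

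The key trick in the derivative step is that, by $0$-homogeneity of $\tilde{f}$, the extremum $p$ on $S$ is simultaneously an extremum of $\tilde{f}$ on all of $V \setminus \{0\}$, since every ray hits $S$. Consequently, for every direction $v \in V$, the one-variable function $t \mapsto \tilde{f}(p+tv)$ has a local extremum at $t=0$, and by (4) combined with differentiability of $l$ we get that the Gateaux derivative $D_p \tilde{f}(v)$ exists and must vanish — no Lagrange multiplier / tangential decomposition argument is needed. Applying the quotient rule,
\[
0 = D_p \tilde{f} = \frac{D_p \alpha}{l(p)} - \frac{\alpha(p)}{l(p)^2}\, l,
\]
which rearranges to $D_p \alpha = (\alpha(p)/l(p))\, l$; dividing by $\alpha(p) \neq 0$ yields the desired identity.

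The main obstacle is really only a bookkeeping one: the continuity check at $\partial V^+$ under the weak hypothesis that $\alpha$ merely \emph{vanishes} outside $(V^+)^{\interior}$, rather than being smoothly defined up to the boundary. Once that is settled, exploiting $0$-homogeneity to promote a constrained extremum on $S$ to an unconstrained one on $V \setminus \{0\}$ is what lets pure Gateaux differentiability suffice for the conclusion.
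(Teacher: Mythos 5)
Your proposal is correct and follows essentially the same strategy as the paper: restrict $\alpha/l$ to the unit sphere, extremize by compactness, exploit $0$-homogeneity to turn the constrained extremum into an unconstrained critical point, and read off the identity from the vanishing derivative. Your two small deviations — extremizing $|\alpha/l|$ rather than $\alpha/l$ (so the argument does not tacitly assume $\alpha$ takes a positive value) and using the quotient rule instead of the paper's logarithmic derivative — are cosmetic and, if anything, make the argument a bit more robust.
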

\begin{proof}
    Pick an euclidean norm on $V$ and let $S$ be the unit sphere of it. Since both $l$ and $\alpha$ are positively $1$-homogeneous, for the first part it is enough to show that $\frac{\alpha}{l}$ has a unique continuous extension to $S$. This follows from the fact that if $l(q)=0$ for $q \in S$, then $q \in S \cap V \setminus V^+$ and $S \cap V \setminus V^+$ is a relatively open set in $S$, where $\alpha$ is zero. To prove the second part, let $p \in S$ be a point where the maximum of $\frac{\alpha}{l}$ is achieved (it exists by compactness of $S$). Then $\alpha(p), l(p) > 0$, because $\alpha$ is nonzero. Point $p$ is a local maximum of $\frac{\alpha}{l}$ and so also of $\log(\frac{\alpha}{l}) = \log(\alpha) - \log(l)$ (note that both functions are defined in a neighbourhood of $p$). Hence the derivative of $\log(\frac{\alpha}{l})$ at $p$ vanishes and we get
    \[ \frac{D_p \alpha}{\alpha(p)} = D_p(\log(\alpha)) = D_p(\log(l)) = \frac{l}{l(p)}. \]
\end{proof}

\subsection{Approximating a global functional}\label{subsection_approx_global_functional}

For this subsection we fix a normal, generically smooth arithmetic variety $\cX$ over $\Spec(\ZZ)$. Let $d+1 \geq 2$ be the dimension of $\cX$.
\begin{proposition}~\label{proposition_formula_for_derivative_of_avol_ample_in_blowup}
    Let $\ov{\cB}, \ov{\cD}_0, \dots, \ov{\cD}_n$ be arithmetic $\RR$-divisors of $C^0$-type on $\cX$ with $\ov{\cB}$ big. Fix $\varepsilon>0$. Then there exists a blowup $f:\cX' \to \cX$ and an ample arithmetic $\QQ$-divisor of $C^{\infty}$-type $\ov{\cB}'$ on $\cX'$ such that:
    \begin{itemize}
        \item $\vol(\cB'_{\QQ}) > \frac{\avol(\ov{\cB})}{2(d+1) \zeta(\ov{\cB})} =: A$;
        \item $|\langle \ov{\cB}^d \rangle \ov{\cD}_i - \ov{\cB}'^d \cdot f^* \ov{\cD}_i| < A \varepsilon$ for all $i = 0, \dots, n$.
    \end{itemize}
\end{proposition}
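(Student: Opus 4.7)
The plan is to combine Lemma~\ref{lemma_calculating_derivative_of_volume_uniformely_for_a_few_divisors} with Zhang's inequality (Theorem~\ref{theorem_Zhang_inequality}), after first reducing to the case where the test divisors are big and nef via Lemma~\ref{lemma_decomposition_of_an_arithmetic_divisor_into_a_difference_of_amples_plus_a_function}. The pair $(f, \ov{\cB}')$ will be an almost optimal Fujita approximation of $\ov{\cB}$, and the lower bound on $\vol(\cB'_{\QQ})$ will come from Zhang's inequality applied directly to $\ov{\cB}'$.

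First I would reduce to big and nef test divisors. For any $\delta>0$, Lemma~\ref{lemma_decomposition_of_an_arithmetic_divisor_into_a_difference_of_amples_plus_a_function} expresses each $\ov{\cD}_i \equiv \ov{\cA}_i - \ov{\cA}'_i - (0, \phi_i)$ modulo a principal divisor, where $\ov{\cA}_i, \ov{\cA}'_i$ are ample (in particular big and nef) arithmetic $\RR$-divisors of $C^{\infty}$-type and $\|\phi_i\|_{\sup} < \delta$. Principal parts are killed by both $\langle\ov{\cB}^d\rangle(-)$ and $\ov{\cB}'^d \cdot f^*(-)$ (Theorem~\ref{theorem_Ikoma_extension_intersection_product_arithmetic_R_divisors} and Lemma~\ref{lemma_intersection_doesnt_depend_on_the_model}). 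The $(0, \phi_i)$ contributions can be controlled uniformly in the model: using the formula $\ov{\cB}'^d \cdot (0, \psi) = \int_{\cX'(\CC)} \psi \cdot c_1(\ov{\cB}')^d$ together with $\vol(\cB'_{\QQ}) \leq \vol(\cB_{\QQ})$ (since $f^*\ov{\cB} - \ov{\cB}'$ is effective, hence so is its generic fiber), one gets $|\ov{\cB}'^d \cdot (0, f^*\phi_i)| \leq \delta \cdot \vol(\cB_{\QQ})$; an analogous bound for $|\langle\ov{\cB}^d\rangle(0, \phi_i)|$ follows by monotonicity of the positive intersection product on the effective cone and the same geometric volume control. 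Choosing $\delta$ small enough therefore reduces the approximation statement to the $2(n+1)$ big and nef divisors $\ov{\cA}_i, \ov{\cA}'_i$.

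Now apply Lemma~\ref{lemma_calculating_derivative_of_volume_uniformely_for_a_few_divisors} to these big and nef divisors with an accuracy $\eta$ to be chosen small. This yields $(f:\cX' \to \cX, \ov{\cB}') \in \hat{\Theta}_{\amp}(\ov{\cB})$ with $\avol(\ov{\cB}) \geq \avol(\ov{\cB}') > \frac{1}{2}\avol(\ov{\cB})$ and $|\langle\ov{\cB}^d\rangle \ov{\cA} - \ov{\cB}'^d \cdot f^*\ov{\cA}| < \eta$ for each $\ov{\cA} \in \{\ov{\cA}_i, \ov{\cA}'_i\}$. Combined with the previous step and $\delta, \eta$ sufficiently small, this produces $|\langle\ov{\cB}^d\rangle \ov{\cD}_i - \ov{\cB}'^d \cdot f^*\ov{\cD}_i| < A\varepsilon$ for every $i$. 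For the volume bound, Zhang's inequality applied to the ample $\ov{\cB}'$ gives $\vol(\cB'_{\QQ}) \geq \frac{\avol(\ov{\cB}')}{(d+1)\zeta(\ov{\cB}')}$. Since $\ov{\cB}' \leq f^*\ov{\cB}$ pseudo-effectively, Lemma~\ref{lemma_properties_of_zeta_function}(5) combined with blowup invariance (Lemma~\ref{lemma_properties_of_zeta_function}(1)) yields $\zeta(\ov{\cB}') \leq \zeta(\ov{\cB})$, a quantity which is positive and finite since $\ov{\cB}$ is big (Zhang's inequality applied to $\ov{\cB}$ itself and Lemma~\ref{lemma_properties_of_zeta_function}(6)). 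Therefore
\[ \vol(\cB'_{\QQ}) \geq \frac{\avol(\ov{\cB}')}{(d+1)\zeta(\ov{\cB})} > \frac{\tfrac{1}{2}\avol(\ov{\cB})}{(d+1)\zeta(\ov{\cB})} = A, \]
which is the required strict inequality.

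The main obstacle I anticipate is the quantitative book-keeping: the parameters $\delta$ (size of the Green-function remainder in the ample decomposition) and $\eta$ (accuracy of the Fujita approximation) must be chosen relative to the intrinsic constants $A$ and $\vol(\cB_{\QQ})$ so that both error sources stay below $A\varepsilon$. The strict inequality $\avol(\ov{\cB}') > \tfrac{1}{2}\avol(\ov{\cB})$ supplied by Lemma~\ref{lemma_calculating_derivative_of_volume_uniformely_for_a_few_divisors} is crucial — it is precisely the slack that converts the Zhang inequality into the strict lower bound $\vol(\cB'_{\QQ}) > A$ rather than merely $\geq A$.
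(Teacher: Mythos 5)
Your strategy is essentially the paper's: reduce to ample test divisors via Lemma~\ref{lemma_decomposition_of_an_arithmetic_divisor_into_a_difference_of_amples_plus_a_function}, invoke the Fujita-type approximation in Lemma~\ref{lemma_calculating_derivative_of_volume_uniformely_for_a_few_divisors}, and then get the volume lower bound from Zhang's inequality (applied to the approximating divisor) together with the monotonicity and blowup invariance of $\zeta$. Your quantitative handling of the $(0,\phi_i)$ remainder, via bounding $\ov{\cB}'^d\cdot(0,\psi)$ by $\|\psi\|_{\sup}\cdot\vol(\cB'_{\QQ})\le\|\psi\|_{\sup}\cdot\vol(\cB_{\QQ})$ uniformly over the choice of $(f,\ov{\cB}')$, is in fact a cleaner justification than the paper's appeal to ``continuity'' (Lemma~\ref{lemma_continuity_of_intersection_height_and_derivative_of_volume_in_the_euclidean_topology}), since the continuity statement there is pointwise in $(f,\ov{\cB}')$ and the uniformity you need across all candidate blowups has to be argued exactly as you do.

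There is, however, one genuine gap. Lemma~\ref{lemma_calculating_derivative_of_volume_uniformely_for_a_few_divisors} produces a pair $(f,\ov{\cA})\in\hat{\Theta}_{\amp}(\ov{\cB})$ where $\ov{\cA}$ is an ample arithmetic $\emph{\RR}$-divisor of $C^{\infty}$-type (it is a positive \emph{real} combination of ample $\ZZ$-divisors, and in fact for irrational $\ov{\cB}$ it \emph{cannot} be $\QQ$-Cartier, since the definition of $\hat{\Theta}_{\amp}$ forces $f^*\ov{\cB}-\ov{\cA}$ to be a $\QQ$-divisor). The proposition you are proving demands an ample arithmetic \emph{$\QQ$}-divisor $\ov{\cB}'$. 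You relabel the output of the lemma as $\ov{\cB}'$ and stop, so your object does not have the required rationality. The paper closes this by writing $\ov{\cA}=\sum_j a_j\ov{\cA}_j$ with ample $\ZZ$-divisors $\ov{\cA}_j$ and replacing the real coefficients $a_j$ by sufficiently good rational approximations, using multilinearity of the arithmetic intersection product and continuity of $\vol$ to preserve both displayed inequalities (both of which are strict and hence stable under small perturbations). You should add this final perturbation step; without it the produced divisor is of the wrong type.
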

\begin{proof}
    First, note that the expression
    \[ \langle \ov{\cB}^d \rangle \ov{\cD} - \ov{\cB}'^d \cdot f^* \ov{\cD} \]
    is linear and continuous in $\ov{\cD}$, see Lemma~\ref{lemma_continuity_of_intersection_height_and_derivative_of_volume_in_the_euclidean_topology}. Moreover, it only depends on the rational equivalence class of $\ov{\cD}$. Hence, by Lemma~\ref{lemma_decomposition_of_an_arithmetic_divisor_into_a_difference_of_amples_plus_a_function} we can without loss of generality assume that $\ov{\cD}_0, \dots, \ov{\cD}_n$ are ample arithmetic $\RR$-divisors. In particular they are big and nef. Thus, by Lemma~\ref{lemma_calculating_derivative_of_volume_uniformely_for_a_few_divisors} for $A \varepsilon$ instead of $\varepsilon$, there is a pair $(f:\cX' \to \cX, \ov{\cA}) \in \hat{\Theta}_{\amp}(\ov{\cB})$ such that for all $i=0, \dots, n$
    \[ |\langle \ov{\cB}^d \rangle \ov{\cD}_i - \ov{\cA}^d \cdot f^* \ov{\cD}_i| < A \varepsilon \]
    and
    \[ \avol(\ov{\cB}) \geq \avol(\ov{\cA}) > \frac{1}{2} \avol(\ov{\cB}). \]
    We use Theorem~\ref{theorem_Zhang_inequality} to get the middle inequality of
    \[ \frac{\avol(\ov{\cB})}{2(d+1) \vol(\cA_{\QQ})} < \frac{\avol(\ov{\cA})}{(d+1) \vol(\cA_{\QQ})} \leq \zeta(\ov{\cA}) \leq \zeta(f^* \ov{\cB}) = \zeta(\ov{\cB}). \]
    The last inequality is by Lemma~\ref{lemma_properties_of_zeta_function}. We get
    \[ \vol(\cA_{\QQ}) > \frac{\avol(\ov{\cB})}{2(d+1) \zeta(\ov{\cB})} = A. \]
    Note that $\ov{\cA}$ is an ample arithmetic $\RR$-divisor of $C^{\infty}$-type on $\cX'$, so it is of the form
    \[ \ov{\cA} = \sum_j a_j \ov{\cA}_j, \]
    for some some $a_j \in \RR$, and $\ov{\cA}_j$ being ample arithmetic $\ZZ$-divisors of $C^{\infty}$-type. Replacing $a_j$'s by sufficiently good rational approximations, we can find an ample arithmetic $\QQ$-divisor of $C^{\infty}$-type $\ov{\cB}'$ on $\cX'$ satisfying the desired inequalities. This is because of multilinearity of the arithmetic intersection product and the continuity of the volume $\vol$.
\end{proof}

The below proposition is proven in a way similar to \cite[Theorem 4.1]{F_Ballay_Succesive_minima} or \cite[Theorem 4.6]{Arithmetic_Demailly_Qu_Yin}.

\begin{proposition}~\label{proposition_arithmetic_Bertini_closed_point_with_good_heights}
    Let $\ov{\cB}, \ov{\cD}_0, \dots, \ov{\cD}_n$ be arithmetic $\RR$-divisors of $C^0$-type on $\cX$ with $\ov{\cB}$ being an ample arithmetic $\QQ$-divisor of $C^{\infty}$-type. Let $Z \subset X = \cX \otimes \QQ$ be a closed proper subscheme. Fix $\varepsilon>0$. Then there exists a closed point $x \in X \setminus Z$ such that for all $i=0, \dots, n$
    \[ \Big| h_{\ov{\cD}_i}(x) - \frac{\ov{\cB}^d \cdot \ov{\cD}_i}{\vol(\cB_{\QQ})} \Big| < \varepsilon \textnormal{ for all } i=0, \dots, n. \]
\end{proposition}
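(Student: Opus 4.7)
The plan is to cut $\cX$ by $d$ well-chosen Cartier divisors, coming from small sections of a large power of $\cO(\ov{\cB})$, and to use the two arithmetic Bertini theorems (Theorems~\ref{theorem_arithmetic_Bertini_by_Wilms} and~\ref{theorem_Charles_Bertini_to_omit_subvariety}) to control irreducibility, generic smoothness, avoidance of $Z$, and the Green-function errors. The point $x$ will be the generic point of the resulting horizontal arithmetic curve $\cC$. The case $d = 0$ is immediate, so I assume $d \geq 1$.

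First I reduce to a convenient setup. Replacing $\ov{\cB}$ by a large integer multiple, which leaves the statement invariant since both $\ov{\cB}^d \cdot \ov{\cD}_i$ and $\vol(\cB_\QQ)$ scale by the same factor, I may assume $\ov{\cB}$ is an arithmetic $\ZZ$-divisor of $C^\infty$-type with $\ov{\cL} := \cO(\ov{\cB})$ an ample hermitian line bundle. Using Lemma~\ref{lemma_decomposition_of_an_arithmetic_divisor_into_a_difference_of_amples_plus_a_function} together with the continuity of $h_{(-)}(x)$ and $\ov{\cB}^d \cdot (-)$ from Lemma~\ref{lemma_continuity_of_intersection_height_and_derivative_of_volume_in_the_euclidean_topology}, I reduce further to the case where each $\ov{\cD}_i$ is of $C^\infty$-type, associated to a hermitian line bundle $\ov{\cN}_i$. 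After an additional rescaling of $\ov{\cL}$, I may also arrange that the forms $c_1(\ov{\cL}) \pm c_1(\ov{\cN}_i)$ are positive on $\cX(\CC)$ for every $i$; this gives the pointwise bound needed to compare wedge-integrals below.

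Fix a small $\delta > 0$, to be specified later. For $N$ sufficiently large, Theorems~\ref{theorem_arithmetic_Bertini_by_Wilms} and~\ref{theorem_Charles_Bertini_to_omit_subvariety} let me pick inductively sections $s_j \in \aH^0(\cZ_{j-1}, \ov{\cL}^{\otimes N}|_{\cZ_{j-1}})$ (with $\cZ_0 := \cX$) for $j = 1, \dots, d$, such that $\cZ_j := \cdiv(s_j) \subset \cZ_{j-1}$ is irreducible, $(\cZ_j)_\QQ$ is smooth and not contained in the pullback of $Z$, and $s_j$ is $(\delta, \ov{\cL}|_{\cZ_{j-1}})$-irreducible in the sense of Definition~\ref{definition_irreducibility_defined_by_Wilms}; the ampleness of $\ov{\cL}$ restricts to each $\cZ_{j-1}$, so the Bertini hypotheses are inherited at each step. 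After $d$ steps, $\cC := \cZ_d$ is a horizontal arithmetic curve whose generic fiber is a reduced closed point $x \in X \setminus Z$ of degree $\deg(x) = N^d \vol(\cB_\QQ)$. Iterating the defining identity of the arithmetic intersection number (Theorem~\ref{theorem_intersection_product_of_hermitian_line_bundles}.(1)) yields
\[
N^d \, \ov{\cL}^d \cdot \ov{\cN}_i = \adeg(\ov{\cN}_i | \cC) + \sum_{j=1}^d N^{d-j} \int_{\cZ_{j-1}(\CC)} -\log\|s_j\| \wedge c_1(\ov{\cN}_i) \wedge c_1(\ov{\cL})^{d-j}.
\]
Since $s_j$ is small and $c_1(\ov{\cL}) \pm c_1(\ov{\cN}_i)$ are positive, the $j$-th integral is bounded in absolute value by $\tfrac{1}{2}\int_{\cZ_{j-1}(\CC)} -\log\|s_j\|^2 \wedge c_1(\ov{\cL})^{d+1-j}$, which is at most $\delta \cdot \adeg(\ov{\cL}^{d+1-j}|\cZ_j)$ by the $(\delta, \ov{\cL})$-irreducibility of $s_j$. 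Inductively bounding $\adeg(\ov{\cL}^{d+1-j}|\cZ_j)$ in terms of intersection numbers on $\cX$ (each cut by a section of $\ov{\cL}^{\otimes N}$ contributes a factor of $N$), the total error on the right, after dividing by $\deg(x) = N^d \vol(\cB_\QQ)$, is $O(\delta)$ uniformly in $N$ and $i$. Choosing $\delta$ small enough gives $|h_{\ov{\cD}_i}(x) - \ov{\cB}^d \cdot \ov{\cD}_i/\vol(\cB_\QQ)| < \varepsilon$ for each $i$.

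The main obstacle is producing, at each inductive step, a single section satisfying all of the Bertini conditions at once (irreducibility, generic smoothness of the generic fiber, avoidance of the pullback of $Z$, and $(\delta, \ov{\cL})$-irreducibility) while keeping $\cZ_{j-1}$ a generically smooth normal arithmetic variety to which the two Bertini theorems can be reapplied; this is where Theorems~\ref{theorem_arithmetic_Bertini_by_Wilms} and~\ref{theorem_Charles_Bertini_to_omit_subvariety}, with their density-one statements, are crucial. Once this is in place, the recursive arithmetic-intersection identity and the estimate of the Green-function errors are routine bookkeeping.
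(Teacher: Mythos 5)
Your proof is correct and follows essentially the same strategy as the paper's: reduce to $C^\infty$-type line bundles, iteratively cut by small sections furnished by Theorems~\ref{theorem_arithmetic_Bertini_by_Wilms} and~\ref{theorem_Charles_Bertini_to_omit_subvariety}, and control the accumulated Green-function error terms via $(\delta, \ov{\cL})$-irreducibility and the pointwise bound on $c_1(\ov{\cN}_i)$. One minor wrinkle in the write-up: fixing a single exponent $N$ for all $d$ cuts creates a circularity, since the threshold beyond which the density-one statement produces a suitable section on $\cZ_{j-1}$ depends on $\cZ_{j-1}$, which itself depends on $N$; one should instead allow different exponents $n_1, \dots, n_d$ chosen inductively (as the paper does), and your normalisation by $\deg(x) = \bigl(\prod_j n_j\bigr)\vol(\cB_\QQ)$ absorbs this change with no further modification.
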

\begin{proof}
    First, note that $\frac{\ov{\cB}^d \cdot \ov{\cD}_i}{\vol(\cB_{\QQ})}$ does not change when we replace $\ov{\cB}$ with its multiples, so we can without loss of generality assume that $\ov{\cB}$ is an ample $\ZZ$-arithmetic divisor of $C^{\infty}$-type. Also, the expression
    \[ h_{\ov{\cD}}(x) - \frac{\ov{\cB}^d \cdot \ov{\cD}}{\vol(\cB_{\QQ})} \]
    is linear in $\ov{\cD}$ and depends only on its rational equivalence class. Thus, by Lemma~\ref{lemma_continuity_of_intersection_height_and_derivative_of_volume_in_the_euclidean_topology} and Lemma~\ref{lemma_decomposition_of_an_arithmetic_divisor_into_a_difference_of_amples_plus_a_function}, we can without loss of generality assume that all $\ov{\cD}_i$'s are ample arithmetic $\ZZ$-divisors of $C^{\infty}$-type.
    
    The above reduction allows us to work with ample hermitian line bundles 
    \[ \cO(\ov{\cB}), \cO(\ov{\cD}_0), \dots, \cO(\ov{\cD}_n) \textnormal{ on } \cX. \]
    To ease the notation we don't write $\cO(-)$ for the corresponding hermitian line bundles. Let $C$ be a positive number, such that for all $i=0, \dots, n$ we have $C \cdot c_1(\ov{\cB}) \geq c_1(\ov{\cD}_i)$. Now we use Theorem~\ref{theorem_arithmetic_Bertini_by_Wilms} together with Theorem~\ref{theorem_Charles_Bertini_to_omit_subvariety} for $\ov{\cL} = \ov{\cB}$ and $\cY$ being the closure of $Z$ in $\cX$, to find a small section $s_1 \in \aH^0(\cX, \ov{\cL}^{\otimes n_1})$ which is $(\delta, \ov{\cL})$-irreducible (we specify $\delta$ later) and with $\cdiv(s_1)$ generically smooth and not contained in $\cY$. Then for all $i=0, \dots, n$ we have
    \[ \ov{\cD}_i \cdot \ov{\cB}^d = \frac{1}{n_1} \ov{\cD}_i \cdot \ov{\cB}^{d-1} \cdot \adiv(s_1) \]
    \[ = \frac{1}{n_1} \Bigl( \ov{\cD}_i|_{\cdiv(s_1)} \cdot (\ov{\cB}|_{\cdiv(s_1)})^{d-1} + \int_{\cX(\CC)} - \log \|s\| 
 \wedge c_1(\ov{\cD}_i) \wedge c_1(\ov{\cB})^{d-1} \Bigr). \]
    Note that in the above formula we treat $\cdiv(s_1)$ as a cycle on $\cX$ as in Theorem~\ref{theorem_intersection_product_of_hermitian_line_bundles}. By the choice of $s_1$ (and using one of the equivalent conditions from Definition~\ref{definition_irreducibility_defined_by_Wilms}) we get
    \[ 0 \leq \int_{\cX(\CC)} - \log \|s\| 
 \wedge c_1(\ov{\cD}_i) \wedge c_1(\ov{\cB})^{d-1} \]
    \[ \leq C \cdot \int_{\cX(\CC)} - \log \|s\| \wedge c_1(\ov{\cB})^d \leq \frac{\delta}{1+\delta} \cdot n_1 C \cdot \ov{\cB}^{d+1} < n_1 \varepsilon, \]
    for $\delta$ small enough for the last inequality. Together we obtain
    \[ \big| \ov{\cD}_i \cdot \ov{\cB}^d - \frac{1}{n_1} \ov{\cD}_i|_{\cdiv(s_1)} \cdot (\ov{\cB}|_{\cdiv(s_1)})^{d-1} \big| < \varepsilon. \]
    Now we use exactly the same strategy, to find a small section $s_2 \in \aH^0(\cdiv(s_1), \ov{\cL}|_{\cdiv{(s_1)}}^{\otimes n_2})$ which satisfies
    \[ \big| \ov{\cD}_i|_{\cdiv(s_1)} \cdot (\ov{\cB}|_{\cdiv(s_1)})^{d-1} - \frac{1}{n_2} \ov{\cD}_i|_{\cdiv(s_1) \cap \cdiv(s_2)} \cdot (\ov{\cB}|_{\cdiv(s_1) \cap \cdiv(s_2)})^{d-2} \big| < \varepsilon. \]
    We can do this since the cycle $\cdiv(s_1)$ is irreducible, hence it is an (generically smooth, but not necessarily normal) arithmetic variety. Moreover, restrictions of ample hermitian line bundles are ample, for example by \cite[Corollary 2.8]{Charles2021ArithmeticAA}. In the above inequality $\cdiv(s_1) \cap \cdiv(s_2)$ is the cycle associated to zeroes of $s_2$ on the arithmetic variety $\cdiv(s_1)$.

    Repeating the above procedure $d$ times, we find small sections $s_1, \dots, s_d$ of $\ov{\cL}^{\otimes n_1}, (\ov{\cL}|_{\cdiv(s_1)})^{\otimes n_2}, \dots, (\ov{\cL}|_{\cdiv(s_1) \cap \ldots \cap \cdiv(s_{d-1})})^{\otimes n_d}$ respectively, such that the following hold:
    \begin{enumerate}
        \item $\cdiv(s_1) \cap \dots \cap \cdiv(s_d)$ is an irreducible, generically smooth arithmetic variety of dimension $1$, hence it is of the form $\ov{\{x\}}$ for some closed point $x \in X$. Moreover, we can assume that $x \not\in Z$.
        \item For all $i=0, \dots, n$ we have \begin{equation}~\label{equation_proof_of_proposition_arithmetic_Bertini_closed_point_with_good_heights}
            \big| \ov{\cD}_i \cdot \ov{\cB}^d - \frac{1}{n_1 \cdots n_d} \adeg(\ov{\cD}_i | \ov{\{x\}}) \big| < d \varepsilon.
        \end{equation}
    \end{enumerate}
    By the inductive definition of the ordinary intersection product of line bundles on a variety $X$ (see for example \cite[Remark (2.6)]{Chambert_Loir_survey}) and by ampleness of $\cB_{\QQ}$ we get
    \[ \vol(\cB_{\QQ}) = \cB_{\QQ}^d = \frac{1}{n_1} \deg(\cB_{\QQ}^{d-1}|\cdiv(s_1)_{\QQ}) \]
    \[ = \frac{1}{n_1 \cdot n_2} \deg(\cB_{\QQ}^{d-2}|\cdiv(s_1)_{\QQ} \cap \cdiv(s_2)_{\QQ}) \]
    \[ = \dots = \frac{1}{n_1 \cdots n_d} \deg(\cdiv(s_1)_{\QQ} \cap \dots \cap \cdiv(s_d)_{\QQ}) \]
    \[ = \frac{1}{n_1 \cdots n_d} \deg(x). \]
    Hence, using Equation~(\ref{equation_proof_of_proposition_arithmetic_Bertini_closed_point_with_good_heights}) we end up with
    \[ \bigg| \frac{\ov{\cD}_i \cdot \ov{\cB}^d}{\vol(\cB_{\QQ})} - \frac{\adeg(\ov{\cD}_i | \ov{\{x\}})|}{\deg(x)} \bigg| < \frac{d}{\vol(\cB_{\QQ})} \varepsilon \textnormal{ for all } i=0, \dots, n. \]
    By the definition of height we have
    \[ h_{\ov{\cD}_i}(x) = \frac{\adeg(\ov{\cD}_i | \ov{\{x\}})}{\deg(x)}, \]
    so
    \[ \Big| h_{\ov{\cD}_i}(x) - \frac{\ov{\cD}_i \cdot \ov{\cB}^d}{\vol(\cB_{\QQ})} \Big| < \frac{d}{\vol(\cB_{\QQ})} \varepsilon \textnormal{ for all } i=0, \dots, n. \]
    This finishes the proof, as $\varepsilon$ was arbitrary.
\end{proof}

\begin{theorem}~\label{theorem_main_arithmetic_approximation_theorem}
    Let $\ov{\cD}_0, \dots, \ov{\cD}_n$ be arithmetic $\RR$-divisors of $C^0$-type on $\cX$ with $\ov{\cD}_0 = (\cdiv(2),0)$. Fix $\varepsilon > 0$, a closed proper subscheme $Z \subset X=\cX \otimes \QQ$, and a linear functional
    \[ l:\Adiv_{\RR}(\cX) \to \RR, \]
    which is non-negative on effective arithmetic $\RR$-divisors of $C^0$-type, zero on principal arithmetic $\RR$-divisors of $C^0$-type, and sends $\ov{\cD}_0$ to $\log(2)$. Then there exists a closed point $x \in X \setminus Z$ such that for all $i=0, \dots, n$
    \[ |h_{\ov{\cD}_i}(x) - l(\ov{\cD}_i)| < \varepsilon. \]
\end{theorem}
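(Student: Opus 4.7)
The plan follows step by step the geometric argument sketched in Subsection~\ref{subsection_the_technique_Theorem_A}, substituting each geometric ingredient with its arithmetic counterpart. Fix an ample arithmetic $\RR$-divisor $\ov{\cH}$ of $C^\infty$-type on $\cX$ (which exists by Lemma~\ref{lemma_every_Cartier_divisor_can_be_equipped_with_a_Green_function} together with the remarks preceding it), and consider the finite-dimensional subspace $V := \Span_{\RR}(\ov{\cD}_0, \ldots, \ov{\cD}_n, \ov{\cH}) \subset \Adiv_\RR(\cX)$. Pass to $\bar V := V/(V \cap \Rdiv_\RR(\cX))$; the linear functional $l$, the heights $h_{(-)}(x)$, the arithmetic volume $\avol$, and the positive intersection $\langle (-)^d \rangle$ all factor through $\bar V$. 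Let $\bar V^+$ be the image of the pseudo-effective cone: it is a closed convex cone generating $\bar V$ (for any class $\bar{\ov{\cD}}$, the divisor $\ov{\cD} + t\ov{\cH}$ is big for $t \gg 0$ by Theorem~\ref{theorem_differentiability_of_arithmetic_volume}(1)), its interior equals the big cone (by Lemma~\ref{lemma_psef_cone_increases_arithmetic_volume} and continuity of $\avol$), and it contains no line (by Theorem~\ref{theorem_Ikoma_characterisation_of_psef_arithmetic_R_divisors}(2)). Apply Lemma~\ref{lemma_convex_geometry_separation_by_a_linear_functional} to perturb $l$ to a functional $l' : \bar V \to \RR$ strictly positive on $\bar V^+ \setminus \{0\}$ with $|l(\ov{\cD}_i) - l'(\ov{\cD}_i)|$ as small as desired for every $i$; in particular $l'(\ov{\cD}_0)$ is close to $\log 2$.

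Now apply Lemma~\ref{lemma_convex_geometry_volume_divided_by_linear} on $\bar V$ with $\alpha := \avol^{1/(d+1)}$, which satisfies all the required properties by Theorem~\ref{theorem_differentiability_of_arithmetic_volume}. This produces a big class $\ov{\cB} \in \bar V$ realising the maximum of $\alpha/l'$, hence with $D_{\ov{\cB}} \alpha / \alpha(\ov{\cB}) = l' / l'(\ov{\cB})$. Unpacking via the chain rule and $D_{\ov{\cB}} \avol = (d+1) \langle \ov{\cB}^d \rangle$ gives
\[
\frac{\langle \ov{\cB}^d \rangle \ov{\cE}}{\avol(\ov{\cB})} \;=\; \frac{l'(\ov{\cE})}{l'(\ov{\cB})}, \qquad \text{for all } \ov{\cE} \in \bar V,
\]
which applied to $\ov{\cE} = \ov{\cD}_0, \ldots, \ov{\cD}_n$ is the arithmetic version of the critical-point identity $\langle B^d \rangle / \vol(B) = l / l(B)$.

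To convert this equation into one involving an honest closed point, invoke Proposition~\ref{proposition_formula_for_derivative_of_avol_ample_in_blowup}, producing a blowup $f : \cX' \to \cX$ and an ample arithmetic $\QQ$-divisor $\ov{\cB}'$ of $C^\infty$-type with $\vol(\cB'_\QQ) > A := \avol(\ov{\cB}) / (2(d+1)\zeta(\ov{\cB}))$ and $|\langle \ov{\cB}^d \rangle \ov{\cD}_i - \ov{\cB}'^d \cdot f^* \ov{\cD}_i| < A \delta$ for every $i$, where $\delta$ is a small parameter chosen later. Then Proposition~\ref{proposition_arithmetic_Bertini_closed_point_with_good_heights}, applied on $\cX'$ to $\ov{\cB}'$, the divisors $f^* \ov{\cD}_i$, and the proper closed subscheme $f^{-1}(Z)_\QQ \subset X'$, yields a closed point $x' \in X' \setminus f^{-1}(Z)$ satisfying
\[
\Bigl| h_{f^* \ov{\cD}_i}(x') - \frac{\ov{\cB}'^d \cdot f^* \ov{\cD}_i}{\vol(\cB'_\QQ)} \Bigr| < \delta.
\]
Set $x := f(x') \in X \setminus Z$; by Lemma~\ref{lemma_behaviour_of_height_for_birational_maps_and_pullbacks_proj_formula}, $h_{\ov{\cD}_i}(x) = h_{f^*\ov{\cD}_i}(x')$.

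Chaining these three approximations,
\[
h_{\ov{\cD}_i}(x) \;\approx\; \frac{\ov{\cB}'^d \cdot f^* \ov{\cD}_i}{\vol(\cB'_\QQ)} \;\approx\; \frac{\langle \ov{\cB}^d \rangle \ov{\cD}_i}{\vol(\cB'_\QQ)} \;=\; \frac{\avol(\ov{\cB})}{\vol(\cB'_\QQ) \cdot l'(\ov{\cB})} \cdot l'(\ov{\cD}_i).
\]
The case $i = 0$ fixes the overall scalar: $h_{\ov{\cD}_0}(x) = \log 2$ by Lemma~\ref{lemma_height_with_respect_to_divisor_max_div_two_and_zero}, while $l'(\ov{\cD}_0) \approx \log 2$, forcing $\avol(\ov{\cB}) / (\vol(\cB'_\QQ) \cdot l'(\ov{\cB}))$ to be close to $1$. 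Therefore $h_{\ov{\cD}_i}(x) \approx l'(\ov{\cD}_i) \approx l(\ov{\cD}_i)$, and by taking $\delta$ and the perturbation parameters small enough we can beat any prescribed $\varepsilon$. The hardest step I expect is the convex-geometric setup on $\bar V$: verifying that passing to the quotient by principal divisors makes the pseudo-effective cone salient (via Theorem~\ref{theorem_Ikoma_characterisation_of_psef_arithmetic_R_divisors}(2)), that its interior coincides with the big cone so that $\avol^{1/(d+1)}$ has the right vanishing behaviour, and controlling the $\vol(\cB'_\QQ)$ lower bound in the Fujita-approximation step (which is precisely what forces Zhang's inequality into Proposition~\ref{proposition_formula_for_derivative_of_avol_ample_in_blowup}) so that the $\ov{\cD}_0$-normalisation is rigid enough to propagate through the final estimate.
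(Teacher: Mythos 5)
Your proposal follows essentially the same route as the paper's own proof: restrict to a finite-dimensional quotient $\bar V$ of the span of the $\ov{\cD}_i$ together with an auxiliary big/ample divisor, perturb $l$ via Lemma~\ref{lemma_convex_geometry_separation_by_a_linear_functional} to be strictly positive on the salient closed cone $\bar V^+$, locate a critical point $\ov{\cB}$ of $\avol^{1/(d+1)}/l'$ via Lemma~\ref{lemma_convex_geometry_volume_divided_by_linear}, pass to an ample $\ov{\cB}'$ on a blowup via Proposition~\ref{proposition_formula_for_derivative_of_avol_ample_in_blowup}, invoke Proposition~\ref{proposition_arithmetic_Bertini_closed_point_with_good_heights}, and fix the global scale $C$ using $h_{\ov{\cD}_0}(x)=\log 2$. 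The minor variations — choosing an ample $\ov{\cH}$ rather than a generic big $\ov{\cM}$, and arguing the interior-of-$\bar V^+$ = big-cone via Lemma~\ref{lemma_psef_cone_increases_arithmetic_volume} rather than Lemma~\ref{lemma_convex_geometry_interior_of_closure_is_the_starting_open_cone} — are both valid and do not change the structure of the argument.
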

\begin{proof}
    Put $M := \max(|l(\ov{\cD}_0)|, \dots, |l(\ov{\cD_n})|)$. Let $V$ be the real span of classes of $\ov{\cD}_0, \dots, \ov{\cD}_n, \ov{\cM}$ in $\aN^1(\cX)$, where $\ov{\cM}$ is some big arithmetic $\RR$-divisor of $C^0$-type in $\Adiv_{\RR}(\cX)$. Let $V^+$ be the (euclidean) closure of the subset of classes of big arithmetic $\RR$-divisors of $C^0$-type from $V$. It is a convex closed cone that does not contain any line in $V$ by Theorem~\ref{theorem_Ikoma_characterisation_of_psef_arithmetic_R_divisors}. Moreover, $\ov{\cD}_0$ is effective, hence by Lemma~\ref{lemma_effective_are_pseudo_effective} it lies in $V^+$. Thus, by Lemma~\ref{lemma_convex_geometry_separation_by_a_linear_functional} we can approximate $l$ (by the assumptions it is well defined on $V$) arbitrarily close by linear functionals strictly positive on $V^+ \setminus \{0\}$ and non-decreasing the value on $\ov{\cD}_0$. Since $\varepsilon$ is arbitrary in the statement of the theorem, we can without loss of generality replace $l$ with sufficiently good approximation, so we assume that $l$ is strictly positive on $V^+ \setminus \{0\}$, $l(\ov{\cD}_0) = \log(2) + \varepsilon'$ for some $0 \leq \varepsilon'<\varepsilon$, and that $\max(|l(\ov{\cD}_0)|, \dots, |l(\ov{\cD_n})|) < 2M$.
    
    We use Lemma~\ref{lemma_convex_geometry_volume_divided_by_linear} with $\alpha = \avol^{1/(d+1)}$ and $V^+, V, l$ as above. Note that the assumptions of the lemma are satisfied by Theorem~\ref{theorem_differentiability_of_arithmetic_volume} and by Lemma~\ref{lemma_convex_geometry_interior_of_closure_is_the_starting_open_cone} which implies that $(V^+)^{\interior}$ is the set of classes of big arithmetic $\RR$-divisors of $C^0$-type in $V$. Thus there exists $p \in (V^+)^{\interior}$ such that
    \[ \frac{D_p \alpha}{\alpha(p)} = \frac{l}{l(p)}. \]
    If $\ov{\cB}$ represents the class of $p$, then by Theorem~\ref{theorem_differentiability_of_arithmetic_volume} we get
    \[ \frac{\langle \ov{\cB}^d \rangle}{\avol(\ov{\cB})} = \frac{l}{l(\ov{\cB})}. \]
    
    By Proposition~\ref{proposition_formula_for_derivative_of_avol_ample_in_blowup} there is a blowup $f:\cX' \to \cX$ and an ample arithmetic $\QQ$-divisor of $C^\infty$-type $\ov{\cB}'$ on $\cX'$ such that the following hold:
    \begin{itemize}
        \item $\vol(\cB'_{\QQ}) > \frac{\avol(\ov{\cB})}{2(d+1) \zeta(\ov{\cB})} =: A$;
        \item $|\langle \ov{\cB}^d \rangle \ov{\cD}_i - \ov{\cB}'^d \cdot f^* \ov{\cD}_i| < A \varepsilon$ for all $i = 1, \dots, n$.
    \end{itemize}

    Now, we can use Proposition~\ref{proposition_arithmetic_Bertini_closed_point_with_good_heights} for the ample arithmetic $\QQ$-divisor of $C^\infty$-type $\ov{\cB}'$ and arithmetic $\RR$-divisors of $C^0$-type $f^* \ov{\cD}_0, \dots, f^* \ov{\cD}_n$ to find a closed point $x' \in X' \setminus f^{-1}(Z)$ (where $X' := \cX' \otimes \QQ$) which satisfies
    \[ \Big| h_{f^* \ov{\cD}_i}(x') - \frac{\ov{\cB}'^d \cdot f^* \ov{\cD}_i}{\vol(\cB'_{\QQ})} \Big| < \varepsilon \textnormal{ for all } i=0, \dots, n. \]
    By putting the above equations and inequalities together we get that for any natural $i$ smaller or equal $n$ we have
    \begin{equation}~\label{equation_height_is_C_times_l}
        \Big| h_{f^* \ov{\cD}_i}(x') - \frac{\avol(\ov{\cB})}{\vol(\cB'_{\QQ}) l(\ov{\cB})} \cdot l(\ov{\cD}_i) \Big| < \Bigl(1+\frac{A}{\vol(\cB'_{\QQ})}\Bigl) \varepsilon < 2 \varepsilon.
    \end{equation}
    Let $C := \frac{\avol(\ov{\cB})}{\vol(\cB'_{\QQ})l(\ov{\cB})}$. Applying the last inequality for $i=0$ we get
    \[ |\log(2) - C (\log(2)+\varepsilon')| = |h_{f^* \ov{\cD}_0}(x') - C \cdot l(\ov{\cD}_0)| < 2 \varepsilon, \]
    because of Lemma~\ref{lemma_height_with_respect_to_divisor_max_div_two_and_zero}. Thus by Lemma~\ref{lemma_basic_inequalities} we get that $|C-1| < 3 \varepsilon / \log(2)$. Recall that $\max(|l(\ov{\cD}_0)|, \dots, |l(\ov{\cD_n})|) < 2M$. By using the bound on $|C-1|$ in Equation~(\ref{equation_height_is_C_times_l}) we get
    \[ |h_{f^* \ov{\cD}_i}(x') - l(\ov{\cD}_i)| < \Bigl( 2 + \frac{6M}{\log(2)} \Bigl) \varepsilon. \]
    Let $x = f(x') \in X$. By Lemma~\ref{lemma_behaviour_of_height_for_birational_maps_and_pullbacks_proj_formula} we have $h_{f^* \ov{\cD}_i}(x') = h_{\ov{\cD}_i}(x)$. Since $M$ is independent of $\varepsilon$ (it only depends on the initial data), this finishes the proof of the theorem.
\end{proof}

\begin{lemma}~\label{lemma_basic_inequalities}
    Let $C, \varepsilon>0$, $\varepsilon > \varepsilon'\geq 0$ and assume that $|1-C(1+\varepsilon')|<2\varepsilon$. Then $|C-1| < 3\varepsilon$.
\end{lemma}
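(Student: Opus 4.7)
The approach is a direct case split on the sign of $C-1$, combined with an elementary algebraic rearrangement of the hypothesis $|1 - C(1+\varepsilon')| < 2\varepsilon$. The key observation is that the quantity we want to bound, namely $|C-1|$, differs from the quantity we control, namely $|1 - C(1+\varepsilon')|$, by exactly $C\varepsilon'$, which is non-negative and can itself be bounded using the relations $\varepsilon' < \varepsilon$ and (in one case) $C < 1$.

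In the case $C \geq 1$, I would write $C - 1 = (C(1+\varepsilon') - 1) - C\varepsilon'$. The first summand is strictly bounded above by $2\varepsilon$ by the hypothesis, and the second summand satisfies $C\varepsilon' \geq 0$ by the positivity assumptions on $C$ and $\varepsilon'$. Together this yields $C - 1 < 2\varepsilon < 3\varepsilon$.

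In the case $0 < C < 1$, I would instead write $1 - C = (1 - C(1+\varepsilon')) + C\varepsilon'$. The first summand is again strictly less than $2\varepsilon$ by the hypothesis, while the second satisfies $C\varepsilon' \leq \varepsilon'$ since $C < 1$, and $\varepsilon' < \varepsilon$ by assumption. Adding the two estimates gives $1 - C < 2\varepsilon + \varepsilon = 3\varepsilon$. Combining both cases yields $|C-1| < 3\varepsilon$.

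There is no genuine obstacle: the lemma is an elementary arithmetic estimate whose only mildly delicate point is the use of $C < 1$ in the second case to absorb the extra factor of $C$ multiplying $\varepsilon'$. The strictness of the final inequality is preserved because the strict inequality $\varepsilon' < \varepsilon$ is strict already, and the hypothesis $|1-C(1+\varepsilon')| < 2\varepsilon$ is strict.
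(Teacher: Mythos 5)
Your proof is correct, and the paper simply marks this lemma's proof as ``Skipped,'' so there is nothing in the source to compare against. Your case split on the sign of $C-1$ with the identities $C-1 = (C(1+\varepsilon')-1) - C\varepsilon'$ and $1-C = (1-C(1+\varepsilon')) + C\varepsilon'$ is exactly the natural elementary argument, and all the inequalities (including the use of $C<1$ to absorb the $C\varepsilon'$ term in the second case, and the strictness coming from $\varepsilon' < \varepsilon$) check out.
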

\begin{proof}
    Skipped.
\end{proof}

Note that from the proof of Theorem~\ref{theorem_main_arithmetic_approximation_theorem} it follows that $l$ does not have to be defined on the whole space $\Adiv_{\RR}(\cX)$. However, by the M.Riesz extension theorem it does not give a more general statement, see Subsection~\ref{subsection_ess_infimum_and_GVF_functionals} and Corollary~\ref{corollary_main_arithmetic_approximation_corollary}.

\section{Applications in globally valued fields}~\label{section_3}

\subsection{Lattice divisors and globally valued field functionals}
Fix a finitely generated field extension $\QQ \subset \Field$ for this subsection.

\begin{definition}~\label{definition_F_models_and_global_divisors_on_F}
    We call $\cX$ an \textit{$\Field$-model}, if it is a generically smooth, normal arithmetic variety together with an isomorphism $\kappa(\cX) \simeq \Field$. The system of $\Field$-models together with morphisms respecting isomorphisms of function fields with $\Field$ is filtered. We use the following notations for direct limits of various groups of divisors with respect to this filtering
    \[ \Adiv_{\KK}(\Field) := \varinjlim \Adiv_{\KK}(\cX) \textnormal{ for } \KK = \ZZ, \QQ \textnormal{ or } \RR; \]
    \[ \Rdiv_{\KK}(\Field) := \varinjlim \Rdiv_{\KK}(\cX) \textnormal{ for } \KK = \ZZ, \QQ \textnormal{ or } \RR; \]
    \[ \aN^1(\Field) := \varinjlim \aN^1(\cX). \]
    We write $\Adiv_{\KK}^0(\cX)$ for the group of arithmetic $\KK$-divisors of $C^0$-type such that the underlying divisor is $0$. This space can be naturally identified with the set of $C^0$-type functions on $\cX(\CC)$. We define
    \[ \Adiv_{\KK}^0(\Field) := \varinjlim \Adiv_{\KK}^0(\cX) \textnormal{ for } \KK = \ZZ, \QQ \textnormal{ or } \RR. \]
    Note that $\Adiv_{\KK}^0(\Field)$ has a norm induced by the supremum norms on $C^0$-functions on (analytifications of) $\Field$-models. We use notation $\|\cdot\|_{\sup}$ for this norm.
\end{definition}

\begin{remark}
    Note that there is a natural exact sequence
    \[ 0 \longrightarrow \Rdiv_{\RR}(\Field) \longrightarrow \Adiv_{\RR}(\Field) \longrightarrow \aN^1(\Field) \longrightarrow 0. \]
\end{remark}

\begin{definition}~\label{definition_lattice_structure_and_divisors_on_F_and_on_F_models}
    Let $\ov{\cD}, \ov{\cE} \in \Adiv_{\QQ}(\Field)$ and assume that $\cX$ is an $\Field$-model such that $\ov{\cD}, \ov{\cE} \in \Adiv_{\QQ}(\cX)$. We define the \textit{lattice infimum $\ov{\cD} \wedge \ov{\cE}$} of $\ov{\cD} = (\cD, g)$ and $\ov{\cE} = (\cE, h)$ in the following way.
    \begin{enumerate}
        \item Assume that $\cD, \cE$ are effective Cartier divisors on $\cX$ and that $\cD \cap \cE$ is a Cartier divisor. Then we put $\ov{\cD} \wedge \ov{\cE} := (\cD \cap \cE, \min(g, h))$ as an element of $\Adiv_{\QQ}(\Field)$.
        \item Assume that $\cD, \cE$ are effective Cartier divisors on $\cX$. Let $\cX'$ be an $\Field$-model over $\cX$ such that the intersection of the pullbacks of $\cD, \cE$ to $\cX'$ is a Cartier divisor. Then use the above definition on $\cX'$.
        \item Assume that $\cD, \cE$ are Cartier divisors on $\cX$. By Lemma~\ref{lemma_every_R_divisor_comb_of_effective_and_a_principal} there exists an effective Cartier divisor $\cA$ on $\cX$ such that $\cD+\cA, \cE+\cA$ are effective. Equipp $\cA$ with an $\cA$-Green function $a$ on $\cX$. We can do this by Lemma~\ref{lemma_every_Cartier_divisor_can_be_equipped_with_a_Green_function}. Define $\ov{\cD} \wedge \ov{\cE} := ((\ov{\cD} + \ov{\cA}) \wedge (\ov{\cE} + \ov{\cA})) - \ov{\cA}$ for $\ov{\cA} = (\cA, a)$.
        \item Assume that $\cD, \cE$ are $\QQ$-Cartier divisors on $\cX$. Let $n$ be a natural number such that $n \cD, n \cE$ are Cartier divisors on $\cX$. Put $\ov{\cD} \wedge \ov{\cE} := \frac{1}{n} (n \ov{\cD} \wedge n \ov{\cE})$.
    \end{enumerate}
    We say that $\ov{\cD} \wedge \ov{\cE}$ \textit{can be calculated on $\cX$}, if we don't have to pass to a model $\cX'$ above $\cX$ to define it. More precisely, this means that there is natural number $n$ and an effective Cartier divisor $\cA$ on $\cX$ such that $n \ov{\cD}, n \ov{\cE}$ are arithmetic $\ZZ$-divisors of $C^0$-type and the intersection $(n \cD+\cA) \cap (n \cE+\cA)$ is a Cartier divisor on $\cX$.
    
    Moreover, if $\cD, \cE$ are $\QQ$-Cartier divisors on $\cX$, we define $\cD \wedge \cE$ using the same procedure. 
\end{definition}

\begin{remark}~\label{remark_if_wedge_can_be_calculated_on_X_it_can_be_calculated_over_it}
    Let $\cX, \cX'$ be $\Field$-models with a morphism $f:\cX' \to \cX$, and let $\ov{\cD}, \ov{\cE} \in \Adiv_{\QQ}(\cX)$. If $\ov{\cD} \wedge \ov{\cE}$ can be calculated on $\cX$, then $f^* \ov{\cD} \wedge f^* \ov{\cE}$ can be calculated on $\cX'$. Indeed, it's enough to take a natural number $n$ and a Cartier divisor $\cA$ on $\cX$ as in Definition~\ref{definition_lattice_structure_and_divisors_on_F_and_on_F_models}, and consider the pullback $f^* \cA$ on $\cX'$.
\end{remark}

\begin{lemma}~\label{lemma_GVF_lattice_structure_well_defined}
    The lattice structure on $\Adiv_{\QQ}(\Field)$ is well defined and does not depend on the choices in the definition.
\end{lemma}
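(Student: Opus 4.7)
The plan is to verify independence of the choices in Definition~\ref{definition_lattice_structure_and_divisors_on_F_and_on_F_models} by going through the four cases in order.

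For case (1), the only content is that $(\cD \cap \cE, \min(g, h))$ is a genuine arithmetic $\QQ$-divisor of $C^0$-type on $\cX$. I would verify this by a local calculation: on a Zariski open where $\cD, \cE, \cD \cap \cE$ have local equations $f_D, f_E, f_{D \cap E}$ (so that $f_D = f_{D \cap E} \cdot g_D$ and $f_E = f_{D \cap E} \cdot g_E$ with $g_D, g_E$ coprime), the expression $\min(g, h) + \log|f_{D \cap E}|^2$ equals
\[ \min\bigl(g + \log|f_D|^2 - \log|g_D|^2,\ h + \log|f_E|^2 - \log|g_E|^2\bigr) \]
off the supports, and near any point one of the two entries of the minimum is continuous in a neighbourhood (since the other goes to $+\infty$), so the minimum extends continuously. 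Case (2) then reduces to case (1) after refining models: given two candidate models $\cX_1', \cX_2'$ above $\cX$ on which the infimum becomes Cartier, dominate both by a common $\Field$-model $\cX_3'$ and use Remark~\ref{remark_if_wedge_can_be_calculated_on_X_it_can_be_calculated_over_it} together with a local check to conclude that the two resulting classes agree in $\Adiv_{\QQ}(\Field)$.

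The crux is case (3), independence from the auxiliary $\ov{\cA} = (\cA, a)$. I would first establish the translation-invariance identity
\[ (\ov{\cD} + \ov{\cB}) \wedge (\ov{\cE} + \ov{\cB}) = (\ov{\cD} \wedge \ov{\cE}) + \ov{\cB} \]
for any arithmetic divisor $\ov{\cB} = (\cB, b)$ with $\cB$ an effective Cartier divisor (possibly after refining the model, using case (2)). On the divisor side this comes from the identity $(\cD + \cB) \cap (\cE + \cB) = (\cD \cap \cE) + \cB$, which is a local-equation calculation using $\gcd(f_D f_B, f_E f_B) = f_B \cdot \gcd(f_D, f_E)$; on the Green function side it is $\min(g + b, h + b) = \min(g, h) + b$. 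Given this identity, if $\ov{\cA}_1, \ov{\cA}_2$ are two valid choices, applying the identity with $\ov{\cB} = \ov{\cA}_2$ to the pair $\ov{\cD} + \ov{\cA}_1, \ov{\cE} + \ov{\cA}_1$ (and symmetrically) shows that both associated candidates equal
\[ \bigl((\ov{\cD} + \ov{\cA}_1 + \ov{\cA}_2) \wedge (\ov{\cE} + \ov{\cA}_1 + \ov{\cA}_2)\bigr) - \ov{\cA}_1 - \ov{\cA}_2. \]
I expect this translation identity to be the main obstacle, since although the Green function half is immediate, the divisor-theoretic half and the simultaneous refinement of models require care.

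Finally, case (4) reduces to the identities $n'(\cD \wedge \cE) = (n'\cD) \wedge (n'\cE)$ (a local-equation calculation) and $m \min(g, h) = \min(mg, mh)$ for positive $m$, which together give $\frac{1}{n'}\bigl((n'\ov{\cD}) \wedge (n'\ov{\cE})\bigr) = \frac{1}{n}\bigl((n\ov{\cD}) \wedge (n\ov{\cE})\bigr)$ for any positive multiple $n' = mn$, and hence for any two $n$'s by passing to a common multiple. The descent from individual $\Field$-models to $\Adiv_{\QQ}(\Field)$ follows by combining case (2) with the fact that pullback along a dominating morphism of $\Field$-models commutes with the lattice operation, again by Remark~\ref{remark_if_wedge_can_be_calculated_on_X_it_can_be_calculated_over_it}.
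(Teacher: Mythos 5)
Your proposal follows essentially the same route as the paper's proof: case~(1) is a local Bezout-type calculation showing one of the two cofactors $g_D, g_E$ is a unit at every complex point (the paper makes this explicit via $xd'+ye'=1$ where you invoke coprimality), case~(2) is handled by dominating candidate models by a common one, case~(3) rests on the translation-invariance identity $(\ov{\cD}+\ov{\cB})\wedge(\ov{\cE}+\ov{\cB})=(\ov{\cD}\wedge\ov{\cE})+\ov{\cB}$ for effective $\ov{\cB}$ and then symmetrising in $\ov{\cA}_1,\ov{\cA}_2$, and case~(4) is $n(\cD\wedge\cE)=(n\cD)\wedge(n\cE)$ via a binomial expansion of the Bezout relation. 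The only imprecision is in case~(1), where the real content is that coprimality rules out \emph{both} terms of the minimum going to $+\infty$ simultaneously, whereas your phrasing suggests one always does.
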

\begin{proof}
    Fix the notation from Definition~\ref{definition_lattice_structure_and_divisors_on_F_and_on_F_models}. We check point by point, that the procedure does not depend on the choices made.
    \begin{enumerate}
        \item Assume that $\cD, \cE$ are effective Cartier divisors on $\cX$ such that $\cD \cap \cE$ is an effective Cartier divisor. We need to show that $\min(g,h)$ is a $(\cD \cap \cE)$-Green function on $\cX$. Let $\Spec A \subset \cX$ be an open subset where $\cD, \cE, \cD \cap \cE$ are given respectively by $d, e, f$ and assume that $x, y, d', e' \in A$ are such that $d=fd', e=fe', f=xd+ye$. Note that $f \cdot (1-(xd'+ye'))=0$, so $f=0$ or $xd'+ye'=1$, as $A$ is an integral domain. If $f=0$, then $d=e=0$ and this cannot happen. We must show that the function
        \[ \min(g(p),h(p)) + \log |f|^2(p) \]
        extends to a $C^0$-type function on $(\Spec A)(\CC)$. By the assumption we know that functions
        \[ g(p) + \log |d|^2(p) \textnormal{ and } h(p) + \log |e|^2(p) \]
        extend to a $C^0$-type functions on $(\Spec A)(\CC)$. Note that
        \[ g(p) + \log |d|^2(p) = g(p) + \log |f|^2(p) + \log |d'|^2(p), \]
        \[ h(p) + \log |e|^2(p) = h(p) + \log |f|^2(p) + \log |e'|^2(p). \]
        Thus it is enough to show that for every $p \in (\Spec A)(\CC)$ we have $|d'|(p) \neq 0$ or $|e'|(p) \neq 0$. However, $xd'+ye'=1$, so if $d'(p) = e'(p) =0$, then $0=x(p) d'(p) + y(p) e'(p)=1$ which gives a contradiction.
        \item Assume that $\cD, \cE$ are effective Cartier divisors on $\cX$ and let $\cX', \cX''$ be $\Field$-models over $\cX$ where intersections of pullbacks of $\cD, \cE$ are Cartier divisors. Without loss of generality $\cX''$ is over $\cX'$. Consider open sets $\Spec C \to \Spec B \to \Spec A$ inside $\cX'' \to \cX' \to \cX$ respectively, and assume that $\cD, \cE$ have local equations $d, e \in A$ on $\Spec(A)$. Assume also, that on $\Spec B$ we have $(d,e)=(f)$ for some $f \in B$. Let $x, y, d', e' \in B$ be elements such that $d=fd', e=fe', f=xd+ye$. The same equations hold in $C$ (for pullbacks), so $(d,e)$ calculated in $\Spec B$ and pulled back to $\Spec C$ is the same as $(d,e)$ calculated in $\Spec C$. Note that to define the Green function for the intersection of pullbacks of $\cD, \cE$ on $\cX''$ one can take the minimum of pullbacks of $g, h$ to $\cX''$, or the pullback of the minimum of pullbacks of $g, h$ on $\cX'$. 
        \item Assume that $\cD, \cE$ are Cartier divisors on $\cX$ and let $\cA, \cB$ be effective Cartier divisors on $\cX$ with $\cD+\cA, \cE+\cA$ effective. Replace $\cX$ by an $\Field$-model $\cX'$ over $\cX$ where $(\cD+\cA) \cap (\cE+\cA)$ is an effective Cartier divisor. To prove independence of $\ov{\cA}$ in the definition of the lattice infimum (point 3.) it's enough to show that for any effective Cartier divisor $\cB$ on $\cX'$ we have
        \[((\cD+\cA) \cap (\cE+\cA)) - \cA = ((\cD+\cA+\cB) \cap (\cE+\cA+\cB)) - (\cA+\cB).\]
        To ease the notation, replace $\cD+\cA, \cE+\cA$ by $\cD, \cE$ respectively. We want to prove that $\cD \cap \cE =((\cD+\cB) \cap (\cE+\cB)) - \cB$ assuming that $\cD, \cE, \cD \cap \cE, \cB$ are effective Cartier divisors on $\cX$. Let $\Spec A \subset \cX$ be an open subset where the effective Cartier divisors from the previous sentence are given respectively by $d, e, f, b \in A$. Also, assume that $\Spec A$ is chosen so that there are $x, y, d', e' \in A$ such that $d=fd', e=fe', f=xd+ye$. Note that then $bd=bfd', be=bfe', bf=xbd+ybe$, hence $(bf) = (be,bd)$, which gives the desired result. The fact that the $(\cD \cap \cE)$-Green function on $\cX$ does not depend on $\ov{\cA}$ follows from the fact that for real valued functions $g, h, j$ we have $\min(g+j, h+j) = \min(g,h)+j$.
        \item At last, assume that $\cD, \cE$ are $\QQ$-Cartier and let $n \neq 0$ be a natural number such that $n \cD, n \cE$ are Cartier divisors on $\cX$. Fix a natural number $m \neq 0$. We need to show that using the point 3. of Definition~\ref{definition_lattice_structure_and_divisors_on_F_and_on_F_models} we get $(n \cD) \wedge (n \cE) = \frac{1}{m} ((mn \cD)\wedge(mn \cE))$. Replace $\cD, \cE$ by $n \cD, n \cE$ respectively, to assume that $n=1$. By the previous points in this lemma, it is enough to prove that if $\cD, \cE$ are effective Cartier divisors on $\cX$ with $\cD \cap \cE$ effective Cartier, then $(m \cD) \cap (m \cE) = m (\cD \cap \cE)$. Let $\Spec A \subset \cX$ be an open subset where $\cD, \cE, \cD \cap \cE$ are given respectively by $d, e, f$ and assume that $x, y, d', e' \in A$ are such that $d=fd', e=fe', 1=xd'+ye'$ (see point 1. in this lemma). Note that
        \[ 1 = (xd' + ye')^{2m} = \sum_{i=0}^{2m} \binom{2m}{i} x^i y^{2m-i} (d')^i (e')^{2m-i} \in ((d')^m, (e')^m). \]
        Thus, from the fact that $d^m = f^m (d')^m, e^m = f^m (e')^m$, we get that $(d^m, e^m) = (f^m)$, which finishes the proof. The fact that the $(\cD \cap \cE)$-Green function on $\cX$ does not depend on $m$ follows from the fact that $\min(mg, mh) = m \min(g,h)$.
    \end{enumerate}
\end{proof}

\begin{lemma}~\label{lemma_properties_of_the_GVF_pairing}
    Let $v$ be a valuation on $\Field$. If $\ov{\cD} \in \Adiv_{\RR}(\Field)$, then the value $\GVFpairing(v, \ov{\cD})$ does not depend on the $\Field$-model on which $\ov{\cD}$ is defined. Moreover, the function $\GVFpairing(v, -)$ has the following properties:
    \begin{enumerate}
        \item it is $\RR$-linear,
        \item for every $\ov{\cD}, \ov{\cE} \in \Adiv_{\QQ}(\Field)$ we have $\GVFpairing(v, \ov{\cD} \wedge \ov{\cE}) = \min(\GVFpairing(v, \ov{\cD}), \GVFpairing(v, \ov{\cE}))$.
    \end{enumerate}
\end{lemma}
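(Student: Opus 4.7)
The plan is to treat the three claims in order, reducing each to a computation that takes place over a single $F$-model and over a single open affine.

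\textbf{Independence of the model.} Suppose $\bar{\cD} \in \Adiv_{\RR}(\Field)$ is represented on two models $\cX$ and $\cX'$; after passing to a common refinement, it suffices to handle a birational morphism $\pi : \cX' \to \cX$ with $\bar{\cD}$ defined on $\cX$ and pulled back to $\cX'$. For non-Archimedean $v$, let $p = \supp(v) \in \cX$ and $p' = \supp(v) \in \cX'$. The valuative criterion of properness applied to both $\cX$ and $\cX'$ (and functoriality of the diagram in the last item of the opening Notation) gives $\pi(p') = p$, and a local equation $f$ for $\cD$ at $p$ is also a local equation for $\pi^*\cD$ at $p'$ viewed inside $\kappa(\cX') = \kappa(\cX) = F$, so $v(f)$ is unchanged. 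For Archimedean $v$ induced by $\sigma : F \hookrightarrow \CC$, the Green function is pulled back, and $\sigma$ factors through the generic point of both $\cX$ and $\cX'$, so evaluation at the corresponding complex point of each model yields the same real number.

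\textbf{Linearity.} Fix $v$ and work on a single $F$-model $\cX$ on which the divisors in question live. For non-Archimedean $v$, if $\bar{\cD} = \alpha \bar{\cD}_1 + \beta \bar{\cD}_2$ with local equations $f_1, f_2$ for $\cD_1, \cD_2$ at $\supp(v)$, then $f_1^\alpha f_2^\beta \in \kappa(\cX)_\RR^\times$ is a local equation for $\cD$, and $v(f_1^\alpha f_2^\beta) = \alpha v(f_1) + \beta v(f_2)$ by the $\RR$-linear extension convention. For Archimedean $v$ the Green function transforms as $g = \alpha g_1 + \beta g_2$ and evaluation at a fixed complex point is $\RR$-linear.

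\textbf{Compatibility with $\wedge$.} Using linearity and the reduction performed in Definition~\ref{definition_lattice_structure_and_divisors_on_F_and_on_F_models} (subtract a common effective Cartier divisor equipped with any Green function, then clear denominators), we may assume that $\bar{\cD} = (\cD, g), \bar{\cE} = (\cE, h)$ are arithmetic $\ZZ$-divisors of $C^0$-type with $\cD, \cE$ effective Cartier and with $\cD \cap \cE$ a Cartier divisor on the chosen model (the latter achievable by passing to a further model, which by item~(1) does not change the pairing). For Archimedean $v$ the claim is the identity $\tfrac{1}{2}\min(g,h)(p) = \min\bigl(\tfrac{1}{2}g(p), \tfrac{1}{2}h(p)\bigr)$. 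For non-Archimedean $v$, pick an affine open $\Spec A \subset \cX$ containing $p := \supp(v)$ on which $\cD, \cE, \cD \cap \cE$ are cut out by $d, e, f \in A$, and write $d = f d'$, $e = f e'$, $1 = xd' + ye'$ as in the proof of Lemma~\ref{lemma_GVF_lattice_structure_well_defined}(1). Since $p \in \Spec A$ we have $A \subset \cO_v$, so $v$ is non-negative on $A$; then $1 = xd' + ye'$ forces $0 = v(1) \geq \min(v(d'), v(e')) \geq 0$, hence $\min(v(d'), v(e')) = 0$. Therefore $\min(v(d), v(e)) = v(f) + \min(v(d'), v(e')) = v(f)$, which is exactly $\GVFpairing(v, \bar{\cD} \wedge \bar{\cE})$.

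The main technical point is the last computation: the ingenious but short use of $1 = xd' + ye'$ together with $A \subset \cO_v$ is what converts the scheme-theoretic definition of $\cD \cap \cE$ (via the ideal sum $(d, e) = (f)$) into the valuative identity $\min(v(d), v(e)) = v(f)$. Everything else is bookkeeping.
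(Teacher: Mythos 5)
Your proof is correct and follows essentially the same route as the paper's: reduce by linearity and Lemma~\ref{lemma_GVF_lattice_structure_well_defined} to effective Cartier $\cD,\cE,\cD\cap\cE$, then apply $v$ to the relations $d=fd'$, $e=fe'$, $1=xd'+ye'$, using $A\subset\cO_v$ to force $\min(v(d'),v(e'))=0$. The only minor deviations are that you make the inclusion $A\subset\cO_v$ explicit (the paper leaves it implicit in the inequality chain) and that you spell out the model-independence step rather than citing Remark~\ref{remark_pullbacks_of_arithmetic_R_divisors_of_C_0_type}.
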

\begin{proof}
    The fact that $\GVFpairing(v, \ov{\cD})$ does not depend on the $\Field$-model follows from the definition of pullback, see Remark~\ref{remark_pullbacks_of_arithmetic_R_divisors_of_C_0_type}. Linearity is clear. To prove the second point, by Lemma~\ref{lemma_GVF_lattice_structure_well_defined} and $\RR$-linearity we can reduce to the following situation. Assume that $\ov{\cD}, \ov{\cE}$ are arithmetic $\ZZ$-divisors on $\cX$ with $\cD, \cE, \cD \cap \cE$ effective Cartier. If $v$ is Archimedean, the statement follows from the definition, so assume that $v$ is non-Archimedean. Let $p = \supp(v) \in \cX$ and let $\Spec A \subset \cX$ be an open neighbourhood of $p$ where $\cD, \cE, \cD \cap \cE$ are given by $d, e, f \in A$ respectively. Assume additionally that $x, y, d', e' \in A$ are such that $d=fd', e=fe', 1=xd'+ye'$ (see point 1. in Lemma~\ref{lemma_GVF_lattice_structure_well_defined}). We need to show that $v(f) = \min(v(d),v(e))$. Note that
    \[ 0 = v(1) = v(xd'+ye') \geq \min(v(x) + v(d'), v(y)+v(e')) \geq \min(v(d'), v(e')) \]
    and
    \[ v(d) = v(f) + v(d'), v(e) = v(f) + v(e'). \]
    Note that if $v(d'), v(e') > 0$, then $0 \geq \min(v(d'),v(e')) > 0$, which gives a contradiction. Thus one of $v(d'),v(e')$ has to be zero, and we get the result.
\end{proof}

\begin{lemma}~\label{lemma_local_degree_on_minimum_is_minimum}
    Let $\cX$ be an $\Field$-model with a closed point in the generic fiber $x \in X = \cX_{\QQ}$. Let $\ov{\cD}, \ov{\cE}, \ov{\cA}$ be arithmetic $\ZZ$-divisors of $C^0$-type on $\cX$ such that $\cA, \cA+\cD, \cA+\cE$ are effective and $(\cA+\cD) \cap (\cA+\cE)$ is principal, so that $\ov{\cD} \wedge \ov{\cE}$ can be calculated on $\cX$. Assume that $x$ does not belong to the support of any of $\cD, \cE, \cA, \cA+\cD, \cA+\cE$ and let $v$ be a valuation on $\kappa(x)$. Then
    \[ \adeg_v( \ov{\cD} \wedge \ov{\cE}|\ov{\{x\}}) = \min(\adeg_v( \ov{\cD}|\ov{\{x\}}), \adeg_v(\ov{\cE}|\ov{\{x\}})). \]
\end{lemma}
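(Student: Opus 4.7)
My plan is to first use $\ZZ$-linearity of $\adeg_v(-|\ov{\{x\}})$ in its divisor argument to reduce to the case $\cA = 0$. Indeed, writing $\ov{\cD} \wedge \ov{\cE} = ((\ov{\cD}+\ov{\cA}) \wedge (\ov{\cE}+\ov{\cA})) - \ov{\cA}$ as in Definition~\ref{definition_lattice_structure_and_divisors_on_F_and_on_F_models}(3), both sides of the desired identity are translated by the same quantity $-\adeg_v(\ov{\cA}|\ov{\{x\}})$ when we pass from $\ov{\cD}, \ov{\cE}$ to the effective divisors $\ov{\cA}+\ov{\cD}, \ov{\cA}+\ov{\cE}$ (whose lattice infimum is, by hypothesis, computable directly on $\cX$ via Definition~\ref{definition_lattice_structure_and_divisors_on_F_and_on_F_models}(1)). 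It therefore suffices to prove the statement assuming $\ov{\cD} = (\cD, g), \ov{\cE} = (\cE, h)$ with $\cD, \cE$ effective Cartier and $\cD \cap \cE$ Cartier, so that $\ov{\cD} \wedge \ov{\cE} = (\cD \cap \cE, \min(g, h))$; note that $\supp(\cD \cap \cE) \subset \supp(\cD)$ automatically, so $x$ still avoids all the relevant supports.

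For an Archimedean $v$ induced by $\sigma: \kappa(x) \to \CC$, the identity is immediate:
$$\adeg_v(\ov{\cD} \wedge \ov{\cE}|\ov{\{x\}}) = \tfrac{1}{2}\min(g, h)(x^\sigma) = \min\bigl(\tfrac{1}{2} g(x^\sigma), \tfrac{1}{2} h(x^\sigma)\bigr) = \min(\adeg_v(\ov{\cD}|\ov{\{x\}}), \adeg_v(\ov{\cE}|\ov{\{x\}})).$$

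The non-Archimedean case is the bulk of the work and mimics the proof of Lemma~\ref{lemma_properties_of_the_GVF_pairing}. Let $q \in \cX$ be the image of the closed point of $\Spec \cO_v$ under the map $\phi$ from the definition, and choose an affine open $\Spec A \subset \cX$ containing $q$ on which $\cD, \cE, \cD \cap \cE$ have local equations $d, e, f \in A$, together with elements $\xi, \eta, d', e' \in A$ satisfying $d = fd'$, $e = fe'$, and $1 = \xi d' + \eta e'$ (such a presentation is available by the proof of Lemma~\ref{lemma_GVF_lattice_structure_well_defined}, point (1)). The main obstacle is that $v$ is a valuation on $\kappa(x)$ rather than on $\kappa(\cX)$, so one has to verify non-negativity of $v$ on the correct elements. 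The key observation is that the valuative criterion gives a local ring homomorphism $\cO_{\cX, q} \to \cO_v$, compatible with restriction to $\kappa(x)$, so $v(a|_x) \geq 0$ for every $a \in A$. Since $x \notin \supp(\cD) \cup \supp(\cE)$, the elements $d, e, f, d', e'$ are all units in $\cO_{\cX, x}$, hence restrict to units in $\kappa(x)$. Using the identities $v(d|_x) = v(f|_x) + v(d'|_x)$ and $v(e|_x) = v(f|_x) + v(e'|_x)$, the claim reduces to showing $\min(v(d'|_x), v(e'|_x)) = 0$. Both values are $\geq 0$ by the preceding remark, and the ultrametric inequality applied to $1 = \xi d' + \eta e'$ gives
$$0 = v(1) \geq \min\bigl(v(\xi|_x) + v(d'|_x),\ v(\eta|_x) + v(e'|_x)\bigr) \geq \min(v(d'|_x), v(e'|_x)),$$
using $v(\xi|_x), v(\eta|_x) \geq 0$. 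Combining the two bounds gives $\min(v(d'|_x), v(e'|_x)) = 0$, which finishes the proof.
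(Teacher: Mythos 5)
Your proof is correct and follows exactly the route the paper intends: the paper skips this proof with the remark that it is ``almost the same'' as the proof of Lemma~\ref{lemma_properties_of_the_GVF_pairing}, and your argument is precisely that adaptation. You also correctly identify and resolve the one genuine subtlety the paper glosses over, namely that $v$ lives on $\kappa(x)$ rather than on $\kappa(\cX)$, which you handle via the local homomorphism $\cO_{\cX,q} \to \cO_v$ coming from the valuative criterion.
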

\begin{proof}
    We skip this proof. It is almost the same as the proof of Lemma~\ref{lemma_properties_of_the_GVF_pairing}.
\end{proof}

\begin{definition}
    We denote by $\Ldiv_{\QQ}(\Field)$ the $\QQ$-vector space generated by $\QQ$-vector and lattice operations from principal arithmetic $\ZZ$-divisors $\adiv(f)$, for $f \in \Field$. We call these \textit{lattice divisors} on $\Field$. For an $\Field$-model $\cX$ we denote by $\Ldiv_{\QQ}(\cX)$ the inverse image of lattice divisors on $\Field$ via the natural map $\Adiv_{\QQ}(\cX) \to \Adiv_{\QQ}(\Field)$. We denote by $\Ldiv_{\QQ}^0(\Field), \Ldiv_{\QQ}^0(\cX)$ the subspaces where the corresponding divisor is $0$. 
\end{definition}

\begin{definition}~\label{definition_GVF_functional}
    Let $l$ be a linear functional on a (real or rational) vector subspace of $\Adiv_{\RR}(\Field)$ (resp. $\QQ$ or $\RR$-linear). We call $l$ a \textit{globally valued field functional} (abbreviated GVF functional), if it satisfies the following properties:
    \begin{itemize}
        \item it maps principal $\RR$-divisors of $C^0$-type in its domain to $0$,
        \item it is non-negative on effective arithmetic $\RR$-divisors of $C^0$-type in its domain.
    \end{itemize}
    Moreover, we call $l$ \textit{normalized}, if its domain contains a pullback of a non-principal arithmetic $\RR$-divisor of $C^0$-type $\ov{\cD}$ on $\Spec(\ZZ)$ and $l$ sends it to $\adeg(\ov{\cD})$. Here we mean the arithmetic degree from Theorem~\ref{theorem_Ikoma_extension_intersection_product_arithmetic_R_divisors} or equivalently the arithmetic degree from Definition~\ref{definition_height_of_a_point_w_r_t_arithmetic_R_divisor}, for the arithmetic variety $\Spec(\ZZ)$.
\end{definition}

\begin{proposition}~\label{proposition_lattice_0_divisors_are_dense_within_artihmetic_zero_divisors}
    Let $\cX$ be an $\Field$-model. Then $\Ldiv_{\QQ}^0(\cX)$ is a dense subset of $\Adiv_{\RR}^0(\cX)$ with respect to the supremum norm.
\end{proposition}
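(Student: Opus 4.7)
The strategy is a Stone--Weierstrass argument applied to the Banach space $C^0(\cX(\CC))^{F_\infty}$ of continuous conjugation-invariant real functions on $\cX(\CC)$ under $\|\cdot\|_{\sup}$, canonically identified with $\Adiv_\RR^0(\cX)$ via $(0,g) \leftrightarrow g$. The plan is to show that $\Ldiv_\QQ^0(\cX)$ (seen through this identification) is a $\QQ$-vector sublattice that contains a dense set of constants and separates points.

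For the sublattice property, closure under $+$ and $\QQ$-scaling is immediate; for $\wedge$, apply Definition~\ref{definition_lattice_structure_and_divisors_on_F_and_on_F_models}.(3) with any auxiliary effective $\ov{\cA}=(\cA,a)$: both $(0,g_1)+\ov{\cA}$ and $(0,g_2)+\ov{\cA}$ have the same effective Cartier divisor $\cA$, so their wedge computed on $\cX$ is $(\cA,\,a+\min(g_1,g_2))$, and subtracting $\ov{\cA}$ recovers $(0,\min(g_1,g_2))$. For a nonzero constant, set $\ov{E}:=\max(\adiv(1/2),\adiv(1/3))$; since $\cdiv 2$ and $\cdiv 3$ have disjoint supports on $\Spec\ZZ \subset \cX$, the underlying divisor $\max(-\cdiv 2,-\cdiv 3)$ vanishes and the Green function is the constant $\max(2\log 2,2\log 3)=2\log 3$, so $\ov{E}=(0,2\log 3) \in \Ldiv_\QQ^0(\cX)$; its $\QQ$-scalings produce a dense subset of real constants.

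For each $f\in\Field^\times$ and each $C\in\QQ_{>0}$, I would then form the truncated element
\[ \phi_{f,C} \;:=\; \min\bigl(\max(\adiv(f),\,-C\ov{E}),\; C\ov{E}\bigr), \]
whose underlying divisor computes to $\min(\cdiv(f)^+,0)=0$ and whose Green function is the continuous truncation of $-\log|f|^2$ to the interval $[-2C\log 3,\,2C\log 3]$. To separate two points $p,q \in \cX(\CC)$ in distinct $F_\infty$-orbits, I would exhibit $f\in\Field^\times$ with $|f(p)|\neq|f(q)|$: if the images of $p,q$ in $\cX_\QQ$ differ this is standard, while if they coincide at a closed point $y$ the two $\CC$-points correspond to non-conjugate embeddings of $\kappa(y)$ into $\CC$, and a suitable element of $\kappa(y)$ together with a rational translate separates their absolute values (using exactly that $p\neq F_\infty q$ to rule out the conjugate case). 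Choosing $C$ large enough that the truncation does not clip at either point then yields $\phi_{f,C}(p)\neq\phi_{f,C}(q)$, after which the lattice form of the Stone--Weierstrass theorem concludes density. The main obstacle is this last separation step, specifically ensuring that absolute values of $\Field$-rational functions suffice to distinguish fiber-conjugate-but-not-$F_\infty$-conjugate points of $\cX(\CC)$.
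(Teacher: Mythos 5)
Your Stone--Weierstrass skeleton matches the paper's: identify $\Adiv_{\RR}^0(\cX)$ with conjugation-invariant $C^0$ functions on $\cX(\CC)$, check that $\Ldiv_{\QQ}^0(\cX)$ is a $\QQ$-sublattice containing a nonzero constant, then prove point-separation. Your constant $\ov{E}=\max(\adiv(1/2),\adiv(1/3))=(0,2\log 3)$ is correct (the paper uses $\adiv(2)\wedge\adiv(1)=(0,-2\log 2)$), and your argument that the wedge of two elements of $\Ldiv_{\QQ}^0(\cX)$ can be computed on $\cX$ itself without passing to a blowup is fine. Your truncation $\phi_{f,C}=\min(\max(\adiv(f),-C\ov{E}),C\ov{E})$ is a genuinely cleaner way to manufacture a lattice $0$-divisor than what the paper does: the paper instead embeds $\cX\subset\PP^n_{\ZZ}$, forms $\ov{\cE}=\adiv(1)\wedge\adiv(f_1)\wedge\cdots$ over all projective coordinates except one, and then takes a difference of the form $-(\ov{\cE}\wedge\adiv(f_i+a))+(\ov{\cE}\wedge\adiv(f_i+b))$, verifying via the ultrametric inequality that the underlying divisor vanishes. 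Your $\phi_{f,C}$ has underlying divisor $0$ by construction, which is a real simplification.

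The gap is exactly where you flag it. You need, for every pair $p\neq F_\infty(q)$ in $\cX(\CC)$, an $f\in\Field^\times$ that is regular and non-vanishing at $p,q$ with $|f(p)|\neq|f(q)|$, and you never actually produce it. Your two-case split (``images in $\cX_\QQ$ differ'' versus ``coincide at a closed point $y$'') is incomplete: the generic case is when $p,q$ factor through the generic point of $\cX$, giving two embeddings $\Field\hookrightarrow\CC$ that are neither equal nor complex conjugate, and you do not address it; also, ``this is standard'' in your first case is doing real work that needs an argument. In every case the crux is the same numerical fact, which is precisely the paper's Lemma~\ref{lemma_complex_numbers_with_diff_conj}: for $x\neq y\in\CC$ with $\mathrm{Im}(x),\mathrm{Im}(y)\geq 0$ there exist large naturals $a,b$ with $\bigl|\tfrac{x+a}{x+b}\bigr|\neq\bigl|\tfrac{y+a}{y+b}\bigr|$. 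Combined with a linear-algebra observation (two distinct non-conjugate embeddings cannot agree up to conjugation on every element of $\Field$, since otherwise $\Field$ would be the union of two proper $\QQ$-subspaces), this supplies the $f$ you need and would close your argument. As written, though, the proposal stops one lemma short of a proof.
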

\begin{proof}
    Identify $\Adiv_{\RR}^0(\cX)$ with continuous real valued functions on $\cX(\CC)/F_{\infty}$. By the Stone–Weierstrass theorem (max-closed), it is enough to check the following:
    \begin{itemize}
        \item a constant non-zero function is in $\Ldiv_{\QQ}^0(\cX)$;
        \item if $g \in \Ldiv_{\QQ}^0(\cX)$, then $ag \in \Ldiv_{\QQ}^0(\cX)$ for $a \in \QQ$;
        \item if $g, h \in \Ldiv_{\QQ}^0(\cX)$, then $g+h, \max(g,h) \in \Ldiv_{\QQ}^0(\cX)$;
        \item the algebra $\Ldiv_{\QQ}^0(\cX)$ separates points of $\cX(\CC)/F_{\infty}$.
    \end{itemize}
    For a constant non-zero function we can take $\adiv(2) \wedge \adiv(1)$. The second and third points are automatic. Now we prove the last point. Fix an embedding $\cX \subset \PP_{\ZZ}^n$ and assume that $\cX$ is not contained in any hyperplane. Pick 
    \[ x, y \in \cX(\CC) \subset \PP_{\ZZ}^n(\CC) = \{ [z_0 : \dots : z_n] |(\exists i=0, \dots, n)(z_i \neq 0) \} \]
    which have different conjugation classes. Assume that $x_0 = y_0 = 1$ (possibly after a change of coordinates of $\PP_{\ZZ}^n$). Let $i, i'$ be indexes among $1, \dots, n$ such that $x_i \neq y_i$ and $\ov{x_{i'}} \neq y_{i'}$. Note that we can without loss of generality assume that $i' = i$. Indeed, if $\ov{x_i} = y_i$ and $x_{i'} = y_{i'}$, then we can change the coordinates of $\PP_{\ZZ}^n$ so that the new $i$'th coordinate is a sum of the $i$'th and $i'$'th coordinate, and other coordinates stay the same. The new $i$'th coordinates of $x$ and $y$ become $x_i + x_{i'}, \ov{x_i} + x_{i'}$ respectively, and they have different conjugation classes as $x_i,x_{i'} \in \CC \setminus \RR$.
    
    Let $X_0, \dots, X_n$ be the natural projective coordinates on $\PP_{\ZZ}^n$ and denote by $f_j$ the restriction of $\frac{X_j}{X_0}$ to $\cX$, for each $j=0, \dots, n$. This makes sense as $\cX$ is not contained in any hyperplane. Let $\ov{\cE}$ be the lattice divisor on $\Field$ defined by the formula
    \[ \ov{\cE} := \adiv(1) \wedge \adiv(f_1) \wedge \dots \wedge \adiv(f_{i-1}) \wedge \adiv(f_{i+1}) \wedge \dots \wedge \adiv(f_n). \]
    Pick natural numbers $a, b$ such that the following hold:
    \begin{enumerate}
        \item $\Big| \frac{x_i+a}{x_i+b} \Big| \neq \Big| \frac{y_i+a}{y_i+b} \Big|$;
        \item for $c = a$ or $c=b$ we have 
        \[ |x_i+c| > 1, |x_1|, \dots, |x_n|; \]
        \[ |y_i+c| > 1, |y_1|, \dots, |y_n|. \]
    \end{enumerate}
    Such exists by Lemma~\ref{lemma_complex_numbers_with_diff_conj} with $N$ big enough. Consider the following lattice divisor on $\Field$
    \[ \ov{\cD} := - (\ov{\cE} \wedge \adiv(f_i + a)) + (\ov{\cE} \wedge \adiv(f_i+b)). \]
    Let $\cX'$ be an $\Field$-model over $\cX$ with $\ov{\cD} \in \Ldiv_{\QQ}(\cX')$. Let $v$ be a non-Archimedean valuation on $\Field$.
    \begin{claim}
        The equality $\GVFpairing(v, \ov{\cD}) = 0$ holds.
    \end{claim}
    \begin{proof}[Proof of the claim]
        By Lemma~\ref{lemma_properties_of_the_GVF_pairing} it is enough to show that
        \[ \min( \{v(f_j):j \neq i\} \cup \{v(f_i + a)\} ) = \min( \{v(f_j):j \neq i\} \cup \{v(f_i + b)\} ), \]
        where $j$ above varies form $0$ to $n$ with $f_0 = 1$. This follows from the ultrametric inequality for non-Archimedean valuations.
    \end{proof}
    In particular, the Weil decomposition of $\cD$ is trivial, hence $\cD = 0$ as $\cX'$ is normal. Thus $\ov{\cD} = (0, g)$ for some $C^0$-type function $g$ on $\cX'(\CC)$. Consider the function $H$ defined by the formula
    \[ - \min( \{- \log|z_j|:j \neq i\} \cup \{-\log|z_i+az_0|\} ) + \min( \{-\log|z_j|:j \neq i\} \cup \{-\log|z_i+bz_0|\} ), \]
    for $[z_0 : \dots : z_n] \in \PP_{\ZZ}^n(\CC)$. Note that it is a $C^0$-type function on $\PP_{\ZZ}^n(\CC)$ (it does not depend on the representative of a point) and it restricts to a $C^0$-type function $h := H|_{\cX(\CC)}$ on $\cX(\CC)$. By Lemma~\ref{lemma_properties_of_the_GVF_pairing} and Definition~\ref{definition_GVF_pairing}, for points $p \in \cX'(\CC)$ factoring through the inclusion $\Spec(\Field) \to \cX'$ (note that those are the same for $\cX$ and $\cX'$), we have
    \[ g(p) = - \min( \{-\log|f_j|(p):j \neq i\} \cup \{-\log|f_i + a|(p)\} ) \]
    \[  + \min( \{-\log|f_j|(p):j \neq i\} \cup \{-\log|f_i + b|(p)\} ) \]
    \[ = - \min( \{- \log|z_j|(p):j \neq i\} \cup \{-\log|z_i+az_0|(p)\} ) \]
    \[ + \min( \{-\log|z_j|(p):j \neq i\} \cup \{-\log|z_i+bz_0|(p)\} ) \]
    \[ = H(p) = h(p), \]
    where the second inequality holds as $f_i = \frac{X_i}{X_0}|_{\cX}$ and for $p$ as above $z_0(p) \neq 0$. Since $g, h$ are continuous, $g$ must be a pullback of $h$ to $\cX'(\CC)$, as those agree on a dense subset. Thus $(0,h) \in \Ldiv_{\QQ}(\cX)$. Moreover, by the choice of $a, b$ we get that
    \[ h(x) = \log \bigg| \frac{x_i+a}{x_i+b} \bigg| \neq \log \bigg| \frac{y_i+a}{y_i+b} \bigg| = h(y). \]
    This finishes the proof.
\end{proof}

\begin{lemma}~\label{lemma_complex_numbers_with_diff_conj}
    Let $x \neq y \in \CC$ be such that $\Ima(x), \Ima(y) \geq 0$. Let $N$ be a natural number. Then there are natural numbers $a, b > N$ such that
    \[ \bigg| \frac{x+a}{x+b} \bigg| \neq \bigg| \frac{y+a}{y+b} \bigg| \]
\end{lemma}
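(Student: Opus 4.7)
The plan is to rewrite the desired inequality in a form that decouples the roles of $a$ and $b$. Observe that
\[
\bigg| \frac{x+a}{x+b} \bigg| \neq \bigg| \frac{y+a}{y+b} \bigg|
\quad \Longleftrightarrow \quad
\bigg| \frac{x+a}{y+a} \bigg| \neq \bigg| \frac{x+b}{y+b} \bigg|,
\]
and note that for real $t \to +\infty$, the ratio $|x+t|/|y+t|$ tends to $1$. So it will be enough to (i) first find a natural number $b > N$ with $|x+b|/|y+b| \neq 1$, i.e.\ $|x+b| \neq |y+b|$, and then (ii) choose a natural number $a > N$ so large that $|x+a|/|y+a|$ is strictly closer to $1$ than $|x+b|/|y+b|$ is.

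For step (i), consider the affine function
\[
h(t) := |x+t|^2 - |y+t|^2 = 2t(\Re(x) - \Re(y)) + (|x|^2 - |y|^2),
\]
obtained by expanding the squared moduli (using that $t$ is real). If $\Re(x) \neq \Re(y)$, then $h$ is non-constant and has at most one real zero, so all but finitely many naturals $b > N$ satisfy $h(b) \neq 0$. If $\Re(x) = \Re(y)$, then $h(t) = \Ima(x)^2 - \Ima(y)^2$, which is non-zero because $x \neq y$ forces $\Ima(x) \neq \Ima(y)$, and the assumption $\Ima(x), \Ima(y) \geq 0$ upgrades this to $\Ima(x)^2 \neq \Ima(y)^2$. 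In either case, the set of admissible $b > N$ is infinite.

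Step (ii) is then immediate: fix such $b$, set $r := |x+b|/|y+b| \neq 1$, and use $\lim_{a \to \infty} |x+a|/|y+a| = 1$ to pick any natural $a > N$ with $|x+a|/|y+a| \neq r$. There is no real obstacle in the argument; the only subtlety is the case split in step (i), and this is precisely where the sign hypothesis $\Ima(x), \Ima(y) \geq 0$ is used.
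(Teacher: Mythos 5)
Your argument is correct and well organized; the paper itself leaves this proof ``Skipped,'' so there is no in-text proof to compare against. The key observation — rewriting the condition as $|x+a|/|y+a| \neq |x+b|/|y+b|$ so that the two free parameters decouple, then using that $|x+t|/|y+t| \to 1$ as $t \to \infty$ — cleanly reduces the problem to finding a single $b$ with $|x+b| \neq |y+b|$. The linear expansion of $h(t)=|x+t|^2-|y+t|^2$ and the case split on $\Re(x)$ versus $\Re(y)$ are exactly right, and you correctly identify that the hypothesis $\Ima(x), \Ima(y) \geq 0$ is what rules out $\Ima(x) = -\Ima(y)$ (and hence $|x|=|y|$) in the degenerate case $\Re(x)=\Re(y)$. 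One tiny point worth a half-sentence in a polished write-up: you should discard the (at most two) values of $b$ and $a$ making $x+b$, $y+b$, $x+a$, or $y+a$ vanish so that the cross-multiplication and ratios are legitimate; since you are free to take $a,b$ arbitrarily large, this costs nothing.
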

\begin{proof}
    Skipped.
\end{proof}

\begin{lemma}~\label{lemma_GVF_functionals_on_R_and_Q_divisors_are_the_same}
    Every GVF functional
    \[ \Adiv_{\QQ}(\Field) \to \RR \]
    extends uniquely to a GVF functional
    \[ \Adiv_{\RR}(\Field) \to \RR. \]
\end{lemma}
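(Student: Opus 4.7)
The plan is to work on a fixed $\Field$-model $\cX$ — compatibility across the direct limit over $\Field$-models will be automatic by functoriality — and separate the problem into the ``continuous function part'' $\Adiv^0$ and the ``Cartier part'' $\Div$. The observation driving the approach is that $\Adiv_{\QQ}^0(\cX) = \Adiv_{\RR}^0(\cX)$ as sets (both identify with $C^0(\cX(\CC))^{F_\infty}$), so on this part the issue is purely to upgrade $l$ from $\QQ$-linear to $\RR$-linear; meanwhile $\Div_{\RR}(\cX) = \Div_{\QQ}(\cX) \otimes_{\QQ} \RR$, so on the Cartier side the extension is algebraic.

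First I would establish that $l$ is Lipschitz in the supremum norm on $\Adiv_{\QQ}^0(\cX)$: for $h \in C^0(\cX(\CC))^{F_\infty}$ with $\|h\|_{\sup} \leq \varepsilon$ rational, the divisors $(0, \varepsilon \pm h)$ are effective in $\Adiv_{\QQ}(\cX)$, giving $|l(0, h)| \leq \varepsilon \cdot l(0, 1)$. A continuous $\QQ$-linear map on an $\RR$-vector space is $\RR$-linear by approximating real scalars by rationals. Next, for any $\cD \in \Div_{\QQ}(\cX)$ choose a $\cD$-Green function $g_{\cD}$ and define $L(\cD) := l(\cD, g_{\cD}) - l(0, g_{\cD})$; this is independent of the choice of $g_{\cD}$ and is $\QQ$-linear, so it extends uniquely to an $\RR$-linear $\tilde{L}: \Div_{\RR}(\cX) \to \RR$. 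My candidate extension will be $\tilde{l}(\cD, g) := \tilde{L}(\cD) + l(0, g)$, where $l(0, g)$ is read via the $\RR$-linear extension from the previous step. It will be $\RR$-linear, agree with $l$ on $\Adiv_{\QQ}(\cX)$, and vanish on principal divisors since $L(\cdiv(f)) + l(0, -\log|f|^2) = l(\adiv(f)) = 0$ for $f \in \kappa(\cX)^\times$.

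The main step will be non-negativity of $\tilde{l}$ on effective arithmetic $\RR$-divisors. Given effective $\ov{\cD} = (\cD, g)$, I would write $\cD = \sum_i a_i \cD_i$ with $a_i > 0$ real and $\cD_i$ effective Cartier, and pick Green functions $g_i \geq 0$ for $\cD_i$ (achievable by adding a constant, since a Green function of an effective divisor extends lower-semicontinuously to $+\infty$ on its support and hence attains its infimum on compact $\cX(\CC)$). Approximate $a_i$ by positive rationals $a_i^{(n)} \geq a_i$ with $a_i^{(n)} \to a_i$, and set
\[ \ov{\cE}^{(n)} := \sum_i a_i^{(n)} (\cD_i, g_i) + \Bigl(0, g - \sum_i a_i g_i\Bigr) \in \Adiv_{\QQ}(\cX). \]
A local calculation near $\supp(\cD_j)$ will confirm this is an arithmetic $\QQ$-divisor of the correct type, with underlying divisor $\sum_i a_i^{(n)} \cD_i \geq 0$ and Green function $g + \sum_i (a_i^{(n)} - a_i) g_i$ pointwise $\geq 0$, so $\ov{\cE}^{(n)}$ will be effective and $l(\ov{\cE}^{(n)}) \geq 0$. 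Since $\ov{\cE}^{(n)} - \ov{\cD} = \sum_i (a_i^{(n)} - a_i)(\cD_i, g_i)$, the $\RR$-linearity of $\tilde{l}$ will yield
\[ \tilde{l}(\ov{\cD}) = l(\ov{\cE}^{(n)}) - \sum_i (a_i^{(n)} - a_i) l(\cD_i, g_i), \]
and the error term tends to $0$ as $n \to \infty$, forcing $\tilde{l}(\ov{\cD}) \geq 0$.

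Uniqueness is automatic: any $\RR$-linear extension of $l$ is determined by its values on $\Adiv_{\QQ}(\cX)$, which $\RR$-spans $\Adiv_{\RR}(\cX)$. The hardest part will be the construction of non-negative Green functions for effective Cartier divisors and verifying that the perturbed divisors $\ov{\cE}^{(n)}$ retain the correct singular behavior along $\supp(\cD)$ so that effectivity in $\Adiv_{\QQ}(\cX)$ is preserved.
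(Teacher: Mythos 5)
Your approach is essentially correct and spells out more of the argument than the paper does. The paper's proof is terse: it cites Lemma~\ref{lemma_divisors_here_are_the_same_is_in_Ikoma_up_to_a_principal} for uniqueness and Proposition~\ref{proposition_Moriwaki_Proposition_2_4_2} for surjectivity of $\Adiv_{\QQ}(\Field)\otimes_{\QQ}\RR\to\Adiv_{\RR}(\Field)$ together with the matching statement about effective cones, without explicitly addressing why $l\otimes\id_{\RR}$ descends through the kernel of that surjection. That kernel sits inside $\Adiv^0_{\QQ}(\cX)\otimes_{\QQ}\RR$, and killing it is exactly equivalent to $l|_{\Adiv^0_{\QQ}(\cX)}$ being $\RR$-linear rather than merely $\QQ$-linear --- which is precisely what your sup-norm Lipschitz estimate $|l(0,h)|\le\|h\|_{\sup}\,l(0,1)$ delivers. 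So you have put your finger on the real content of the lemma. Your non-negativity argument, approximating the real coefficients $a_i$ from above by rationals and absorbing the error with the same Lipschitz bound, is a legitimate alternative to the paper's implicit use of Proposition~\ref{proposition_Moriwaki_Proposition_2_4_2}(2), which directly decomposes an effective arithmetic $\RR$-divisor into a non-negative real combination of effective arithmetic $\ZZ$-divisors of $C^0$-type.

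There is one formal defect to repair: the expressions $l(0,g_{\cD})$ and $l(0,g)$ are not defined when $\cD\neq 0$, since a $\cD$-Green function has logarithmic singularities along $\supp(\cD)(\CC)$ and so $(0,g_{\cD})$ is not an element of $\Adiv^0_{\QQ}(\cX)$. The combination $\tilde{L}(\cD)+l(0,g)$ secretly means $l(\cD,g_{\cD})+l_0(g-g_{\cD})$ (with $l_0$ the $\RR$-linear extension on $\Adiv^0$), which is fine because $g-g_{\cD}$ is continuous; but as written $L(\cD)$ is neither well-defined nor independent of the choice of $g_{\cD}$. To make the splitting rigorous, either fix a $\QQ$-linear section $s\colon\Div_{\QQ}(\cX)\to\Adiv_{\QQ}(\cX)$ and set $L:=l\circ s$, or skip $L$ and define $\tilde{l}$ directly on a presentation $\ov{\cD}=\sum_i r_i(\cD_i,g_i)+(0,h)$ by $\tilde{l}(\ov{\cD}):=\sum_i r_i\,l(\cD_i,g_i)+l_0(h)$, verifying well-definedness by pushing the discrepancy between two presentations into $\Adiv^0$, where $\RR$-linearity of $l_0$ kills it.
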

\begin{proof}
    By Lemma~\ref{lemma_divisors_here_are_the_same_is_in_Ikoma_up_to_a_principal} we get the uniqueness. For existence, use Proposition~\ref{proposition_Moriwaki_Proposition_2_4_2} to get surjectivity of the natural map $\Adiv_{\QQ}(\Field) \otimes \RR \to \Adiv_{\RR}(\Field)$ and for the fact that the real span of the image of the effective cone of $\Adiv_{\QQ}(\Field)$ spans the effective cone in $\Adiv_{\RR}(\Field)$.
\end{proof}

\begin{lemma}~\label{lemma_every_divisor_has_a_lattice_metric}
    Let $\cD$ be a $\QQ$-Cartier divisor on an $\Field$-model $\cX$. Then there exists an $\Field$-model $\cX'$ over $\cX$ and an a $\cD$-Green function $g$ on $\cX'$ such that $(\cD, g) \in \Ldiv_{\QQ}(\cX')$.
\end{lemma}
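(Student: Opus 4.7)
The plan is to reduce to an effective Cartier divisor whose associated line bundle is globally generated, and then read off the Green function from any finite set of generating global sections. First, since $\Ldiv_{\QQ}(\cX')$ is a $\QQ$-vector space, one may multiply $\cD$ by a positive integer to assume that $\cD$ is Cartier; and choosing a very ample Cartier divisor $\cH$ on $\cX$ (which exists by projectivity) we have that for $n \gg 0$ both $\cO_{\cX}(n\cH)$ and $\cO_{\cX}(\cD + n\cH)$ are very ample, hence globally generated. Writing $\cD = (\cD + n\cH) - n\cH$, it suffices to prove the lemma for each summand separately, i.e.\ in the case where $\cE$ is a Cartier divisor on $\cX$ whose associated line bundle is globally generated.

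In that situation, pick global sections $s_1, \dots, s_k \in H^0(\cX, \cO_{\cX}(\cE))$ generating $\cO_{\cX}(\cE)$. Via the standard identification these correspond to rational functions $f_1, \dots, f_k \in \Field^{\times}$ with $\cdiv(f_i) + \cE \geq 0$ for all $i$. Global generation at a point $p \in \cX$ means that at least one $s_i$ generates the stalk $\cO_{\cX}(\cE)_p$, which in terms of the $f_i$'s is equivalent to
\[ \min_i \ord_{\Gamma}(f_i) \;=\; -\ord_{\Gamma}(\cE) \]
for every codimension-one integral subscheme $\Gamma \subset \cX$ containing $p$. Hence, as Weil divisors on $\cX$, we have $\min_i \cdiv(f_i) = -\cE$, and (by passing to a suitable $\Field$-model $\cX'$ on which the lattice infima of the $\cdiv(f_i)$'s become Cartier) the same equality holds as Cartier divisors. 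In the lattice $\Ldiv_{\QQ}(\Field)$ this translates to $-\bigwedge_i \cdiv(f_i) = \cE$, i.e.\ $\bigvee_i \cdiv(f_i^{-1}) = \cE$.

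Now set
\[ g(p) \;:=\; \log \max_{i} |f_i(p)|^{2}. \]
On the underlying Weil/Cartier side we already have $\bigvee_i \adiv(f_i^{-1}) = (\cE, g)$ after taking maxima both of the Weil parts and of the Green functions, using $-\log|f_i^{-1}|^{2} = \log|f_i|^{2}$. It remains to check that $g$ is a $\cE$-Green function of $C^{0}$-type. Near any $p \in \cX(\CC)$, let $a$ be a local equation for $\cE$ and set $h_i := f_i a$; each $h_i$ is regular at $p$ because $\cdiv(f_i) + \cE \geq 0$, and
\[ g + \log|a|^{2} \;=\; \log \max_{i} |h_i|^{2}. \]
Global generation guarantees $\max_i |h_i(p)|^{2} > 0$, so this function extends continuously across $p$, proving that $g$ is a $\cE$-Green function of $C^{0}$-type. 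Therefore $(\cE, g) = \bigvee_i \adiv(f_i^{-1}) \in \Ldiv_{\QQ}(\cX')$, and combining the two cases finishes the proof. The only delicate point is the continuity argument in the last step, which is precisely why we first reduce to the globally generated case.
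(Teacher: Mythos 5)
Your proposal is correct and follows essentially the same strategy as the paper: reduce to a globally generated case, pick a finite set of rational functions $f_i$ with poles at most the divisor, and identify the arithmetic divisor as the lattice supremum $\bigvee_i \adiv(f_i^{-1})$ (equivalently, minus the lattice infimum $\bigwedge_i \adiv(f_i)$), so that the Green function $\log\max_i|f_i|^2$ comes along automatically. The paper differs only in two inessential ways: it reduces directly to the \emph{very ample} case by writing $\cD$ as a rational multiple of a difference of very ample divisors (you instead use $\cD=(\cD+n\cH)-n\cH$, which is fine), and it argues directly that global generation forces the sum of the ideals of the $\cD+\cdiv(f_i)$ to be $(1)$ at every point, rather than passing through Weil divisors (your Weil-divisor step works because $\cX$ is normal, but the ideal-theoretic phrasing is more robust). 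Your explicit continuity check of $g$ is also not needed once one knows the lattice operations on $\Ldiv_{\QQ}$ are well defined (Lemma~\ref{lemma_GVF_lattice_structure_well_defined}), which the paper implicitly relies on.
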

\begin{proof}
    By presenting $\cD$ as a rational number times a difference of two very ample Cartier divisors, we can reduce to the case where $\cD$ is very ample. In particular it is globally generated. Pick a basis $f_1, \dots, f_n \in H^0(\cX, \cD)$ of rational functions with poles at most given by $\cD$.
    \begin{claim}
        $\cD + \bigwedge_{i=1}^n \cdiv(f_i) = 0$.
    \end{claim}
    \begin{proof}[Proof of the claim]
        Recall that for a Cartier divisor $\cD$ by rational functions with poles at most given by $\cD$ we mean
        \[ H^0(\cX,\cD) = \{0\} \cup \{ f \in \kappa(\cX)^{\times} \ | \ \cD + \cdiv(f) \geq 0 \}. \]
        Note that $\{ \cD + \cdiv(f) \ | \ f \in H^0(\cX,\cD) \}$ is the set of effective Cartier divisors linearly equivalent to $\cD$. The fact that $\cO(\cD)$ is globally generated implies that for every $p \in \cX$ there is $\cE \in \{ \cD + \cdiv(f_i) \ | \ i=1, \dots, n \}$ such that $p \not\in \cE$. Thus the ideal defining $\cE$ is $(1)$ at $p$. Since $p$ was arbitrary, we get that the sum of ideals of effective Cartier divisors $\cE_i = \cD + \cdiv(f_i)$ is $(1)$. By the definition this means that (possibly passing to a blowup $\cX'$ of $\cX$)
        \[ 0 = \bigwedge_{i=1, \dots, n} (\cD + \cdiv(f_i)) = \cD + \bigwedge_{i=1, \dots, n} \cdiv(f_i),\]
        which finishes the proof of the claim.
    \end{proof}
    Thus, we can put $\ov{\cD} := - \bigwedge_{i=1}^n \adiv(f_i)$ and this gives a desired $\cD$-Green function $g$ on $\cX'$.
\end{proof}

\begin{proposition}~\label{proposition_GVF_functionals_extend_uniquely_to_arithmetic_R_divisors}
    Every GVF functional
    \[ \Ldiv_{\QQ}(\Field) \to \RR \]
    extends uniquely to a GVF functional
    \[ \Adiv_{\RR}(\Field) \to \RR. \]
\end{proposition}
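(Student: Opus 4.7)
Proof plan. The plan is to extend $l$ in two stages: first build a $\QQ$-linear GVF functional on $\Adiv_{\QQ}(\Field)$, then invoke Lemma~\ref{lemma_GVF_functionals_on_R_and_Q_divisors_are_the_same} to promote it to a GVF functional on $\Adiv_{\RR}(\Field)$. The backbone of the construction is to combine the density result (Proposition~\ref{proposition_lattice_0_divisors_are_dense_within_artihmetic_zero_divisors}) with the existence of lattice Green functions (Lemma~\ref{lemma_every_divisor_has_a_lattice_metric}).

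First I would handle the zero-divisor part. Rational constant functions lie in $\Ldiv_{\QQ}^0(\Field)$: the computation $\adiv(2) \wedge \adiv(1) = (0, -2\log 2)$ already used in the proof of Proposition~\ref{proposition_lattice_0_divisors_are_dense_within_artihmetic_zero_divisors} produces a nonzero constant, and $\QQ$-scaling yields every rational constant. Consequently, for any $(0,h) \in \Ldiv_{\QQ}^0(\Field)$ and any rational $c \geq \|h\|_{\sup}$, both $(0, c \pm h)$ are effective elements of $\Ldiv_{\QQ}$, so $l(0, c) \pm l(0, h) \geq 0$. This forces $|l(0,h)| \leq \|h\|_{\sup} \cdot l(0, 1)$, i.e., $l|_{\Ldiv_{\QQ}^0}$ is Lipschitz with respect to $\|\cdot\|_{\sup}$. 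Applying Proposition~\ref{proposition_lattice_0_divisors_are_dense_within_artihmetic_zero_divisors} on each $\Field$-model and passing to the direct limit, $\Ldiv_{\QQ}^0(\Field)$ is dense in $\Adiv_{\RR}^0(\Field)$, so $l|_{\Ldiv_{\QQ}^0}$ extends uniquely to a continuous $\RR$-linear map $\tilde l_0 \colon \Adiv_{\RR}^0(\Field) \to \RR$.

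Next, for any arithmetic $\QQ$-divisor $(\cD, g)$, I would fix a lattice Green function $g_0$ on $\cD$ (possibly after passing to a higher $\Field$-model, by Lemma~\ref{lemma_every_divisor_has_a_lattice_metric}) and set $\tilde l(\cD, g) := l(\cD, g_0) + \tilde l_0(0, g - g_0)$. Independence from the choice of $g_0$ is immediate because $\tilde l_0$ coincides with $l$ on $\Ldiv_{\QQ}^0$. The resulting map is $\QQ$-linear (lattice metrics combine $\QQ$-linearly on different $\QQ$-Cartier divisors) and vanishes on principal $\QQ$-divisors (which already lie in $\Ldiv_{\QQ}$). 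Lemma~\ref{lemma_GVF_functionals_on_R_and_Q_divisors_are_the_same} then promotes $\tilde l$ to a GVF functional on $\Adiv_{\RR}(\Field)$. Uniqueness is automatic: any GVF extension of $l$ is Lipschitz on $\Adiv_{\RR}^0$ by the same argument, hence determined on $\Adiv_{\RR}^0$ by density, and then on all of $\Adiv_{\RR}$ by the lattice-metric formula.

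The main obstacle is verifying non-negativity of $\tilde l$ on effective divisors in $\Adiv_{\QQ}$. Given an effective $(\cD, g)$ and a chosen lattice metric $(\cD, g_0)$, the function $g - g_0$ need not be non-negative, so the defining formula does not obviously yield a non-negative value. I would resolve this by approximating $g - g_0$ uniformly by a sequence $h_n \in \Ldiv_{\QQ}^0$ and shifting by small positive rational constants $c_n \to 0$ chosen so that $g_0 + h_n + c_n \geq 0$ (recall constants are in $\Ldiv_{\QQ}^0$). The shifted divisors $(\cD, g_0 + h_n + c_n)$ are then effective elements of $\Ldiv_{\QQ}$, so their $l$-values are non-negative and, by the Lipschitz continuity of $\tilde l_0$, converge to $\tilde l(\cD, g)$. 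This approximation-and-shift step, where the density result and the Lipschitz bound interact non-trivially, is the only delicate point of the argument.
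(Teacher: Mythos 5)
Your proposal is correct and follows essentially the same route as the paper: decompose an arithmetic $\QQ$-divisor as a lattice divisor plus a $C^0$-function via Lemma~\ref{lemma_every_divisor_has_a_lattice_metric}, extend on the $C^0$-part by uniform continuity and the density of $\Ldiv_{\QQ}^0$ from Proposition~\ref{proposition_lattice_0_divisors_are_dense_within_artihmetic_zero_divisors}, and finish with Lemma~\ref{lemma_GVF_functionals_on_R_and_Q_divisors_are_the_same}. Your Lipschitz bound $|l(0,h)| \leq \|h\|_{\sup}\,l(0,1)$ and the explicit approximation-and-shift check of non-negativity merely unpack the paper's sandwiching argument and its remark that "one can show the above definition gives a GVF functional," so the substance is the same.
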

\begin{proof}
    Let $l:\Ldiv_{\QQ}(\Field) \to \RR$ be a GVF functional. By Lemma~\ref{lemma_GVF_functionals_on_R_and_Q_divisors_are_the_same} it is enough to show that $l$ has a unique extension to $\Adiv_{\QQ}(\Field)$. Pick $\ov{\cD} \in \Adiv_{\QQ}(\Field)$ and let $\cX$ be a generically smooth, normal, arithmetic variety with $\kappa(\cX) \simeq \Field$ such that $\ov{\cD} \in \Adiv_{\QQ}(\cX)$. Let $\ov{\cD} = (\cD, g)$ on $\cX$. By Lemma~\ref{lemma_every_divisor_has_a_lattice_metric} (possibly replacing $\cX$ by an $\Field$-model over it) there is a $\cD$-Green function $g'$ on $\cX$ with $(\cD, g') \in \Ldiv_{\QQ}(\cX)$. The value of $l$ on $(\cD, g')$ is determined, so it is enough to define (and show uniqueness of) the value of the extension at $(0, g-g') \in \Adiv_{\QQ}(\cX)$. Let $h$ be the continuous extension of $g-g'$ to $\cX(\CC)$. By Proposition~\ref{proposition_lattice_0_divisors_are_dense_within_artihmetic_zero_divisors} there is a pair of Green functions for the $0$ divisor $g_1, g_2$ on $\cX$ such that:
    \begin{itemize}
        \item $g_1 \leq h \leq g_2$,
        \item $0 \leq g_2 - g_1< \varepsilon$,
        \item $(0,g_1), (0, g_2) \in \Ldiv_{\QQ}(\cX)$.
    \end{itemize}
    To find these Green functions, take $g_1$ approximating $h - \frac{1}{4} \varepsilon$ up to $\frac{1}{4} \varepsilon$ and similarly with $g_2$. Hence, any GVF extension of $l$ assigns a value between $l((0,g_1))$ and $l((0,g_2))$ to $(0,h)$. Moreover, 
    \[ |l((0,g_2))-l((0,g_1))| = |l((0, g_2-g_1))| < \varepsilon |l((0,1))|. \] 
    Take a sequence $\varepsilon_n \to 0$ and for each $\varepsilon_n$ pick $(g_1)_n, (g_2)_n$ as above, such that $(g_1)_n$ is increasing and $(g_2)_n$ is decreasing (one can do it since $\Ldiv_{\QQ}(\Field)$ is closed under maximum and minimum). For any GVF extension of $l$, we must have $l((0,h)) = \lim_n l((0,(g_i)_n)$ for $i=1,2$.

    This proves that if an extension exists, it has to be given by the above construction. To prove existence one can either show that the above definition gives a GVF functional, or use the Riesz extension theorem (i.e., Theorem~\ref{theorem_Riesz_extension_theorem}) to get one.
\end{proof}

\subsection{Globally valued fields as unbounded continuous logic structures}

\begin{definition}~\label{definition_GVF_Globally_valued_fields}
    A \textit{$\QQ$-tropical polynomial} is a term in the language of divisible ordered abelian groups, i.e., in $+, \min, 0, \alpha \cdot x$ for $\alpha \in \QQ$. We denote by $n_t$ the number of variables of a $\QQ$-tropical polynomial $t$. We also use the symbol $\wedge$ for minimum.
    
    Let $\Field$ be a countable field. A \textit{globally valued field structure} on $\Field$ (abbreviated GVF structure) is a collection of functions (continuous logic predicates) $R_t:(\Field^{\times})^{n_t} \to \RR$ indexed by $\QQ$-tropical polynomials $t$, that are also denoted by
    \[ R_t(\ov{a}) =: \int t(v(\ov{a})) dv, \]
    and which satisfy the following axioms:
    \begin{enumerate}
        \item $R_t$ are compatible with permutations of variables and dummy variables, see the remark below.
        \item (Linearity) $R_{t_1+t_2} = R_{t_1} + R_{t_2}, R_{\alpha t} = \alpha R_t$ for all $\QQ$-tropical polynomials $t_1, t_2, t$ and a rational number $\alpha$.
        \item (Local-global positivity) Assume that $\ov{a}$ is a tuple from $\Field^{\times}$, $t$ is a $\QQ$-tropical polynomial (of the corresponding arity), and for all valuations $v$ on $\Field$ we have $t(v(\ov{a})) \geq 0$. Then
        \[ \int t(v(\ov{a})) dv \geq 0. \]
        \item (Product formula) $\int v(x) dv = 0$ for all $x \in \Field^{\times}$.
    \end{enumerate}
    We call a field equipped with a GVF structure a \textit{globally valued field} (abbreviated GVF). Globally valued fields form a category where morphisms are embeddings of fields such that the predicates $R_t$ of the bigger field restrict to the ones of the smaller one. We call a GVF \textit{trivial}, if all predicates $R_t$ are zero.
\end{definition}

\begin{remark}
    Let us be more clear about the first point in Definition~\ref{definition_GVF_Globally_valued_fields}. Compatibility with permutations means that if $\sigma$ is a permutation of $\{ x_1, \dots, x_n \}$, $t$ is a $\QQ$-tropical polynomial, and $t' = t \circ \sigma$, then $R_{t'} = R_t \circ \sigma$. Compatibility with dummy variables means that if $t(x_1, \dots, x_n) = s(x_1, \dots, x_m)$ for some $m < n$, then $R_t(a_1, \dots, a_n) = R_s(a_1, \dots, a_m)$.
\end{remark}

\begin{definition}
    For a GVF $\Field$ and $a \in \Field^{\times}$ we define the \textit{height} of $a$ to be the number
    \[ \height(a) := \int - \min(v(a), 0) dv. \]
\end{definition}

\begin{remark}
    One can define a GVF structure on any field, moreover in a first order way (in the sense of unbounded continuous logic, see \cite{Ben_Yaacov_unbdd_cont_FOL}). To do so one should replace the ``Local-global positivity'' axiom scheme by the following one: if $\phi(\ov{x})$ is a formula in the ring language implying that non of $x_i$ from $\ov{x} = (x_1, \dots, x_n)$ is zero and such that
    \[ \VF \models (\forall \ov{x})( \phi(\ov{x}) \implies t(v(\ov{x})) \geq 0 ), \]
    then
    \[ (\forall \ov{a})(\phi(\ov{a}) \implies \int t(v(\ov{a})) dv \geq 0) \]
    is an axiom of globally valued field structures. Here $\VF$ is the common theory of valued fields, i.e., two sorted structures with a field sort, a sort for reals, and a predicate for minus logarithm of a norm. For the details on this approach, see \cite{GVF3}. The fact that for countable fields two definitions are equivalent follows from \cite[Lemma 2.5, Proposition 2.15, Remark 2.20]{GVF3}.
\end{remark}

\begin{remark}~\label{remark_GVF_structures_are_measures}
    The notation for continuous logic predicates $R_t$ is not arbitrary - it comes from a fact that for every GVF structure on a countable field $\Field$, there is an admissible measure $\mu$ on the space $\Val_{\Field}$ which is unique up to a renormalisation and satisfies
    \[ \int t(v(\ov{a})) dv = \int_{\Val_{\Field}} t(v(\ov{a})) d\mu(v), \]
    for all $\QQ$-tropical polynomials $t$ and tuples $\ov{a}$ from $\Field^{\times}$. For details on this approach and definitions of admissibility and renormalisation relation, see \cite{GVF3}.
\end{remark}

\begin{example}[Curves over a field]~\label{example_function_field_of_a_curve_GVF}
    Let $C$ be a smooth projective curve over a field $\smallk$. Then closed points of $C$ correspond to classes of valuations on $\smallk(C)$ trivial on $\smallk$. For a closed $p \in C$ let $\ord_p$ be the order of vanishing at $p$ valuation - its value ring is $\cO_{C,p} \subset \smallk(C)$. If $f \in \smallk(C)$, then the degree of the principal divisor generated by $f$ is zero, so we have
    \[ 0 = \deg \sdiv(f) = \sum_p \ord_p(f) \cdot [\kappa(p) : \smallk]. \]
    In other words, the measure $\mu = \sum_p \delta_{\ord_p} \cdot [\kappa(p) : \smallk]$ defines a GVF structure (as in Remark~\ref{remark_GVF_structures_are_measures}). In fact it is the unique (up to multiplication by a positive scalar) GVF structure that restricts to a trivial one on $\smallk$, see \cite[Lemma 12.1]{GVF2}. Thus if we take $C = \PP^1$ and $\kappa(\PP^1) = \smallk(t)$ we get the unique GVF structure trivial on $\smallk$ and with $\height(t) = 1$. By using the uniqueness for curves, this means that also on the algebraic closure $\ov{\smallk(t)}$ there is a unique GVF structure with $\height(t) = 1$, trivial on $\smallk$. We denote it by $\ov{\smallk(t)}[1]$. Note however, that globally valued fields $\ov{\smallk(t)}[1], \ov{\smallk(t)}[2]$ (where in the latter we multiply every continuous logic predicate by two) are isomorphic, as one can send $t \in \ov{\smallk(t)}[1]$ to $\sqrt{t} \in \ov{\smallk(t)}[2]$.
\end{example}

\begin{example}[Number fields]~\label{example_number_field_GVF}
    Let $K$ be a number field with the integer ring $\cO_K$. Then, for $f \in K^*$ we have the following product formula
    \[ \sum_{p \in \Spec(\cO_K)} \ord_p(f) \cdot \log \# \kappa(p) + \sum_{\sigma : K \to \CC} -\log |\sigma(f)| = 0, \]
    where in the first sum we only take closed points and the second sum is taken over embeddings of fields. This yields a GVF structure on $K$ induced (as in Remark~\ref{remark_GVF_structures_are_measures}) by the measure
    \[ \mu := \frac{1}{[K : \QQ]} \biggl( \sum_{p \in \Spec(\cO_K)} \delta_{\ord_p} \cdot \log \# \kappa(p) + \sum_{\sigma : K \to \CC} \delta_{-\log |\sigma(-)|} \biggr). \]
    In fact this GVF structure is unique up to multiplication by a positive scalar, see \cite[Proposition 3.3]{GVF3}. We denote it by $K[1]$. It extends the GVF structure on $\QQ$ given by the same formula. The composition of all finite extensions of $\QQ$ yields the \textit{standard} GVF structure on $\ov{\QQ}$ denoted $\ov{\QQ}[1]$. Note that this is the unique GVF structure on $\ov{\QQ}$ with $\height(2) = \log(2)$. By uniqueness, this GVF structure is Galois-invariant, i.e., if $\ov{a}, \ov{a}'$ lie in the same Galois orbit, then $R_t(\ov{a}) = R_t(\ov{a}')$ for any $\QQ$-tropical polynomial $t$ of the corresponding arity.
\end{example}

\begin{theorem}~\label{theorem_GVF_functional_are_GVF_structures}
    Let $\Field$ be a finitely generated extension of $\QQ$. There is a natural bijection 
    \[ \bigl\{ \textnormal{GVF functionals $\Adiv_{\RR}(\Field) \to \RR$}  \bigr\} 
\longleftrightarrow \bigl\{ \textnormal{GVF structures on $F$}  \bigr\} \]
    \[ l \longmapsto \bigl( R_t^l(\ov{a}):=l(\ov{\cD}_t(\ov{a})) \bigr)_{\textnormal{$\QQ$-tropical polynomials } t} \]
    where $\ov{\cD}_t(\ov{a})=t(\adiv(a_1), \dots, \adiv(a_n))$ if $\ov{a} = (a_1, \dots, a_n)$.
\end{theorem}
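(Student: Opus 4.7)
The plan is to prove the bijection by directly constructing the inverse map. The key technical identity, valid for every valuation $v$ on $\Field$ and every $\QQ$-tropical polynomial $t$, is
\[ \GVFpairing(v, \ov{\cD}_t(\ov{a})) = t(v(\ov{a})). \]
It follows from Lemma~\ref{lemma_properties_of_the_GVF_pairing} (linearity of $\GVFpairing(v,-)$ and compatibility with the lattice infimum, applied inductively to the syntactic tree of $t$) together with the immediate computation $\GVFpairing(v, \adiv(a)) = v(a)$ in both the non-Archimedean case and the Archimedean case (where $\frac{1}{2}(-\log|a|^2) = -\log|a|$). Using this identity I first check that $l \mapsto (R_t^l)$ does land among GVF structures: linearity, permutation and dummy-variable axioms are inherited from the $\RR$-linearity of $l$ and the inductive definition of $\ov{\cD}_t$; the product formula $l(\adiv(a)) = 0$ is the hypothesis that $l$ vanishes on principal divisors; and local-global positivity follows because if $t(v(\ov{a})) \geq 0$ for every $v$, the identity above says $\GVFpairing(v, \ov{\cD}_t(\ov{a})) \geq 0$ for all $v$, whence $\ov{\cD}_t(\ov{a})$ is effective by Remark~\ref{remark_alternative_definition_of_effectivity_of_arithmetic_R_divisors}, so $l$ is nonnegative on it. Normalisation is automatic once $l$ is fixed; for Theorem~\ref{theorem_C}C as stated no normalisation condition is imposed on $l$ and none is imposed on $R_t$.

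Injectivity is immediate: two GVF functionals inducing the same predicates $R_t$ agree on every element of the form $\ov{\cD}_t(\ov{a})$, which by definition exhausts $\Ldiv_{\QQ}(\Field)$; Proposition~\ref{proposition_GVF_functionals_extend_uniquely_to_arithmetic_R_divisors} then forces them to agree on all of $\Adiv_{\RR}(\Field)$.

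For surjectivity, given a GVF structure $(R_t)$, I define $l$ on $\Ldiv_{\QQ}(\Field)$ by the formula $l(\ov{\cD}_t(\ov{a})) := R_t(\ov{a})$. The main obstacle — and really the only substantive step — is well-definedness: if $\ov{\cD}_t(\ov{a}) = \ov{\cD}_{t'}(\ov{a'})$, I pad both tropical polynomials with dummy variables so that they share a common variable tuple $\ov{z}$, and form $s := t - t'$, which is again a $\QQ$-tropical polynomial since the class is closed under $\QQ$-linear combinations (the operations $+$ and $\alpha \cdot x$ are allowed). Then $\ov{\cD}_s(\ov{z}) = 0$, so the identity above yields $s(v(\ov{z})) = 0$ for every valuation $v$. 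Applying local-global positivity to both $s$ and $-s$ gives $R_s(\ov{z}) \geq 0$ and $-R_s(\ov{z}) = R_{-s}(\ov{z}) \geq 0$, hence $R_s(\ov{z}) = 0$, i.e., $R_t(\ov{a}) = R_{t'}(\ov{a'})$. The resulting $l$ is $\QQ$-linear by construction, vanishes on principal lattice divisors by the product formula, and is nonnegative on effective lattice divisors by the same combination of the identity, Remark~\ref{remark_alternative_definition_of_effectivity_of_arithmetic_R_divisors}, and local-global positivity. Proposition~\ref{proposition_GVF_functionals_extend_uniquely_to_arithmetic_R_divisors} extends $l$ uniquely to a GVF functional on $\Adiv_{\RR}(\Field)$, and the two constructions are mutually inverse by inspection on $\Ldiv_{\QQ}(\Field)$, completing the proof.
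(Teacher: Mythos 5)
Your proof is correct and takes essentially the same route as the paper's: map $l$ to $(R_t^l)$ and check the GVF axioms via linearity of $l$ and the pairing identity $\GVFpairing(v, \ov{\cD}_t(\ov{a})) = t(v(\ov{a}))$, then invert by defining $l$ on lattice divisors (with well-definedness from dummy-variable compatibility and local-global positivity) and extending by Proposition~\ref{proposition_GVF_functionals_extend_uniquely_to_arithmetic_R_divisors}. You simply spell out the well-definedness step (forming $s = t - t'$ on a common variable tuple and applying positivity to $\pm s$) that the paper leaves compressed.
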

The bijection in the theorem is natural as a map between contravariant functors from the category of fields finitely generated over $\QQ$ to sets.
\begin{proof}
    First, we check that the above defined structure $(R_t^l)_t$ on $\Field$ is a GVF structure. Compatibility with permutations and dummy variables is clear. Linearity follows from linearity of $l$. Local-global positivity holds by Remark~\ref{remark_alternative_definition_of_effectivity_of_arithmetic_R_divisors} and Lemma~\ref{lemma_properties_of_the_GVF_pairing}. We have the product formula since $l$ maps principal arithmetic $\RR$-divisors of $C^0$-type to zero.

    On the other hand, a GVF structure on $\Field$ defines a GVF functional
    \[ \Ldiv_{\QQ}(\Field) \to \RR \]
    given by the formula $t(\adiv(f_1), \dots, \adiv(f_m)) \mapsto R_t(f_1, \dots, f_m)$. It is well defined as if $t_1(\adiv(\ov{f}_1)) = t_2(\adiv(\ov{f}_2))$ for some tuples of functions $\ov{f}_1, \ov{f}_2$ in $\Field$, then compatibility with dummy variables and local-global positivity imply that $R_{t_1}(\ov{f}_1) = R_{t_2}(\ov{f}_2)$. By Proposition~\ref{proposition_GVF_functionals_extend_uniquely_to_arithmetic_R_divisors} it extends uniquely to a GVF functional
    \[ \Adiv_{\RR}(\Field) \to \RR. \]
    It is easy to check that these constructions are inverse to each other.
\end{proof}

\begin{remark}~\label{remark_GVF_functional_are_GVF_structures}
    One can generalise the above theorem to an arbitrary characteristic zero field $\Field$.
    Indeed, call a generically smooth, normal arithmetic variety $\cX$ an \textit{$\Field$-submodel}, if it is equipped with a fields embedding $\kappa(\cX) \to \Field$. We equip $\Field$-submodels with a category structure where a morphism $\cX \to \cY$ is a dominant map of arithmetic varieties that respects the field embeddings into $\Field$. Then we put
    \[ \Adiv_{\RR}(\Field) = \varinjlim \Adiv_{\RR}(\cX), \]
    where the limit is taken over the category of $\Field$-submodels with pullback maps on arithmetic $\RR$-divisors of $C^0$-type. A linear functional $l:\Adiv_{\RR}(\Field) \to \RR$ is called a GVF functional, if the natural restriction to $\Adiv_{\RR}(\cX)$ is a GVF functional for all $\Field$-submodels $\cX$. In this case $\Adiv_{\RR}(\Field)$ also has a lattice structure. Moreover, there is a natural bijection 
    \[ \bigl\{ \textnormal{GVF functionals $\Adiv_{\RR}(\Field) \to \RR$}  \bigr\} \longleftrightarrow \bigl\{ \textnormal{GVF structures on $F$}  \bigr\} \]
    \[ l \longmapsto \bigl( R_t^l(\ov{a}):=l(\ov{\cD}_t(\ov{a})) \bigr)_{\textnormal{$\QQ$-tropical polynomials } t} \]
    where $\ov{\cD}_t(\ov{a})=t(\adiv(a_1), \dots, \adiv(a_n))$ if $\ov{a} = (a_1, \dots, a_n)$.
\end{remark}

\begin{definition}~\label{definition_existentially_closed_GVFs}
        Let $\Field$ be a GVF. We call it \textit{existentially closed}, if for any GVF extension $\Field \subset K$ and for any finite tuple $\ov{a}$ in $K$ the following condition holds: if $f_1, \dots, f_m \in \Field(\ov{x})$ and $g_1, \dots, g_n, h \in \Field[\ov{x}]$ satisfy
        \begin{equation}~\label{equation_e_c_conditions}
        \begin{aligned}
          & R_{t_i}(f_i(\ov{a})) = r_i \textnormal{ for } i=1, \dots, m;\\
          & g_1(\ov{a}) = \dots = g_n(\ov{a}) = 0;\\
          & h(\ov{a}) \neq 0;
        \end{aligned}
        \end{equation}
        for some $\QQ$-tropical polynomials $t_1, \dots, t_m$, then for any $\varepsilon > 0$ there is a tuple $\ov{a}'$ in $\Field$ such that
        \begin{equation}~\label{equation_e_c_output}
        \begin{aligned}
        & |R_{t_i}(f_i(\ov{a}')) - r_i| < \varepsilon \textnormal{ for } i=1, \dots, m;\\
        & g_1(\ov{a}') = \dots = g_n(\ov{a}') = 0;\\
        & h(\ov{a}') \neq 0.
        \end{aligned}
        \end{equation}
\end{definition}

\begin{lemma}~\label{lemma_GVFs_have_finite_extensions_symmetric}
    If $\Field$ is a GVF and $\Field \subset K$ is a finite field extension of $\Field$, then there exists a (not necessarily unique) GVF structure on $K$ making $\Field \subset K$ a GVF embedding.
\end{lemma}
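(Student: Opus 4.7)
By Theorem~\ref{theorem_GVF_functional_are_GVF_structures} and its Remark~\ref{remark_GVF_functional_are_GVF_structures}, GVF structures on a field correspond to GVF functionals on $\Adiv_{\RR}$. The embedding $\Field\subset K$ induces a natural linear map $\iota:\Adiv_{\RR}(\Field)\to\Adiv_{\RR}(K)$ via the forgetful functor from $\Field$-submodels to $K$-submodels (every $\Field$-submodel is a $K$-submodel by composing field embeddings). The plan is to produce a GVF functional $l_K:\Adiv_{\RR}(K)\to\RR$ with $l_K\circ\iota=l_F$ by the M.\ Riesz extension theorem.

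First I would define the functional on the subspace
\[ U:=\iota(\Adiv_{\RR}(\Field))+\Rdiv_{\RR}(K)\subseteq\Adiv_{\RR}(K) \]
by the formula $l(\iota(\xi)+\ov{\cP}):=l_F(\xi)$. For well-definedness I would use the norm $N_{K/\Field}:K^{\times}_{\RR}\to\Field^{\times}_{\RR}$ at the level of arithmetic divisors: the formula $\cdiv(N_{K/\Field}(f))=\pi_*\cdiv(f)$ together with the identity $-\log|N_{K/\Field}(f)|^2=-\sum_\sigma\log|\sigma(f)|^2$ (sum over embeddings $\sigma:K\to\CC$ extending a given one of $\Field$) shows that geometric pushforward along a generically finite morphism $\pi:\cY\to\cX$ of models of degree $[K:\Field]$ defines a map $N:\Rdiv_{\RR}(K)\to\Rdiv_{\RR}(\Field)$ that satisfies $N\circ\iota=[K:\Field]\cdot\id$ on the image of $\iota$ and sends the effective cone of $K$ into that of $\Field$. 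Applying $N$ then $l_F$ gives both the well-definedness of $l$ and the non-negativity of $l$ on $U\cap V^+$, where $V^+\subset\Adiv_{\RR}(K)$ is the effective cone: if $\iota(\xi_1)+\ov{\cP}_1=\iota(\xi_2)+\ov{\cP}_2$ (resp.\ is effective), then $[K:\Field](\xi_1-\xi_2)$ (resp.\ $[K:\Field]\xi$) lies in $\Rdiv_{\RR}(\Field)$ (resp.\ in the effective cone modulo $\Rdiv_{\RR}(\Field)$) up to a principal divisor, forcing $l_F(\xi_1)=l_F(\xi_2)$ (resp.\ $l_F(\xi)\ge 0$).

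To verify the majorization hypothesis of M.\ Riesz, I would argue as follows: given $\ov{\cE}\in\Adiv_{\RR}(K)$ realized on a $K$-submodel $\cY$, choose a common refinement $\cY'$ of $\cY$ with some $\Field$-submodel $\cX$ (possible by filteredness of $K$-submodels), fix an ample arithmetic $\RR$-divisor $\ov{\cA}$ on $\cX$, and combine Lemma~\ref{lemma_decomposition_of_an_arithmetic_divisor_into_a_difference_of_amples_plus_a_function} with ampleness of the pullback of $\ov{\cA}$ along the generically finite map $\cY'\to\cX$ to conclude that $n\iota(\ov{\cA})-\ov{\cE}$ is effective on $\cY'$ for $n\gg 0$.

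Applying the M.\ Riesz extension theorem produces $l_K:\Adiv_{\RR}(K)\to\RR$ extending $l$ and non-negative on $V^+$. Since $\Rdiv_{\RR}(K)\subseteq U$ and $l$ vanishes there, $l_K$ is a GVF functional, corresponding via Theorem~\ref{theorem_GVF_functional_are_GVF_structures} to a GVF structure on $K$ that restricts to the given one on $\Field$. The main technical obstacle will be a clean construction of the norm map on Green functions, namely verifying that $\pi_*g$ is a continuous Green function for $\pi_*\cD$ across the ramification locus of $\pi:\cY\to\cX$, together with the geometric bookkeeping required for the majorization step, which involves juggling models with different function fields and may need normalization and resolution of singularities to stay within the generically smooth normal setting of $\Adiv_{\RR}$.
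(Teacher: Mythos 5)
The paper dispatches this lemma by citing \cite[Proposition 3.1]{GVF3}, which works directly with admissible measures on $\Val_K$; your proposal is a genuinely different self-contained route through the GVF-functional dictionary and M.\ Riesz extension, and the overall shape is sound. But there are real gaps. Two fixable ones first: the pullback of an ample $\ov{\cA}$ along $\cY'\to\cX$ is \emph{not} ample (that map is generically finite but not finite), only big; what saves the majorization is that bigness plus Lemma~\ref{lemma_effective_are_pseudo_effective} gives effectivity up to a principal divisor, which is enough because $\Rdiv_{\RR}(K)\subset U$ absorbs the correction, exactly as in Corollary~\ref{corolarry_extending_GVF_functionals_using_Riesz_extension}. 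And ``choose a common refinement $\cY'$ of $\cY$ with some $\Field$-submodel $\cX$'' does not by itself produce a generically finite $\cY'\to\cX$: for $F=\QQ(x^2)\subset K=\QQ(x)$ and $\kappa(\cY)=\QQ(x+x^2)$ one has $\kappa(\cY)\cap F=\QQ$, so the only $F$-submodel dominated by $\cY$ is $\Spec\ZZ$. One must first enlarge $\cX$ so that $\kappa(\cX)$ contains the coefficients of the minimal polynomials over $F$ of generators of $\kappa(\cY)$; only then is the compositum $\kappa(\cX)\cdot\kappa(\cY)$ finite over $\kappa(\cX)$.

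The serious gap is the one you flag but do not resolve: the norm map $N$ on arithmetic $\RR$-divisors of $C^0$-type. Pushforward of a Green function along a proper generically finite but non-finite morphism is not even pointwise defined over the positive-dimensional fibers, and normalizing $\cX$ in $\kappa(\cY')$ to make the map finite destroys generic smoothness, so there is a genuine tension you leave unaddressed. You should bypass pushforward entirely and instead use the valuation-theoretic effectivity criterion of Remark~\ref{remark_alternative_definition_of_effectivity_of_arithmetic_R_divisors} together with Lemma~\ref{lemma_properties_of_the_GVF_pairing}: if $\pi^*\xi+\adiv(f)\ge 0$ on $\cY'$, then for every valuation $u$ on $\kappa(\cX)$ and every extension $v$ of $u$ to $\kappa(\cY')$ one has $\GVFpairing(u,\xi)+v(f)\ge 0$; summing over extensions with local-degree weights and using $u(N_{\kappa(\cY')/\kappa(\cX)}(f))=\sum_v n_v\,v(f)$ gives $[\kappa(\cY'):\kappa(\cX)]\,\GVFpairing(u,\xi)+u(N(f))\ge 0$, so $\xi$ plus a principal divisor is effective and $l_F(\xi)\ge 0$, which delivers both well-definedness and positivity without any pushforward of metrics. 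A final caveat: your whole argument lives over $\Spec\ZZ$ and leans on Remark~\ref{remark_GVF_functional_are_GVF_structures}, so it is implicitly characteristic zero, whereas the lemma is stated for an arbitrary GVF $\Field$; the positive-characteristic case would need the function-field analogue indicated in \cite[Section 11]{GVF2}, which you do not invoke.
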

\begin{proof}
    See \cite[Proposition 3.1]{GVF3}.
\end{proof}

In particular, it follows that an existentially closed GVF must be algebraically closed.

\begin{theorem}~\label{theorem_existential_closedness_of_function_field_of_curves}~\cite[Theorem 2.1]{GVF2}
    The globally valued field $\ov{\smallk(t)}[1]$ is existentially closed.
\end{theorem}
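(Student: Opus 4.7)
The plan is to mirror the strategy used in this paper for Theorem~\ref{theorem_QQ_bar_is_existentially_closed}, but in the purely geometric setting over the base field $\smallk$. Fix a GVF extension $\ov{\smallk(t)}[1] \subset K$ and a tuple $\ov{a} \in K^m$ together with polynomials $g_1, \dots, g_n, h \in \ov{\smallk(t)}[\ov{x}]$, rational functions $f_1, \dots, f_m \in \ov{\smallk(t)}(\ov{x})$, $\QQ$-tropical polynomials $t_1, \dots, t_m$, and reals $r_1, \dots, r_m$ satisfying \eqref{equation_e_c_conditions}. Using Lemma~\ref{lemma_GVFs_have_finite_extensions_symmetric}, enlarge $\ov{a}$ and restrict the GVF structure to the subfield $\ov{\smallk(t)}(\ov{a})$, so I may assume $K = \ov{\smallk(t)}(\ov{a})$ is finitely generated over $\smallk$.

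First, I would construct a smooth projective variety $X$ over $\smallk$ with function field $K$ (compactifying the affine variety cut out by the $g_j$ inside $\Affine^m_{\ov{\smallk(t)}}$ and resolving; in positive characteristic one invokes alterations and argues after a finite extension), equipped with the dominant morphism $\pi : X \to \PP^1_{\smallk}$ induced by the inclusion $\smallk(t) \subset K$. The rational functions $f_i$ become rational functions on $X$, and the locus $\{h = 0\}$ defines a proper closed subscheme $Z \subsetneq X$ since $h(\ov{a}) \neq 0$. Via the function-field analogue of Theorem~\ref{theorem_GVF_functional_are_GVF_structures}, the GVF structure on $K$ determines a linear functional $l : N^1(X) \to \RR$ which is non-negative on effective classes, vanishes on principal classes, and satisfies $l(\pi^* p) = 1$ for every closed point $p \in \PP^1_{\smallk}$ (this is the normalization $\height(t) = 1$). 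Each height condition $R_{t_i}(f_i(\ov{a})) = r_i$ then reads as $l(D_i) = r_i$, where $D_i \in N^1(X)$ is obtained from $\cdiv(f_i)$ by applying the lattice operations prescribed by $t_i$.

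Next, I would apply \cite[Theorem 10.9, Corollary 10.11]{GVF2}, sketched in Subsection~\ref{subsection_the_technique_Theorem_A}: for any $\varepsilon > 0$ there exists an irreducible curve $C \subset X$, which (using a geometric Bertini-type avoidance as in the proof of Theorem~\ref{theorem_main_arithmetic_approximation_theorem}) one may additionally assume to be not contained in $Z$, such that
\[ \Bigl| \frac{\deg_{C}(D_i)}{\deg(C/\PP^1)} - l(D_i) \Bigr| < \varepsilon \quad \text{for all } i. \]
The generic point of $C$ is a $\kappa(C)$-point of $X$; choosing any $\smallk$-embedding $\kappa(C) \hookrightarrow \ov{\smallk(t)}$ extending $\smallk(t) \hookrightarrow \kappa(C)$ (possible since $\ov{\smallk(t)}$ is algebraically closed), one obtains a point $\ov{a}' \in X(\ov{\smallk(t)})$ with $\ov{a}' \notin Z$. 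This $\ov{a}'$ automatically satisfies $g_j(\ov{a}') = 0$ and $h(\ov{a}') \neq 0$, while its heights with respect to the unique GVF structure on $\ov{\smallk(t)}[1]$ are computed by the ratios $\deg_{C}(D_i)/\deg(C/\PP^1)$ (the function-field analogue of Lemma~\ref{lemma_local_and_global_arithmetic_degree}), giving $|R_{t_i}(f_i(\ov{a}')) - r_i| < \varepsilon$ as required by \eqref{equation_e_c_output}.

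The main obstacle is the function-field version of Theorem~\ref{theorem_GVF_functional_are_GVF_structures}: reading off a well-defined linear functional on $N^1(X)$ from the GVF data $(K, R_t)$ and verifying that it lies in the cone to which the geometric approximation theorem applies. This amounts to checking effectivity-positivity (from local-global positivity) and the normalization $l(\pi^* p) = 1$ (from the product formula on $\PP^1_{\smallk}$), both of which should go through just as in Section~\ref{section_3}, with arithmetic $\RR$-divisors replaced by algebraic divisors and $\adiv(2)$ replaced by $\pi^* p$. Once this translation is in place, the remainder of the argument is a direct analogue of the proof of Theorem~\ref{theorem_QQ_bar_is_existentially_closed} outlined in Subsection~\ref{subsection_approx_global_functional}.
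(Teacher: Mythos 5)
Your overall strategy is the right one and matches the argument outlined in Subsection~\ref{subsection_the_technique_Theorem_A} and the arithmetic analogue in the proof of Theorem~\ref{theorem_QQ_bar_is_existentially_closed}: translate the GVF structure into a linear functional on $N^1(X)$, invoke the geometric approximation theorem from \cite{GVF2} (differentiability of the volume plus Bertini), and read off the heights from the resulting curve $C$. But there is a concrete gap in your very first reduction.

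You claim to ``restrict the GVF structure to the subfield $\ov{\smallk(t)}(\ov{a})$, so I may assume $K = \ov{\smallk(t)}(\ov{a})$ is finitely generated over $\smallk$.'' This is false: $\ov{\smallk(t)}(\ov{a})$ contains the full algebraic closure $\ov{\smallk(t)}$, which has infinite degree over $\smallk(t)$, so it is not finitely generated over $\smallk$. Consequently there is no smooth projective $\smallk$-variety $X$ with function field $\ov{\smallk(t)}(\ov{a})$, and the remainder of your construction (the model $X$, the group $N^1(X)$, the morphism $\pi$) has nothing to apply to. Lemma~\ref{lemma_GVFs_have_finite_extensions_symmetric} is about extending GVF structures along \emph{finite} field extensions and does not help here. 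The correct move is the one carried out in the Claim inside the proof of Theorem~\ref{theorem_QQ_bar_is_existentially_closed}: only finitely many parameters $b \in \ov{\smallk(t)}$ appear in the $f_i, g_j, h$, and these lie in a finite extension $\smallk(t) \subset \smallk(t,b)$. One then reduces to the statement that for every finitely generated GVF extension $\smallk(t)[1] \subset K = \smallk(t)(b,\ov{a})$ there are approximate solutions in $\ov{\smallk(t)}[1]$, treating $b$ as an additional coordinate constrained by equations over $\smallk(t)$. After the geometric approximation produces a tuple $(b',\ov{a}')$ with $b'$ of the same minimal polynomial as $b$, one must invoke Galois-invariance of $\ov{\smallk(t)}[1]$ --- which follows from the uniqueness recorded in Example~\ref{example_function_field_of_a_curve_GVF} --- to conjugate $b'$ back to $b$. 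This Galois-invariance step is entirely absent from your write-up, and without it the tuple you produce need not solve the original equations (which have $b$, not $b'$, as a parameter). Once the reduction to a genuinely finitely generated extension of $\smallk(t)$ is made and the conjugation step is added, the rest of your sketch (translation of the GVF structure to $l$ on $N^1(X)$, positivity on effective classes, normalisation $l(\pi^*p)=1$ after passing to $\ov{\smallk}$, and the function-field analogue of Lemma~\ref{lemma_geometric_translation_of_existencial_closedness}) is correct in outline and is indeed the shape of the argument in \cite{GVF2}.
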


\begin{definition}
    Let $\Field$ be a finitely generated extension of $\QQ$ and fix an $\Field$-model $\cX$. Let $\ov{\cD}_1, \dots, \ov{\cD}_m$ be arithmetic $\QQ$-divisors of $C^0$-type on $\cX$, and $t$ be a $\QQ$-tropical polynomial. We say that \textit{the inductive definition of $t(\ov{\cD}_1, \dots, \ov{\cD}_m)$ can be calculated on $\cX$}, in the following cases:
    \begin{itemize}
        \item when $t(x_1, \dots, x_n) = x_i$, we always say so;
        \item when $t = \alpha \cdot t'$, we say so if the inductive definition of $t'(\ov{\cD}_1, \dots, \ov{\cD}_m)$ can be calculated on $\cX$;
        \item when $t = t_1 + t_2$, we say so if the inductive definitions of \[t_1(\ov{\cD}_1, \dots, \ov{\cD}_m), t_2(\ov{\cD}_1, \dots, \ov{\cD}_m)\] can be calculated on $\cX$;
        \item when $t=t_1 \wedge t_2$, we say so if the inductive definitions of \[\ov{\cD} := t_1(\ov{\cD}_1, \dots, \ov{\cD}_m), \ov{\cE} := t_2(\ov{\cD}_1, \dots, \ov{\cD}_m)\] can be calculated on $\cX$, and $\ov{\cD} \wedge \ov{\cE}$ can be calculated on $\cX$.
    \end{itemize}
\end{definition}

\begin{lemma}~\label{lemma_geometric_translation_of_existencial_closedness}
    Let $\Field=\QQ(\ov{a})$ be a finitely generated extension of $\QQ$ and let $\cX$ be an $\Field$-model. Let $\ov{\cD} = t(\adiv(f_1(\ov{a})), \dots, \adiv(f_n(\ov{a})))$ for some $\QQ$-tropical polynomial $t$ and some rational functions $f_1, \dots, f_n \in \QQ(\ov{y})$ (where $\ov{y}$ are some independent variables). Assume that the inductive definition of $\ov{\cD}$ can be calculated on $\cX$. Let $x \in \cX_{\QQ}$ be a sufficiently generic closed point. Then
    \[ h_{\ov{\cD}}(x) = R_t(f_1(\ov{a})|_x, \dots, f_n(\ov{a})|_x), \]
    where on the right hand side, $f_i(\ov{a})|_x$ is the evaluation of the rational function $f_i(\ov{a})$ at $x$ for every $i=1, \dots, n$, and $R_t$ is calculated with respect to the GVF structure $\kappa(x)[1]$.
\end{lemma}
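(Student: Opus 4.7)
The plan is to reduce the claim to a purely local (pointwise) identity, and then prove that identity by induction on the tropical polynomial $t$.

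First I would unpack both sides as integrals over $\Val_{\kappa(x)}$. For the left-hand side, choose $x \in \cX_{\QQ}$ generic enough that $x$ avoids the support of $\cD$ (and, more importantly, the supports of every intermediate $\QQ$-Cartier divisor showing up in the inductive construction of $\ov{\cD}$, as well as the zero/pole loci of all $f_i(\ov{a})$). Since there are only finitely many such closed proper subschemes to avoid, this is generic behavior. Then Lemma~\ref{lemma_local_and_global_arithmetic_degree} rewrites
\[ h_{\ov{\cD}}(x) \;=\; \int_{\Val_{\kappa(x)}} \adeg_v(\ov{\cD}|\ov{\{x\}}) \, d\mu(v), \]
where $\mu$ is the standard measure defining $\kappa(x)[1]$. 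For the right-hand side, the GVF structure $\kappa(x)[1]$ from Example~\ref{example_number_field_GVF} gives by definition
\[ R_t\bigl(f_1(\ov{a})|_x, \dots, f_n(\ov{a})|_x\bigr) \;=\; \int_{\Val_{\kappa(x)}} t\bigl(v(f_1(\ov{a})|_x), \dots, v(f_n(\ov{a})|_x)\bigr) \, d\mu(v). \]
Thus it suffices to establish the pointwise identity
\[ \adeg_v(\ov{\cD}|\ov{\{x\}}) \;=\; t\bigl(v(f_1(\ov{a})|_x), \dots, v(f_n(\ov{a})|_x)\bigr) \]
for every valuation $v$ on $\kappa(x)$.

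Next I would prove this pointwise identity by induction on the structure of $t$, following the four clauses of the inductive definition of $t(\ov{\cD}_1,\dots,\ov{\cD}_m)$. The base case $t(x_1,\dots,x_n)=x_i$ amounts to the identity $\adeg_v(\adiv(f_i(\ov{a}))|\ov{\{x\}}) = v(f_i(\ov{a})|_x)$: for non-Archimedean $v$ this is immediate from Definition~\ref{definition_height_of_a_point_w_r_t_arithmetic_R_divisor} and the genericity of $x$ (the rational function $f_i(\ov{a})$ is a local equation for $\adiv(f_i(\ov{a}))$ near $x$), and for Archimedean $v$ coming from an embedding $\sigma:\kappa(x)\to\CC$ the Green function $-\log|f_i(\ov{a})|^2$ evaluated at $x^\sigma$ yields $-\log|\sigma(f_i(\ov{a})|_x)|^2$, so $\adeg_v(\adiv(f_i(\ov{a}))|\ov{\{x\}}) = -\log|\sigma(f_i(\ov{a})|_x)| = v(f_i(\ov{a})|_x)$. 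Scalar multiplication and addition are handled by $\RR$-linearity of $\adeg_v(-|\ov{\{x\}})$ (which in turn follows from its definition and multiplicativity of $v$). The minimum clause is where the real content lies.

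For the clause $t=t_1\wedge t_2$, let $\ov{\cD}_i = t_i(\adiv(f_1(\ov{a})),\dots,\adiv(f_n(\ov{a})))$, so $\ov{\cD}=\ov{\cD}_1\wedge\ov{\cD}_2$. The hypothesis that the inductive definition can be calculated on $\cX$ guarantees the existence of an effective Cartier divisor $\cA$ (with some Green function $a$) on $\cX$ and a positive integer witnessing the hypothesis of Lemma~\ref{lemma_local_degree_on_minimum_is_minimum}; since $x$ is chosen generic, it avoids the supports of $\cA$, $\cA+\cD_1$, $\cA+\cD_2$, $\cD_1$, and $\cD_2$. Lemma~\ref{lemma_local_degree_on_minimum_is_minimum} then gives
\[ \adeg_v(\ov{\cD}_1\wedge\ov{\cD}_2 | \ov{\{x\}}) \;=\; \min\bigl(\adeg_v(\ov{\cD}_1|\ov{\{x\}}),\, \adeg_v(\ov{\cD}_2|\ov{\{x\}})\bigr) \]
in the non-Archimedean case, and for Archimedean $v$ the same equality follows directly because the Green function of $\ov{\cD}_1\wedge\ov{\cD}_2$ is (by Definition~\ref{definition_lattice_structure_and_divisors_on_F_and_on_F_models}) the pointwise minimum of the Green functions. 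By the inductive hypothesis applied to $t_1$ and $t_2$, each $\adeg_v(\ov{\cD}_i|\ov{\{x\}})$ equals $t_i(v(f_1(\ov{a})|_x),\dots,v(f_n(\ov{a})|_x))$, and taking the minimum closes the induction.

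The main obstacle I anticipate is keeping careful track of the genericity condition on $x$: each clause of the induction introduces additional intermediate divisors whose supports must be avoided, and one must argue that the total collection of conditions still carves out a non-empty open subset of $\cX_{\QQ}$. This is essentially bookkeeping, but it is the place where the phrase ``sufficiently generic'' acquires content. Once that bookkeeping is done, the calculation itself is a transparent consequence of Lemma~\ref{lemma_local_and_global_arithmetic_degree}, Lemma~\ref{lemma_local_degree_on_minimum_is_minimum}, and the explicit form of the measure $\mu$ defining $\kappa(x)[1]$.
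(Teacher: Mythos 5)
Your proof is correct and takes essentially the same route as the paper: reduce both sides to integrals over $\Val_{\kappa(x)}$ via Lemma~\ref{lemma_local_and_global_arithmetic_degree}, then establish the pointwise identity $\adeg_v(\ov{\cD}|\ov{\{x\}}) = t(v(f_1(\ov{a})|_x),\dots,v(f_n(\ov{a})|_x))$ by induction on the structure of $t$ with Lemma~\ref{lemma_local_degree_on_minimum_is_minimum} handling the minimum clause. You spell out the base case, the linearity clauses, and the genericity bookkeeping that the paper leaves implicit, but the substance is identical (note that Lemma~\ref{lemma_local_degree_on_minimum_is_minimum} already covers Archimedean $v$, so your separate Archimedean argument in the minimum clause is redundant though harmless).
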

\begin{proof}
    Let $\mu$ be the standard GVF measure on $\Val_{\kappa(x)}$ as described in Example~\ref{example_number_field_GVF} and let $\ov{f} = (f_1, \dots, f_n)$. We calculate
    \[ R_t(\ov{f}(\ov{a})|_x) = \int t(v(\ov{f}(\ov{a})|_x)) dv = \int_{\Val_{\kappa(x)}} t(v(\ov{f}(\ov{a})|_x)) d \mu(v). \]
    By Lemma~\ref{lemma_local_degree_on_minimum_is_minimum} and induction, for sufficiently generic $x$ we get the equality with 
    \[ \int_{\Val_{\kappa(x)}} \adeg_v(\ov{\cD}|\ov{\{x\}}) d \mu(v) = h_{\ov{\cD}}(x), \]
    where the last equality holds by Lemma~\ref{lemma_local_and_global_arithmetic_degree}.
\end{proof}

\begin{theorem}~\label{theorem_QQ_bar_is_existentially_closed}
    $\GVFQ[1]$ is an existentially closed GVF.
\end{theorem}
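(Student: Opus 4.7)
The plan is to deduce Theorem~\ref{theorem_QQ_bar_is_existentially_closed} from the main geometric result Theorem~\ref{theorem_main_arithmetic_approximation_theorem} via the GVF functional translation of Theorem~\ref{theorem_GVF_functional_are_GVF_structures} and the height-formula Lemma~\ref{lemma_geometric_translation_of_existencial_closedness}. First I would dispose of parameters. The data of Definition~\ref{definition_existentially_closed_GVFs} --- the polynomials $f_i, g_j, h$ and the GVF extension $\GVFQ[1] \subset K$ --- involves only finitely many coefficients from $\GVFQ$, so they all lie in some $\QQ(b)$ with $b$ algebraic over $\QQ$. By prepending $b$ to the tuple $\ov{a}$, rewriting the polynomials over $\QQ$ with $b$ a new variable, and adjoining the minimal polynomial of $b$ over $\QQ$ to the list of $g_j$'s, one reduces to the case where all the polynomials have coefficients in $\QQ$. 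At the end, if the approximation $(b', \ov{a}')$ produced in $\GVFQ$ happens to have $b' \neq b$, then $b'$ must be a Galois conjugate of $b$, and applying a $\Gal(\ov{\QQ}/\QQ)$-automorphism $\sigma$ with $\sigma(b') = b$ --- using Galois invariance of $\GVFQ[1]$ from Example~\ref{example_number_field_GVF} --- yields $(b, \sigma(\ov{a}'))$, which still approximately satisfies all the equations.

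Now assume all data is over $\QQ$. Let $\Field := \QQ(\ov{a}) \subset K$, a finitely generated extension. The given GVF structure on $K$ restricts to a GVF structure on $\Field$, which by Theorem~\ref{theorem_GVF_functional_are_GVF_structures} corresponds to a GVF functional $l: \Adiv_{\RR}(\Field) \to \RR$. Normalization $l((\cdiv(2),0)) = \log 2$ holds since the GVF substructure on $\QQ \subset \Field$ is the standard $\QQ[1]$: writing $(\cdiv(2),0) = \adiv(2) + (0, 2\log 2)$, the first summand has value $0$ by the product formula and the second evaluates to $2 \log 2 \cdot (1/2) = \log 2$ using the Archimedean mass on $\QQ$. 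For each $i$, let $\ov{\cD}_i$ denote the arithmetic $\RR$-divisor of $C^0$-type obtained by applying $t_i$ (interpreted via the lattice structure of Definition~\ref{definition_lattice_structure_and_divisors_on_F_and_on_F_models}) to the $\adiv(-)$ of the components of $f_i(\ov{a})$. Choose an $\Field$-model $\cX$ large enough that the inductive calculations defining every $\ov{\cD}_i$ can be carried out on $\cX$ --- possible after successive blowups by Remark~\ref{remark_if_wedge_can_be_calculated_on_X_it_can_be_calculated_over_it} --- and set $\ov{\cD}_0 := (\cdiv(2), 0) \in \Adiv_{\RR}(\cX)$. Let $Z \subset X := \cX_{\QQ}$ be the union of the vanishing locus of $h$ (a closed proper subscheme since $h(\ov{a}) \neq 0$ means $h$ is nonzero at the generic point of $X$) with any closed proper loci needed for the genericity hypothesis of Lemma~\ref{lemma_geometric_translation_of_existencial_closedness}.

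Next, applying Theorem~\ref{theorem_main_arithmetic_approximation_theorem} to $l$, the divisors $\ov{\cD}_0, \dots, \ov{\cD}_m$ on $\cX$, the subscheme $Z$, and the tolerance $\varepsilon$ produces a closed point $x \in X \setminus Z$ with
\[ |h_{\ov{\cD}_i}(x) - l(\ov{\cD}_i)| < \varepsilon \textnormal{ for all } i = 0, \dots, m. \]
Because $x \notin Z$, Lemma~\ref{lemma_geometric_translation_of_existencial_closedness} gives $h_{\ov{\cD}_i}(x) = R_{t_i}(f_i(\ov{a})|_x)$ computed in the standard GVF structure $\kappa(x)[1]$ on the number field $\kappa(x) \subset \GVFQ$, while Theorem~\ref{theorem_GVF_functional_are_GVF_structures} identifies $l(\ov{\cD}_i) = R_{t_i}(f_i(\ov{a})) = r_i$ computed in the GVF structure on $K$. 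Hence $|R_{t_i}(f_i(\ov{a}|_x)) - r_i| < \varepsilon$, while $g_j(\ov{a}|_x) = 0$ for all $j$ (since $\ov{a}$ is the generic point of $X$ and the relations $g_j(\ov{a}) = 0$ cut out $X$) and $h(\ov{a}|_x) \neq 0$ (since $x \notin Z$). Thus $\ov{a}|_x$ is the desired approximating tuple in $\GVFQ[1]$.

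The main technical obstacle is orchestrating the geometric setup: arranging a single $\Field$-model $\cX$ on which all the tropical expressions $\ov{\cD}_i$ are simultaneously defined, which demands iterated blowups to resolve the intersections implicit in the lattice operations on arithmetic divisors, and enlarging $Z$ precisely enough to trigger Lemma~\ref{lemma_geometric_translation_of_existencial_closedness} at the chosen point. Everything else --- the GVF/functional correspondence, the height translation, and the Galois reduction --- plugs in cleanly from the preparatory results, so the conceptual content of the proof is simply that Theorem~\ref{theorem_main_arithmetic_approximation_theorem} is the geometric counterpart of existential closedness expressed in the arithmetic-divisor language.
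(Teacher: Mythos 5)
Your approach is the same as the paper's: dispose of parameters via Galois invariance, translate the GVF structure on $\Field = \QQ(\ov{a})$ into a GVF functional via Theorem~\ref{theorem_GVF_functional_are_GVF_structures}, encode the tropical data as lattice divisors on a sufficiently refined $\Field$-model, approximate with a closed point via Theorem~\ref{theorem_main_arithmetic_approximation_theorem}, and translate heights back via Lemma~\ref{lemma_geometric_translation_of_existencial_closedness}. The normalization computation $l((\cdiv(2),0)) = \log 2$ from the product formula and the Archimedean mass on $\QQ$ is correct and is the right way to see it.

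There is one gap: you never address the case where $\Field = \QQ(\ov{a})$ is algebraic over $\QQ$. In that case $\cX$ is one-dimensional (it is $\Spec \cO_\Field$ up to normalization), so Theorem~\ref{theorem_main_arithmetic_approximation_theorem} does not apply --- the whole Subsection~\ref{subsection_approx_global_functional} is set up under the standing hypothesis $d+1 \geq 2$, and the argument there (positive intersection powers $\langle\ov{\cB}^d\rangle$, arithmetic Bertini, iterated hyperplane sections) genuinely needs a nontrivial generic fiber. The paper handles this by first observing (via uniqueness of GVF structures on number fields from Example~\ref{example_number_field_GVF}, combined with Lemma~\ref{lemma_GVFs_have_finite_extensions_symmetric}) that one may assume $\Field$ has positive transcendence degree over $\QQ$: if $\Field$ is a number field, the GVF structure on it is forced to be the standard one, so $\Field \subset \GVFQ$ is already a GVF embedding and $\ov{a}' = \ov{a}$ solves the problem exactly, no approximation needed. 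You should insert this observation before invoking Theorem~\ref{theorem_main_arithmetic_approximation_theorem}; without it the proof does not cover all cases.
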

\begin{proof}
    We start with the following reduction.
    \begin{claim}
    To prove existential closedness of $\GVFQ[1]$ it suffices to prove the following statement: 
    
    (*) For every finitely generated GVF extension $\QQ[1] \subset K$ generated by a tuple $\ov{a}$, if $\ov{a}$ satisfies Equations~(\ref{equation_e_c_conditions}) in Definition~\ref{definition_existentially_closed_GVFs} (with $\Field = \QQ$), then for any $\varepsilon > 0$ there exists a tuple $\ov{a}'$ in $\GVFQ[1]$ satisfying Equations~(\ref{equation_e_c_output}).
    \end{claim}
    \begin{proof}[Proof of the claim]
        Let $\GVFQ[1] \subset K$ be a GVF extension and let $\ov{a}$ be a tuple in $K$. Assume that it satisfies Equations~(\ref{equation_e_c_conditions}) from Definition~\ref{definition_existentially_closed_GVFs} (with $\Field = \GVFQ$). Let $b$ be an element in $\GVFQ$ such that $f_i, g_j, h$ are defined using parameters from $\QQ(b)$ for all $i, j$ as in Definition~\ref{definition_existentially_closed_GVFs}. Fix $\varepsilon > 0$. Use the condition (*) for the finitely generated extension $\QQ[1] \subset \QQ(b, \ov{a})$ to get a tuple $(b', \ov{a}')$ satisfying Equations~(\ref{equation_e_c_output}) (but with $b'$ replacing $b$ in all functions $f_i, g_j, h$). Moreover, by adding some equations over $\QQ$, we can assume that the minimal polynomial of $b'$ is the same as that of $b$. Let $\sigma$ be an automorphism of $\GVFQ$ taking $b'$ to $b$. Since the GVF structure $\GVFQ[1]$ is Galois-invariant (see Example~\ref{example_number_field_GVF}) the tuple $\sigma(\ov{a}')$ satisfies Equations~(\ref{equation_e_c_output}), which finishes the proof of the claim.
    \end{proof}
    We prove (*) for a finitely generated GVF extension $\QQ[1] \subset \Field = \QQ(\ov{a})$. By uniqueness of finite GVF extensions of $\QQ[1]$, we can assume that $\Field$ is not algebraic over $\QQ$. By Theorem~\ref{theorem_GVF_functional_are_GVF_structures} a GVF structure on $\Field$ corresponds to a GVF functional $l:\Adiv_{\RR}(\Field) \to \RR$. Continuous logic predicates in Equations~(\ref{equation_e_c_conditions}) correspond to some finite family $\ov{\cD}_1, \dots, \ov{\cD}_m \in \Ldiv_{\QQ}(\cX)$ on some $\Field$-model $\cX$ with $l(\ov{\cD}_i) = R_{t_i}(\ov{f}_i(\ov{a}))$. By possibly passing to a different $\Field$-model, we can assume that the inductive definitions of these lattice arithmetic $\QQ$-divisors of $C^0$-type can be calculated on $\cX$. We can do this by Remark~\ref{remark_if_wedge_can_be_calculated_on_X_it_can_be_calculated_over_it}. Fix $\varepsilon > 0$. As $\Field$ is of transcendence degree at least $1$ over $\QQ$, we can use Theorem~\ref{theorem_main_arithmetic_approximation_theorem} to find a closed point $x \in \cX_\QQ$ satisfying
    \[ |h_{\ov{\cD}_i}(x) - l(\ov{\cD}_i)| < \varepsilon \textnormal{ for } i=1, \dots, r \]
    and being sufficiently generic so that one can apply Lemma~\ref{lemma_geometric_translation_of_existencial_closedness}.
    By putting $\ov{a}' = \ov{a}|_x$ we get 
    \[ R_{t_i}(\ov{f}_i(\ov{a}')) = R_{t_i}(\ov{f}_i(\ov{a})|_x) = h_{\ov{\cD}_i}(x) \textnormal{ for all } i=1, \dots, r. \]
    This finishes the proof of (*) and hence proves that $\GVFQ[1]$ is an existentially closed GVF.
\end{proof}

\subsection{Essential infimum and GVF functionals}\label{subsection_ess_infimum_and_GVF_functionals}

For this subsection, let $\Field$ be a finitely generated extension of $\QQ$ and let $\cX$ be an $\Field$-model. Assume that the transcendence degree of $\Field$ over $\QQ$ is $d$ (in particular $\cX$ is of dimension $d+1$).

\begin{theorem}~\label{theorem_Riesz_extension_theorem}(M.Riesz extension theorem)
    Let $\KK$ be $\RR$ or $\QQ$ and work in the category of $\KK$ vector spaces. Let $E$ be a vector space, $W \subset E$ be a subspace, and let $K \subset E$ be a convex cone. Assume that a linear functional $l_0:W \to \RR$ non-negative on $W \cap K$ is given. If the condition
    \[ E \subset K+W \]
    is satisfied, then there exists an extension $l:E \to \RR$ of $\phi$, non-negative on $K$. Moreover, $l$ is unique, if for each $e \in E$ and $\varepsilon > 0$, there are $w, w' \in W$ with $w-e, e-w' \in K$ and $l_0(w-w') < \varepsilon$.
\end{theorem}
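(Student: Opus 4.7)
The plan is to prove existence by the standard Zorn's lemma / Hahn--Banach one-step extension argument, and uniqueness by a direct sandwiching estimate that uses the additional hypothesis.

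For existence, I would consider the poset $\cP$ of pairs $(V, m)$ with $W \subset V \subset E$ a $\KK$-subspace and $m : V \to \RR$ a $\KK$-linear extension of $l_0$ that is non-negative on $V \cap K$, ordered by extension. This poset is nonempty (containing $(W, l_0)$) and every chain has an upper bound (given by the union), so Zorn's lemma yields a maximal element $(V, m)$. The key step is to argue that $V$ must already be all of $E$. If not, fix $e \in E \setminus V$ and set $V' := V + \KK e$; any $\KK$-linear extension of $m$ to $V'$ is determined by a single real number $c := m'(e)$. Requiring non-negativity on $V' \cap K$, and splitting an element $v + t e \in K$ into the cases $t > 0$ and $t < 0$ with the rescaling trick (legitimate because $K$ is a cone), reduces the problem to finding $c$ with
\[ A := \sup\{ -m(u) : u \in V,\, u + e \in K \} \;\leq\; c \;\leq\; \inf\{ m(v') : v' \in V,\, v' - e \in K \} =: B, \]
using the convention that an empty supremum is $-\infty$ and an empty infimum is $+\infty$. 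The Hahn--Banach observation is $A \leq B$: for any admissible $u, v'$, the sum $u + v' = (u + e) + (v' - e)$ lies in $K + K \subset K$, whence $m(u + v') \geq 0$, i.e. $-m(u) \leq m(v')$. Any choice $c \in [A, B]$ then gives an extension of $(V, m)$ strictly above it in $\cP$, contradicting maximality. The hypothesis $E \subset K + W$ is used here: together with $W$ being a subspace (so $-W = W$, whence also $E \subset -K + W$), it guarantees that if $e = k + w$ with $k \in K, w \in W$, then $u := -w$ satisfies $u + e = k \in K$, and symmetrically $v' := w''$ with $e = -k'' + w''$ satisfies $v' - e = k'' \in K$; so both constraint sets are nonempty.

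For uniqueness, let $l_1, l_2$ be two $K$-positive linear extensions of $l_0$. Given $e \in E$ and $\varepsilon > 0$, use the hypothesis to pick $w, w' \in W$ with $w - e,\, e - w' \in K$ and $l_0(w - w') < \varepsilon$. Applying $l_i$ to $w - e$ and $e - w'$, which lie in $K$, yields the sandwich
\[ l_0(w') \;\leq\; l_i(e) \;\leq\; l_0(w) \qquad (i = 1, 2), \]
so $|l_1(e) - l_2(e)| \leq l_0(w) - l_0(w') = l_0(w - w') < \varepsilon$. Since $\varepsilon$ was arbitrary, $l_1(e) = l_2(e)$.

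The only genuine content is the compatibility $A \leq B$ in the extension step, and this falls straight out of $K + K \subset K$; the remainder of the proof is purely formal. No distinction arises between the cases $\KK = \RR$ and $\KK = \QQ$, since the coefficient field never interacts with the real-valued target of the functionals: when extending by a single vector $e$, the degree of freedom is always a single real parameter $c \in \RR$.
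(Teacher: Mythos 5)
Your proof is correct, and it is the standard Zorn/one-step Hahn--Banach argument for the M.\ Riesz extension theorem: the existence step reduces to the compatibility $A\le B$ via $K+K\subset K$, with $E\subset K+W$ supplying nonemptiness of the two constraint sets so that $A$ and $B$ are finite; the uniqueness step is the sandwiching estimate. The paper itself does not prove this statement --- it is flagged as classical and delegated to \cite[Lemma 5.8]{GVF3} --- so there is no in-paper argument to compare against, but your write-up is a complete and correct proof of exactly what is claimed (including the correct reading of the paper's typo ``extension of $\phi$'' as ``extension of $l_0$'', and the correct observation that the coefficient field $\KK\in\{\QQ,\RR\}$ plays no role because the extension parameter $c$ is always a single real number).
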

\begin{proof}
    This is classical. For a proof, see \cite[Lemma 5.8]{GVF3}.
\end{proof}

\begin{corollary}~\label{corolarry_extending_GVF_functionals_using_Riesz_extension}
    Let $V$ be a real vector subspace of $\Adiv_{\RR}(\Field)$ containing a big arithmetic $\RR$-divisor $\ov{\cD}$ of $C^0$-type. Assume that we are given a GVF functional
    \[ l_0:V \to \RR \]
    Then there exists an extension $l_0 \subset l : \Adiv_{\RR}(\Field) \to \RR$ to a GVF functional $l$.
\end{corollary}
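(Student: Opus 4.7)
The plan is a direct application of the M.~Riesz extension theorem (Theorem~\ref{theorem_Riesz_extension_theorem}) with $E = \Adiv_{\RR}(\Field)$, $W = V$, and the convex cone
\[
K := \{\ov{\cE} + \ov{\cP} \mid \ov{\cE} \text{ effective}, \ \ov{\cP} \in \Rdiv_{\RR}(\Field)\}.
\]
This choice is made so that a linear functional on $\Adiv_{\RR}(\Field)$ is a GVF functional precisely when it is non-negative on $K$: non-negativity on effectives is immediate from $\Eff \subseteq K$, and since $\Rdiv_{\RR}(\Field)$ is a linear subspace of $K$ closed under negation, non-negativity on $K$ forces vanishing on principals. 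Correspondingly, the hypothesis that $l_0$ is a GVF functional on $V$ translates into the positivity requirement $l_0 \geq 0$ on $V \cap K$ needed by Riesz.

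The only non-trivial geometric input is the inclusion $E \subseteq K + W$, which is precisely where the bigness of $\ov{\cD}$ enters. Given any $\ov{\cE} \in \Adiv_{\RR}(\Field)$, the continuity of the arithmetic volume (Theorem~\ref{theorem_differentiability_of_arithmetic_volume}) combined with $\avol(\ov{\cD}) > 0$ shows that $\ov{\cE} + t \ov{\cD}$ is big for $t > 0$ sufficiently large. Lemma~\ref{lemma_effective_are_pseudo_effective} then supplies a positive integer $n$ and $f \in \Field^{\times}$ such that $n(\ov{\cE} + t\ov{\cD}) + \adiv(f)$ is effective. Dividing by $n$ exhibits $\ov{\cE} + t\ov{\cD}$ as a sum of an effective arithmetic $\RR$-divisor of $C^0$-type and the principal divisor $\tfrac{1}{n}\adiv(f^{-1})$, so $\ov{\cE} + t\ov{\cD} \in K$, and therefore
\[
\ov{\cE} = (\ov{\cE} + t \ov{\cD}) + (-t\ov{\cD}) \in K + V.
\]

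Applying Riesz now produces a linear extension $l: \Adiv_{\RR}(\Field) \to \RR$ of $l_0$ satisfying $l \geq 0$ on $K$, and by the observation in the first paragraph $l$ is automatically a GVF functional. The main conceptual content is the choice of $K$: it packages the two defining conditions of a GVF functional into a single convex positivity condition on one cone, allowing the Riesz extension theorem to deliver both simultaneously. Once this cone is fixed, the bigness of $\ov{\cD} \in V$ is exactly what guarantees the Minkowski-sum condition $E = K + W$ driving the extension theorem, and no further geometric input is required.
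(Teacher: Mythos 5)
Your argument is essentially the paper's proof with a cosmetic change of ambient space: the paper applies the Riesz theorem in the quotient $\aN^1(\Field) = \Adiv_{\RR}(\Field)/\Rdiv_{\RR}(\Field)$, with $W = \pi(V)$ and $K$ the image under the quotient map $\pi$ of the cone of effective arithmetic $\RR$-divisors of $C^0$-type, and your cone is precisely $\pi^{-1}$ of the paper's. The verification of $E \subseteq K + W$ using bigness of $\ov{\cD}$, continuity of $\avol$, and Lemma~\ref{lemma_effective_are_pseudo_effective} is the same as in the paper (the paper scales by an integer $n$ and explicitly flags passing to a common $\Field$-model, but these are inessential). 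So this is the same approach, not a genuinely different route.

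One point deserves caution, though it is a subtlety you share with the paper rather than a flaw you introduced. You assert that being a GVF functional on $V$ ``translates into'' the Riesz hypothesis $l_0 \geq 0$ on $V \cap K$. But an element of $V \cap K$ decomposes as $\ov{\cE} + \ov{\cP}$ with $\ov{\cE}$ effective and $\ov{\cP}$ principal, and neither summand need lie in $V$, whereas Definition~\ref{definition_GVF_functional} constrains $l_0$ only on effectives and principals that are themselves inside $V$. Concretely, if $\ov{\cD}$ is big but neither $\ov{\cD}$ nor $-\ov{\cD}$ is effective, nor a scalar multiple of a principal divisor, take $V = \RR \cdot \ov{\cD}$ and $l_0(\ov{\cD}) := -1$: then $l_0$ satisfies Definition~\ref{definition_GVF_functional} vacuously, yet by Lemma~\ref{lemma_effective_are_pseudo_effective} some positive multiple of $\ov{\cD}$ lies in $K$, so $l_0$ is negative on $V \cap K$ and no extension can exist. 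The paper's proof likewise invokes Riesz without checking the ``non-negative on $W \cap K$'' hypothesis; read literally, the corollary should assume that $l_0$ is non-negative on $V$ intersected with the cone $\textnormal{effectives} + \Rdiv_{\RR}(\Field)$, which is what the downstream applications actually supply (e.g.\ via Corollary~\ref{corollary_Ballays_result_without_semi_pos_ass_zeta_formula}). Worth making explicit rather than asserting as a translation.
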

\begin{proof}
    Let $W$ be the image of $V$ under the projection $\pi:\Adiv_{\RR}(\Field) \to \aN^1(\Field)$. Note that $l_0$ factors through $W$ as it is a GVF functional. Use Theorem~\ref{theorem_Riesz_extension_theorem} (the existence part) with $E = \aN^1(\Field)$ and $K$ being the image of the cone of effective arithmetic $\RR$-divisors of $C^0$-type under $\pi$. Take a class of an arithmetic $\RR$-divisor of $C^0$-type $\ov{\cE}$ on an $\Field$-model $\cX$. Note that by Theorem~\ref{theorem_differentiability_of_arithmetic_volume}, for a natural number $n$ we have (possibly by passing to a different $\Field$-model)
    \[ \avol(n \ov{\cD} + \ov{\cE}) = n^{d+1} \avol(\ov{\cD} + \frac{1}{n} \ov{\cE}), \]
    and $\lim_n \avol(\ov{\cD} + \frac{1}{n} \ov{\cE}) = \avol(\ov{\cD}) > 0$, so for $n$ big enough $\ov{\cF} := n \ov{\cD} + \ov{\cE}$ is big. In particular by Lemma~\ref{lemma_effective_are_pseudo_effective} its $\pi$ image is in $K$. Note that
    \[ \ov{\cE} = \ov{\cF} + (-n \ov{\cD}), \]
    and by the fact that $\pi(\ov{\cE}) \in E$ was arbitrary, we get an extension function $\aN^1(\Field) \to \RR$ non-negative on classes of effective arithmetic $\RR$-divisors of $C^0$-type. By composing it with $\pi$ we are done.
\end{proof}

\begin{corollary}~\label{corollary_main_arithmetic_approximation_corollary}
    Let $\ov{\cD}_0, \dots, \ov{\cD}_n$ be arithmetic $\RR$-divisors of $C^0$-type on $\cX$ with $\ov{\cD}_0 = (\cdiv(2),0)$. Denote by $V$ the real vector space generated by $\ov{\cD}_0, \dots, \ov{\cD}_n$ and assume that one of $\ov{\cD}_0, \dots, \ov{\cD}_n$ is big. Fix $\varepsilon > 0$, a closed proper subscheme $Z \subset X=\cX \otimes \QQ$, and a normalised GVF functional
    \[ l:V \to \RR, \]
    Then there exists a closed point $x \in X \setminus Z$ such that for all $i=0, \dots, n$
    \[ |h_{\ov{\cD}_i}(x) - l(\ov{\cD}_i)| < \varepsilon. \]
\end{corollary}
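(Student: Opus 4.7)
The plan is to reduce directly to Theorem~\ref{theorem_main_arithmetic_approximation_theorem} by extending $l$ from the finite-dimensional subspace $V$ to a GVF functional defined on all of $\Adiv_{\RR}(\cX)$; the Riesz extension is the only non-formal step and is already packaged as Corollary~\ref{corolarry_extending_GVF_functionals_using_Riesz_extension}.

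First, consider the composition of the inclusion $V \hookrightarrow \Adiv_{\RR}(\cX)$ with the natural map $\Adiv_{\RR}(\cX) \to \Adiv_{\RR}(\Field)$, where $\Field = \kappa(\cX)$. Let $V' \subset \Adiv_{\RR}(\Field)$ be the image of $V$. The functional $l$ factors through $V'$: if a real linear combination of $\ov{\cD}_0, \dots, \ov{\cD}_n$ becomes principal after passing to the generic fiber of $\cX$, then it is a principal arithmetic $\RR$-divisor of $C^0$-type on $\cX$, so $l$ sends it to zero by hypothesis. Hence $l$ descends to a well-defined linear functional $l:V' \to \RR$ which is still non-negative on effective elements of $V'$ and sends principal elements of $V'$ to zero, so it is a GVF functional on $V'$. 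Moreover, $V'$ contains the class of at least one big arithmetic $\RR$-divisor of $C^0$-type, by the assumption that some $\ov{\cD}_i$ is big (bigness is stable under the identification $\Adiv_{\RR}(\cX) \to \Adiv_{\RR}(\Field)$).

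Next, apply Corollary~\ref{corolarry_extending_GVF_functionals_using_Riesz_extension} to extend $l$ to a GVF functional
\[ \widetilde{l} : \Adiv_{\RR}(\Field) \to \RR. \]
Restricting $\widetilde{l}$ along the natural map $\Adiv_{\RR}(\cX) \to \Adiv_{\RR}(\Field)$ produces a linear functional $\widetilde{l}_{\cX} : \Adiv_{\RR}(\cX) \to \RR$ which is non-negative on effective arithmetic $\RR$-divisors of $C^0$-type, vanishes on principal ones (both because $\widetilde{l}$ has these properties and because the natural map respects effectivity and principality), and satisfies $\widetilde{l}_{\cX}(\ov{\cD}_0) = l(\ov{\cD}_0) = \log 2$.

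Finally, apply Theorem~\ref{theorem_main_arithmetic_approximation_theorem} to $\widetilde{l}_{\cX}$, the divisors $\ov{\cD}_0, \dots, \ov{\cD}_n$, the proper subscheme $Z$, and $\varepsilon > 0$ to obtain a closed point $x \in X \setminus Z$ such that $|h_{\ov{\cD}_i}(x) - \widetilde{l}_{\cX}(\ov{\cD}_i)| < \varepsilon$ for all $i = 0, \dots, n$. Since $\widetilde{l}_{\cX}$ agrees with $l$ on $V$, this gives the desired inequality $|h_{\ov{\cD}_i}(x) - l(\ov{\cD}_i)| < \varepsilon$. The only conceptual hurdle is verifying the hypothesis of the Riesz-based extension, namely that $V'$ contains a big divisor; this is exactly what the bigness assumption in the statement provides.
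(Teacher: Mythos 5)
Your proof is correct and follows exactly the route the paper indicates — extend $l$ through the M.\ Riesz machinery packaged in Corollary~\ref{corolarry_extending_GVF_functionals_using_Riesz_extension} and then invoke Theorem~\ref{theorem_main_arithmetic_approximation_theorem}.

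One small remark on the factoring step: the justification you give (``if a combination becomes principal on the generic fiber, then it is already principal on $\cX$, so $l$ kills it'') is not quite what is needed and is a bit of a detour. What you actually need is that the kernel of the canonical map $V \hookrightarrow \Adiv_{\RR}(\cX) \to \Adiv_{\RR}(\Field)$ lies in $\ker l$; but this kernel is trivial, since the transition maps in the direct system defining $\Adiv_{\RR}(\Field)$ are pullbacks along birational morphisms of normal models, which are injective on $\Adiv_{\RR}$ (the $\RR$-Cartier divisor is recovered by push-forward of the Weil class, and the Green function by density of the locus where the blowup is an isomorphism). So $V \to V'$ is an isomorphism and $l$ transports automatically, with no condition on principality to check at this stage; the compatibility with principality and effectivity is exactly what makes the transported functional still a GVF functional, and that is correctly invoked. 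The rest — bigness of some $\ov{\cD}_i$ surviving into $\Adiv_{\RR}(\Field)$ via Lemma~\ref{lemma_pseudo_effective_cone_is_invariant_under_birational_pullbacks}, the appeal to Corollary~\ref{corolarry_extending_GVF_functionals_using_Riesz_extension}, restriction back to $\Adiv_{\RR}(\cX)$, and the application of Theorem~\ref{theorem_main_arithmetic_approximation_theorem} — is exactly the intended argument.
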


\begin{construction}~\label{construction_GVF_functional_from_a_nef_R_arithmetoc_divisor_on_a_model}
    Let $\ov{\cB}$ be a big arithmetic $\RR$-divisor of $C^0$-type on an $\Field$-model $\cX'$. We define a linear functional $l_{\ov{\cB}}:\Adiv_{\RR}(\Field) \to \RR$ by the following formula.
    \[ l_{\ov{\cB}}:\Adiv_{\RR}(\Field) \to \RR \]
    \[ \ov{\cE} \mapsto \langle (\phi^*\ov{\cB})^d \rangle \cdot \ov{\cE}, \]
    where $\ov{\cE}$ is an arithmetic $\RR$-divisor of $C^0$-type on $\phi:\cX'' \to \cX'$. By Lemma~\ref{lemma_intersection_doesnt_depend_on_the_model} $l_{\ov{\cB}}$ is well defined. Moreover, by Theorem~\ref{theorem_differentiability_of_arithmetic_volume} we have
    \[ \langle (\phi^*\ov{\cB})^d \rangle \cdot \ov{\cE} = \frac{1}{d+1} D_{\ov{\cB}} \avol(\ov{\cE}) = \lim_{t \to 0^+} \frac{\avol(\phi^*\ov{\cB} + t\ov{\cE}) - \avol(\phi^*\ov{\cB})}{(d+1)t}. \]
    But for effective $\ov{\cE}$ and $t>0$ we have $\avol(\phi^*\ov{\cB} + t\ov{\cE}) - \avol(\phi^*\ov{\cB}) \geq 0$, see the proof of Lemma~\ref{lemma_effective_are_pseudo_effective}. Thus $l_{\ov{\cB}}(\ov{\cE}) \geq 0$ and so $l_{\ov{\cB}}$ is in fact a GVF functional. Note that if $\ov{\cB}$ is nef, then $l_{\ov{\cB}}(\ov{\cE}) = (\phi^*\ov{\cB})^d \cdot \ov{\cE}$ by Remark~\ref{remark_equiv_of_def_of_pos_intersection_in_Ikoma_and_pos_int_for_nef}.
\end{construction}

\begin{lemma}~\label{lemma_arithmetic_divisor_pseudo_effective_iff_all_GVF_functionals_non_negative}
    Let $\ov{\cD}$ be an arithmetic $\RR$-divisor of $C^0$-type on $\cX$. Then $\ov{\cD}$ is pseudo-effective if and only if the following holds.
    \[ \textnormal{For all GVF functionals }l:\Adiv_{\RR}(\Field) \to \RR \textnormal{ we have } l(\ov{\cD}) \geq 0. \]
    Moreover, $\ov{\cD}$ is principal if and only if it satisfies the following.
    \[ \textnormal{For all GVF functionals }l:\Adiv_{\RR}(\Field) \to \RR \textnormal{ we have } l(\ov{\cD}) = 0. \]
\end{lemma}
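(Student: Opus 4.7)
The plan is to prove each equivalence by combining the Ikoma characterisations in Theorem~\ref{theorem_Ikoma_characterisation_of_psef_arithmetic_R_divisors} with the GVF functionals $l_{\ov{\cB}}$ constructed in Construction~\ref{construction_GVF_functional_from_a_nef_R_arithmetoc_divisor_on_a_model}. For the forward direction of the first equivalence, I would fix any big arithmetic $\RR$-divisor of $C^0$-type $\ov{\cB}$ on $\cX$ (for instance, an ample arithmetic $\RR$-divisor of $C^\infty$-type, which exists by \cite[Corollary 2.11]{Charles2021ArithmeticAA}). Given a GVF functional $l:\Adiv_{\RR}(\Field)\to\RR$ and any $\varepsilon>0$, the sum $\ov{\cD}+\varepsilon\ov{\cB}$ is big (by the proof of Lemma~\ref{lemma_effective_are_pseudo_effective} applied to $\ov{\cD}$ pseudo-effective and $\varepsilon\ov{\cB}$ big). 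Invoking Lemma~\ref{lemma_effective_are_pseudo_effective}, there is a positive integer $n$ and an $f\in\kappa(\cX)^\times$ such that $\adiv(f)+n(\ov{\cD}+\varepsilon\ov{\cB})$ is effective on $\cX$. Applying $l$ (which kills principal divisors and is non-negative on effective ones) yields $n\,l(\ov{\cD})+n\varepsilon\, l(\ov{\cB})\geq 0$, hence $l(\ov{\cD})\geq-\varepsilon\, l(\ov{\cB})$, and sending $\varepsilon\to 0^+$ gives $l(\ov{\cD})\geq 0$.

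For the converse, I would take the contrapositive: assume $\ov{\cD}$ is not pseudo-effective and produce a GVF functional of negative value at $\ov{\cD}$. By the equivalence (a)$\iff$(c) of Theorem~\ref{theorem_Ikoma_characterisation_of_psef_arithmetic_R_divisors}(1), there exists a blowup $\phi:\cX'\to\cX$ with $\cX'$ normal and generically smooth, and an ample arithmetic $\QQ$-divisor $\ov{\cH}$ of $C^\infty$-type on $\cX'$, such that $\adeg(\ov{\cH}^d\cdot\phi^*\ov{\cD})<0$. Applying Construction~\ref{construction_GVF_functional_from_a_nef_R_arithmetoc_divisor_on_a_model} to $\ov{\cH}$ yields a GVF functional $l_{\ov{\cH}}:\Adiv_{\RR}(\Field)\to\RR$. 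Since $\ov{\cH}$ is nef, Remark~\ref{remark_equiv_of_def_of_pos_intersection_in_Ikoma_and_pos_int_for_nef} gives $\langle\ov{\cH}^d\rangle\cdot\phi^*\ov{\cD}=\ov{\cH}^d\cdot\phi^*\ov{\cD}$, and by Lemma~\ref{lemma_intersection_doesnt_depend_on_the_model} this value is independent of the chosen common model; hence $l_{\ov{\cH}}(\ov{\cD})=\ov{\cH}^d\cdot\phi^*\ov{\cD}<0$, the desired witness.

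For the principality statement, one direction is a tautology: principal arithmetic $\RR$-divisors of $C^0$-type are mapped to zero by every GVF functional by Definition~\ref{definition_GVF_functional}. Conversely, if $l(\ov{\cD})=0$ for every GVF functional, then $l(\pm\ov{\cD})\geq 0$ always, so by the first half both $\ov{\cD}$ and $-\ov{\cD}$ are pseudo-effective. For any blowup $\phi:\cX'\to\cX$ and any ample arithmetic $\RR$-divisor of $C^\infty$-type $\ov{\cH}$ on $\cX'$, the two pseudo-effectivities give $\ov{\cH}^d\cdot\phi^*\ov{\cD}\geq 0$ and $\ov{\cH}^d\cdot\phi^*\ov{\cD}\leq 0$ respectively, so the intersection vanishes. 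Theorem~\ref{theorem_Ikoma_characterisation_of_psef_arithmetic_R_divisors}(2) then forces $\ov{\cD}$ to be principal.

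There is no real obstacle here — the proof is essentially a bookkeeping exercise once the right objects are named. The only subtle point is ensuring that the functional $l_{\ov{\cH}}$ evaluates at $\ov{\cD}$ to exactly $\ov{\cH}^d\cdot\phi^*\ov{\cD}$ rather than a strictly larger positive-intersection number, which is precisely the content of the nef case of Definition~\ref{definition_positive_intersection_power} recorded in Remark~\ref{remark_equiv_of_def_of_pos_intersection_in_Ikoma_and_pos_int_for_nef}; this is what makes the characterisations from Theorem~\ref{theorem_Ikoma_characterisation_of_psef_arithmetic_R_divisors} directly translatable into statements about GVF functionals.
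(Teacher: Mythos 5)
Your proof is correct and follows the same strategy as the paper's: the forward direction uses the behaviour of GVF functionals along big approximations $\ov{\cD}+\varepsilon\ov{\cB}$ together with Lemma~\ref{lemma_effective_are_pseudo_effective}, and the converse and principality statements are handled via Construction~\ref{construction_GVF_functional_from_a_nef_R_arithmetoc_divisor_on_a_model} and Theorem~\ref{theorem_Ikoma_characterisation_of_psef_arithmetic_R_divisors}. One small improvement worth keeping in mind: you invoke the ample characterisation (1)(c) of Theorem~\ref{theorem_Ikoma_characterisation_of_psef_arithmetic_R_divisors}, which is the cleaner choice since Construction~\ref{construction_GVF_functional_from_a_nef_R_arithmetoc_divisor_on_a_model} requires $\ov{\cB}$ to be \emph{big} (so that the positive intersection product is defined), and ampleness gives both bigness and nefness at once — the paper's phrasing (``a nef arithmetic $\RR$-divisor $\ov{\cH}$'') is slightly looser on this point, though the intended witness is of course the same.
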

\begin{proof}
    We start with the first characterisation. Assume that $\ov{\cD}$ is pseudo-effective and let $\ov{\cB}$ be a big arithmetic $\RR$-divisor of $C^0$-type. Then for every $\delta>0$ we have $\avol(\ov{\cD} + \delta \ov{\cB}) > 0$, so if $l$ is a GVF functional on $\Adiv_{\RR}(\Field)$ we have
    \[ l(\ov{\cD}) = \lim_{\delta \to 0^+} l(\ov{\cD} + \delta \ov{\cB}) \geq 0, \]
    where the last inequality follows from Lemma~\ref{lemma_effective_are_pseudo_effective}. On the other hand, if $\ov{\cD}$ is not pseudo-effective, by Theorem~\ref{theorem_Ikoma_characterisation_of_psef_arithmetic_R_divisors} there is an $\Field$-model $\pi:\cX' \to \cX$ with a nef arithmetic $\RR$-divisor $\ov{\cH}$ such that $\ov{\cH}^d \cdot \pi^* \ov{\cD} < 0$. By Construction~\ref{construction_GVF_functional_from_a_nef_R_arithmetoc_divisor_on_a_model}, $l = l_{\ov{\cH}}$ is a GVF functional, and it satisfies $l(\ov{\cD}) < 0$ which finishes the proof.

    To prove the other characterisation, it's enough to prove that if all GVF functionals vanish on $\ov{\cD}$, then $\ov{\cD}$ is principal. Note that by the first part of the lemma we get that $\ov{\cD}$ is pseudo-effective. Now the result follows from Theorem~\ref{theorem_Ikoma_characterisation_of_psef_arithmetic_R_divisors} and the proof of the first characterisation.
\end{proof}

\begin{remark}~\label{remark_it_is_enoguh_to_check_on_normalised}
    Fix the notation from Lemma~\ref{lemma_arithmetic_divisor_pseudo_effective_iff_all_GVF_functionals_non_negative} and let $\ov{\cD}_0 = (\cdiv(2), 0)$. Note that in both characterisations we could in fact check the corresponding condition only for normalised GVF functionals. Indeed, assume that the first condition holds for normalised GVF functionals on $\Adiv_{\RR}(\Field)$. Take any GVF functional $l:\Adiv_{\RR}(\Field) \to \RR$. If $l(\ov{\cD}_0) \neq 0$, we can rescale it (using a positive scalar as $\ov{\cD}_0$ is effective) to be normalised, so $l(\ov{\cD}) \geq 0$. If $l(\ov{\cD}_0) = 0$, we can take any normalised GVF functional $l'$ on $\Adiv_{\RR}(\Field)$ and if we had $l(\ov{\cD}) < 0$, then for $n$ big enough $(l' + n l)(\ov{\cD}) < 0$, which gives a contradiction.
\end{remark}

By unravelling the proof of the first part of the below theorem, one can see that it is essentially the same as the proof of \cite[Theorem 4.6]{Arithmetic_Demailly_Qu_Yin} (but using Theorem~\ref{theorem_arithmetic_Bertini_by_Wilms} instead of \cite[Theorem 3.9]{Arithmetic_Demailly_Qu_Yin}). We decided to include this theorem here to underline the new interpretation of $\zeta$ using GVF functionals.

\begin{theorem}~\cite[Theorem 4.6]{Arithmetic_Demailly_Qu_Yin}~\label{theorem_essential_infimum_as_infimum_of_GVF_functionals}
    Let $\ov{\cD}$ be an arithmetic $\RR$-divisor of $C^0$-type on $\cX$. Then
    \[ \zeta(\ov{\cD}) \leq \inf \{ l(\ov{\cD}) : \textnormal{$l$ is a normalised GVF functional on } \Adiv_{\RR}(\Field) \}. \]
    Moreover, if $\cD_\QQ$ is big, this is an equality.
\end{theorem}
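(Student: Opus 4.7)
The plan is to establish the two inequalities separately: the first (which holds in full generality) follows directly from Theorem~\ref{theorem_main_arithmetic_approximation_theorem}, while the reverse inequality (which requires bigness of $\cD_{\QQ}$) reduces via Lemma~\ref{lemma_F_Ballay_lemma_3_16} to the characterisation of pseudo-effectivity by GVF functionals from Lemma~\ref{lemma_arithmetic_divisor_pseudo_effective_iff_all_GVF_functionals_non_negative}.

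First I would prove $\zeta(\ov{\cD}) \leq l(\ov{\cD})$ for every normalised GVF functional $l$ on $\Adiv_{\RR}(\Field)$. Restricting $l$ to $\Adiv_{\RR}(\cX)$ produces a linear functional that is zero on principal arithmetic $\RR$-divisors of $C^0$-type, non-negative on effective ones, and sends $\ov{\cD}_0 = (\cdiv(2),0)$ to $\log 2$ (since $\ov{\cD}_0$ is the pullback from $\Spec(\ZZ)$ of a divisor with arithmetic degree $\log 2$). Hence Theorem~\ref{theorem_main_arithmetic_approximation_theorem} applies: for any $\varepsilon>0$ and any closed proper subscheme $\cY \subset X$, there exists a closed point $x \in X\setminus \cY$ with $|h_{\ov{\cD}}(x) - l(\ov{\cD})| < \varepsilon$, so $\inf_{x \notin \cY} h_{\ov{\cD}}(x) \leq l(\ov{\cD}) + \varepsilon$. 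Taking the supremum over $\cY$ and letting $\varepsilon \to 0$ gives $\zeta(\ov{\cD}) \leq l(\ov{\cD})$, whence $\zeta(\ov{\cD}) \leq \inf_l l(\ov{\cD})$.

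For the reverse inequality under the assumption that $\cD_\QQ$ is big, Lemma~\ref{lemma_F_Ballay_lemma_3_16}(3) yields
\[ \zeta(\ov{\cD}) \geq \sup\bigl\{\, r \in \RR : \ov{\cD} - (r/\log 2)\cdot \ov{\cD}_0 \textnormal{ is pseudo-effective} \,\bigr\}. \]
By Lemma~\ref{lemma_arithmetic_divisor_pseudo_effective_iff_all_GVF_functionals_non_negative} combined with Remark~\ref{remark_it_is_enoguh_to_check_on_normalised}, an arithmetic $\RR$-divisor of $C^0$-type is pseudo-effective if and only if every normalised GVF functional on $\Adiv_{\RR}(\Field)$ is non-negative on it. Applied to $\ov{\cD} - (r/\log 2)\cdot \ov{\cD}_0$, this condition becomes $l(\ov{\cD}) - (r/\log 2)\cdot l(\ov{\cD}_0) = l(\ov{\cD}) - r \geq 0$ for every normalised $l$. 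Thus the supremum above equals $\inf\{l(\ov{\cD}) : l \textnormal{ is a normalised GVF functional on }\Adiv_{\RR}(\Field)\}$, and combining with the first direction completes the proof.

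The main conceptual step is simply the translation between the pseudo-effectivity cone and the defining inequality of (normalised) GVF functionals, mediated by the identity $l(\ov{\cD}_0) = \log 2$. I do not anticipate any separate technical obstacle, since the heavy lifting has already been carried out in Theorem~\ref{theorem_main_arithmetic_approximation_theorem} (for the upper bound on $\zeta$) and in Lemma~\ref{lemma_F_Ballay_lemma_3_16}(3) with Lemma~\ref{lemma_arithmetic_divisor_pseudo_effective_iff_all_GVF_functionals_non_negative} (for the lower bound). The only point deserving attention is that normalised GVF functionals exist in sufficient supply; this is ensured by Corollary~\ref{corolarry_extending_GVF_functionals_using_Riesz_extension} applied to suitable rescalings of the functionals $l_{\ov{\cB}}$ from Construction~\ref{construction_GVF_functional_from_a_nef_R_arithmetoc_divisor_on_a_model}.
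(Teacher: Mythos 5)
Your proposal is correct and follows essentially the same route as the paper's proof: the upper bound on $\zeta(\ov{\cD})$ comes from applying Theorem~\ref{theorem_main_arithmetic_approximation_theorem} to a single normalised GVF functional with $Z$ ranging over all proper closed subschemes, and the reverse inequality under bigness of $\cD_\QQ$ comes from chaining Lemma~\ref{lemma_F_Ballay_lemma_3_16}(3) with Lemma~\ref{lemma_arithmetic_divisor_pseudo_effective_iff_all_GVF_functionals_non_negative} and Remark~\ref{remark_it_is_enoguh_to_check_on_normalised} to rewrite the pseudo-effectivity threshold as $\inf_l l(\ov{\cD})$. Your closing remark about the supply of normalised GVF functionals is a reasonable sanity check that the paper leaves implicit; it is indeed covered by Construction~\ref{construction_GVF_functional_from_a_nef_R_arithmetoc_divisor_on_a_model} together with Corollary~\ref{corolarry_extending_GVF_functionals_using_Riesz_extension}, or directly by Lemma~\ref{lemma_compactness_of_the_space_of_GVF_functionals}.
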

\begin{proof}
    For the first part, assume that there is a normalised GVF functional $l:\Adiv_{\RR}(\Field) \to \RR$ such that $l(\ov{\cD}) = r$. Fix a closed proper subscheme $\cY \subset \cX$ and $\varepsilon>0$. By Theorem~\ref{theorem_main_arithmetic_approximation_theorem} there is a closed point $x \in X = \cX_\QQ$ such that $x \not\in \cY$ and $|h_{\ov{\cD}}(x)-l(\ov{\cD})|<\varepsilon$. We can use the theorem because $l$ is normalised. Since $\cY$ was arbitrary, we get
    \[ \zeta(\ov{\cD}) = \sup_{\cY \subset \cX} \inf_{x \in \cX \setminus \cY(\ov{\QQ})} h_{\ov{\cD}}(x) \leq r+\varepsilon. \]
    As $l$ and $\varepsilon$ were arbitrary, we get the desired inequality.

    For the other part, by Lemma~\ref{lemma_F_Ballay_lemma_3_16} (using bigness of $\cD_\QQ$) we have
    \[ \zeta(\ov{\cD}) \geq \sup \{ r : \ov{\cD} - \frac{r}{\log 2} \cdot \ov{\cD}_0 \textnormal{ is pseudo-effective} \}, \]
    for $\ov{\cD}_0 = (\cdiv(2),0)$. By Lemma~\ref{lemma_arithmetic_divisor_pseudo_effective_iff_all_GVF_functionals_non_negative} we know that for any fixed $r \in \RR$
    \[ \ov{\cD} - \frac{r}{\log 2} \cdot \ov{\cD}_0 \textnormal{ is pseudo-effective} \]
    if and only if for all GVF functionals $l:\Adiv_{\RR}(\Field) \to \RR$ we have $l(\ov{\cD} - \frac{r}{\log 2} \cdot \ov{\cD}_0) \geq 0$. Note that for a normalised GVF functional $l:\Adiv_{\RR}(\Field) \to \RR$ we have
    \[ l(\ov{\cD} - \frac{r}{\log 2} \cdot \ov{\cD}_0) = l(\ov{\cD}) - r, \]
    so this expression is $\geq 0$ if and only if $l(\ov{\cD}) \geq r$. By Remark~\ref{remark_it_is_enoguh_to_check_on_normalised} we get that
    \[ \zeta(\ov{\cD}) \geq \sup \{ r : (\forall l)(l(\ov{\cD}) \geq r) \},  \]
    where we quantify over all normalised GVF functionals $l$ on $\Adiv_{\RR}(\Field)$. This gives the desired equality in the case where $\cD_{\QQ}$ is big.
\end{proof}

As a corollary one gets \cite[Theorem 1.4]{Arithmetic_Demailly_Qu_Yin} which is equivalent to \cite[Corollary 4.2]{F_Ballay_Succesive_minima} without the semi-positivity assumption.

\begin{corollary}~\cite[Theorem 1.4]{Arithmetic_Demailly_Qu_Yin}~\label{corollary_Ballays_result_without_semi_pos_ass_zeta_formula}
    Let $\ov{\cD}$ be an arithmetic $\RR$-divisor of $C^0$-type on $\cX$ with $\cD_{\QQ}$ big and let $\ov{\cD}_0 = (\cdiv(2),0)$. Then
    \[ \zeta(\ov{\cD}) = \sup \{ r : \ov{\cD} - \frac{r}{\log 2} \cdot \ov{\cD}_0 \textnormal{ is pseudo-effective} \}. \]
\end{corollary}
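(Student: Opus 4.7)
The plan is to derive this corollary directly from Theorem~\ref{theorem_essential_infimum_as_infimum_of_GVF_functionals} by identifying the two right-hand sides. In fact, the argument in the second half of the proof of that theorem already establishes this identification; we just need to isolate it.

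First, I would show the inequality ``$\geq$'' for free using Lemma~\ref{lemma_F_Ballay_lemma_3_16}(3), which is valid precisely because $\cD_\QQ$ is big. This gives
\[ \zeta(\ov{\cD}) \geq \sup \{ r : \ov{\cD} - \tfrac{r}{\log 2} \cdot \ov{\cD}_0 \textnormal{ is pseudo-effective} \}. \]

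For the reverse inequality, I would use Theorem~\ref{theorem_essential_infimum_as_infimum_of_GVF_functionals} to rewrite
\[ \zeta(\ov{\cD}) = \inf \{ l(\ov{\cD}) : l \textnormal{ normalised GVF functional on } \Adiv_{\RR}(\Field) \}, \]
and then translate the infimum on the right into a supremum over pseudo-effectivity. Concretely, for a normalised GVF functional $l$ and a real number $r$, the normalisation condition $l(\ov{\cD}_0) = \log 2$ implies
\[ l\bigl(\ov{\cD} - \tfrac{r}{\log 2} \cdot \ov{\cD}_0\bigr) = l(\ov{\cD}) - r, \]
so $l(\ov{\cD}) \geq r$ if and only if $l$ is non-negative on $\ov{\cD} - \tfrac{r}{\log 2}\ov{\cD}_0$. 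By Lemma~\ref{lemma_arithmetic_divisor_pseudo_effective_iff_all_GVF_functionals_non_negative} together with Remark~\ref{remark_it_is_enoguh_to_check_on_normalised}, the condition ``$l(\ov{\cD} - \tfrac{r}{\log 2}\ov{\cD}_0) \geq 0$ for every normalised GVF functional $l$'' is equivalent to pseudo-effectivity of $\ov{\cD} - \tfrac{r}{\log 2}\ov{\cD}_0$. Therefore
\[ \inf_{l} l(\ov{\cD}) = \sup\{ r : (\forall l)\ l(\ov{\cD}) \geq r \} = \sup\{ r : \ov{\cD} - \tfrac{r}{\log 2}\cdot \ov{\cD}_0 \textnormal{ is pseudo-effective}\}, \]
where the infimum and supremum range over normalised GVF functionals and real numbers respectively. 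Combining with Theorem~\ref{theorem_essential_infimum_as_infimum_of_GVF_functionals} yields the desired equality.

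There is essentially no obstacle: every ingredient has been established. The only thing one needs to be slightly careful about is that Theorem~\ref{theorem_essential_infimum_as_infimum_of_GVF_functionals} requires $\cD_\QQ$ to be big for the equality (not merely the inequality $\leq$), but this is exactly the standing assumption of the corollary. Lemma~\ref{lemma_F_Ballay_lemma_3_16}(3) is then used only to re-derive the ``$\geq$'' inequality, but in fact the chain of equalities above already proves both inequalities simultaneously, so one could skip Lemma~\ref{lemma_F_Ballay_lemma_3_16}(3) entirely and present the corollary as a pure reformulation of Theorem~\ref{theorem_essential_infimum_as_infimum_of_GVF_functionals} via the GVF-functional characterisation of pseudo-effectivity.
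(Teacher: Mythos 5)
Your proposal is correct and follows essentially the same route as the paper's own proof: both deduce the statement from Theorem~\ref{theorem_essential_infimum_as_infimum_of_GVF_functionals} together with the GVF-functional characterisation of pseudo-effectivity (Lemma~\ref{lemma_arithmetic_divisor_pseudo_effective_iff_all_GVF_functionals_non_negative} plus Remark~\ref{remark_it_is_enoguh_to_check_on_normalised}) and the normalisation identity $l(\ov{\cD}-\tfrac{r}{\log 2}\ov{\cD}_0)=l(\ov{\cD})-r$. The paper phrases it as a chain of equivalences ``$\zeta(\ov{\cD})\geq r\iff\cdots$'' via Lemma~\ref{lemma_F_Ballay_lemma_3_16}(1), while you translate $\inf_l l(\ov{\cD})$ directly into the supremum over $r$; this is only a reorganisation, and your observation that Lemma~\ref{lemma_F_Ballay_lemma_3_16}(3) is redundant for the ``$\geq$'' direction is correct.
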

\begin{proof}
    Note that by Lemma~\ref{lemma_F_Ballay_lemma_3_16}
    \[ \zeta(\ov{\cD}) \geq r \iff \zeta(\ov{\cD} - \frac{r}{\log 2} \ov{\cD}_0) \geq 0, \]
    which by Theorem~\ref{theorem_essential_infimum_as_infimum_of_GVF_functionals} is equivalent to the inequality $\inf_l (l(\ov{\cD}) - \frac{r}{\log 2}) \geq 0$ where the infimum is taken over all normalised GVF functionals on $\Adiv_{\RR}(\Field)$. By Lemma~\ref{lemma_arithmetic_divisor_pseudo_effective_iff_all_GVF_functionals_non_negative} this is equivalent to $\ov{\cD} - \frac{r}{\log 2} \ov{\cD}_0$ being pseudo-effective, which finishes the proof.
\end{proof}

\begin{construction}
    Let $\ov{\cD} \in \Adiv_{\RR}(\Field)$ be big. Note that then $\cD_{\QQ}$ is big by Lemma~\ref{lemma_effective_are_pseudo_effective}. Let
    \[ \varphi(\ov{\cE}) = \frac{\avol(\ov{\cE})}{(d+1) \vol(\cE_{\QQ})} \]
    for $\ov{\cE} \in \Adiv_{\RR}(\Field)$ with big $\cE_{\QQ}$. Note that it is well defined since both $\avol$ and $\vol$ are invariant under birational pullbacks (see Lemma~\ref{lemma_pseudo_effective_cone_is_invariant_under_birational_pullbacks} and \cite{Lazarsfeld_Positivity} for $\vol$). By differentiability of $\avol$ from Theorem~\ref{theorem_differentiability_of_arithmetic_volume} and differentiability of $\vol$ proved in \cite[Theorem A]{Boucksom_DIFFERENTIABILITY_OF_VOLUMES} we get a linear functional
    \[ D_{\ov{\cD}} \varphi : \Adiv_{\RR}(\Field) \to \RR \]
    which is the Gateaux derivative of $\varphi$ calculated on an appropriate $\Field$-model. This linear functional is normalised and zero on principal arithmetic $\RR$-divisors of $C^0$-type, but is not necessarily a GVF functional.
\end{construction}

\begin{lemma}~\label{lemma_compactness_of_the_space_of_GVF_functionals}
    Let $\ov{\cD}$ be a big arithmetic $\RR$-divisor of $C^0$-type on $\cX$. Then there exists a normalised GVF functional $l:\Adiv_{\RR}(\Field) \to \RR$ such that $\zeta(\ov{\cD}) = l(\ov{\cD})$.
\end{lemma}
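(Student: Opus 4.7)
The strategy is a compactness argument in the product topology on the space of normalised GVF functionals. Since $\ov{\cD}$ is big, Lemma~\ref{lemma_effective_are_pseudo_effective} gives that $\cD_{\QQ}$ is big, so Theorem~\ref{theorem_essential_infimum_as_infimum_of_GVF_functionals} identifies $\zeta(\ov{\cD})$ as the infimum of $l(\ov{\cD})$ over all normalised GVF functionals $l$ on $\Adiv_{\RR}(\Field)$. Moreover $\ov{\cD}$ is pseudo-effective (since big), so $l(\ov{\cD}) \geq 0$ for all GVF functionals by Lemma~\ref{lemma_arithmetic_divisor_pseudo_effective_iff_all_GVF_functionals_non_negative}; in particular $\zeta(\ov{\cD}) \geq 0$. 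Fix any $M > \zeta(\ov{\cD})$ and set
\[ \cG_M := \bigl\{ l : l \textnormal{ is a normalised GVF functional on } \Adiv_{\RR}(\Field) \textnormal{ with } l(\ov{\cD}) \leq M \bigr\}. \]
Note that $\cG_M$ is non-empty by the definition of the infimum.

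The key point will be that $\cG_M$ is uniformly pointwise bounded: for every $\ov{\cE} \in \Adiv_{\RR}(\Field)$ there exists $c_{\ov{\cE}} > 0$ with $|l(\ov{\cE})| \leq c_{\ov{\cE}} M$ for all $l \in \cG_M$. To see this, pull $\ov{\cD}$ and $\ov{\cE}$ back to a common $\Field$-model; by continuity of the arithmetic volume (Theorem~\ref{theorem_differentiability_of_arithmetic_volume}) together with $\avol(\ov{\cD}) > 0$, the divisors $\ov{\cD} \pm t \ov{\cE}$ are big for all sufficiently small $t > 0$. Rescaling, for some $c > 0$ both $c \ov{\cD} \pm \ov{\cE}$ are big and hence pseudo-effective by Lemma~\ref{lemma_effective_are_pseudo_effective}. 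Applying any $l \in \cG_M$ yields $|l(\ov{\cE})| \leq c\, l(\ov{\cD}) \leq c M$, so we may take $c_{\ov{\cE}} := c$.

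Embed $\cG_M$ via pointwise evaluation into the compact Hausdorff space $K := \prod_{\ov{\cE}} [-c_{\ov{\cE}} M,\, c_{\ov{\cE}} M]$ (Tychonoff). The conditions cutting out $\cG_M$ inside $K$ — linearity, vanishing on principal arithmetic $\RR$-divisors of $C^0$-type, non-negativity on effective arithmetic $\RR$-divisors of $C^0$-type, the normalisation $l(\ov{\cD}_0) = \log 2$, and the inequality $l(\ov{\cD}) \leq M$ — are each closed conditions in the product topology. Hence $\cG_M$ is a closed, non-empty subset of $K$, and therefore compact. The coordinate projection $l \mapsto l(\ov{\cD})$ is continuous, so its restriction to $\cG_M$ attains its minimum at some $l^{\ast} \in \cG_M$. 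Since every normalised GVF functional outside $\cG_M$ satisfies $l(\ov{\cD}) > M > \zeta(\ov{\cD})$, the infimum over all normalised GVF functionals coincides with that over $\cG_M$, and so $l^{\ast}(\ov{\cD}) = \zeta(\ov{\cD})$.

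The main obstacle is the pointwise bound in the second paragraph: bigness of $\ov{\cD}$ (rather than merely pseudo-effectivity) is essential here, since it is exactly the openness of the big cone that allows arbitrary $\ov{\cE}$ to be dominated by a multiple of $\ov{\cD}$, which in turn converts the single upper bound $l(\ov{\cD}) \leq M$ into a uniform pointwise bound on each coordinate.
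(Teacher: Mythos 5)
Your proof is correct and matches the paper's first argument: a Tychonoff compactness argument using the uniform pointwise bound that bigness of $\ov{\cD}$ affords, with your minimum-attaining-on-a-compact-set phrasing being a minor repackaging of the paper's converging-subnet phrasing. The paper also records a second, shorter proof that defines $l$ on $\Span(\ov{\cD}_0, \ov{\cD})$ by $l(\ov{\cD}_0)=\log 2$, $l(\ov{\cD})=\zeta(\ov{\cD})$, verifies it is a GVF functional via Corollary~\ref{corollary_Ballays_result_without_semi_pos_ass_zeta_formula}, and extends by the Riesz extension theorem (Corollary~\ref{corolarry_extending_GVF_functionals_using_Riesz_extension}), which you might find worth comparing.
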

\begin{proof}
    We give two proofs. First, we can find a sequence of normalised GVF functionals $l_n$ on $\Adiv_{\RR}(\Field)$ with $\lim_n l_n(\ov{\cD}) = \zeta(\ov{\cD})$ by Theorem~\ref{theorem_essential_infimum_as_infimum_of_GVF_functionals}. Pick a bound $C>0$ such that $l_n(\ov{\cD}) < C$ for all natural $n$. Let $\ov{\cE} \in \Adiv_{\RR}(\Field)$. As in the proof of Corollary~\ref{corolarry_extending_GVF_functionals_using_Riesz_extension}, take a natural number $m$ such that $m \ov{\cD} - \ov{\cE}, m \ov{\cD} + \ov{\cE}$ are big. Then for all GVF functionals $l$ on $\Adiv_{\RR}(\Field)$ with $l(\ov{\cD}) \leq C$ we have
    \[ -Cm \leq l(\ov{\cE}) \leq Cm. \]
    Denote by $m = m_{\ov{\cE}}$ a number suitable for $\ov{\cE}$. We get that the set of normalised GVF functionals $l$ on $\Adiv_{\RR}(\Field)$ with $l(\ov{\cD}) \leq C$ embeds as a closed subset into
    \[ \prod_{\ov{\cE} \in \Adiv_{\RR}(\Field)} [-C m_{\ov{\cE}}, C m_{\ov{\cE}} ]. \]
    This product is compact, so we can find a subnet of $(l_n)_{n \in \NN}$ converging to a desired normalised GVF functional.

    Another way to construct such a normalised GVF functional is to consider the vector space $V$ generated by $\ov{\cD}_0 = (\cdiv(2),0)$ and $\ov{\cD}$ in $\Adiv_{\RR}(\Field)$. Next, define $l$ on $V$ by putting $l(\ov{\cD}_0) := \log(2), l(\ov{\cD}) := \zeta(\ov{\cD})$. By Corollary~\ref{corollary_Ballays_result_without_semi_pos_ass_zeta_formula} we get that $l$ is a GVF functional, and by Corollary~\ref{corolarry_extending_GVF_functionals_using_Riesz_extension} it has an extension to $\Adiv_{\RR}(\Field)$, which finishes the second proof.
\end{proof}

\begin{theorem}~\label{theorem_characterisation_of_Zhang_equality_divisors}
    Let $\ov{\cD}$ be a big arithmetic $\RR$-divisor of $C^0$-type on $\cX$. Consider the statements:
    \begin{enumerate}[label=(\arabic*)]
        \item $\varphi(\ov{\cD}) = \zeta(\ov{\cD})$;
        \item $D_{\ov{\cD}} \varphi$ is a GVF functional;
        \item the infimum
        \[ \zeta(\ov{\cD}) = \inf \{ l(\ov{\cD}) : \textnormal{$l$ is a normalised GVF functional on } \Adiv_{\RR}(\Field) \} \]
        is achieved at the unique normalised GVF functional;
        \item $\zeta$ is Gateaux differentiable at $\ov{\cD}$ in every direction.
    \end{enumerate}
    Then $(1) \iff (2) \implies (3) \iff (4)$.
\end{theorem}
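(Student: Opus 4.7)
The strategy is to recognise the four conditions as statements about the concave function $\zeta$ (concavity follows from positive $1$-homogeneity and superadditivity, Lemma~\ref{lemma_properties_of_zeta_function}(2),(3)). The central bijection underpinning the plan is that the set
\[ \cS := \{ l : l \textnormal{ is a normalised GVF functional on } \Adiv_{\RR}(\Field),\ l(\ov{\cD}) = \zeta(\ov{\cD}) \} \]
coincides with the set of linear functionals $l : \Adiv_{\RR}(\Field) \to \RR$ satisfying $l \geq \zeta$ and $l(\ov{\cD}) = \zeta(\ov{\cD})$, i.e.\ the set of supporting linear functionals of the concave $\zeta$ at $\ov{\cD}$. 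The forward inclusion is Theorem~\ref{theorem_essential_infimum_as_infimum_of_GVF_functionals}; for the reverse, one reads off the three GVF conditions from $l \geq \zeta$ by testing on $\pm$-principal divisors (class-invariance of $\zeta$ forces $l=0$ there), on effective divisors (using Lemma~\ref{lemma_properties_of_zeta_function}(4)) and on $\pm \ov{\cD}_0$ (using $\zeta(\pm \ov{\cD}_0) = \pm \log 2$ from Lemma~\ref{lemma_height_with_respect_to_divisor_max_div_two_and_zero} or Lemma~\ref{lemma_F_Ballay_lemma_3_16}(1)). By Lemma~\ref{lemma_compactness_of_the_space_of_GVF_functionals}, $\cS$ is non-empty.

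For $(1) \iff (2)$: a direct computation shows that $D_{\ov{\cD}}\varphi$ is always linear, vanishes on principal divisors (since $\avol$ and $\vol$ depend only on classes), and satisfies $D_{\ov{\cD}}\varphi(\ov{\cD}_0) = \log 2$ (the $\vol$-term in the derivative drops because $\cdiv(2)_\QQ = 0$, leaving $\langle \ov{\cD}^d \rangle(0, 2 \log 2) / \vol(\cD_\QQ)$, which equals $\log 2$ by the definition of the positive intersection). Hence (2) really says that $D_{\ov{\cD}}\varphi$ is non-negative on effective divisors. The implication $(2) \Rightarrow (1)$ then combines Theorem~\ref{theorem_essential_infimum_as_infimum_of_GVF_functionals} (giving $\zeta(\ov{\cD}) \leq D_{\ov{\cD}}\varphi(\ov{\cD})$), Euler's identity for the $1$-homogeneous $\varphi$ (giving $D_{\ov{\cD}}\varphi(\ov{\cD}) = \varphi(\ov{\cD})$) and Zhang's inequality (Theorem~\ref{theorem_Zhang_inequality}) to force equality. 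For $(1) \Rightarrow (2)$, the non-negative function $\zeta - \varphi$ vanishes at $\ov{\cD}$; expanding in an arbitrary direction $\ov{\cE}$ and separating the $t \to 0^{\pm}$ limits yields $\partial^+_{\ov{\cE}} \zeta \geq D_{\ov{\cD}}\varphi(\ov{\cE}) \geq \partial^-_{\ov{\cE}} \zeta$, which combined with the concavity inequality $\partial^+_{\ov{\cE}} \zeta \leq \partial^-_{\ov{\cE}} \zeta$ forces equality. Thus $\zeta$ is Gateaux differentiable at $\ov{\cD}$ in every direction with $D_{\ov{\cD}}\zeta = D_{\ov{\cD}}\varphi$, and $D_{\ov{\cD}}\zeta$ is a normalised GVF functional by class-invariance of $\zeta$, Lemma~\ref{lemma_properties_of_zeta_function} and Lemma~\ref{lemma_F_Ballay_lemma_3_16}(1). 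Note that the same argument also proves $(1) \Rightarrow (4)$.

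For $(3) \iff (4)$: any $l \in \cS$ obeys $\partial^+_{\ov{\cE}} \zeta \leq l(\ov{\cE}) \leq \partial^-_{\ov{\cE}} \zeta$ for every $\ov{\cE}$ (apply $l \geq \zeta$ to $\ov{\cD} + t \ov{\cE}$ and let $t \to 0^{\pm}$). Under (4) this interval collapses to $\{ D_{\ov{\cD}}\zeta(\ov{\cE}) \}$, uniquely pinning down $l$ and giving $(4) \Rightarrow (3)$. For the converse I argue contrapositively: if $c_+ := \partial^+_{\ov{\cE}_0}\zeta < \partial^-_{\ov{\cE}_0}\zeta =: c_-$ for some $\ov{\cE}_0$, pick sequences $t_n \to 0^{\pm}$ with $\ov{\cD} + t_n \ov{\cE}_0$ still big (bigness is open) and normalised GVF functionals $\ell_n^{\pm}$ achieving $\zeta(\ov{\cD} + t_n \ov{\cE}_0)$ (Lemma~\ref{lemma_compactness_of_the_space_of_GVF_functionals}). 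From
\[ \ell_n^{\pm}(\ov{\cE}_0) = \frac{\zeta(\ov{\cD} + t_n \ov{\cE}_0) - \ell_n^{\pm}(\ov{\cD})}{t_n} \]
together with $\ell_n^{\pm}(\ov{\cD}) \geq \zeta(\ov{\cD})$ one reads off $\limsup_n \ell_n^+(\ov{\cE}_0) \leq c_+$ and $\liminf_n \ell_n^-(\ov{\cE}_0) \geq c_-$; extracting subnet limits via the compactness recipe of Lemma~\ref{lemma_compactness_of_the_space_of_GVF_functionals} produces distinct $l^+, l^- \in \cS$ with $l^{\pm}(\ov{\cE}_0) = c_{\pm}$, contradicting (3). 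The main technical hurdle is precisely this implication: one needs uniform boundedness of the values $\ell_n^{\pm}(\ov{\cE})$ on every $\ov{\cE}$ so that Lemma~\ref{lemma_compactness_of_the_space_of_GVF_functionals} applies (which rests on continuity of $\zeta$ along the segment $\ov{\cD} + t \ov{\cE}_0$ inside the big cone), and one must verify that the subnet limit $l^{\pm}$ actually lies in $\cS$, i.e.\ that the equality $l^{\pm}(\ov{\cD}) = \zeta(\ov{\cD})$ survives the limit.
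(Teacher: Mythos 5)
Your proposal is essentially correct, and the core of it ($(1)\Leftrightarrow(2)$, $(4)\Rightarrow(3)$, and $(1)\Rightarrow(4)$ as a byproduct) tracks the paper's argument closely, with the ``supporting functionals'' characterisation of $\cS$ as a useful conceptual overlay the paper leaves implicit. Your reverse inclusion (that $l\geq\zeta$ with $l(\ov{\cD})=\zeta(\ov{\cD})$ forces $l$ to be a normalised GVF functional) is sound, though it is never actually used in the proof; only the forward direction, Theorem~\ref{theorem_essential_infimum_as_infimum_of_GVF_functionals}, is.

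The genuine divergence is in $(3)\Rightarrow(4)$. The paper's argument is hands-on convex geometry: it passes to the affine span $V$ of $\ov{\cD}_0,\ov{\cD},\ov{\cE}$, slices the pseudo-effective cone by a hyperplane $P$ to obtain a compact convex body $Q$, encodes $\zeta$ along the ray $\ov{\cD}+t\ov{\cE}$ via a support function $g$ of $Q$, derives the functional equation $(1+g(\varepsilon))f(\varepsilon/(1+g(\varepsilon)))=(g(\varepsilon)+\varepsilon)\log 2$ relating $g$ to $f(t)=\zeta(\ov{\cD}+t\ov{\cE})$, deduces that non-differentiability of $f$ forces non-differentiability of $g$, and then produces two distinct supporting affine functions at a corner of $Q$ to contradict uniqueness via Riesz extension. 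Your argument is instead functional-analytic: take sequences of minimising normalised GVF functionals $\ell_n^\pm$ for the nearby big divisors $\ov{\cD}+t_n\ov{\cE}_0$, use the concavity estimate $\ell_n^+(\ov{\cE}_0)\leq c_+<c_-\leq\ell_n^-(\ov{\cE}_0)$ (which you derive correctly), and extract subnet limits $l^\pm$ to obtain two distinct elements of $\cS$. This route trades explicit $2$-dimensional geometry for a compactness argument, and in that sense is softer and arguably more conceptual.

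However, the compactness step you flag as ``the main technical hurdle'' is left unresolved, and it is not a triviality: Lemma~\ref{lemma_compactness_of_the_space_of_GVF_functionals}'s recipe requires a uniform bound $\ell_n^\pm(\ov{\cD})\leq C$, whereas a priori you only control $\ell_n^\pm$ at the varying divisors $\ov{\cD}+t_n\ov{\cE}_0$. To close this you must actually prove $\ell_n^\pm(\ov{\cD})\to\zeta(\ov{\cD})$. This can be done: from $\ell_n^+(\ov{\cE}_0)\leq c_+$ and $\ell_n^+(\ov{\cE}_0)\geq -m\,\ell_n^+(\ov{\cD})$ for a fixed $m$ with $m\ov{\cD}\pm\ov{\cE}_0$ big, combined with $\ell_n^+(\ov{\cD}+t_n\ov{\cE}_0)=\zeta(\ov{\cD}+t_n\ov{\cE}_0)\to\zeta(\ov{\cD})$ (continuity of the concave, finite-valued $\zeta$ on the big cone), one gets $(1-t_nm)\,\ell_n^+(\ov{\cD})\leq\zeta(\ov{\cD}+t_n\ov{\cE}_0)$ and the lower bound $\ell_n^+(\ov{\cD})\geq\zeta(\ov{\cD})$ from Theorem~\ref{theorem_essential_infimum_as_infimum_of_GVF_functionals}, squeezing $\ell_n^+(\ov{\cD})\to\zeta(\ov{\cD})$; similarly for $\ell_n^-$. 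Once that is in place, the subnet limits $l^\pm$ are normalised GVF functionals with $l^\pm(\ov{\cD})=\zeta(\ov{\cD})$, hence in $\cS$, and $l^+(\ov{\cE}_0)\leq c_+<c_-\leq l^-(\ov{\cE}_0)$ gives the contradiction. So your approach is salvageable and does produce a correct alternative proof, but as written the proposal asserts rather than establishes this bound, which is precisely where the work lies in your route.

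One small correction: you cannot claim $l^\pm(\ov{\cE}_0)=c_\pm$ exactly from the extraction step alone; the subnet argument only gives $l^+(\ov{\cE}_0)\leq c_+$ and $l^-(\ov{\cE}_0)\geq c_-$. The exact equalities do hold once $l^\pm\in\cS$ is established (then $c_+\leq l^\pm(\ov{\cE}_0)\leq c_-$ forces them to the endpoints), but this is downstream of the boundedness issue, and the strict inequalities alone already suffice for the contradiction.
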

\begin{proof}
    Let $\ov{\cE} \in \Adiv_{\RR}(\Field)$. Note that for small real $t$ the arithmetic $\RR$-divisor of $C^0$-type $\ov{\cD}+t \ov{\cE}$ is big. Let $l_0$ be a normalised GVF functional on $\Adiv_{\RR}(\Field)$ with $l_0(\ov{\cD}) = \zeta(\ov{\cD})$. Such exists by Lemma~\ref{lemma_compactness_of_the_space_of_GVF_functionals}. For real $t>0$ small enough we then have
    \begin{equation}~\label{eq_zeta}
        \frac{\zeta(\ov{\cD}+t \ov{\cE})-\zeta(\ov{\cD})}{t} \leq \frac{l_0(\ov{\cD}+t \ov{\cE})-l_0(\ov{\cD})}{t}  = l_0(\ov{\cE}).
    \end{equation}
    To get the above inequality we use Theorem~\ref{theorem_essential_infimum_as_infimum_of_GVF_functionals}. Now we pass to proving equivalence of the conditions in the theorem.
    
    $(1) \Rightarrow (2):$ Assuming $\varphi(\ov{\cD}) = \zeta(\ov{\cD})$, by Theorem~\ref{theorem_Zhang_inequality} we get
    \[ \frac{\varphi(\ov{\cD}+t \ov{\cE})-\varphi(\ov{\cD})}{t} \leq \frac{\zeta(\ov{\cD}+t \ov{\cE})-\zeta(\ov{\cD})}{t} \leq l_0(\ov{\cE}). \]
    Taking $t \to 0^+$ we get that $D_{\ov{\cD}} \varphi \leq l_0$, but since they are linear this implies that $D_{\ov{\cD}} \varphi = l_0$ is a GVF functional.
    
    $(2) \Rightarrow (1):$ From Theorem~\ref{theorem_essential_infimum_as_infimum_of_GVF_functionals} we then get that $\zeta(\ov{\cD}) \leq D_{\ov{\cD}} \varphi (\ov{\cD}) = \varphi(\ov{\cD})$ which together with Theorem~\ref{theorem_Zhang_inequality} gives the equality.

    $(1) \Rightarrow (4):$ From the proof of $(1) \Rightarrow (2)$ we get that 
    \[ l_0(\ov{\cE}) = D_{\ov{\cD}} \varphi(\ov{\cE}) \leq \lim_{t \to 0^+} \frac{\zeta(\ov{\cD}+t \ov{\cE})-\zeta(\ov{\cD})}{t} \leq l_0(\ov{\cE}).  \]
    For the negative $t$ replace $\ov{\cE}$ by $-\ov{\cE}$.

    $(4) \Rightarrow (3):$ By taking the limit $t \to 0$ in Equation~(\ref{eq_zeta}) we get
    \[ D_{\ov{\cD}} \zeta(\ov{\cE}) \leq l_0(\ov{\cE}). \]
    Since both maps are linear, we get $D_{\ov{\cD}} \zeta = l_0$, hence the uniqueness of $l_0$ follows.

    $(3) \Rightarrow (4):$ Let $V$ be the real vector subspace generated by $\ov{\cD}_0 = (\cdiv(2),0), \ov{\cD}, \ov{\cE}$ in $\Adiv_{\RR}(\Field)$. If the classes of $\ov{\cD}_0, \ov{\cD}, \ov{\cE}$ are linearly dependent in $\aN^1(\Field)$, then differentiability of $\zeta$ at $\ov{\cD}$ in direction $\ov{\cE}$ follows from Lemma~\ref{lemma_properties_of_zeta_function} and Lemma~\ref{lemma_F_Ballay_lemma_3_16}. Assume that those classes are linearly independent. Let $V^+$ be the (euclidean) closure of the big cone in $V$ and pick a hyperplane $P$ in $V$ containing $\ov{\cD}_0, \ov{\cD}$, such that $Q := P \cap V^+$ is a compact convex set. This is possible as $V^+$ does not contain a line. By assumption and Theorem~\ref{theorem_Riesz_extension_theorem} we know that $l_0$ restricted to $P$ is the unique affine function $l$ on $P$ having the following properties:
    \begin{itemize}
        \item $l(\ov{\cD}_0) = \log(2)$,
        \item $l(\ov{\cD}) = \zeta(\ov{\cD})$,
        \item $l$ is non-negative on $Q$.
    \end{itemize}
    Consider the function $f(t) = \zeta(\ov{\cD} + t \ov{\cE})$ on a neighbourhood of $0$. It is concave by Lemma~\ref{lemma_properties_of_zeta_function}. We now show that if $f$ is not differentiable at zero, then we can find a different affine function $l$ having the listed properties. By moving the hypersurface $P$ and rescaling $\ov{\cD}_0, \ov{\cD}, \ov{\cE}$ we can without loss of generality assume that $\ov{\cD}_0, \ov{\cD}, \ov{\cE} \in P$. In other words
    \[ P = \{ a\ov{\cD}_0+b\ov{\cD}+c\ov{\cE} : a+b+c=1 \}. \]
    Fix coordinates on $P$ so that $\ov{\cD}_0$ is the zero and the basis vectors are $e_1 = \ov{\cD} - \ov{\cD}_0, e_2 = \ov{\cE} - \ov{\cD}_0$. Consider the function
    \[ g(\varepsilon) := \sup \{ r : \ov{\cD} + \varepsilon e_2 + r e_1 \textnormal{ is pseudo-effective} \}. \]
    There is a positive $t$ such that $g:(-t,t) \to \RR$ is a concave function, as $Q$ is a compact convex set. Note that
    \[ \ov{\cD} + \varepsilon e_2 + r e_1 = (1+r)\ov{\cD} + \varepsilon \ov{\cE} -(r+\varepsilon)\ov{\cD}_0. \]
    By Corollary~\ref{corollary_Ballays_result_without_semi_pos_ass_zeta_formula} we know that $\ov{\cD} + \varepsilon e_2 + r e_1$ is pseudo-effective if and only if $\zeta(\ov{\cD} + \varepsilon e_2 + r e_1) \geq 0$, which by the above calculation and Lemma~\ref{lemma_F_Ballay_lemma_3_16} is equivalent to
    \[ \zeta(\ov{\cD} + \varepsilon e_2 + r e_1) = \zeta((1+r)\ov{\cD} + \varepsilon \ov{\cE}) - (r+\varepsilon) \log(2) \]
    \[ = (1+r) \zeta(\ov{\cD} + \frac{\varepsilon}{1+r} \ov{\cE}) - (r+\varepsilon) \log(2) = (1+r) f(\frac{\varepsilon}{1+r}) - (r+\varepsilon) \log(2) \geq 0. \]
    The supremum of $r$ where this quantity is non-negative is achieved when we have the equality with zero, so we get the functional equation
    \[ (1+g(\varepsilon)) f(\frac{\varepsilon}{1+g(\varepsilon)}) - (g(\varepsilon)+\varepsilon) \log(2) = 0. \]
    Thus
    \[ f(\frac{\varepsilon}{1+g(\varepsilon)}) = \log(2) \frac{g(\varepsilon)+\varepsilon}{1+g(\varepsilon)}. \]
    Note that $g(0)$ is strictly positive as $\avol(\ov{\cD}) > 0$. If $g$ was differentiable at $0$, then $f$ also would be, as
    \[ \frac{f(\frac{\varepsilon}{1+g(\varepsilon)}) - f(0)}{\frac{\varepsilon}{1+g(\varepsilon)}} = \log(2) \frac{\frac{g(\varepsilon)+\varepsilon}{1+g(\varepsilon)} - \frac{g(0)}{1+g(0)}}{\frac{\varepsilon}{1+g(\varepsilon)}}. \]
    Thus $g$ is not differentiable at $0$. This means that there exist two different lines $X_1, X_2$ on $P$ such that $X_i \cap Q = \{\ov{\cD} + g(0)e_1\}$ for $i=1,2$. Consider affine functions $l_1, l_2$ on $P$ determined by the following conditions
    \begin{itemize}
        \item $l_i$ vanishes on $X_i$,
        \item $l_i(\ov{\cD}_0) = \log(2)$,
        \item $l_i(\ov{\cD}) = \zeta(\ov{\cD})$,
    \end{itemize}
    for $i=1, 2$. Note that such functions exists by Corollary~\ref{corollary_Ballays_result_without_semi_pos_ass_zeta_formula}. Moreover, they are non-negative on $Q$, since they are affine, their zero sets are supporting lines for $Q$, and they are positive on a point $\ov{\cD}_0$ in $Q$. This gives a non-uniqueness of $l_0$, which finishes the proof.
\end{proof}

\begin{question}
    What is the set of all arithmetic big $\RR$-divisors of $C^0$-type $\ov{\cD}$ with $D_{\ov{\cD}} \zeta$ Gateaux differentiable? How to find a counterexample for the implication $(3) \Rightarrow (2)$ in Theorem~\ref{theorem_characterisation_of_Zhang_equality_divisors}?
\end{question}

Before the next lemma, note that if $x \in \cX_{\QQ}$ is a sufficiently general closed point, then for $\ov{\cE} \in \Adiv_{\RR}(\Field)$ one can make sense out of $h_{\ov{\cE}}(x)$. Indeed, if $\ov{\cE} \in \Adiv_{\RR}(\cX')$ for some $\Field$-model $\phi:\cX' \to \cX$, then if $x$ is in the open subset where $\phi$ is an isomorphism, we can define $h_{\ov{\cE}}(x) := h_{\ov{\cE}}(\phi^{-1}(x))$.

The below lemma follows the lines of \cite[Lemma (8.2)]{Chambert_Loir_survey} or \cite[Section 5]{Chen_differentiability_of_arithmetic_volume} and proves that differentiability of $\zeta$ is enough for equdistribution (over all places).

\begin{lemma}~\label{lemma_equdistributuin_with_zeta_differentiable}
    Let $\ov{\cD}$ be a big arithmetic $\RR$-divisor of $C^0$-type on $\cX$. Assume that $(x_n)_{n \in \NN}$ is a sequence of closed points in the generic fiber of $\cX$ which converges to the generic point. If $\zeta$ is Gateaux differentiable at $\ov{\cD}$ in directions $\ov{\cE}, - \ov{\cE} \in \Adiv_{\RR}(\Field)$ and
    \[ \lim_n h_{\ov{\cD}}(x_n) = \zeta(\ov{\cD}), \]
    then
    \[ \lim_n h_{\ov{\cE}}(x_n) = D_{\ov{\cD}} \zeta(\ov{\cE}). \]
\end{lemma}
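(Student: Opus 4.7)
The plan is to extract the derivative of $\zeta$ at $\ov{\cD}$ as a one-sided limit, then sandwich $h_{\ov{\cE}}(x_n)$ between the derivatives in directions $\ov{\cE}$ and $-\ov{\cE}$.

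First I would observe the key ``lower semicontinuity'' property of heights along the sequence: for any $\ov{\cF} \in \Adiv_{\RR}(\Field)$ we have $\liminf_n h_{\ov{\cF}}(x_n) \geq \zeta(\ov{\cF})$. Indeed, fix any $\Field$-model $\cX'$ over $\cX$ on which $\ov{\cF}$ is defined; since $x_n$ converges to the generic point of $\cX$ and $\cX' \to \cX$ is an isomorphism over a dense open set, for $n$ large enough we can view $x_n$ as a closed point of $\cX' \otimes \QQ$. Now for any closed proper $\cY' \subset \cX'$ eventually $x_n \notin \cY'$, so $\liminf_n h_{\ov{\cF}}(x_n) \geq \inf_{x \notin \cY'} h_{\ov{\cF}}(x)$; taking the supremum over $\cY'$ and using birational invariance of $\zeta$ from Lemma~\ref{lemma_properties_of_zeta_function} yields the claim.

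Next I would apply this to $\ov{\cF} = \ov{\cD} + t \ov{\cE}$ for small $t>0$, together with the additivity of heights $h_{\ov{\cD}+t\ov{\cE}}(x_n) = h_{\ov{\cD}}(x_n) + t \cdot h_{\ov{\cE}}(x_n)$. Since $\lim_n h_{\ov{\cD}}(x_n) = \zeta(\ov{\cD})$ exists by hypothesis, this gives
\[ \zeta(\ov{\cD}) + t \liminf_n h_{\ov{\cE}}(x_n) \geq \zeta(\ov{\cD} + t \ov{\cE}). \]
Dividing by $t$ and sending $t \to 0^+$, the Gateaux differentiability of $\zeta$ at $\ov{\cD}$ in direction $\ov{\cE}$ produces
\[ \liminf_n h_{\ov{\cE}}(x_n) \geq D_{\ov{\cD}} \zeta(\ov{\cE}). \]

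Finally I would repeat the same argument with $\ov{\cE}$ replaced by $-\ov{\cE}$, using that $h_{-\ov{\cE}}(x_n) = -h_{\ov{\cE}}(x_n)$ and that Gateaux differentiability in direction $-\ov{\cE}$ means $D_{\ov{\cD}}\zeta(-\ov{\cE}) = -D_{\ov{\cD}}\zeta(\ov{\cE})$ (concavity of $\zeta$ from Lemma~\ref{lemma_properties_of_zeta_function} makes this the nontrivial content of the assumption). This yields $\liminf_n (- h_{\ov{\cE}}(x_n)) \geq -D_{\ov{\cD}}\zeta(\ov{\cE})$, i.e., $\limsup_n h_{\ov{\cE}}(x_n) \leq D_{\ov{\cD}}\zeta(\ov{\cE})$, and combining the two inequalities closes the sandwich.

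There is essentially no obstacle here; the only minor technical point is handling the fact that $\ov{\cE}$ may live on a different $\Field$-model than $\cX$, but this is absorbed by the birational invariance of both $\zeta$ and of heights of sufficiently generic points (Lemma~\ref{lemma_behaviour_of_height_for_birational_maps_and_pullbacks_proj_formula}), which is why ``$x_n$ converges to the generic point'' is exactly the right hypothesis.
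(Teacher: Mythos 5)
Your proposal is correct and follows essentially the same route as the paper: express the difference quotient of $\zeta$ along $\ov{\cD} + t\ov{\cE}$, use the lower-semicontinuity bound $\liminf_n h_{\ov{\cF}}(x_n) \geq \zeta(\ov{\cF})$ (which the paper asserts without proof and you helpfully justify via the genericity hypothesis and birational invariance), let $t \to 0^+$, and symmetrize with $-\ov{\cE}$ to close the sandwich. The only addition on your side is making the semicontinuity argument and the role of concavity explicit, which does not change the method.
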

\begin{proof}
    Let $t>0$ be a real number. We calculate
    \[ \liminf_{n \to \infty} h_{\ov{\cE}}(x_n) = \liminf_{n \to \infty} \frac{h_{\ov{\cD} + t\ov{\cE}}(x_n) - h_{\ov{\cD}}(x_n)}{t}. \]
   Note that $\liminf_{n \to \infty} h_{\ov{\cD} + t\ov{\cE}}(x_n) \geq \zeta(\ov{\cD} + t\ov{\cE})$ and $\liminf_{n \to \infty} h_{\ov{\cD}}(x_n) = \zeta(\ov{\cD})$. Thus we get
    \[ \liminf_{n \to \infty} \frac{h_{\ov{\cD} + t\ov{\cE}}(x_n) - h_{\ov{\cD}}(x_n)}{t} \geq \frac{\zeta(\ov{\cD} + t\ov{\cE}) - \zeta(\ov{\cD})}{t}. \]
    Taking the limit $t \to 0^+$ we have
    \[ \liminf_{n \to \infty} h_{\ov{\cE}}(x_n) \geq D_{\ov{\cD}} \zeta(\ov{\cE}). \]
    Replacing $\ov{\cE}$ with $- \ov{\cE}$ we get
    \[ \limsup_{n \to \infty} h_{\ov{\cE}}(x_n) \leq D_{\ov{\cD}} \zeta(\ov{\cE}). \]
    Together those inequalities give $\lim_n h_{\ov{\cE}}(x_n) = D_{\ov{\cD}} \zeta(\ov{\cE})$ which finishes the proof.
\end{proof}

\printbibliography

\end{document}